\newtheorem{thm}{Theorem}[section]
\newtheorem{lem}[thm]{Lemma}
\newtheorem{prop}[thm]{Proposition}
\newtheorem{ques}[thm]{Question}
\newtheorem{cor}[thm]{Corollary}
\newtheorem{defn}[thm]{Definition}
\theoremstyle{remark}
\newtheorem{rem}[thm]{Remark}
\newtheorem{exam}[thm]{Example}
\def \N {\mathbb N}
\def \B {\mathcal B}
\def \D {\mathcal D}
\def \Z {\mathbb Z}
\def \R {\mathbb R}
\def \F {\mathcal F}
\def \Q {\mathbb Q}
\def \U {\mathcal U}
\def \P {\mathcal P}
\def \M {\mathcal M}
\def \htop {h_{\mathsf{top}}}
\def \xt {$(X,T)$}
\def \eps {\varepsilon}
\def \sq {sequence}
\def \ys {$(Y,S)$}
\def \tl {topological}
\def \im {invariant measure}
\def \inv {invariant}
\def \ds {dynamical system}
\def \htop{h_{\mathsf{top}}}
\def \pr1 {\hspace{1pt}+\hspace{-5pt}^\rightarrow\hspace{3pt}}
\def \pl1 {\hspace{3pt}^\leftarrow\hspace{-5pt}+\hspace{1pt}}
\def \pl{\looparrowleft}
\def \pr{\looparrowright}
\numberwithin{equation}{section}
\begin{document}

\title[Normality and Determinism]{On preservation of normality and determinism under arithmetic operations}

\author{Vitaly Bergelson and Tomasz Downarowicz}

\address{\vskip 2pt \hskip -12pt Tomasz Downarowicz}

\address{\hskip -12pt Faculty of Pure and Applied Mathematics, Wroc\l aw University of Technology, Wroc\l aw, Poland}

\email{downar@pwr.edu.pl}

\medskip
\address{\vskip 2pt \hskip -12pt Vitaly Bergelson}

\address{\hskip -12pt Department of Mathematics, Ohio State University, Columbus, OH 43210, USA}

\email{vitaly@math.ohio-state.edu}

\thanks{
This research is supported by National Science Center, Poland (Grant HARMONIA No. 2018/30/M/ST1/00061) and by the Wroc\l aw University of Science and Technology.
}

\subjclass[2010]{Primary 37B05, 37B20; Secondary 37A25}
\keywords{}

\begin{abstract}
In this paper we develop a general ergodic approach which reveals the underpinnings of the effect of arithmetic operations involving normal and deterministic numbers. This allows us to recast in new light and amplify the result of Rauzy, which states that a number $y$ is deterministic if and only if $x+y$ is normal for every normal number $x$. Our approach is based on the notions of lower and upper entropy of a point in a \tl\ \ds. The ergodic approach to Rauzy theorem naturally leads to the study of various aspects of normality and determinism in the general framework of dynamics of endomorphisms of compact metric groups. In particular, we generalize Rauzy theorem to ergodic toral endomorphisms. 
 Also, we show that the phenomena described by Rauzy do not occur when one replaces the base $2$ normality associated with the $(\frac12,\frac12)$-Bernoulli measure by the variant of normality associated with a $(p,1-p)$-Bernoulli measure, where $p\neq\frac12$.
 Finally, we present some rather nontrivial examples which show that Rauzy-type results are not valid when addition is replaced by multiplication. 
\end{abstract}

\maketitle

\tableofcontents

\section{Introduction}\label{S1}

Fix a natural number $r\ge2$. For any number $x\in\R$ consider its \emph{base $r$ expansion}
$$
x=\sum_{n=-m}^\infty \frac{x_n}{r^n},
$$ 
where $m\ge 0$ and, for each $n\ge -m$, $x_n\in\{0,1,\dots,r-1\}$.\footnote{Some rational numbers have two base $r$ expansions, in this case we choose the one that terminates with zeros.}

A number $x$ is \emph{normal in base $r$} if the \sq\ of digits in its expansion $\omega=(x_n)_{n\ge-m}\in\{0,1,\dots,r-1\}^\N$ is normal, meaning that for any $k\in\N$, every finite block of digits $w=w_1w_2\dots w_k$ appears in $\omega$ with the ``correct'' limiting frequency~$r^{-k}$. 

A property dual to normal is that of \emph{deterministic}. Precise definition of this property is quite intricate and will be given in Section~\ref{S3}. For now, let us just say that a number $x$ is deterministic in base $r$ if the appropriately defined \emph{epsilon-complexity} of $\omega$ grows subexponentially (see, e.g.,~\cite[Lemma 8.9]{W3} and \cite[Definition~1]{BV}). 

Let $\mathcal N(r)$ and $\D(r)$ denote the sets of real numbers normal and deterministic in base~$r$, respectively. A remarkable result of G. Rauzy \cite{Ra} states that if $x\in\mathcal N(r)$ and $y\in\D(r)$ then $x+y\in\mathcal N(r)$. Rauzy also proved the converse: if $y$ has the property that $x+y\in\mathcal N(r)$ for any $x\in\mathcal N(r)$ then $y\in\D(r)$. To summarise, a number $y$ is deterministic (in base $r$) if and only if the operation $x\mapsto x+y$ \emph{preserves normality} in base $r$. Also, one can derive from the results obtained in \cite{Ra} that if $x\in\D(r)$ and $y\in\D(r)$ then $x+y\in\D(r)$. As a matter of fact, the converse holds as well (see Corollary~\ref{rrr}(3) below): if $y$ has the property that $x+y\in\D(r)$ for any $x\in\D(r)$ then $y\in\D(r)$.

In this paper we develop a general ergodic approach to the study of the effect of arithmetic operations on normality and determinism. This allows us to recast in new light and amplify the work of Rauzy (for instance, our methods allow for an almost immediate generalization of Rauzy theorem to $\R^k$). Our approach is based on the notions of lower and upper entropy of a point in a \tl\ \ds. To recover Rauzy's results we work with the \ds\ $(\mathbb T,R)$, where $\mathbb T=\R/\Z$ is the 1-dimensional torus (circle) and $R$ is the map given by $t\mapsto rt$, $t\in\mathbb T$. The ergodic approach to Rauzy theorem naturally leads to the study of various aspects of normality and determinism in the general framework of dynamics of endomorphisms of compact metric groups. In particular, we generalize Rauzy theorem to ergodic toral endomorphisms. 
A more detailed discussion of the diverse applications of our ergodic approach is given in the description of the structure of the paper provided below.
\smallskip

Our paper also contains some elaborate constructions which indicate the limits to  possible extensions of the results obtained in this paper:
\begin{itemize}
 \item We show that the phenomena described by Rauzy do not occur when one replaces the base $2$ normality associated with the $(\frac12,\frac12)$-Bernoulli measure by the variant of normality associated with a $(p,1-p)$-Bernoulli measure, where $p\neq\frac12$.
 \item We present some rather nontrivial examples which show that Rauzy-type results are not valid when addition is replaced by multiplication. 
\end{itemize}

The structure of the paper is as follows.
In Section~\ref{S2} we introduce the basic notions of \tl\ dynamics such as \im s, factors, joinings, generic and quasi-generic points for an \im, and we interpret the notion of a normal number in dynamical terms. We also introduce the definition of a $p$-normal number.

In Section~\ref{S3} we introduce the notions of lower and upper entropies of a point in a \tl\ \ds. Also, in this section, we define deterministic numbers and discuss an equivalent definition given by Rauzy. Finally, we provide an interpretation of normality and determinism in terms of lower and upper entropies. 

In Section~\ref{Sn}, we prove our first main result, Proposition~\ref{x+y}, which deals with the behavior of lower and upper entropy under addition, and, as a corollary, we derive in terms of pure ergodic theory one direction of Rauzy's seminal characterization of deterministic numbers, namely that if $x\in\mathcal N(r)$ and $y\in\D(r)$ then $x+y\in\mathcal N(r)$. We show by examples that the bounds given in Proposition~\ref{x+y} are sharp. Next, in Proposition~\ref{xq} we show (again, by purely ergodic means) that for any number $x$ and any nonzero rational number $q$, $qx$ has the same lower and upper entropy as~$x$. This result is a refinement of an old result by D.D.\ Wall, which states that if $x$ is normal, so is any nonzero rational multiple of $x$. We conclude the section with a streamlined proof of the other direction of Rauzy theorem. 

In Section~\ref{S5} we utilize the results obtained in Section~\ref{Sn} to obtain a multidimensional version of Rauzy theorem.

In Section~\ref{S4} we deal with generalizations of Rauzy theorem in two directions. First, in Subsections~\ref{5.1} and ~\ref{fnor} we extend the framework to the more general context which involves averaging along an arbitrary F\o lner \sq\ $\F$ in $\N$. Next, in Subsection~\ref{sred} we define the notions of $\F$-normality and $\F$-determinism for actions of endomorphisms on compact metric groups, and, in this generality, we prove a version of Rauzy theorem for endomorphisms of some Abelian groups including ergodic toral endomorphisms. 

In Section~\ref{S6} we deal with $p$-normal numbers which were defined in Section~\ref{S2}, and we show that if $p\neq\frac12$ then for any $p$-normal number its sum with any deterministic number, as well as its product by any rational number, is never $p$-normal (nor $p'$-normal for any $p'$).

Finally, in Section~\ref{S7} we give a rather elaborate example of a normal (in base~2) number $x$ and two deterministic numbers $y$ and $z$ (the frequency of the digit 1 in the binary expansion of $y$ is zero while in $z$ it is positive) such that neither $xy$ nor $xz$ are normal. In fact, both these products are deterministic. The example allows us also to show that the products and squares of deterministic numbers need not be deterministic. We conclude the section with a series of open problems and some pertinent observations and remarks. 

Finally, in the Appendix we provide a proof of an important result by B.\ Weiss, which characterizes deterministic \sq s in terms of complexity (this result was stated without a proof in \cite[Lemma~7.9]{W3}).


\section{Background material}\label{S2}
Let $X$ be  a compact metrizable space and let $T:X\to X$ be a continuous transformation. The pair $(X,T)$ is called a \emph{\tl\ \ds} (or just a \emph{\ds}). Let $\M(X)$ denote the space of all Borel probability measures\footnote{By abuse of language, we will often say that $\mu$ is a ``measure on $X$'', meaning that $\mu\in\M(X)$.} on $X$, endowed with the (compact) topology of the weak* convergence. A measure $\mu\in\M(X)$ is called \emph{$T$-invariant} (or just \emph{invariant}), if $\mu(T^{-1}(A))=\mu(A)$ for any Borel set $A\subset X$. The collection $\M(X,T)\subset\M(X)$ of all $T$-\im s is convex and compact (see, e.g.,~\cite{W} for more details). If $\mu\in\M(X,T)$ then the triple $(X,\mu,T)$ will be called a \emph{measure-preserving system}.

Let \xt\ and \ys\ be \ds s and let a map $\phi:X\to Y$ be continuous, surjective and \emph{equivariant}, i.e., such that $\phi\circ T=S\circ\phi$. In this case we say that $\phi$ is a \emph{factor map} from the system \xt\ to the system \ys. For brevity, we will write $\phi:(X,T)\to(Y,S)$.
The system \ys\ is called a \emph{factor} of \xt\ and \xt\ is called an \emph{extension} of \ys. Note that $\phi$ induces a natural map $\phi^*$ from $\M(X,T)$ onto $\M(Y,S)$
given by
\begin{equation}\label{star}
\phi^*(\mu)(A)=\mu(\phi^{-1}(A)),
\end{equation}
where $A$ is a Borel subset of $Y$. The measure-preserving system $(Y,\phi^*(\mu),S)$ is referred to as a \emph{continuous factor} of the measure-preserving system $(X,\mu,T)$ (via~$\phi$). 

Measure-preserving systems $(X,\mu,T)$ and $(Y,\nu,S)$ are \emph{isomorphic} if there exists an equivariant Borel-measurable (not necessarily continuous) map $\phi:X\to Y$ defined and invertible $\mu$-almost everywhere and such that $\phi^*(\mu)=\nu$.

If a factor map $\phi$ from $(X,T)$ to $(Y,S)$ is invertible, then it is a homeomorphism and it is called a \emph{\tl\ conjugacy}. 

\begin{rem}\label{conj}
Note that if $(X,T)$ and $(Y,S)$ are \tl ly conjugate then the map $\phi^*$ is a homeomorphism between $\M(X,T)$ and $\M(Y,S)$ and for each \im\ $\mu\in\M(X,T)$, $\phi$ is an isomorphism between $(X,\mu,T)$ and $(Y,\phi^*(\mu),S)$.
\end{rem}

A \ds\ which plays an important role in the study of normality is the \emph{symbolic system on $r$ symbols}, $(\{0,1,\dots,r-1\}^\N,\sigma)$, where the shift map $\sigma$ is given by 
$$
\sigma((a_n)_{n\ge1})=(a_{n+1})_{n\ge1}, \ (a_n)_{n\ge1}\in\{0,1,\dots,r-1\}^\N.
$$

We now introduce some terminology associated with symbolic systems. By a \emph{block} we will understand any finite \sq\ $B=(b_1,b_2,\dots b_k)$, $k\in\N$, of elements of the \emph{alphabet} $\{0,1,\dots,r-1\}$. The number of elements of $B$ is called the \emph{length} of $B$ and is denoted by $|B|$. 
We will find it convenient to denote the set of consecutive integers of the form $\{n,n+1,\dots,m\}$ as $[n,m]$.  
Given an $\omega=(a_n)_{n\ge1}\in\{0,1,\dots,r-1\}^\N$ and a set $\mathbb S\subset\N$, by $\omega|_{\mathbb S}$ we will denote the \emph{restriction} of $\omega$ to $\mathbb S$. For instance, if $\mathbb S=\{s_1,s_2,\dots\}$ is infinite, where $s_1<s_2<\dots$, then $\omega|_{\mathbb S}=(a_{s_1},a_{s_2},\dots)\in\{0,1,\dots,r-1\}^\N$.
If $\mathbb S=[n,n+k-1]$ then $\omega|_{\mathbb S}$ is the block $(a_n,a_{n+1},\dots,a_{n+k-1})$. 

We say that a block $B=(b_1,b_2,\dots,b_k)$ \emph{occurs} in $\omega$ at a coordinate $n\ge1$ if $\omega|_{[n,n+k-1]}=B$.
\medskip

Recall that the notion of normality of a real number $x$ in base $r$ was informally outlined in the Introduction in terms of statistics of appearance of blocks in the \sq\ of digits of the base $r$ expansion of $x$. The goal of the following definitions is to establish a formal setup for dealing with the notion of normality.

\begin{defn}\label{density}
The \emph{lower density} of a set $\mathbb S\subset\mathbb N$ is defined as
$$
\underline d(\mathbb S)=\liminf_{n\to\infty}\frac{|\mathbb S\cap[1,n]|}n.
$$
\emph{Upper density} $\overline d(\mathbb S)$ is defined analogously with $\limsup$. If $\underline d(\mathbb S)=\overline d(\mathbb S)$ then the common value is called \emph{the density of $\mathbb S$} and denoted by $d(\mathbb S)$. In this case we say that \emph{the density $d(\mathbb S)$ exists}.
\end{defn}

\begin{defn}\label{frequency} Given $\omega\in\{0,1,\dots,r-1\}^\N$, $k\in\N$, and a block $B\in\{0,1,\dots,r-1\}^k$, denote $A_\omega(B)=\{n\in\N: \omega|_{[n,n+k-1]}=B\}$. 
\begin{enumerate}[(a)]
\item The \emph{lower} and \emph{upper frequency} of $B$ in $\omega$ are defined, respectively, as 
$\underline d (A_\omega(B))$ and $\overline d(A_\omega(B))$. If $d(A_\omega(B))$ exists, we call it \emph{the frequency of $B$ in $\omega$} and denote by $\mathsf{Fr}(B,\omega)$.
\item If $\B$ is a finite family of blocks then 
$\underline d(\bigcup_{B\in\B}A_\omega(B))$ is called the \emph{lower joint frequency of the blocks from $\B$ in $\omega$} (the same convention applies to upper joint frequency and joint frequency).
\end{enumerate}
\end{defn}

\begin{defn}\label{ns}
A \sq\ $\omega\in\{0,1,\dots,r-1\}^\N$ is \emph{normal} if for any $k\in\N$, any finite block $B=(b_1,b_2,\dots,b_k)\in\{0,1,\dots,r-1\}^k$ appears in $\omega$ with frequency~$r^{-k}$. 
\end{defn}

\smallskip

A distinctive class of \im s on the system $(\{0,1,\dots,r-1\}^\N,\sigma)$ is that of Bernoulli measures. Let $\bar p=(p_0,p_1,\dots,p_{r-1})$ be a probability vector and let $P$ be the probability measure on $\{0,1,\dots,r-1\}$ given by $P(\{i\})=p_i$. The $\bar p$-Bernoulli measure $\mu_{\bar p}$ is the product measure $P^\N$ on $\{0,1,\dots,r-1\}^\N$. If $p_i=\frac1r$ for each $i$ then $\mu_{\bar p}$ is referred to as the \emph{uniform Bernoulli measure}.
\medskip

We say that a point $x\in X$ in a \ds\ \xt\ \emph{generates} (or \emph{is generic for}) a measure $\mu\in\M(X)$ if, in the weak* topology, we have
\begin{equation}\label{ws}
\lim_{n\to\infty}\frac1n\sum_{i=0}^{n-1}\delta_{T^ix}=\mu,
\end{equation}
where $\delta_{T^ix}$ denotes the point-mass concentrated at $T^ix$. Note that in view of the correspondence between Borel probability measures on $X$ and nonnegative normalized functionals on the space $C(X)$ of continuous real functions on $X$, 
the formula~\eqref{ws} is equivalent to the uniform distribution of the orbit $(T^nx)_{n\ge1}$, i.e:  
\begin{equation}\label{wsk}
\lim_{n\to\infty}\frac1n\sum_{i=0}^{n-1}f(T^ix) = \int f \,d\mu, \text{ \ for any $f\in C(X)$}.
\end{equation}

We can now characterize normal \sq s (and hence normal numbers) in terms of dynamics.

\begin{prop}\label{nortop}
A \sq\ $\omega\in\{0,1,\dots,r-1\}^\N$ is normal if and only if it is generic under the shift $\sigma$ for the uniform Bernoulli measure on $\{0,1,\dots,r-1\}^\N$.
\end{prop}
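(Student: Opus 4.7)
The plan is to translate between the combinatorial notion of block frequency and the dynamical notion of weak* convergence of empirical measures, exploiting the fact that the product topology on $\{0,1,\dots,r-1\}^\N$ has a clopen base of cylinder sets. For a block $B=(b_1,\dots,b_k)$, let $[B] = \{\omega' : \omega'|_{[1,k]}=B\}$ denote the corresponding cylinder. I would first observe that $[B]$ is clopen, so the indicator $\mathbf{1}_{[B]}$ is continuous, and record the elementary identity $\mathbf{1}_{[B]}(\sigma^{i}\omega) = \mathbf{1}_{A_\omega(B)}(i+1)$, so that
$$
\frac1n\sum_{i=0}^{n-1}\mathbf{1}_{[B]}(\sigma^i\omega) \;=\; \frac{|A_\omega(B)\cap[1,n]|}{n}.
$$
Since the uniform Bernoulli measure $\mu$ assigns $\mu([B])=r^{-k}$, the condition that $B$ appears in $\omega$ with frequency $r^{-k}$ is literally the statement $\frac1n\sum_{i=0}^{n-1}\mathbf{1}_{[B]}(\sigma^i\omega)\to\int \mathbf{1}_{[B]}\,d\mu$.

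The forward direction is then immediate: if $\omega$ is generic for $\mu$, apply~\eqref{wsk} with the continuous function $f=\mathbf{1}_{[B]}$ to conclude that $B$ has frequency $r^{-k}$ for every block $B$, i.e., $\omega$ is normal.

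For the converse, the task is to upgrade convergence on cylinder indicators to convergence of Cesàro averages against an arbitrary $f\in C(\{0,1,\dots,r-1\}^\N)$. The key observation is that finite real linear combinations of cylinder indicators form a unital subalgebra of $C(\{0,1,\dots,r-1\}^\N)$ that separates points, hence by the Stone--Weierstrass theorem it is uniformly dense. Given $f\in C(\{0,1,\dots,r-1\}^\N)$ and $\eps>0$, choose such a combination $g$ with $\|f-g\|_\infty<\eps$; the bounds $\bigl|\frac1n\sum_{i} f(\sigma^i\omega)-\frac1n\sum_{i} g(\sigma^i\omega)\bigr|\le\eps$ and $\bigl|\int f\,d\mu-\int g\,d\mu\bigr|\le\eps$, together with $\frac1n\sum_{i} g(\sigma^i\omega)\to\int g\,d\mu$ (which follows by linearity from the previous paragraph applied to each cylinder in the expansion of $g$), yield~\eqref{wsk} by first sending $n\to\infty$ and then $\eps\to0$. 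I do not expect any real obstacle; the only subtlety is the indexing bookkeeping between block frequencies (where $A_\omega(B)\subseteq\N$ is indexed from $1$) and orbit sums (indexed from $0$).
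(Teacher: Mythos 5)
Your proof is correct and follows essentially the same route as the paper's: identify block frequencies with Ces\`aro averages of cylinder indicators, get the forward direction from~\eqref{wsk}, and get the converse from linear density of cylinder indicators in $C(X)$ via a standard approximation argument. The only difference is that you spell out the density step (via Stone--Weierstrass) and the $\eps$-approximation, which the paper leaves as "standard."
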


\begin{proof}
By Definition~\ref{ns}, normality of $\omega$ is equivalent to the condition that, for any $k\in\N$ and any block $B$ of length $k$, one has
$$
\lim_{n\to\infty}\frac1n\sum_{i=0}^{n-1}\mathbbm 1_{[B]}(\sigma^i\omega) = r^{-k},
$$
where
\begin{equation}\label{cyl}
[B] = \{\omega\in \{0,1,\dots,r-1\}^\N: \omega|_{[1,k]}=B\}
\end{equation}
is the \emph{cylinder} associated with the block $B$.
Note that for $\bar p=(\frac1r,\frac1r,\dots,\frac1r)$ one has $r^{-k}=\mu_{\bar p}([B])$, where $\mu_{\bar p}$ is the uniform Bernoulli measure. In other words, normality of $\omega$ is equivalent to \eqref{wsk} with $X=\{0,1,\dots,r-1\}^\N$, $T=\sigma$, $\mu=\mu_{\bar p}$, and functions $f$ of the form $\mathbbm1_{[B]}$ where $B$ is any finite block 
(note that such functions belong to $C(X)$). This shows that if $\omega$ is generic for $\mu_{\bar p}$ then it is normal. The opposite implication follows by a standard 
approximation argument from the fact that functions of the form $\mathbbm1_{[B]}$ are linearly dense in $C(X)$.
\end{proof}

Given a general \ds\ $(X,T)$ we say that $x\in X$ \emph{quasi-generates} (or \emph{is quasi-generic for}) a measure $\mu\in\M(X)$ if, for some increasing \sq\ $\mathcal J=(n_k)_{k\ge1}$, we have
\begin{equation}\label{wsl}
\lim_{k\to\infty}\frac1{n_k}\sum_{i=0}^{n_k-1}\delta_{T^ix}=\mu.
\end{equation}
Alternatively, we will say that $x$ \emph{generates $\mu$ along $\mathcal J$}.

It is not hard to see that any measure defined by a limit of the form \eqref{wsl} is necessarily invariant. By compactness of $\M(X)$, every point $x\in X$ quasi-generates at least one \im. We will denote the (nonempty and compact) set of measures quasi-generated by $x$ by $\M_x$. Clearly, $x$ is generic for some measure if and only if $\M_x$ is a singleton.

\begin{rem}\label{mxmy}
Whenever $\phi:X\to Y$ is a factor map from a \ds\ \xt\ to a \ds\ \ys, and $x\in X$ generates (or generates along a \sq\ $\mathcal J$) an \im\ $\mu\in\M(X,T)$ then the point $\phi(x)$ generates (respectively, generates along $\mathcal J$) the \im\ $\phi^*(\mu)\in\M(Y,S)$. Conversely, if $\phi(x)$ generates an \im\ $\nu\in\M(Y,S)$ along a \sq\ $\mathcal J$ then, along some sub\sq\ of $\mathcal J$, $x$ generates some measure $\mu$ and then $\phi^*(\mu)=\nu$. It follows that $\phi^*$ maps $\M_x$ onto $\M_{\phi(x)}$.
\end{rem}
 
Given \ds s \xt\ and \ys\ and \im s $\mu\in\M(T,X)$ and $\nu\in\M(S,Y)$, a \emph{joining} of $\mu$ and $\nu$ is any measure $\xi\in\M(X\times Y)$, invariant under $T\times S$ (defined by $(T\times S)(x,y)=(Tx,Sy)$), with marginals\footnote{Given a measure $\xi$ on a product space $X\times Y$, the marginal of $\xi$ on $X$ is the measure $\xi_X$ satisfying $\xi_X(A)=\xi(A\times Y)$ (where $A$ is a Borel subset of $X$). The marginal $\xi_Y$ on $Y$ is defined analogously.} $\mu$ and $\nu$ on $X$ and $Y$, respectively. We then write $\xi=\mu\vee\nu$ (although there may exist many different joinings of $\mu$ and $\nu$). The product measure $\mu\times\nu$ is a joining. When $\mu\times\nu$ is the unique joining, we will say that the measures $\mu$ and $\nu$ are \emph{disjoint} (in the sense of Furstenberg, see \cite{F}). If $\xi$ is a joining of $\mu$ and $\nu$ then both measure-preserving systems $(X,\mu,T)$ and $(Y,\nu,S)$ are continuous factors of $(X\times Y, \xi, T\times S)$ via the projections on the respective coordinates. If measures $\mu\in\M(X,T)$ and $\nu\in\M(Y,S)$ are generated by some points $x\in X$ and $y\in Y$ along a common \sq\ $\mathcal J$ then a joining of $\mu$ and $\nu$ can be constructed in the following natural way: any measure $\xi$ on $X\times Y$ generated in the product system $(X\times Y, T\times S)$ by the pair $(x,y)$ along a sub\sq\ of $\mathcal J$ (note that such a sub\sq\ always exists by compactness) is a joining of $\mu$ and $\nu$.

\medskip
When dealing with the symbolic system $(\{0,1,\dots,r-1\}^\N,\sigma)$ we will use the following terminology. For each pair of blocks $B$ and $C$ with $|C|\le|B|$ we define the \emph{density of $C$ in $B$} by the formula
\begin{equation}\label{empirical}
\mu_B(C)=\frac1{|B|-|C|+1}|\{n\in[1,|B|-|C|+1]:B|_{[n,n+|C|-1]}=C\}|.
\end{equation}
\begin{defn}\label{wss}
We will say that a \sq\ of blocks $(B_k)_{k\ge1}$, whose lengths $|B_k|$ increase, \emph{generates} an \im\ $\mu$ on $\{0,1,\dots,r-1\}^\N$ if, for every block $C$ over $\{0,1,\dots,r-1\}$, we have
\begin{equation}\label{ws1s}
\lim_{k\to\infty} \mu_{B_k}(C)=\mu([C]).
\end{equation}
\end{defn}

It is a standard fact in symbolic dynamics that any \sq\ of blocks with increasing lengths contains a sub\sq\ which generates an \im. 

Note that a \sq\ $\omega\in\{0,1,\dots,r-1\}^\N$ generates, in the sense of \eqref{wsl}, a measure $\mu$ along a \sq\ $\mathcal J=(n_k)_{k\ge1}$, if and only if the \sq\ of blocks $(B_{k})_{k\ge1}$ generates $\mu$ in the sense of Definition~\ref{wss}, where $B_{k}=\omega|_{[1,n_k]}$.

\begin{rem}\label{uo}
If a \sq\ of blocks $(B_k)_{k\ge1}$ generates an \im\ $\mu$ and, for each $k\ge1$, $B_k$ is a concatenation of $B^{(1)}_k$ and $B^{(2)}_k$ where $\lim_{k\to\infty}\frac{|B^{(1)}_k|}{|B_k|}=\alpha\in[0,1]$, and the \sq s $(B^{(1)}_k)_{k\ge1}$ and $(B^{(2)}_k)_{k\ge1}$ generate some measures $\theta_1$ and $\theta_2$, respectively, then $\mu=\alpha\theta_1+(1-\alpha)\theta_2$.
\end{rem}

Given a number $x\in\R$, consider its base $r$
expansion  
\begin{equation}\label{expansion}
x=\sum_{n=-m}^\infty \frac{x_n}{r^n}.
\end{equation}
The formula~\eqref{expansion} gives rise to a representation of $x$ in the form of a \sq\ of digits $(x_n)_{n\ge-m}$ with a dot between the coordinates 0 and 1, separating the integer part from the fractional part. Clearly, the statistical properties of this \sq\ (which are the main subject of our interest) do not depend on any finite collection of digits, so it is natural to omit the portion representing the integer part as well as the separating dot. The resulting \sq, $\omega_r(x)=(x_n)_{n\ge1}$ is an element of the symbolic space $\{0,1,\dots,r-1\}^\N$. We will call it the \emph{symbolic alias of $x$ in base $r$}, or just \emph{alias}, when there is no ambiguity about the base $r$. When $r=2$, we will often use the term \emph{binary alias}. We can now formalize the definition of the key concept of this paper, outlined at the beginning of the Introduction:

\begin{defn}\label{nn}
Fix an integer $r\ge 2$. A number $x\in\R$ is normal in base $r$ if its alias $\omega_r(x)$ is a normal \sq\ in $\{0,1,\dots,r-1\}^\N$.
\end{defn}

\begin{rem}
It is well known (see \cite[Theorem~1]{Wa} or \cite[Chapter~1, Theorem~8.1]{KN}) that a real number $x$ is normal in base $r$ if and only if the 
\sq\ $(r^nx)_{n\ge1}$ is uniformly distributed\!\!$\mod 1$, i.e:
\begin{equation}\label{wsK2}
\lim_{n\to\infty}\frac1n\sum_{i=0}^{n-1}f(r^ix\!\!\mod 1) = \int f \,dx, \text{ \ for any $f\in C([0,1])$}.
\end{equation}
\end{rem}
Formula \eqref{wsK2} can be viewed as a special case of \eqref{wsk}. The definition of normality via formula~\ref{wsK2} will enable us to prove results dealing with real numbers with the help of the compact dynamical system $(\mathbb T,R)$, where
$\mathbb T$ is the circle $\R/\Z$ and $R$ is the transformation $t\mapsto rt\mod1$, $t\in\mathbb T$ (see more details in Section~\ref{Sn}, in particular Definition~\ref{torus}(2)). 
\smallskip

We conclude this section by introducing a definition which will be instrumental in most of our considerations.

\begin{defn}\label{normpre}
We say that a number $y\in\R$ \emph{preserves normality in base $r$} if $x+y\in\mathcal N(r)$ for every $x\in\mathcal N(r)$. The set of numbers that preserve normality in base $r$ will be denoted by $\mathcal N^\perp(r)$.
\end{defn}

\section{Entropy and determinism}\label{S3}
We start by summarizing some basic facts from the theory of entropy, keeping in mind that throughout this paper we deal only with measure-preserving systems arising from \tl\ systems equipped with an \im.
Recall that the entropy of an \im\ $\mu$ in a \ds\ \xt\ is defined in three steps (see, e.g.,~\cite{W}):
\begin{enumerate}
\item Given a finite measurable partition $\P$ of $X$ one defines the \emph{Shannon entropy of $\P$ with respect to $\mu$} as 
$$
H_\mu(\P)=-\sum_{A\in\P}\mu(A)\log\mu(A),
$$
where $\log$ stands for $\log_2$.
\item The \emph{dynamical entropy of $\P$ with respect to $\mu$ under the action of $T$} is defined by the formula
$$
h_\mu(\P,T)=\lim_{n\to\infty}\frac1n H_\mu(\P^n),
$$
where $\P^n$ stands for the partition 
$$
\bigvee_{i=0}^{n-1}T^{-i}\P = \left\{\bigcap_{i=0}^{n-1}T^{-i}(A_i): \forall_{i\in\{0,1,\dots,n-1\}}\ A_i\in\P\right\}.
$$
\item Finally, the \emph{Kolmogorov--Sinai entropy} of $\mu$ (with respect to the transformation $T$) is defined as
$$
h_\mu(T)=\sup_\P h_\mu(\P,T),
$$
where $\P$ ranges over all finite measurable partitions of $X$.
\end{enumerate} 
By the classical Kolmogorov--Sinai Theorem (\cite{S}), if $\P$ is a generating partition (i.e., such that the partitions $T^{-i}\P$, $i\ge0$, separate points), then $h_\mu(T)=h_\mu(\P,T)$. When the transformation $T$ is fixed, we will abbreviate $h_\mu(T)$ as $h(\mu)$. 

In this paper, we will also use the notion of \tl\ entropy introduced in~\cite{ACM}. 
It is known (see, e.g.,~\cite{M}) that \tl\ entropy is characterized by the so-called \emph{variational principle}, which, for convenience, we will use as definition:
\begin{defn}\label{topen}
Let $(X,T)$ be a \tl\ \ds. The \emph{\tl\ entropy} of the system equals
$$
\htop(X,T)=\sup\{h_\mu(T):\mu\in\M(X,T)\}.
$$
\end{defn}

Let $(X,\mu,T)$ and $(Y,\nu,S)$ be measure-preserving systems. 
We will be using the following classical facts (see, e.g.,~\cite[Facts 4.1.3 and 4.4.3]{Do}):

If $(Y,\nu,S)$ is a continuous factor of $(X,\mu,T)$ then 
\begin{equation}\label{eof}
h(\nu)\le h(\mu).
\end{equation}
and if $\xi$ is a joining of $\mu$ and $\nu$ then
\begin{equation}\label{eoj}
h(\xi)\le h(\mu)+h(\nu).
\end{equation}
If $\xi = \mu\times\nu$ then one has equality in~\eqref{eoj}.

We will also make use of joinings of countably many measures, $\xi=\bigvee_{m\ge1}\mu_m$. In this case the inequality~\eqref{eoj} remains valid in the following form:
\begin{equation}\label{eoj1}
h(\xi)\le \sum_{m\ge1}h(\mu_m).
\end{equation}

\begin{defn}\label{lue}
The \emph{lower} and \emph{upper entropies of a point} $x$ in a \tl\ \ds\ \xt\ are defined as
$$
\underline h(x) = \inf\{h(\mu):\mu\in\M_x\}, \ \ \ \overline h(x) = \sup\{h(\mu):\mu\in\M_x\}.
$$
If $\underline h(x)=\overline h(x)$ then we denote the common value by $h(x)$ and call it the \emph{entropy~of}~$x$.
\end{defn}
In particular, the entropy of a point $x$ is well defined for every point which is generic for some measure $\mu$ (and then $h(x)=h(\mu)$). 

\begin{rem}\label{conule}
If two systems, $(X,T)$ and $(Y,S)$, are \tl ly conjugate via a map $\phi$ then, for any $x\in X$, $\underline h(x)=\underline h(\phi(x))=$ and $\overline h(x)=\overline h(\phi(x))$. Indeed, it follows from Remark~\ref{mxmy} that 
$\phi^*(\M_x)=\M_{\phi(x)}$, and by Remark~\ref{conj}, for each $\mu\in\M_x$ the system $(X,\mu,T)$ is isomorphic to $(Y,\phi^*(\mu),S)$. The claim then follows from the classical fact that isomorphic systems have equal entropies.
\end{rem}

\begin{defn}\label{real}
When the base of expansion $r$ is fixed, by the lower and upper entropies of a real number $x$, $\underline h(x)$ and $\overline h(x)$, respectively, we will understand the lower and upper entropies of the alias $\omega_r(x)$ viewed as an element of the symbolic system $(\{0,1,\dots,r-1\}^\N,\sigma)$.
\end{defn}

We will now introduce, for a fixed base $r$, the notion of a deterministic number~$x$. Similarly to normality and upper/lower entropy, the notion of a determinism hinges on statistical/combinatorial/dynamical properties of the alias $\omega_r(x)$.

There are several equivalent definitions of deterministic sequences, some of which we will only describe briefly, as they are quite intricate and not needed in this work. The essential feature of deterministic sequences is that they have ``low complexity'' for some appropriate notion of complexity.

We will be mostly using the dynamical definition of a deterministic \sq\ introduced by B. Weiss in \cite[Definition~1.6]{W2} (under the name \emph{completely deterministic}). 

\begin{defn} \label{deta} \phantom{a}
Let \xt\ be a \ds. A point $x\in X$ is called \emph{deterministic} if all measures in $\M_x$ (measures quasi-generated by $x$) have entropy zero. We will say that a \sq\ $\omega=(a_n)_{n\ge1}\in\{0,1,\dots,r-1\}^\N$ is \emph{deterministic} if $\omega$ is a deterministic element of the symbolic system $(\{0,1,\dots,r-1\}^\N,\sigma)$.
\end{defn}

For the sake of completeness, we now indicate how deterministic symbolic \sq s can be defined directly, via statistical/combinatorial properties, without referring to dynamical systems.

\begin{defn}\label{complexity}
Let $\omega\in\{0,1,\dots,r-1\}^\N$. Given $\eps\in(0,1)$ and $m\in\N$, by the \emph{$\eps$-complexity of $\omega$ at $m$} we mean the minimal number $C_\omega(\eps,m)$ such that there exists a family of blocks $F\subset\Lambda^m$ of cardinality $C_\omega(\eps,m)$ and a set $\mathbb S\subset\N$ of upper density not exceeding $\eps$, satisfying 
\begin{equation}\label{seup}
\omega|_{[i,i+m-1]}\in F \text{ \ for all $i\notin\mathbb S$}.
\end{equation}
\end{defn}

\begin{rem}\label{inco}
Clearly, if~\eqref{seup} is satisfied for a family $F\subset\Lambda^m$ and a set $\mathbb S\subset\N$ of upper density not exceeding $\eps$ then $C_\omega(\eps,m)\le|F|$.
\end{rem}

\begin{defn}\label{detb}
A \sq\ $\omega\in\{0,1,\dots,r-1\}^\N$ has \emph{subexponential epsilon-complexity} if for any $\eps>0$ there exists an $m\in\N$ such that $C_\omega(\eps,m)<2^{\eps m}$.
\end{defn}

\begin{thm}{\rm(see \cite[Lemma 8.9]{W3} for a slightly different yet equivalent formulation)}\label{detdet}
A \sq\ $\omega\in\{0,1,\dots,r-1\}^\N$ is deterministic if and only if it has subexponential epsilon-complexity.
\end{thm}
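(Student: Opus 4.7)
The plan is to pass between the combinatorial quantity $C_\omega(\eps,m)$ and the spectrum of entropies $\{h(\mu):\mu\in\M_\omega\}$, using only the weak$^*$ structure of $\M_\omega$ together with continuity in $\mu$ and subadditivity in $m$ of the partition entropy $H_\mu(\P^m)$. The starting point is the dynamical reformulation
\[
C_\omega(\eps,m)=\min\bigl\{|F|:\mu\bigl({\textstyle\bigcup_{B\in F}[B]}\bigr)\ge 1-\eps\text{ for every }\mu\in\M_\omega\bigr\},
\]
which is immediate from the identity $\limsup_N N^{-1}\sum_{i<N}\mathbbm 1_U(\sigma^i\omega)=\sup_{\mu\in\M_\omega}\mu(U)$ for clopen $U$, applied to $U=\bigcup_{B\notin F}[B]$. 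Both directions then hinge on the two-set splitting bound
\[
H_\mu(\P^m)\le 1+\log|F|+\eps\, m\log r,\qquad\mu\bigl({\textstyle\bigcup_{B\in F}[B]}\bigr)\ge 1-\eps.
\]

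For the direction \emph{subexponential epsilon-complexity $\Rightarrow$ deterministic}, I would fix $\mu\in\M_\omega$ and $\eta>0$, choose $\eps$ so small that $\eps(1+\log r)<\eta/2$, apply the hypothesis to produce $m$ and $F$ with $|F|<2^{\eps m}$ and $\mu(\bigcup_{B\in F}[B])\ge 1-\eps$, substitute into the splitting bound, divide by $m$, and use $h(\mu)\le H_\mu(\P^m)/m$ (a consequence of subadditivity) to conclude $h(\mu)<\eta$ as soon as $m$ is large. Letting $\eta\downarrow 0$ gives $h(\mu)=0$, so $\omega$ is deterministic.

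The reverse direction is the main obstacle, and I would argue by contradiction: assume $\omega$ is deterministic but that there is $\eps_0>0$ with $C_\omega(\eps_0,m)\ge 2^{\eps_0 m}$ for every $m$. The key step is a uniformisation lemma: for each sufficiently large $m$ there exists a \emph{single} measure $\mu_m^\star\in\M_\omega$ whose top $\lfloor 2^{\eps_0 m/2}\rfloor$ $m$-blocks carry mass $<1-\eps_0/2$. To establish this, I would suppose no such $\mu_m^\star$ exists; then each $\mu\in\M_\omega$ admits some $F_\mu$ of size $<2^{\eps_0 m/2}$ with $\mu(\bigcup_{B\in F_\mu}[B])\ge 1-\eps_0/2$, so continuity of $\mu'\mapsto\mu'(\bigcup_{B\in F_\mu}[B])$ plus compactness of $\M_\omega$ yield a finite union $F^\star$ of size $\le k\cdot 2^{\eps_0 m/2}$ working uniformly in $\mu\in\M_\omega$ at tolerance $\eps_0/2+\delta$; for $m$ large enough to swallow the constant $k$ this gives $C_\omega(\eps_0/2+\delta,m)<2^{\eps_0 m}$, contradicting the standing hypothesis via monotonicity of $C_\omega(\cdot,m)$ in its first argument.

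Given $\mu_m^\star$, sorting its $m$-block masses $p_1\ge p_2\ge\cdots$ and using that the tail beyond index $2^{\eps_0 m/2}$ has total mass $>\eps_0/2$ while each $p_i$ there is $\le 2^{-\eps_0 m/2}$ forces $H_{\mu_m^\star}(\P^m)/m\ge \eps_0^2/4-o(1)$. I would then pick any weak$^*$ cluster point $\mu^\star\in\M_\omega$ of $(\mu_m^\star)$; for each fixed $n$ the subadditive inequality $H_{\mu_m^\star}(\P^n)/n\ge H_{\mu_m^\star}(\P^m)/m-(n\log r)/m$ combined with continuity of $\mu\mapsto H_\mu(\P^n)$ (finite partition of clopen sets) transfers the bound to the limit, giving $H_{\mu^\star}(\P^n)/n\ge \eps_0^2/4$ for every $n$, whence $h(\mu^\star)\ge \eps_0^2/4>0$, contradicting determinism. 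The genuinely delicate step is the uniformisation producing $\mu_m^\star$: a direct Shannon--McMillan attack only controls covers at a $\mu$-dependent scale, and padding a $\mu$-adapted family of $m_\mu$-blocks up to a common length $m$ inflates its cardinality by a factor $r^{m-m_\mu}$; pigeonholing through $\eps$ rather than through the block length, with a small slack $\delta$ to absorb the compactness constant $k$, is what makes the argument close.
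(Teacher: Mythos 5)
Your first direction (subexponential epsilon-complexity implies determinism) is correct. The identity $C_\omega(\eps,m)=\min\{|F|:\mu(\bigcup_{B\in F}[B])\ge1-\eps\text{ for all }\mu\in\M_\omega\}$ is valid because the sets involved are clopen, and the two-set splitting bound together with $h(\mu)\le\frac1mH_\mu(\P^m)$ is exactly the content of the paper's class-1/class-2 computation, just phrased on the measure side rather than on empirical block frequencies. (Both your write-up and the paper's tacitly assume that the $m$ furnished by Definition~\ref{detb} can be taken large so that the additive term $1/m$ coming from $H(\zeta,1-\zeta)\le1$ is negligible; this is a shared, repairable point and not the issue.)

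The converse direction, however, has a genuine gap, and it sits exactly where you place the weight of the argument: the uniformisation lemma. In your compactness argument the open cover $\bigl\{\mu':\mu'\bigl(\bigcup_{B\in F_\mu}[B]\bigr)>1-\eps_0/2-\delta\bigr\}_{\mu\in\M_\omega}$ depends on $m$ through the families $F_\mu\subset\Lambda^m$, so the number $k$ of sets in the finite subcover is a function $k(m)$ with no a priori bound. The step ``for $m$ large enough to swallow the constant $k$'' is therefore not available: you would need $k(m)<2^{\eps_0m/2}$, which is precisely the kind of uniform cardinality control the lemma is supposed to produce. Nothing in the argument excludes that, for each $m$, the (connected but non-convex) set $\M_\omega$ is a long chain of measures each concentrated on its own small family of $m$-blocks, with the families nearly disjoint, so that $|F^\star|=\bigl|\bigcup_iF_{\mu_i}\bigr|$ is enormous even though each individual $\mu$ is well covered. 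The paper closes this hole by a different mechanism: compactness of $\M_\omega$ is used only to get \emph{uniform} convergence $\frac1nH_\mu(\P^n)\to0$, this is transferred by concavity of entropy to the combinatorial entropies $H_n$ of the length-$m$ windows of $\omega$ (so that the windows with $H_n\ge\eps$ occupy upper density $<\eps$), and the covering family is then taken to be the \emph{canonical}, measure-independent set $F=\{B\in\Lambda^m:H_n(B)<\eps\}$, whose cardinality is bounded by the counting lemma $\mathbf C[n,m,c]\le2^{(c+o(1))m}$ (Theorem~\ref{cl}). Some ingredient of this type --- a description of the covering family that does not arise as a union over a subcover of $\M_\omega$, together with a count of it --- is what your proposal is missing, and without it the hard direction does not close.
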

Lemma 8.9 is stated in \cite{W3} without a proof. An explicit proof of a more general (and more cumbersome) theorem dealing with the setup of actions of countable amenable groups is given in \cite[Theorem 6.11]{BDV}. For reader's convenience, we include a relatively short proof of Theorem~\ref{detdet} in the Appendix.

We are now in a position to define deterministic real numbers.

\begin{defn}\label{detn}
A real number $x$ \emph{is deterministic in base $r$} if its alias $\omega_r(x)$ is a deterministic \sq\ in $\{0,1,\dots,r-1\}^\N$. The set of real numbers deterministic in base $r$ will be denoted by $\mathcal D(r)$.
\end{defn}

The following proposition provides a class of examples of deterministic numbers. 

\begin{prop}\label{triv} Let $\mathbb S\subset\N$ be a set of density $1$.
Let $y\in\R$ and assume that $\omega_r(y)|_{\mathbb S}$ (the restriction of the alias of $y$ to $\mathbb S$) is periodic. Then $y$ is deterministic in base $r$.    
\end{prop}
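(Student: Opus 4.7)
The plan is to invoke Theorem~\ref{detdet} and verify that the symbolic alias $\omega := \omega_r(y)$ has subexponential epsilon-complexity. Write $\mathbb{T} := \N \setminus \mathbb S$ (a set of density $0$), enumerate $\mathbb S = \{s_1 < s_2 < \cdots\}$, and let $p$ be a period of $\omega|_{\mathbb S}$, i.e.\ $\omega_{s_{k+p}} = \omega_{s_k}$ for all $k \ge 1$. I will show that for every $\eps > 0$ there exists $m$ with $C_\omega(\eps, m) < 2^{\eps m}$.

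Fix $\eps > 0$ and choose $\delta \in (0, 1/2)$ so small that $H(\delta) + \delta \log r < \eps/2$, where $H(t) := -t \log t - (1-t)\log(1-t)$ is the binary entropy function. For each large $m$, define the exceptional set
\begin{equation*}
\mathbb S_m := \{i \in \N : |\mathbb T \cap [i, i+m-1]| > \delta m\}.
\end{equation*}
A Fubini-style double count gives $\sum_{i=1}^{n} |\mathbb T \cap [i, i+m-1]| \le m\,|\mathbb T \cap [1, n+m-1]|$, which is $o(n)$ as $n \to \infty$ (with $m$ fixed), since $\overline d(\mathbb T) = 0$; Markov's inequality then forces $\overline d(\mathbb S_m) = 0 \le \eps$, so $\mathbb S_m$ meets the exceptional-set requirement of Definition~\ref{complexity}.

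Next, I would enumerate the windows $\omega|_{[i,i+m-1]}$ with $i \notin \mathbb S_m$. For such an $i$, let $J_i := (\mathbb T \cap [i,i+m-1]) - i \subset \{0,\dots,m-1\}$, so $|J_i| \le \delta m$. The complementary $\ell := m - |J_i|$ positions in $[i, i+m-1]$ form a consecutive block $s_{k_1}, s_{k_1+1}, \dots, s_{k_1+\ell-1}$ of elements of $\mathbb S$; by the assumed periodicity, the tuple $(\omega_{s_{k_1}}, \dots, \omega_{s_{k_1+\ell-1}})$ depends only on $\ell$ and on the residue $k_1 \bmod p$. Consequently, the window $\omega|_{[i,i+m-1]}$ is determined by: (a)~the subset $J_i \subset \{0,\dots,m-1\}$, (b)~the digits at the coordinates in $J_i$, and (c)~the phase $k_1 \bmod p$. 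Enumerating these choices and using the standard bound $\sum_{k \le \delta m}\binom{m}{k} \le (m+1)\,2^{H(\delta)\,m}$,
\begin{equation*}
C_\omega(\eps, m) \le p \cdot (m+1) \cdot 2^{H(\delta)\,m} \cdot r^{\delta m} = p(m+1)\cdot 2^{(H(\delta)+\delta \log r)\,m},
\end{equation*}
which, by the choice of $\delta$, drops below $2^{\eps m}$ for all sufficiently large $m$. Theorem~\ref{detdet} then gives that $\omega$, and hence $y$, is deterministic.

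The main care-point is step (c): once $J_i$ is fixed, verifying that the $\mathbb S$-entries inside the window depend on $i$ only through the single residue $k_1 \bmod p$. This is exactly the place where periodicity of $\omega|_{\mathbb S}$ --- a sequence indexed by $\mathbb S$, not by $\N$ --- is exploited in full. Once this reindexing is clear, all the remaining estimates are routine density/entropy bookkeeping.
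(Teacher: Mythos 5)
Your proof is correct, but it takes a genuinely different route from the paper's. The paper argues directly from the dynamical definition (Definition~\ref{deta}): the periodic sequence $\omega_r(y)|_{\mathbb S}$, viewed as an element of $\{0,1,\dots,r-1\}^\N$, is generic for the uniform measure on a periodic orbit, which has entropy zero, and since the digits of $\omega_r(y)$ inserted along the density-zero set $\N\setminus\mathbb S$ do not alter any block frequencies, $\omega_r(y)$ generates that same zero-entropy measure. That argument is two lines and pins down the unique measure generated by $\omega_r(y)$, though it quietly relies on the reindexing fact that passing between positions in $\mathbb S$ and positions in $\N$ preserves asymptotic frequencies. You instead verify subexponential epsilon-complexity (Definition~\ref{detb}) and invoke Theorem~\ref{detdet}. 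Your counting is sound: the double count giving $\overline d(\mathbb S_m)=0$, the observation that the $\mathbb S$-positions inside a non-exceptional window are consecutive in the enumeration of $\mathbb S$ (so that $J_i$ together with the phase $k_1\bmod p$ determines the $\mathbb S$-part of the window), and the final bound $p(m+1)\,2^{(H(\delta)+\delta\log r)m}<2^{\eps m}$ all check out. What your route buys is an explicit quantitative complexity bound and extra robustness --- the identical count works if ``periodic'' is weakened to ``of subexponential complexity,'' with $p$ replaced by the number of admissible restricted blocks of length $\ell$ --- and it sidesteps the reindexing issue entirely. What it costs is dependence on Theorem~\ref{detdet}, whose proof occupies the Appendix, so measured in total machinery your argument is considerably heavier than the paper's direct one. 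A cosmetic point: your symbol $\mathbb T$ for $\N\setminus\mathbb S$ collides with the paper's notation for the circle and should be renamed.
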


\begin{proof}
Assume first that $y'\in\R$ is such that $\omega_r(y')$ is periodic. Clearly $y'\in\mathcal D(r)$. Indeed, the \sq\ $\omega_r(y')$ generates a measure supported by a periodic orbit and this measure has entropy zero. Now, if $\omega_r(y)|_{\mathbb S}=\omega_r(y')$ then $\omega_r(y)$ generates the same measure as $\omega_r(y')$, because the digits in $\omega_r(y)$ appearing along the set $\N\setminus\mathbb S$ of density zero do not alter the frequencies of any blocks. So, $y\in\mathcal D(r)$ as well. 
\end{proof}

As mentioned earlier, Rauzy in \cite{Ra} provided the following remarkable characterization of numbers $y\in\mathcal D(r)$, which served as the main motivation for our work. 
\begin{thm}\label{rrrr}
A real number $y$ is deterministic in base $r$ if and only if, for any $x\in\mathcal N(r)$ one has $x+y\in\mathcal N(r)$. That is,
$$
\mathcal D(r)=\mathcal N^\perp(r).
$$
\end{thm}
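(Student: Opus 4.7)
The plan is to work entirely within the dynamical system $(\mathbb T, R)$, where $\mathbb T = \R/\Z$ and $R(t) = rt \bmod 1$. Via the symbolic alias (Definition~\ref{real}) and the observation following Proposition~\ref{nortop}, normality of a real number is equivalent to genericity of the corresponding point of $\mathbb T$ for Haar measure $\lambda$; and determinism is equivalent to all its quasi-generated measures having entropy zero. The critical fact about $(\mathbb T, R)$ is that $\lambda$ is the unique measure of maximal entropy (equal to $\htop(\mathbb T, R) = \log r$). Both implications will be obtained by squeezing the entropies of measures quasi-generated by the pair $(x, y) \in \mathbb T^2$ against $\log r$.

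For $\mathcal D(r) \subseteq \mathcal N^\perp(r)$, let $x \in \mathcal N(r)$, $y \in \mathcal D(r)$, and take any $\xi \in \M_{(x,y)}$ in the product system $(\mathbb T^2, R \times R)$; such $\xi$ is a joining of $\lambda$ with some $\mu_y \in \M_y$, and $h(\mu_y) = 0$. By~\eqref{eof} applied to the first-coordinate projection and~\eqref{eoj}, $\log r = h(\lambda) \le h(\xi) \le h(\lambda) + h(\mu_y) = \log r$, so $h(\xi) = \log r$. The skew map $\Phi(u,v) = (u+v, v)$ is a homeomorphism of $\mathbb T^2$ commuting with $R \times R$, hence a topological conjugacy, and therefore $h(\Phi^*\xi) = h(\xi) = \log r$. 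The marginals of $\Phi^*\xi$ are $\phi^*\xi$ (with $\phi(u,v) = u+v$) and $\mu_y$; applying~\eqref{eoj} again, $\log r \le h(\phi^*\xi) + h(\mu_y) = h(\phi^*\xi)$, and combined with $h(\phi^*\xi) \le \htop(\mathbb T, R) = \log r$ this forces $h(\phi^*\xi) = \log r$. Uniqueness of the measure of maximal entropy then forces $\phi^*\xi = \lambda$. Since by Remark~\ref{mxmy} every quasi-generic measure of $x+y$ has this form, $x+y$ is generic for $\lambda$, i.e.\ normal.

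For the converse $\mathcal N^\perp(r) \subseteq \mathcal D(r)$, I would argue by contraposition: assuming $y \notin \mathcal D(r)$, construct $x \in \mathcal N(r)$ with $x+y \notin \mathcal N(r)$. Choose $\mu_y \in \M_y$ with $h(\mu_y) > 0$ and a subsequence $\mathcal J$ along which $y$ generates $\mu_y$. First, I would produce an $R \times R$-invariant joining $\xi$ of $\lambda$ with $\mu_y$ on $\mathbb T^2$ such that $\phi^*\xi \ne \lambda$; the positive entropy of $\mu_y$ supplies the ``randomness'' needed for such a nontrivial joining (for instance, coupling through a Bernoulli factor of $(\mathbb T, \mu_y, R)$ guaranteed by Sinai's theorem, or, in the degenerate case $\mu_y = \lambda$, the ``anti-diagonal'' joining $\{(-v, v)\}$ whose sum-pushforward is $\delta_0$ with $x = -y$ normal by Proposition~\ref{xq}). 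Second, I would realize $\xi$ empirically by constructing $x$ digit-by-digit along the intervals singled out by $\mathcal J$, so that the orbit of $(x, y)$ in $\mathbb T^2$ quasi-generates $\xi$ along a subsequence of $\mathcal J$ while $x$ alone is generic for $\lambda$. On each interval one chooses the block of digits of $x$ so that its joint empirical distribution with $y$'s corresponding block approximates $\xi$, and on the large complementary set one draws blocks from a typical uniform Bernoulli source to secure overall normality of $x$.

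The main obstacle is this empirical realization in the backward direction. The forward implication is a clean entropy-and-conjugacy argument, but the backward one requires an actual construction reconciling two opposing demands: the block frequencies of $x$ in isolation must match uniform Bernoulli statistics (so that $x$ is normal), while jointly with $y$'s blocks they must display the specific non-independent correlation dictated by $\xi$ (so that $x+y$ fails to be normal along $\mathcal J$). The reconciliation is possible precisely because $h(\mu_y) > 0$ guarantees exponentially many typical length-$m$ blocks of $y$, leaving enough combinatorial room to embed the required coupling into a sequence $x$ whose own statistics still behave like those of an independent uniform Bernoulli source.
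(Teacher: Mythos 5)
Your forward inclusion $\mathcal D(r)\subset\mathcal N^\perp(r)$ is correct and is in substance the paper's argument (Proposition~\ref{x+y}(a) combined with Proposition~\ref{nordet1}(3)): both rest on the inequalities \eqref{eof} and \eqref{eoj} together with the fact that Lebesgue measure is the unique measure of maximal entropy for $(\mathbb T,R)$; your packaging via the conjugacy $(u,v)\mapsto(u+v,v)$ is a clean way of saying the same thing.

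The converse, however, has a genuine gap at its first step. You need a joining $\xi$ of $\lambda$ with a positive-entropy $\mu_y\in\M_y$ such that $\phi^*\xi\neq\lambda$, where $\phi(u,v)=u+v$, and you propose to obtain it from the relatively independent joining over a common Bernoulli factor, on the grounds that positive entropy yields a \emph{nontrivial} (non-product) joining. But non-product does not imply $\phi^*\xi\neq\lambda$. Writing $\chi_k(t)=e^{2\pi ikt}$, one has $\phi^*\xi=\lambda$ if and only if the \emph{diagonal} mixed Fourier coefficients $\int\chi_k(z_1)\chi_k(z_2)\,d\xi$ vanish for all $k\neq0$, whereas non-productness only guarantees that some coefficient $\int\chi_n(z_1)\chi_m(z_2)\,d\xi$ with $n,m\neq0$ is nonzero, possibly with $n\neq m$. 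A concrete obstruction: the graph joining of $\lambda$ with itself along $t\mapsto 2t$ is $(R\times R)$-invariant and far from being a product, yet its image under $\phi$ is again $\lambda$. Repairing this forces you to replace $x$ by rational multiples $\frac nm x$ (normal by Proposition~\ref{xq}) so as to move an off-diagonal coefficient onto the diagonal --- and at that point you have essentially reconstructed the paper's proof of Theorem~\ref{rauzy1}, which runs in the opposite logical direction: assuming $y\in\mathcal N^\perp(r)$, it tests an \emph{arbitrary} joining against all pairs of characters, uses normality of $\frac nm x+y$ to annihilate every mixed coefficient, concludes that $\lambda$ and $\nu$ are disjoint, and then gets $h(\nu)=0$ because two positive-entropy systems are never disjoint. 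Separately, the ``empirical realization'' you defer to the end is exactly Kamae's theorem (\cite[Theorem 2]{K1}, generalized in Theorem~\ref{dwt}); it is a genuine theorem with a delicate proof (the constructed $x$ must be generic for $\lambda$ along \emph{all} times, not merely along $\mathcal J$), so it must be cited or proved rather than sketched.
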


\begin{rem}
Prior to Rauzy, in 1969, J.\ Spears and J.\ Maxfield \cite{SM}, proved that numbers $y$ that match our description in Proposition~\ref{triv} belong to $\mathcal N^\perp(r)$.
\end{rem}

Theorem~\ref{rrrr} can be viewed as a third equivalent definition of a deterministic real number. It is worth mentioning that the paper \cite{Ra} gives yet another (fourth) definition (which we will not use in this paper) in terms of a ``noise function''. The noise of a given sequence $(a_n)_{n\ge1}$ is a measure of how difficult it is to predict the value of $a_n$ given information about the ``tail'' $a_{n+1},a_{n+2},\dots,a_{n+s}$ as $s\to\infty$. Deterministic sequences
are those of zero noise (i.e., one can almost always predict with high probability the value $a_n$ given the information about a sufficiently long tail). The proof in \cite{Ra} of the equivalence between the noise-based definition with Definition~\ref{deta} is quite nontrivial.

\section{Rauzy theorem as a phenomenon associated with entropy} \label{Sn}

In order to discuss phenomena associated with Rauzy theorem for real numbers in terms of entropy in dynamical systems, we need to replace the noncompact space of real numbers by a more manageable compact model. This will be done in Subsection~{4.1}.
In subsections~\ref{3.1},~\ref{4.3},~\ref{qx} we present purely dynamical proofs of statements concerning the behavior of lower and upper entropy under algebraic operations, and provide interpretation of these results for real numbers. In particular, we derive the ``necessity'' in Rauzy theorem (Theorem~\ref{rrrr}) in Corollary~\ref{rrr}(1) from entropy inequalities established in Proposition~\ref{x+y}. For completeness, in Subsection~\ref{rev} we prove ``sufficiency'' in Rauzy theorem (admittedly, this prove already depends also on Fourier analysis and does not differ much from Rauzy's original proof).

\subsection{Passing from real numbers to compact dynamical systems}\label{4.1a}
In previous sections the definitions of normality and determinism of a real number $x$ were introduced via the symbolic alias $\omega_r(x)$ viewed as an element of the symbolic space $\{0,1,\dots,r-1\}^\N$. In this manner, we are making a convenient reduction from the non-compact set $\R$ to the compact symbolic space equipped naturally with the shift transformation $\sigma$. 

Since addition of real numbers interpreted in terms of the base $r$ expansions leads to the rather cumbersome addition \emph{with the carry}, we will find it convenient to work with yet another \tl\ system, namely $(\mathbb T,R)$, where $\mathbb T$ is the circle $\R/\Z$ and $R$ is given by $R(t)=rt$, $t\in\mathbb T$. 
The natural bijection between the interval $[0,1)$ and the circle $\mathbb T=\R/\Z$, given by $[0,1)\ni t\mapsto t+\Z\in\R/\Z$ allows us to view, for each real number $x$, its fractional part $\{x\}$ as an element of the circle~$\mathbb T$. With this identification, the mapping $x\mapsto\{x\}$ is in fact a group homomorphism from $\R$~to~$\mathbb T$. More precisely, $\{x+_{_\R}y\}=\{x\}+_{_\mathbb T}\{y\}$, where $+_{_\R}$ and $+_{_\mathbb T}$ are group operations in $\R$ and $\mathbb T$, respectively. In the sequel we will  use ``$+$'' for both $+_{_\R}$ and $+_{_\mathbb T}$, as the group to which the operation refers will be clear from the context. A similar convention will apply to the subtraction sign ``$-$''. 

The systems $(\{0,1,\dots,r-1\}^\N,\sigma)$ and $(\mathbb T,R)$ are linked by an ``almost invertible'' factor map, described below.

\begin{prop}\label{rr}Define the map $\phi_r:\{0,1,\dots,r-1\}^\N\to\mathbb T$ as follows: For $\omega=(a_n)_{n\ge1}\in\{0,1,\dots,r-1\}^\N$ we let  
$$
\phi_r(\omega)=\begin{cases}
0,& \text{ if }a_n = r-1\text{ for all }n\in\N,\\
\sum_{n=1}^\infty \frac{a_n}{r^n},& \text{ otherwise}.
\end{cases}
$$
Then $\phi_r$ is a factor map from the symbolic system $(\{0,1,\dots,r-1\}^\N,\sigma)$ to $(\mathbb T,R)$, and for each nonatomic \im\ $\mu$ on the symbolic system, $\phi_r$ is an isomorphism between the measure-preserving systems 
$(\{0,1,\dots,r-1\}^\N,\mu,\sigma)$ and $(\mathbb T,\phi^*_r(\mu),R)$. In particular, we have the equality $h(\phi_r^*(\mu))=h(\mu)$.
If $\mu$ has atoms then the systems $(\{0,1,\dots,r-1\}^\N,\mu,\sigma)$ and $(\mathbb T,\phi^*_r(\mu),R)$ need not be isomorphic, but still the equality $h(\phi_r^*(\mu))=h(\mu)$ holds.
\end{prop}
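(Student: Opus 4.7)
The plan is to treat the four claims in the proposition in the natural order: factor map properties, injectivity mod a small set, isomorphism in the nonatomic case, and entropy preservation in general.

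First I would check directly from the definition of $\phi_r$ that it is a factor map. Continuity follows because $|\phi_r(\omega)-\phi_r(\omega')|$ is controlled by the first coordinate at which $\omega$ and $\omega'$ disagree (the exceptional value $\phi_r((r-1)^\infty)=0$ matches the limit $\sum (r-1)/r^n = 1\equiv 0\pmod 1$, so no discontinuity appears there). Surjectivity is the existence of a base-$r$ expansion of every $t\in\mathbb T$. Equivariance $\phi_r\circ\sigma=R\circ\phi_r$ follows from $r\sum a_n/r^n - a_1 = \sum a_{n+1}/r^n$, again with the all-$(r-1)$s case accommodated by the convention $\phi_r((r-1)^\infty)=0$.

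Next, identify the set $B\subset\{0,\dots,r-1\}^\N$ on which $\phi_r$ fails to be injective. A standard computation shows that $\phi_r(\omega)=\phi_r(\omega')$ with $\omega\neq\omega'$ forces $\phi_r(\omega)$ to be an $r$-adic rational, and one of $\omega,\omega'$ to be eventually constant $0$ while the other is eventually constant $r-1$; hence $B$ is countable. For a nonatomic $\mu\in\M_\sigma(\{0,\dots,r-1\}^\N)$ we have $\mu(B)=0$, and on the complement $\phi_r$ is a Borel bijection whose inverse is the coding map for the generating partition $\P_{\mathrm{tor}}=\{[i/r,(i+1)/r):i=0,\dots,r-1\}$ of $(\mathbb T,R)$. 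Combined with equivariance, this gives the claimed measure-preserving isomorphism, and in particular $h(\phi_r^*(\mu))=h(\mu)$.

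For the general case I would exploit two facts: (i) entropy is affine in the measure, and (ii) any purely atomic $\sigma$-invariant probability measure on the symbolic space is supported on the (countable) set of periodic points, since an atom on a non-periodic point would force infinitely many atoms of the same mass. Decompose $\mu=(1-\alpha)\mu_0+\alpha\mu_1$ with $\mu_0$ nonatomic and $\mu_1$ purely atomic (both automatically $\sigma$-invariant because $B$ and the set of periodic points are invariant). Then $h(\mu_1)=0$, and likewise $\phi_r^*(\mu_1)$ is concentrated on finite orbits in $\mathbb T$, giving $h(\phi_r^*(\mu_1))=0$. Applying the isomorphism of the nonatomic part yields
\[
h(\phi_r^*(\mu))=(1-\alpha)h(\phi_r^*(\mu_0))+\alpha h(\phi_r^*(\mu_1))=(1-\alpha)h(\mu_0)=h(\mu).
\]
An explicit atomic example, e.g. $\mu=\tfrac12(\delta_{0^\infty}+\delta_{(r-1)^\infty})$, which is mapped to $\delta_0$, shows that isomorphism itself can fail while entropy equality persists.

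The main obstacle I anticipate is the atomic case: one must verify carefully both that a $\sigma$-invariant atomic measure lives on periodic points (so has zero entropy) and that its image under $\phi_r$ retains this property, and then argue affinity cleanly — the temptation to claim isomorphism directly must be resisted, since different atoms may collapse under $\phi_r$. The continuity/equivariance check at the exceptional sequence $(r-1)^\infty$ is delicate but standard.
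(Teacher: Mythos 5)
Your argument is correct, and for the first three claims (equivariance, the countable exceptional set, the isomorphism for nonatomic $\mu$) it coincides with the paper's proof. Where you genuinely diverge is the atomic case: the paper disposes of it in one line by observing that $\phi_r$ is finite-to-one (in fact at most $2$-to-$1$) and invoking the Ledrappier--Walters theorem that finite-to-one factor maps preserve entropy of invariant measures. You instead decompose $\mu$ into its nonatomic and purely atomic parts, show the atomic part lives on periodic orbits and hence contributes zero entropy on both sides, and conclude by affinity of entropy. Your route is more self-contained (no external citation needed, beyond affinity of entropy, which the paper uses elsewhere anyway), at the cost of having to verify that the two parts of the decomposition are themselves $\sigma$-invariant and that atomic invariant measures are carried by periodic points; the paper's route is shorter and also subsumes the nonatomic case. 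Your explicit example $\mu=\tfrac12(\delta_{0^\infty}+\delta_{(r-1)^\infty})\mapsto\delta_0$ is a nice addition the paper does not spell out.

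One small imprecision to repair: your justification that an atom must sit on a periodic point --- ``an atom on a non-periodic point would force infinitely many atoms of the same mass'' --- only handles points whose forward orbit is infinite. A strictly pre-periodic point such as $10^\infty$ has a finite forward orbit, so no infinitude of atoms arises. To exclude these, either invoke Poincar\'e recurrence ($\mu$-a.e.\ point of a positive-measure set returns to it, so an atom must be periodic), or note that if $\eta$ is the periodic point in the eventual cycle then $\sigma^{-1}\{\eta\}$ would contain two distinct atoms of positive mass while $\mu(\sigma^{-1}\{\eta\})=\mu(\{\eta\})$ equals the common mass along the cycle, a contradiction. With that patched, the affinity computation goes through as you wrote it.
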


\begin{proof}[Proof of Proposition~\ref{rr}]
The fact that $\phi_r\circ\sigma=R\circ\phi_r$ is straightforward, as well as the fact that $\phi_r$ is invertible except on the countable set of \sq s that are eventually 0 or eventually $r-1$. Since this exceptional set is countable, it follows that $\phi_r$ is invertible $\mu$-almost everywhere for any nonatomic measure $\mu$ on the symbolic system. Thus $\phi_r$ an is isomorphism between $(\{0,1,\dots,r-1\}^\N,\mu,\sigma)$ and $(\mathbb T,\phi^*_r(\mu),R)$.
The last statement (for measures $\mu$ with atoms) follows from the fact that finite-to-one factor maps preserve entropy of \im s (see, e.g.,~\cite[Theorem~2.1]{LW}).
\end{proof}

\begin{rem}
If $t\in\mathbb T$ is of the form $\{\frac a{r^n}\}$, where $a\in\N\cup\{0\}$ (and $a$ is not necessarily co-prime with $r$) then $t$ has two preimages via $\phi_r$, one whose digits are eventually $0$'s, and another, whose digits are eventually $r-1$. By convention, the alias $\omega_r(t)$ of $t$ is the \sq\ ending with zeros (exceptionally, one time in Section~\ref{S7}, the other preimage will also be used). For any other $t$, $\omega_r(t)$ is the unique preimage of $t$~by~$\phi_r$. 
\end{rem}


\begin{rem}\label{umome}
Notice that if $\mu$ denotes the uniform Bernoulli measure on $\{0,1,\dots,r-1\}^\N$ then
$\phi_r^*(\mu)$ equals the Lebesgue measure $\lambda$ on $\mathbb T$. It is a classical fact that $\mu$ is the unique \im\ on $(\{0,1,\dots,r-1\}^\N,\sigma)$ of maximal entropy, i.e., such that $h_\mu(\sigma)$ is equal to the \tl\ entropy $\htop(\{0,1,\dots,r-1\}^\N,\sigma)=\log r$ (see , e.g.,~\cite{AW})\footnote{Systems with a unique measure of maximal entropy are often called \emph{intrinsically ergodic, see~\cite{W1}}.}. In view of Proposition~\ref{rr}, it follows that the Lebesgue measure is the unique measure with maximal entropy $\log r$ on $(\mathbb T,R)$.
\end{rem}

\begin{defn}\label{torus} Let the base $r\ge2$ be fixed and let $R$ denote the map $t\mapsto rt$, $t\in\mathbb T$.
\begin{enumerate}
    \item By $\underline h(t)$ and $\overline h(t)$, where $t\in\mathbb T$, we will mean the lower and upper entropies of $t$ in the system $(\mathbb T,R)$. 
    \item An element $t\in\mathbb T$ is said to be \emph{$R$-normal} if it is generic for the Lebesgue measure in the system $(\mathbb T,R)$. The set of $R$-normal elements of $\mathbb T$ will be denoted by $\mathcal N(\mathbb T,R)$.
    \item An element $s\in\mathbb T$ is said to \emph{preserve $R$-normality} if $s+t\in\mathcal N(\mathbb T,R)$ for every $t\in\mathcal N(\mathbb T,R)$. The set of elements of $\mathbb T$ that preserve $R$-normality will be denoted by $\mathcal N^\perp(\mathbb T,R)$.
    \item An element $t\in\mathbb T$ is \emph{$R$-deterministic} if it is a deterministic element in the system $(\mathbb T,R)$. 
    The set of $R$-deterministic elements of $\mathbb T$ will be denoted by $\mathcal D(\mathbb T,R)$.  
    \item An element $s\in\mathbb T$ is said to \emph{preserve $R$-determinsim} if $s+t\in\mathcal D(\mathbb T,R)$ for every $t\in\mathcal D(\mathbb T,R)$. The set of elements of $\mathbb T$ that preserve $R$-determinism will be denoted by $\mathcal D^\perp(\mathbb T,R)$.
\end{enumerate}
\end{defn}
\begin{rem}\label{thesame} In view of Definition~\ref {real} and Propositions~\ref{rr},~\ref{nortop}, we have:
\begin{enumerate}
    \item If $x$ is any real number such that $\{x\}=t$, then $\underline h(x)=\underline h(t)$ and $\overline h(x)=\overline h(t)$, where 
    $\underline h(x)$ and $\overline h(x)$ denote the lower and upper entropy of real numbers with respect to their base $r$-expansions (see Definition~\ref{real}), while $\underline h(t)$ and $\overline h(t)$ denote the lower and upper entropy of a point in the system $(\mathbb T,R)$.
\item $t\in\mathcal N(\mathbb T,R)$ if and only if $x\in\mathcal N(r)$ for any real number $x$ with $\{x\}=t$. 
\item $t\in\mathcal N^\perp(\mathbb T,R)$ if and only if $x\in\mathcal N^\perp(r)$ for any real number $x$ with~$\{x\}=t$.
\item $t\in\mathcal D(\mathbb T,R)$ if and only if $x\in\mathcal D(r)$ for any real number $x$ with $\{x\}=t$.
\item $t\in\mathcal D^\perp(\mathbb T,R)$ if and only if $x\in\mathcal D^\perp(r)$ for any real number $x$ with~$\{x\}=t$.
\end{enumerate}
\end{rem}

We can now rephrase the Rauzy theorem (Theorem~\ref{rrrr}) in terms of the system $(\mathbb T,R)$. The proof follows directly from Theorem~\ref{rrrr} and Remark~\ref{thesame}(3) and (4).

\begin{thm}{\rm(Version of Rauzy theorem for an endomorphism of the circle)}\label{newrauzy}
$$
\mathcal D(\mathbb T,R)=\mathcal N^\perp(\mathbb T,R).
$$
\end{thm}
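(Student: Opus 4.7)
The plan is to simply transfer Theorem~\ref{rrrr} across the correspondence between real numbers and elements of the circle $\mathbb T=\R/\Z$ that was set up in Section~\ref{4.1a}. The content of Theorem~\ref{newrauzy} is entirely captured by Remark~\ref{thesame} and the original Rauzy theorem; there is no new dynamical work to do at this step.

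First I would fix an element $t\in\mathbb T$ and choose any representative $x\in\R$ with $\{x\}=t$ (for concreteness one can take $x=t$ under the bijection $[0,1)\ni t\mapsto t+\Z$). By Remark~\ref{thesame}(4), the condition $t\in\mathcal D(\mathbb T,R)$ is equivalent to $x\in\mathcal D(r)$, and by Remark~\ref{thesame}(3), the condition $t\in\mathcal N^\perp(\mathbb T,R)$ is equivalent to $x\in\mathcal N^\perp(r)$. These equivalences rest on the observation that $x\mapsto\{x\}$ is a group homomorphism from $(\R,+)$ onto $(\mathbb T,+)$ whose kernel consists of integers, so adding integers does not affect the alias $\omega_r$ and hence does not change membership in $\mathcal N(r)$ or $\mathcal D(r)$.

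Next I would invoke Theorem~\ref{rrrr} (the real-number Rauzy theorem proved earlier in the paper), which asserts $\mathcal D(r)=\mathcal N^\perp(r)$, to conclude $x\in\mathcal D(r)\Leftrightarrow x\in\mathcal N^\perp(r)$. Chaining the two equivalences from the previous paragraph with this one then gives $t\in\mathcal D(\mathbb T,R)\Leftrightarrow t\in\mathcal N^\perp(\mathbb T,R)$, and since $t$ was arbitrary, the two sets coincide.

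The main (and only) obstacle is purely bookkeeping: one must make sure that the ``$\bmod 1$'' reduction really does respect both the normality condition (via Proposition~\ref{rr} and the fact that $\phi_r^*$ of the uniform Bernoulli measure is Lebesgue) and the determinism condition (via the identification of $\M_x$ with $\M_t$ that is implicit in Remark~\ref{mxmy} applied to $\phi_r$). Both have already been verified in the build-up to Remark~\ref{thesame}, so the theorem follows with no additional argument.
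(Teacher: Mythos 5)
Your proposal matches the paper's proof exactly: the paper states that Theorem~\ref{newrauzy} "follows directly from Theorem~\ref{rrrr} and Remark~\ref{thesame}(3) and (4)," which is precisely the translation-and-invoke argument you give. The additional bookkeeping you spell out (that the mod~$1$ reduction respects normality and determinism) is already packaged into Remark~\ref{thesame}, so nothing further is needed.
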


The following proposition demonstrates that normality and determinism are intrinsically connected to lower and upper entropy. We keep the base $r$ fixed and, as before, $R$ denotes the transformation $t\mapsto rt$, $t\in\mathbb T$.  

\begin{prop}\label{nordet1}\phantom{a}
\begin{enumerate}
	\item A point $x$ in a \tl\ \ds\ $(X,T)$ is deterministic if and only if $h(x)$ exists and equals $0$.
        \item An element $t\in\mathbb T$ is $R$-deterministic if and only if $h(t)$ with respect to the transformation $R$ exists and equals zero.
	\item An element $t\in\mathbb T$ is $R$-normal if and only if $h(t)$ with 
        respect to the transformation $R$ exists and equals $\log r$.
        \item A real number $x$ is normal in base $r$ if and only if $h(x)$ (see Definition~\ref{real}) exists and equals $\log r$.
        \item A real number $x$ is deterministic in base $r$ if and only if $h(x)$ exists and equals $0$.
\end{enumerate}
\end{prop}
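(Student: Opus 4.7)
The plan is to prove (1) from the definitions, obtain (2) as a specialization, establish (3) via intrinsic ergodicity of $(\mathbb T,R)$, and then deduce (4) and (5) by passing to fractional parts.

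\textbf{Step 1 (part (1)).} I would start from Definition~\ref{deta}: $x$ is deterministic iff $h(\mu)=0$ for every $\mu\in\M_x$. Since the Kolmogorov--Sinai entropy is nonnegative, the supremum of the set $\{h(\mu):\mu\in\M_x\}$ is zero iff every element of that set is zero. By Definition~\ref{lue}, this sup is precisely $\overline h(x)$. Combined with the trivial chain $0\le\underline h(x)\le\overline h(x)$, being deterministic is equivalent to $\underline h(x)=\overline h(x)=0$, i.e.\ to $h(x)$ existing and equalling $0$.

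\textbf{Step 2 (part (2)).} By Definition~\ref{torus}(4), $R$-determinism of $t\in\mathbb T$ is simply determinism of $t$ in the system $(\mathbb T,R)$ in the sense of Definition~\ref{deta}. Together with Definition~\ref{torus}(1), part (2) is the restatement of (1) for that particular system.

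\textbf{Step 3 (part (3)).} For the forward direction: if $t$ is $R$-normal then $t$ is generic for the Lebesgue measure $\lambda$, hence $\M_t=\{\lambda\}$; consequently $h(t)$ exists and equals $h(\lambda)$, which by Remark~\ref{umome} equals $\log r$. For the converse, suppose $h(t)$ exists and equals $\log r$. Then every $\mu\in\M_t$ satisfies $h(\mu)=\log r$. By Remark~\ref{umome}, $\lambda$ is the \emph{unique} measure of maximal entropy $\log r$ on $(\mathbb T,R)$, so each such $\mu$ must equal $\lambda$. Hence $\M_t=\{\lambda\}$, which means $t$ is generic for $\lambda$, i.e.\ $R$-normal. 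This intrinsic-ergodicity step is the one place where a nontrivial input (the uniqueness of the measure of maximal entropy) is used, and it is the most substantive ingredient of the whole proposition.

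\textbf{Step 4 (parts (4) and (5)).} Finally, I would transfer (3) and (2) from $\mathbb T$ to $\R$ through the fractional-part identification. By Remark~\ref{thesame}(1), for any $x\in\R$ with $\{x\}=t$ one has $\underline h(x)=\underline h(t)$ and $\overline h(x)=\overline h(t)$, so $h(x)$ exists iff $h(t)$ does and the values coincide. Combining this with Remark~\ref{thesame}(2) gives (4), and with Remark~\ref{thesame}(4) gives (5). The only conceptual obstacle is again the uniqueness of the maximal-entropy measure invoked in Step 3; everything else is a bookkeeping exercise on the definitions of $\M_x$, $\underline h$, and $\overline h$.
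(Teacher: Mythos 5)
Your proof is correct and follows essentially the same route as the paper, which disposes of (1) and (2) as immediate consequences of Definitions~\ref{deta} and~\ref{lue} and derives (3)--(5) from the uniqueness of the measure of maximal entropy (Remark~\ref{umome}) together with the identifications in Remark~\ref{thesame} and Definition~\ref{real}. Your write-up merely makes explicit the bookkeeping that the paper leaves to the reader.
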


\begin{proof}The statements (1) and (2) are obvious. 
The statements~(3), (4) and~(5) follow from Remark~\ref{umome} and Definition~\ref{real}. 
\end{proof}

\begin{rem}\label{minus}
The map $t\mapsto-t$ is a \tl\ conjugacy of the system $(\mathbb T, R)$ with itself, hence, in view of Remark~\ref{conule}, we have $\underline h(-t)=\underline h(t)$ and $\overline h(-t)=\overline h(t)$. In particular, if $t\in\mathbb T$ is $R$-normal or $R$-deterministic then so is $-t$, that is $-\mathcal N(\mathbb T,R)=\mathcal N(\mathbb T,R)$ and $-\mathcal D(\mathbb T,R)=\mathcal D(\mathbb T,R)$. Combining this fact with Remark~\ref{thesame} we get that $-\mathcal N(r)=\mathcal N(r)$ and $-\mathcal D(r)=\mathcal D(r)$.
\end{rem}

\subsection{Behavior of lower and upper entropies under addition}\label{3.1}
In this subsection we continue to work with a fixed (but arbitrary) base $r\ge2$ and with the system $(\mathbb T,R)$, where $R(t)=rt$, $t\in\mathbb T$. Most of the time throughout this subsection, the letters $x$ and $y$ denote elements of $\mathbb T$ rather than real numbers (exceptions: Corollary~\ref{rrr}, Question~\ref{q1} and Proposition~\ref{any}). The symbols $\underline h(x)$ and $\overline h(x)$ stand for the lower and upper entropy of a point in the system $(\mathbb T,R)$.

\begin{prop}\label{x+y}Recall (see Remark~\ref{umome}) that $\htop(\mathbb T,R)=\log r$.
For any $x,y\in\mathbb T$ we have
\begin{gather*}
\max\{0, \underline h(x)-\overline h(y), \underline h(y)-\overline h(x)\}\!\overset{(a)}\le\!\underline h(x+y)\!\overset{(b)}\le\!\min\{\log r, \underline h(x)+\overline h(y), \overline h(x)+\underline h(y)\},\\
\max\{|\underline h(x)-\underline h(y)|,\ |\overline h(x)-\overline h(y)|\}\overset{(c)}\le\overline h(x+y)\overset{(d)}\le\min\{\log r,\ \overline h(x) + \overline h(y)\}.
\end{gather*}
\end{prop}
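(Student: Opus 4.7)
The central ingredient is that addition on the circle is a factor map: the map $\pi\colon\mathbb T\times\mathbb T\to\mathbb T$ defined by $\pi(s,t)=s+t$ is continuous and satisfies $\pi\circ(R\times R)=R\circ\pi$. My plan is to analyze the orbit of $x+y$ through the orbit of the pair $(x,y)$ under $R\times R$. Given any $\rho\in\M_{x+y}$, I choose an increasing \sq\ along which $\rho$ is generated, and then refine it (using compactness of $\M(\mathbb T\times\mathbb T)$) to a sub\sq\ along which $(x,y)$ quasi-generates some $\xi\in\M(\mathbb T\times\mathbb T,R\times R)$. Remark~\ref{mxmy} applied to the two coordinate projections forces $\xi$ to be a joining of some $\mu\in\M_x$ and $\nu\in\M_y$, while the same remark applied to $\pi$ forces $\rho=\pi^*(\xi)$. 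Combining \eqref{eof} and \eqref{eoj} then gives the master inequality
$$
h(\rho)=h(\pi^*(\xi))\le h(\xi)\le h(\mu)+h(\nu),
$$
and Remark~\ref{umome} together with the variational principle supplies the trivial cap $h(\rho)\le\log r$ for every $\rho\in\M(\mathbb T,R)$.

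With this in hand, the upper bounds~(b) and~(d) are immediate. For~(d), apply the master inequality to an arbitrary $\rho\in\M_{x+y}$ to get $h(\rho)\le\overline h(x)+\overline h(y)$ and take supremum, combining with the $\log r$ cap. For~(b), I select a \sq\ $\mu_n\in\M_x$ with $h(\mu_n)\to\underline h(x)$ and subsequences $\mathcal J_n$ along which $x$ quasi-generates $\mu_n$. For each $n$, the above compactness construction, carried out along a sub\sq\ of $\mathcal J_n$, produces a joining $\xi_n$ of $\mu_n$ with some $\nu_n\in\M_y$, and hence a measure $\pi^*(\xi_n)\in\M_{x+y}$ satisfying
$$
h(\pi^*(\xi_n))\le h(\mu_n)+h(\nu_n)\le h(\mu_n)+\overline h(y).
$$
Letting $n\to\infty$ yields $\underline h(x+y)\le\underline h(x)+\overline h(y)$; interchanging $x$ and $y$ gives the symmetric bound, and $\underline h(x+y)\le\overline h(x+y)\le\log r$ supplies the $\log r$ cap.

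The lower bounds~(a) and~(c) I plan to deduce from~(b) and~(d) purely algebraically, using the identity $x=(x+y)+(-y)$ in $\mathbb T$ together with Remark~\ref{minus}, which asserts $\underline h(-y)=\underline h(y)$ and $\overline h(-y)=\overline h(y)$. Applying~(b) to the pair $(x+y,-y)$ yields $\underline h(x)\le\underline h(x+y)+\overline h(y)$, i.e., $\underline h(x+y)\ge\underline h(x)-\overline h(y)$; swapping $x$ and $y$ gives $\underline h(x+y)\ge\underline h(y)-\overline h(x)$, and combined with the trivial $\underline h(x+y)\ge0$ this is~(a). For~(c), applying~(d) to $(x+y,-y)$ gives $\overline h(x+y)\ge\overline h(x)-\overline h(y)$, and applying the just-proved~(a) to the same pair gives $\overline h(x+y)\ge\underline h(y)-\underline h(x)$; the absolute values are then obtained from the $x\leftrightarrow y$ symmetry of the assertion.

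The only delicate step that I expect to have to handle with care is the iterated compactness argument used in~(b): starting from $\mathcal J_n$, I must successively extract a sub\sq\ along which $y$ quasi-generates some $\nu_n\in\M_y$ and along which $(x,y)$ quasi-generates a joining of $\mu_n$ and $\nu_n$ in the product system. This is a routine diagonal extraction using the weak* compactness of $\M(\mathbb T\times\mathbb T)$, but it is essential, since it is exactly what guarantees that the measure $\pi^*(\xi_n)$ produced belongs to $\M_{x+y}$ rather than to some larger class.
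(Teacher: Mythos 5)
Your argument is correct, and for the upper bounds (b) and (d) it is essentially the paper's proof: the same refinement of a quasi-generating \sq\ to produce a joining $\xi$ of measures from $\M_x$ and $\M_y$ whose image under the factor map $(t,u)\mapsto t+u$ lies in $\M_{x+y}$, followed by \eqref{eof}, \eqref{eoj} and the $\log r$ cap from Remark~\ref{umome}. Where you genuinely diverge is in the lower bounds. The paper proves (a) and (c) by separate direct joining arguments: it generates joinings from the pairs $(x+y,y)$ and $(x+y,x)$ and pushes them forward under the subtraction map $(t,u)\mapsto t-u$ to recover $x$ and $y$, respectively. You instead obtain (a) and (c) purely formally from (b) and (d), by substituting the pair $(x+y,-y)$ into the already-proved inequalities and invoking $\underline h(-y)=\underline h(y)$, $\overline h(-y)=\overline h(y)$ from Remark~\ref{minus}. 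The two routes are mathematically equivalent --- unwinding your substitution reproduces exactly the paper's subtraction-map joinings --- but your organization is leaner: a single joining construction serves all four inequalities, and the identity $x=(x+y)+(-y)$ together with the conjugacy $t\mapsto -t$ does the remaining bookkeeping. The one point to make explicit in a write-up is that (b), (d), and then (a) are established for \emph{arbitrary} pairs of points of $\mathbb T$ before being applied to the pair $(x+y,-y)$; your phrasing already handles this, and the iterated compactness extraction you flag as the delicate step is indeed the only place where care is required, exactly as in the paper.
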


We remark that Kamae in \cite{K2} introduced a notion of \emph{entropy of a point}, which coincides with our upper entropy of a point, and proved the inequality (d). Note however, that since normality is characterized in terms of lower entropy (see Proposition~\ref{nordet1}(3)), the inequality (d) alone is insufficient to prove even the ``necessity'' of Rauzy theorem (which we do in Corollary~\ref{rrr}(1)).

\begin{proof}[Proof of Proposition~\ref{x+y}]
(a) Let $\eps$ be a positive number and let $\mathcal J$ be a \sq\ along which $x+y\in\mathbb T$ generates (via the transformation $R$) an \im\ $\mu$ with entropy not exceeding $\underline h(x+y)+\eps$. There is a sub\sq\ $\mathcal J'$ of $\mathcal J$ along which the points $x$ and $y$ generate some measures $\nu_x$ and $\nu_y$ on $\mathbb T$, respectively. Clearly, 
$$
\underline h(x)\le h(\nu_x) \text{ \ \ and \ \ } h(\nu_y)\le\overline h(y) . 
$$
The pair $(x+y,y)\in\mathbb T\times\mathbb T$ generates (via the transformation $R\times R$) along some sub\sq\ $\mathcal J''$ of $\mathcal J'$, some joining $\zeta$ of $\mu$ and $\nu_y$. By \eqref{eoj}, we have
$$
h(\zeta)\le h(\mu)+h(\nu_y).
$$ 
The mapping from $\mathbb T\times\mathbb T$ to $\mathbb T$ defined by $(t,u)\mapsto t-u$, $t,u\in\mathbb T$, is continuous, surjective and equivariant:
$$
(R\times R)(t,u)=(Rt,Ru)=(rt,ru)\mapsto(rt-ru)=r(t-u)=R(t-u).
$$
This means that $(\mathbb T,R)$ is a factor of $(\mathbb T\times\mathbb T,R\times R)$ via this map. Since $x$ is the image of $(x+y,y)$, it generates along $\mathcal J''$ some measure which is a factor of $\zeta$. 
On the other hand, as $\mathcal J''$ is a sub\sq\ of $\mathcal J$, we know that $x$ generates $\nu_x$ along $\mathcal J''$. It follows that $(\mathbb T,\nu_x,R)$ is a continuous factor of $(\mathbb T, \zeta,R)$ and hence $h(\nu_x)\le h(\zeta)$. 
We have shown that
$$
\underline h(x)\le h(\nu_x)\le h(\zeta) \le h(\mu)+h(\nu_y) \le \underline h(x+y) +\eps+\overline h(y).
$$
Since $\eps$ is arbitrary, we get
\begin{equation}\label{o1}
\underline h(x)-\overline h(y)\le \underline h(x+y).
\end{equation}
By switching the roles of $x$ and $y$ we also get 
\begin{equation}\label{o2}
\underline h(y)-\overline h(x)\le\underline h(x+y).
\end{equation}
Combining \eqref{o1} and \eqref{o2} we get (a).
\smallskip

(b) Let $\eps$ be a positive number. There exists an increasing \sq\ $\mathcal J$  of natural numbers along which:
\begin{itemize}
	\item[---] $x$ generates a measure $\nu_x$ with entropy not exceeding $\underline h(x)+\eps$, 
	\item[---] $y$ generates some measure $\nu_y$,
	\item[---] $x+y$ generates some measure $\mu$,
	\item[---] the pair $(x,y)$ generates some joining $\xi$ of $\nu_x$ and $\nu_y$,
	\item[---] the pair $(x+y,x)$ generates some joining $\zeta$ of $\mu$ and $\nu_x$ (to be used in the proof of (c)).
\end{itemize}	
The factor map $(t,u)\mapsto t+u$, $t,u\in\mathbb T$,  sends the pair $(x,y)$ to $x+y$, hence the adjacent map on measures sends $\xi$ to $\mu$. This implies that
$$
\underline h(x+y)\le h(\mu)\le h(\xi)\le h(\nu_x)+h(\nu_y)\le\underline h(x) +\eps+ \overline h(y).
$$ 
Since $\eps$ is arbitrary, we have shown that 
\begin{equation}\label{o3}
\underline h(x+y)\le\underline h(x) + \overline h(y).
\end{equation}
By switching the roles of $x$ and $y$ we also get 
\begin{equation}\label{o4}
\underline h(x+y)\le\underline h(y) + \overline h(x).
\end{equation}
Combining \eqref{o3}, \eqref{o4} and the fact that the entropy of any \im\ of the system $(\mathbb T,R)$ cannot exceed $\log r$ (see Remark~\ref{umome}), we obtain (b).

\smallskip
(c) Let $\eps$, $\mathcal J$, $\mu$, $\nu_x$, $\nu_y$ and $\zeta$ be as in the proof of (b). The factor map $(t,u)\mapsto t-u$, $t,u\in\mathbb T$, sends $(x+y,x)$ to $y$, hence the adjacent map on measures sends $\zeta$ to $\nu_y$. Thus
$$
\underline h(y)\le h(\nu_y) \le h(\zeta) \le h(\mu)+h(\nu_x)\le\overline h(x+y) + \underline h(x)+\eps.
$$ 
Since $\eps$ is arbitrary, we obtain
$$
\underline h(y)-\underline h(x)\le\overline h(x+y).
$$
By switching the roles of $x$ and $y$ we also get $\underline h(x)-\underline h(y)\le\overline h(x+y)$, and so we have 
\begin{equation}\label{o5}
|\underline h(y)-\underline h(x)|\le\overline h(x+y).
\end{equation}

Choose again an $\eps>0$ and let $\mathcal J'$ be a \sq\ along which $x$ generates a measure $\nu'_x$ with entropy exceeding $\overline h(x)-\eps$, while $y$, $x+y$ and the pair $(x+y,y)$ generate some measures $\nu'_y$, $\mu'$ and some joining $\zeta'$ of $\mu'$ and $\nu'_y$, respectively. Then the map adjacent to the factor map $(t,u)\mapsto t-u$, $t,u\in\mathbb T$, sends $\zeta'$ to $\nu'_x$ and thus
$$
\overline h(x)-\eps\le h(\nu'_x)\le h(\zeta')\le h(\mu')+h(\nu'_y)\le \overline h(x+y)+\overline h(y),
$$
implying that
$$
\overline h(x)  - \eps- \overline h(y)\le \overline h(x+y).
$$
Since $\eps$ is arbitrary, we get $\overline h(x) - \overline h(y)\le \overline h(x+y)$.
By switching the roles of $x$ and $y$ we also get $\overline h(y) - \overline h(x)\le \overline h(x+y)$, and so 
\begin{equation}\label{o6}
|\overline h(x) - \overline h(y)|\le \overline h(x+y).
\end{equation}
Clearly, (c) follows from \eqref{o5} and \eqref{o6}.

\smallskip(d) For an $\eps>0$ let $\mathcal J$ denote a \sq\ along which $x+y$ generates an \im\ 
$\mu$ with entropy exceeding $\overline h(x+y)-\eps$, while $x$ and $y$ generate some measures $\nu_x$ and $\nu_y$, respectively, and the pair $(x,y)$ generates a joining $\xi$ of $\nu_x$ and $\nu_y$. We have
$$
h(\xi)\le h(\nu_x)+h(\nu_y)\le \overline h(x)+\overline h(y).
$$ 
The map $(t,u)\mapsto t+u$ sends $(x,y)$ to $x+y$ and hence the adjacent map on measures sends $\xi$ to $\mu$. We have shown that
$$
\overline h(x+y)-\eps \le h(\mu)\le h(\xi)\le h(\nu_x)+h(\nu_y)\le \overline h(x)+\overline h(y).
$$
Since $\eps$ is arbitrary, we get 
$$
\overline h(x+y)\le \overline h(x)+\overline h(y).
$$
The inequality $\overline h(x+y)\le\log r$ is obvious, and so we have proved (d).
\end{proof}

\begin{cor}\label{rauzy}\phantom{a} 
The following facts hold for the system $(\mathbb T,R)$:
\begin{enumerate}
  \item Fix $x,y\in\mathbb T$. If $h(x)$ and $h(y)$ exist then 
  $$
  |h(x)-h(y)|\le \underline h(x+y)\le\overline h(x+y)\le h(x)+h(y).
  $$
  \item An element $y\in\mathbb T$ is $R$-deterministic if and only if for any $x\in\mathbb T$ we have 
  $$
  \underline h(x+y) = \underline h(x) \text{ \ and \ }
  \overline h(x+y) = \overline h(x).
  $$ 
  \item $\mathcal D(\mathbb T,R)\subset\mathcal N^\perp(\mathbb T,R)$.
  \item $\mathcal D(\mathbb T,R)\subset\mathcal D^\perp(\mathbb T,R)$, i.e., if $x\in\mathcal D(\mathbb T,R)$ and $y\in\mathcal D(\mathbb T,R)$ then $x+y\in\mathcal D(\mathbb T,R)$. Combining this fact with Remark~\ref{minus} we get that $\mathcal D(\mathbb T,R)$ is a group.
  \item $\mathcal D(\mathbb T,R)\supset\mathcal D^\perp(\mathbb T,R)$ (and thus $\mathcal D(\mathbb T,R)=\mathcal D^\perp(\mathbb T,R)$).
\end{enumerate}
\end{cor}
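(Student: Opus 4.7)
The inclusion $\mathcal D(\mathbb T,R)\supset\mathcal D^\perp(\mathbb T,R)$ turns out to be almost immediate once one makes the right observation about the neutral element of the group $\mathbb T$. My plan is to exhibit a specific, very simple element of $\mathcal D(\mathbb T,R)$ and then plug it into the definition of $\mathcal D^\perp(\mathbb T,R)$.

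The key observation is that $0\in\mathbb T$ is a fixed point of $R$, since $R(0)=r\cdot 0=0\pmod 1$. Consequently the orbit of $0$ under $R$ is trivial and $0$ is generic for the Dirac measure $\delta_0$, which is supported on a single periodic orbit and therefore has entropy~$0$. By Proposition~\ref{nordet1}(2) (or simply by Definition~\ref{deta}, since $\M_0=\{\delta_0\}$), this shows $0\in\mathcal D(\mathbb T,R)$.

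Now suppose $y\in\mathcal D^\perp(\mathbb T,R)$. By Definition~\ref{torus}(5), we have $t+y\in\mathcal D(\mathbb T,R)$ for every $t\in\mathcal D(\mathbb T,R)$. Specialising this to $t=0$, which we have just shown lies in $\mathcal D(\mathbb T,R)$, we conclude that $y=0+y\in\mathcal D(\mathbb T,R)$. This gives $\mathcal D^\perp(\mathbb T,R)\subset\mathcal D(\mathbb T,R)$, and combined with part~(4) of the corollary (already established from Proposition~\ref{x+y}) it yields $\mathcal D(\mathbb T,R)=\mathcal D^\perp(\mathbb T,R)$.

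There is essentially no obstacle here; the only point requiring any thought is that the statement asks for a test point witnessing non-preservation of determinism whenever $y$ itself is not deterministic, and the constant $0$ (a fixed point of $R$) provides such a witness automatically. Note that the analogous trick for normality fails, since $0$ is not $R$-normal—and indeed the corresponding direction of Rauzy's theorem, namely $\mathcal N^\perp(\mathbb T,R)\subset\mathcal D(\mathbb T,R)$, requires substantial work (carried out in Subsection~\ref{rev}).
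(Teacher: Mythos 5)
Your argument for part (5) is correct and is essentially identical to the paper's: the authors likewise take $x=0$ (a fixed point of $R$, generic for $\delta_0$ and hence $R$-deterministic) as the test point witnessing that any non-deterministic $y$ fails to preserve determinism. The remaining parts, which you treat as already established, are handled the same way in the paper --- declared obvious or derived directly from Proposition~\ref{x+y}.
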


\begin{proof}
The statements (1), (3) and (4) are obvious. For an $R$-deterministic $y\in\mathbb T$ both equalities in (2) follow from Proposition~\ref{x+y}. If $y$ is not $R$-deterministic (i.e., if $\overline h(y)>0$) then the second equality in (2) fails for example for $x=0$. This also proves (5). It is also possible (but much harder) to explicitly construct an $x$ for which the first equality fails. We skip the tedious construction. (It will follow from Theorem~\ref{rauzy1} that any normal $x$ is an example, however, this is not a consequence of Proposition~\ref{x+y}).
\end{proof}

In view of Remark~\ref{thesame} we have:
\begin{cor}\label{rrr}\phantom{a} 
\begin{enumerate}
  \item (Rauzy, \cite{Ra}, ``necessity'') $\mathcal D(r)\subset\mathcal N^\perp(r)$.
  \item $\mathcal D(r)=\mathcal D^\perp(r)$. The set $\mathcal D(r)$ is a subgroup of $(\R,+)$.
\end{enumerate}
\end{cor}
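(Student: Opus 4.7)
The plan is to deduce both parts directly from Corollary~\ref{rauzy} via the dictionary provided by Remark~\ref{thesame}, which translates $R$-normality and $R$-determinism of circle elements into ordinary normality and determinism of real numbers. The key observation, already noted in Subsection~\ref{4.1a}, is that $x \mapsto \{x\}$ is a group homomorphism from $(\R,+)$ onto $(\mathbb T,+)$, so $\{x+y\} = \{x\}+\{y\}$ for all $x,y\in\R$.

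For part (1), I would pick $x \in \mathcal N(r)$ and $y \in \mathcal D(r)$ and set $s = \{x\}$, $t = \{y\}$. By Remark~\ref{thesame}(2) and (4), $s \in \mathcal N(\mathbb T, R)$ and $t \in \mathcal D(\mathbb T, R)$. Corollary~\ref{rauzy}(3) then gives $s + t \in \mathcal N(\mathbb T, R)$. Since $\{x + y\} = s + t$, a final application of Remark~\ref{thesame}(2) in the reverse direction yields $x + y \in \mathcal N(r)$, establishing $\mathcal D(r) \subset \mathcal N^\perp(r)$.

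For part (2), the identity $\mathcal D(r) = \mathcal D^\perp(r)$ follows similarly: Corollary~\ref{rauzy}(4) and~(5) give $\mathcal D(\mathbb T, R) = \mathcal D^\perp(\mathbb T, R)$, and transporting this equality via Remark~\ref{thesame}(4) and~(5) (using $\{x+y\}=\{x\}+\{y\}$ exactly as in part (1)) produces the desired equality for real numbers. To verify that $\mathcal D(r)$ is a subgroup of $(\R,+)$, I would check the three group axioms: $0\in\mathcal D(r)$ trivially; closure under addition follows from Corollary~\ref{rauzy}(4) by the same translation argument; and closure under negation follows from Remark~\ref{minus}, since $-\mathcal D(\mathbb T,R)=\mathcal D(\mathbb T,R)$ and $\{-x\}=-\{x\}$ in $\mathbb T$, whence $x\in\mathcal D(r)$ implies $-x\in\mathcal D(r)$.

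There is no genuine obstacle here: the substantive dynamical content has already been extracted in Proposition~\ref{x+y} and Corollary~\ref{rauzy}. The present corollary is essentially a bookkeeping exercise whose only nontrivial input is the compatibility of the fractional part map with group operations, which is exactly what Remark~\ref{thesame} was designed to formalize.
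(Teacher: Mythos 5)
Your proposal is correct and matches the paper's own (very brief) proof, which simply invokes Remark~\ref{thesame} to transport Corollary~\ref{rauzy}(3)--(5) and Remark~\ref{minus} from $(\mathbb T,R)$ to $\R$; you have merely spelled out the bookkeeping the paper leaves implicit. No issues.
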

\medskip
We now introduce the notion of independence of generic points in \ds s.

\begin{defn}\label{indd}
Let $(X_1,T_1), (X_2,T_2), \dots (X_k,T_k)$ be \tl\ \ds s and let $x_i\in X_i$ be generic for a $T_i$-\im\ $\mu_i$ on $X_i$, $i=1,2,\dots,k$. We say that the elements $x_1, x_2,\dots, x_k$ are \emph{independent} if the $k$-tuple $(x_1,x_2,\dots,x_k)$ is generic in the product system $(X_1\times X_2,\dots\times X_k,T_1\times T_2,\dots\times T_k)$ for the product measure $\mu_1\times\mu_2\times\dots\times\mu_k$.
\end{defn}

\begin{defn}\label{indrn}\phantom{.}
\begin{itemize}
\item[(a)] Real numbers $x_1,x_2,\dots,x_k$ are said to be \emph{$r$-independent} if their aliases $\omega_r(x_1),\omega_r(x_2),\dots,\omega_r(x_k)$, viewed as elements of the symbolic system\break 
$(\{0,1,\dots,r-1\},\sigma)$, are independent.
\item[(b)] Elements $t_1,t_2,\dots,t_k\in\mathbb T$ are said to be \emph{$R$-independent} if they are independent in the system $(\mathbb T,R)$.
\end{itemize}
\end{defn}

\begin{rem}\label{lkj}
Invoking the map $\phi_r:\{0,1,\dots,r-1\}^\N\to\mathbb T$ it can be seen that real numbers $x_1,x_2,\dots,x_k$ are $r$-independent if and only if their fractional parts $\{x_1\},\{x_2\},\dots,\{x_k\}$, are \emph{$R$-independent}.  
\end{rem}

Independence of symbolic \sq s can be expressed in terms of  frequencies of simultaneous occurrences of blocks\footnote{Independence in this setting has been introduced by Rauzy for arbitrary sequences in a compact metric space, see \cite[Chapter 4, Section 4, page 91]{Ra1}}. For simplicity, consider just two \sq s $\omega_1,\omega_2\in\{0,1,\dots,r-1\}^\N$, and let $B_1\in\{0,1,\dots,r-1\}^{k_1}$, $B_2\in\{0,1,\dots,r-1\}^{k_2}$ be two blocks. We say that the pair of blocks $(B_1,B_2)$ \emph{occurs in the pair of \sq s $(\omega_1,\omega_2)$ at a position $n$} if $B_1$ occurs in $\omega_1$ starting at the position $n$ and, simultaneously, $B_2$ occurs in $\omega_2$ starting at the position $n$. 
In analogy to Definition~\ref{frequency}, we will say that the frequency of the pair of blocks $(B_1,B_2)$ in the pair of \sq s $(\omega_1,\omega_2)$ exists if the density
$$
d(\{n: (B_1,B_2) \text{ occurs in } (\omega_1,\omega_2) \text{ at the position }n\})
$$ 
exists. We will denote this frequency by $\mathsf{Fr}(B_1,B_2,\omega_1,\omega_2)$.
With this terminology, the symbolic \sq s $\omega_1, \omega_2$ are independent if, for any blocks $B_1\in\Lambda^{k_1}$,  $B_2\in\Lambda^{k_2}$, we have
\begin{itemize}
\item the frequency  $\mathsf{Fr}(B_1,\omega_1)$ of $B_1$ in $\omega_1$ exists,
\item the frequency  $\mathsf{Fr}(B_2,\omega_2)$ of $B_2$ in $\omega_2$ exists,
\item the frequency $\mathsf{Fr}(B_1,B_2,\omega_1,\omega_2)$ of the pair of blocks $(B_1,B_2)$ in the pair of \sq s $(\omega_1,\omega_2)$ exists and satisfies 
$$
\mathsf{Fr}(B_1,B_2,\omega_1,\omega_2)=\mathsf{Fr}(B_1,\omega_1)\mathsf{Fr}(B_2,\omega_2).
$$
\end{itemize}

\begin{exam}Recall that by $\omega_r(x)$ we denote the alias of a real number $x$ in base~$r$. Let $x\in\R$ be normal in base $4$ and let $y, z$ be real numbers satisfying, for each $n\in\N$,
$$
(\omega_2(y))_n=\lfloor\tfrac12(\omega_4(x))_n\rfloor, \ \ (\omega_2(z))_n=(\omega_4(x))_n\ (\!\!\!\!\!\!\mod2).
$$
Then $y$ and $z$ are normal in base 2 and $2$-independent.  
\end{exam}
\begin{proof}As easily verified, the map $\pi:\{0,1\}^\N\times\{0,1\}^\N\to\{0,1,2,3\}^\N$ given by 
$$
\pi(\omega,\omega')=(b_n)_{n\ge1}, \text{ where } = 2a_n + a'_n, \ \omega=(a_n)_{n\ge1}, \ \omega'=(a'_n)_{n\ge1}
$$
is continuous, bijective, and commutes with the shift. So, it is a \tl\ conjugacy between the product system $(\{0,1\}^\N\times\{0,1\}^\N,\sigma\times\sigma)$ and the shift on four symbols $(\{0,1,2,3\}^\N,\sigma)$. 
Further, we have $\pi^*(\mu_2\times\mu_2)=\mu_4$ (where $\mu_r$ stands for the uniform Bernoulli measure on $\{0,1,\dots,r-1\}^\N$). Finally, $\pi(\omega_2(y),\omega_2(z))=\omega_4(x)$. The fact that $x$ is normal in base $4$ is equivalent to $\omega_4(x)$ being generic for $\mu_4$. Hence, the pair $(\omega_2(y),\omega_2(z))=\pi^{-1}(\omega_4(x))$ is generic for ${\pi^*}^{-1}(\mu_4)=\mu_2\times\mu_2$. This means that $\omega_2(y)$ and $\omega_2(z)$ are normal and (by Definition~\ref{indd}) independent, as elements of the system $(\{0,1\}^\N,\sigma)$, which further means that $z,y$ are normal in base 2 and $2$-independent.
\end{proof}

For a fixed base $r$, if $x,y\in\mathbb T$ are $R$-independent then the lower bound in Corollary~\ref{rauzy}(1) can be significantly sharpened:

\begin{prop}\label{x+yi}
If $x,y\in\mathbb T$ are $R$-independent then 
$$
\max\{h(x),h(y)\}\le h(x+y).
$$
\end{prop}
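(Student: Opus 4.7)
My approach will rely on exploiting the $R$-independence of $x$ and $y$ together with the observation that the ``shear map'' $\Phi(t,u) = (t+u, u)$ on $\mathbb{T}\times\mathbb{T}$ is a topological conjugacy of the product system $(\mathbb{T}\times\mathbb{T}, R\times R)$ with itself. The point is that, although addition $(t,u)\mapsto t+u$ is only a factor map (which would give the upper bound in Proposition~\ref{x+y}(d)), by coupling it with the projection onto one coordinate it becomes an honest isomorphism. Combined with the joining inequality \eqref{eoj}, this will reverse the direction of the entropy inequality on the independent coordinate.

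First I would record the consequences of independence: by Definitions~\ref{indd}--\ref{indrn}, $x$ and $y$ are generic (in $(\mathbb{T},R)$) for some measures $\mu_x,\mu_y\in\M(\mathbb{T},R)$, and the pair $(x,y)$ is generic in $(\mathbb{T}\times\mathbb{T},R\times R)$ for the product measure $\mu_x\times\mu_y$. Since addition is continuous and equivariant, $x+y$ is then generic for the pushforward $\mu$ of $\mu_x\times\mu_y$ under $(t,u)\mapsto t+u$; in particular, $h(x)=h(\mu_x)$, $h(y)=h(\mu_y)$ and $h(x+y)=h(\mu)$ all exist.

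The core of the argument is the observation that $\Phi(t,u)=(t+u,u)$ is a homeomorphism of $\mathbb{T}\times\mathbb{T}$ (with inverse $(s,u)\mapsto(s-u,u)$) commuting with $R\times R$, hence a topological conjugacy of $(\mathbb{T}\times\mathbb{T},R\times R)$ with itself. Setting $\xi=\Phi^*(\mu_x\times\mu_y)$, a direct computation of marginals shows that $\xi$ is a joining of $\mu$ (first coordinate) and $\mu_y$ (second coordinate). By Remark~\ref{conj}, the systems $(\mathbb{T}\times\mathbb{T},\mu_x\times\mu_y,R\times R)$ and $(\mathbb{T}\times\mathbb{T},\xi,R\times R)$ are isomorphic, so
$$h(\xi)=h(\mu_x\times\mu_y)=h(\mu_x)+h(\mu_y).$$
Applying the joining entropy inequality \eqref{eoj} to $\xi$ then yields $h(\mu_x)+h(\mu_y)=h(\xi)\le h(\mu)+h(\mu_y)$, and since $h(\mu_y)\le\log r<\infty$, subtraction gives $h(x)\le h(x+y)$.

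By symmetry (applying the identical argument with the analogous shear $(t,u)\mapsto(t,t+u)$, or simply interchanging the roles of $x$ and $y$), one obtains $h(y)\le h(x+y)$, yielding the stated bound. The main obstacle, and in fact the only nontrivial ingredient, will be recognizing that the shear $\Phi$ provides a topological conjugacy — the rest is a routine application of the entropy facts recalled earlier in Section~\ref{S3}.
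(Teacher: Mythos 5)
Your proposal is correct and follows essentially the same route as the paper: the paper also passes through the pair $(x+y,y)$, noting that $(t,u)\mapsto(t+u,u)$ and $(t,u)\mapsto(t-u,u)$ are factor maps in both directions (your topological-conjugacy observation, phrased differently), to get $h(x,y)=h(x+y,y)$, and then applies the joining inequality \eqref{eoj} and subtracts the finite quantity $h(y)$. The only cosmetic difference is that you invoke Remark~\ref{conj} for the shear, whereas the paper applies \eqref{eof} twice.
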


\begin{proof} By the definition of independent points, $x$ and $y$ are generic for some \im s $\mu,\nu\in\M(\mathbb T,R)$, respectively, while the pair $(x,y)$ is generic for $\mu\times\nu\in\M(\mathbb T\times\mathbb T,R\times R)$. In particular, $h(x)$ and $h(y)$ are well defined (as, correspondingly, $h(\mu)$ and $h(\nu)$). The point $x+y\in\mathbb T$ is the image of $(x,y)$ via the factor map $(t,u)\mapsto t+u$, $t,u\in\mathbb T$, therefore, by Remark~\ref{mxmy}, $x+y$ is also generic for some measure, and hence $h(x+y)$ is well defined as well. Note that, on the one hand, the factor map $(t,u)\mapsto(t+u,u)$, $t,u\in\mathbb T$, sends $(x,y)$ to $(x+y,y)$, and on the other hand, the factor map $(t,u)\mapsto (t-u,u)$, $t,u\in\mathbb T$, sends $(x+y,y)$ back to $(x,y)$. Using the inequalities \eqref{eof} (two times) and \eqref{eoj}, we obtain:
$$
h(x,y)=h(x+y,y)\le h(x+y)+h(y).
$$
By independence of $x$ and $y$, we also have $h(x,y)=h(x)+h(y)$. So, we have shown that $h(x)\le h(x+y)$. By switching the roles of $x$ and $y$, we also get
$h(y)\le h(x+y)$. 
\end{proof}

\begin{cor}\label{indep}\phantom{.}
\begin{itemize}
\item[(i)]If $x, y\in\mathbb T$ are $R$-independent and $x\in\mathcal N(\mathbb T,R)$ then $x+y\in\mathcal N(\mathbb T,R)$ regardless of $y$ (use $h(x+y)\ge \max\{h(x),h(y)\}=\log r$, and Proposition~\ref{nordet1}~(3)). In particular, the sum of two independent $R$-normal elements of $\mathbb T$ is $R$-normal.
\item[(ii)]If $x, y\in\R$ are $r$-independent and $x\in\mathcal N(r)$ then $x+y\in\mathcal N(r)$ (regardless of $y$). In particular, the sum of two $r$-independent real numbers normal in base~$r$ is normal in base $r$.
\end{itemize}
\end{cor}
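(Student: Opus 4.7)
The plan is to combine Proposition~\ref{x+yi} with the two facts recorded in Remark~\ref{umome}: that every \im\ on $(\mathbb T,R)$ has entropy at most $\log r$, and that the Lebesgue measure $\lambda$ is the \emph{unique} \im\ achieving this maximum. The hint embedded in the statement of the corollary (``use $h(x+y)\ge \max\{h(x),h(y)\}=\log r$, and Proposition~\ref{nordet1}(3)'') already indicates the shape of the argument; what remains is to assemble the pieces carefully and to transport the conclusion from $\mathbb T$ to $\R$.

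For part (i), I would begin by unpacking the independence hypothesis via Definition~\ref{indd}: $x$ is generic for some $\mu\in\M(\mathbb T,R)$, $y$ is generic for some $\nu\in\M(\mathbb T,R)$, and the pair $(x,y)$ is generic for the product $\mu\times\nu$. The assumption $x\in\mathcal N(\mathbb T,R)$ together with Definition~\ref{torus}(2) forces $\mu=\lambda$, and hence $h(x)=h(\lambda)=\log r$ by Remark~\ref{umome}. The proof of Proposition~\ref{x+yi} already shows that $x+y$, being the image of $(x,y)$ under the continuous equivariant sum map $(t,u)\mapsto t+u$, is itself generic (by Remark~\ref{mxmy}) for some $\tau\in\M(\mathbb T,R)$; in particular $h(x+y)$ exists. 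Proposition~\ref{x+yi} then yields
$$
h(\tau)=h(x+y)\ge\max\{h(x),h(y)\}\ge\log r,
$$
while the maximal-entropy bound gives $h(\tau)\le\log r$. Hence $h(\tau)=\log r$, and by the uniqueness clause of Remark~\ref{umome} we conclude $\tau=\lambda$, so $x+y\in\mathcal N(\mathbb T,R)$ by Definition~\ref{torus}(2). The ``in particular'' assertion is simply the special case in which $y$ itself lies in $\mathcal N(\mathbb T,R)$.

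For part (ii), I would transport (i) to the real line via the additive homomorphism $x\mapsto\{x\}$. By Remark~\ref{lkj}, $r$-independence of $x,y\in\R$ is equivalent to $R$-independence of the fractional parts $\{x\},\{y\}\in\mathbb T$; by Remark~\ref{thesame}(2), $x\in\mathcal N(r)$ is equivalent to $\{x\}\in\mathcal N(\mathbb T,R)$; and $\{x+y\}=\{x\}+\{y\}$ in $\mathbb T$. Applying (i) to $\{x\},\{y\}$ and translating back through Remark~\ref{thesame}(2) gives $x+y\in\mathcal N(r)$.

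I do not expect any real obstacle: the substantive input is Proposition~\ref{x+yi} together with the intrinsic ergodicity of $(\mathbb T,R)$ (Remark~\ref{umome}). The only point warranting explicit care is that $x+y$ is genuinely \emph{generic}, not merely quasi-generic, for the measure $\tau$ whose entropy is being squeezed against $\log r$; this is exactly the content of Remark~\ref{mxmy} applied to the sum map and is used implicitly in the proof of Proposition~\ref{x+yi}.
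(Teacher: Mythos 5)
Your proposal is correct and follows essentially the same route as the paper, which proves the corollary via the parenthetical hint: combine the lower bound $h(x+y)\ge\max\{h(x),h(y)\}=\log r$ from Proposition~\ref{x+yi} with the upper bound $\log r$ and intrinsic ergodicity of $(\mathbb T,R)$. Your only cosmetic deviation is that you invoke the uniqueness of the maximal-entropy measure (Remark~\ref{umome}) directly rather than through the packaged equivalence of Proposition~\ref{nordet1}(3), and the transfer to $\R$ in part (ii) is exactly the intended use of Remarks~\ref{lkj} and~\ref{thesame}.
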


Independence is not necessary for the sum of $R$-normal elements of the circle $\mathbb T$ to be $R$-normal. For example, whenever $x$ is $R$-normal then $x+x=2x$ is also $R$-normal (see Proposition~\ref{xq}), while the pair of points $(x,x)$ generates the diagonal joining, which makes them far from independent. 

\begin{ques}\label{q1} It follows, via Remark~\ref{thesame}, from Corollaries~\ref{indep} and~\ref{xq} (see below) that there are two extreme cases when the sum of two real numbers $x,y\in\mathcal N(r)$ belongs to $\mathcal N(r)$: (a) when $x$ and $y$ are $r$-independent, and (b) when $x=qy$ for some rational number $q\neq -1$. Is there a  succinct necessary and sufficient condition for the pair of two numbers $x,y\in\mathcal N(r)$ to have their sum also in $\mathcal N(r)$?  
\end{ques}

For completeness of the picture, we provide a short proof of the following well-known fact:

\begin{prop}\label{any}
Any real number $x$ can be represented in uncountably many different ways as a sum of two numbers normal in base $r$. 
\end{prop}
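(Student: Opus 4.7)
The plan is to deduce the representation from a measure-theoretic argument using Borel's classical theorem that almost every real number is normal in base $r$, i.e., $\mathcal{N}(r)$ has full Lebesgue measure.

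First I would fix $x\in\R$ and look for decompositions of the form $x=y+(x-y)$ with both $y$ and $x-y$ normal in base $r$. Since the only freedom is in choosing $y$, it suffices to exhibit uncountably many $y\in\mathcal{N}(r)$ such that $x-y\in\mathcal{N}(r)$ as well. Equivalently, I want to show that the set
$$
E_x := \mathcal{N}(r)\cap\bigl(x-\mathcal{N}(r)\bigr)
$$
is uncountable.

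Next I would combine three facts. (i) By Borel's normal number theorem, $\mathcal{N}(r)$ has full Lebesgue measure in $\R$. (ii) By Remark~\ref{minus}, $-\mathcal{N}(r)=\mathcal{N}(r)$, so $x-\mathcal{N}(r)=x+\mathcal{N}(r)$. (iii) Lebesgue measure on $\R$ is translation invariant, hence $x+\mathcal{N}(r)$ also has full Lebesgue measure. Thus $E_x$ is the intersection of two sets of full Lebesgue measure, so it too has full Lebesgue measure, and in particular is uncountable (indeed, of the cardinality of the continuum).

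Finally, for each $y\in E_x$, setting $z:=x-y$ gives a representation $x=y+z$ with $y,z\in\mathcal{N}(r)$, and distinct choices of $y$ yield distinct representations. I do not foresee any substantive obstacle here — the argument is essentially a one-line consequence of Borel's theorem together with Remark~\ref{minus}; the only point requiring any care is the (trivial) verification that $x-\mathcal{N}(r)$ is the translate of a full-measure set and hence has full measure.
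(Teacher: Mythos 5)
Your proof is correct and follows essentially the same route as the paper's: both establish that $\mathcal N(r)$ and $x-\mathcal N(r)$ have full Lebesgue measure (the paper via invariance of Lebesgue measure under symmetry and translation, you via Remark~\ref{minus} plus translation invariance) and conclude that their intersection is an uncountable set of admissible summands.
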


\begin{proof}
The set $\mathcal N(r)$ is of full Lebesgue measure on $\R$. By invariance of the Lebesgue measure under symmetry and translation, the set $x-\mathcal N(r)=\{x-y: y\in\mathcal N(r)\}$ is also of full Lebesgue measure, which implies that $(x-\mathcal N(r))\cap \mathcal N(r)$ is of full Lebesgue measure. So, there exists uncountably many numbers $x_1\in\mathcal N(r)$ such that $x_1\in x-\mathcal N(r)$ and hence for some $x_2\in\mathcal N(r)$ (depending on the choice of $x_1$) we have $x_1=x-x_2$. Then $x=x_1+x_2$, as required.
\end{proof}

\subsection{Attainability of the bounds for lower and upper entropy}\label{4.3}
We now present a series of examples to illustrate the behavior of lower and upper entropy under addition. In particular, we will show that all the estimates established in Proposition~\ref{x+y} are sharp. We will be utilizing the system $(\mathbb T,R)$, where the map $R$ is given by $t\mapsto rt$, $t\in\mathbb T$, $r\ge2$.

\begin{exam}\label{ea}
We begin with a simple example in which $h(x), h(y)$ and $h(x+y)$ exist and $|h(x)-h(y)|=h(x+y)<h(x)+h(y)$, i.e.,
the entropy of the sum achieves its lower bound (given by  Corollary~\ref{rauzy}(1)) but not the upper bound. Let $r=2$. Let $x\in\mathbb T$ be generic (under the transformation $R$) for a measure of positive entropy $h$ (so, $h(x)=h$). 
The map $x\mapsto-x$ is a \tl\ conjugacy of the system $(\mathbb T, R)$ with itself, hence, for $y=-x$, we have $h(y)=h(x)=h$. Now, $x+y=0$, which is fixed under $R$, and hence $h(x+y)=0$. Thus 
$$
|h(x)-h(y)|=0=h(x+y)<2h=h(x)+h(y).
$$
\end{exam}

\begin{exam}\label{sm}
In this example we deal with the equality 
\begin{equation}\label{==}
h(x+y)=h(x)+h(y).
\end{equation}
Note that the equality \eqref{==} holds if at least one of $x, y$ is $R$-deterministic. It is of interest to inquire whether the equality \eqref{==} can hold when both $x$ and $y$ are not $R$-deterministic. We will answer this question in the positive. Note that if $x$ is not $R$-deterministic and $y$ is $R$-normal then $h(x)+h(y)>\log r$ and \eqref{==} cannot hold. We will show that \eqref{==} can hold  for independent $x,y\notin\mathcal D(\mathbb T,R)\cup\mathcal N(\mathbb T,R)$ as well as for dependent $x',y'\notin\mathcal D(\mathbb T,R)\cup\mathcal N(\mathbb T,R)$.

(i) Take $r=4$ and let $x\in\mathbb T$ be an element such that its alias in base $4$, $\omega_4(x)$, contains only the digits $0,1$, and let $y\in\mathbb T$ be an element such that $\omega_4(y)$ contains only the digits $0,2$. Because both $\omega_4(x)$ and $\omega_4(y)$ use only 2 out of 4 symbols, $x$ and $y$ are not $4$-normal. 
Observe that on such pairs $(x,y)$ the factor map $\mathbb T\times\mathbb T\to\mathbb T$ given by $(t,u)\mapsto t+u$, $t,u\in\mathbb T$, is invertible. Indeed, the digit $1$ occurs in $\omega_4(x)$ precisely at the coordinates where $\omega_4(x+y)$ has digits $1$ or~$3$, and likewise, the digits $2$ in $\omega_4(y)$ occur precisely at the coordinates where $\omega_4(x+y)$ has digits $2$ or~$3$, and so $x+y$ determines the pair $(x,y)$. This implies that whenever $x$, $y$, $x+y$ and the pair $(x,y)$ are generic for some measures $\mu$, $\nu$, $\zeta$ and $\xi$ (the latter is a measure on $\mathbb T\times\mathbb T$), respectively, then the systems $(\mathbb T, \zeta, R)$ and $(\mathbb T\times\mathbb T,\xi, R\times R)$ (here $R$ is given by $t\mapsto 4t$, $t\in\mathbb T$) are isomorphic, and hence
$$
h(x+y) = h(x,y).
$$
Since the digits 0 and 1 in $\omega_4(x)$ (as well as 0 and 2 in $\omega_4(y)$) are distributed completely arbitrarily, we can find elements $x$ and $y$ as above so that $h(x)=h(\mu)>0$ and $h(y)=h(\nu)>0$ (implying that $x,y\notin\mathcal D_{\mathbb T}(4)$), and moreover, by judiciously choosing the positions of the digits in the aliases of $x$ and $y$, we can arrange these aliases to be independent (and hence $x,y$ to be $R$-independent). In this case, we have $h(x,y)=h(x)+h(y)$ and the desired equality $h(x+y)=h(x)+h(y)$ holds. 

(ii) We keep $r=4$. We will construct $R$-dependent $x'$ and $y'$ using $x,y$ from~(i). The element $x'$ is obtained by placing successive digits of $\omega_4(x)$ at even coordinates and filling the odd coordinates with zeros. We create $y'$ analogously using the digits of $\omega_4(y)$. Note that under $\sigma^2$ (the shift by two positions) the \sq\ $\omega_4(x')$ generates a measure $\mu'$ on $\{0,1,2,3\}^\N$ such that the system $(\{0,1,2,3\}^\N,\sigma^2,\mu')$ is isomorphic to system
$(\{0,1,2,3\}^\N,\sigma,\mu)$ ($\mu$ is the measure generated by $\omega_4(x)$ under the shift~$\sigma$). The classical formula for entropy, $h_\mu(T^k)=kh_\mu(T)$ (see, e.g.,~\cite[Fact~2.4.19]{Do}), implies that $h(x')=\frac12 h(x)$. Similarly, $h(y')=\frac12 h(y)$ and $h(x'+y')=\frac12 h(x+y)$, so the equality $h(x'+y')=h(x')+h(y')$ holds. It remains to show that the elements $x'$ and $y'$ are not $R$-independent.  Let $\mu'$, $\nu'$ and $\xi'$ denote the measures generated by $x'$, $y'$ and the pair $(x',y')$, respectively. Then 
$$
\mu'([1])=\tfrac12\mu([1]),\ \  \nu'([2])=\tfrac12\nu([2]) \text{ \ \ and \ \ }\xi'\left(\begin{bmatrix}1\\2\end{bmatrix}\right)=\tfrac12\xi\left(\begin{bmatrix}1\\2\end{bmatrix}\right),\footnote{Here $\begin{bmatrix}1\\2\end{bmatrix}$ denotes the cylinder in 
$(\{0,1,2,3\}\times\{0,1,2,3\})^\N$ corresponding to the block $\begin{matrix}1\\2\end{matrix}$\,, one of 16 blocks of length $1$ over the alphabet $\{0,1,2,3\}\times\{0,1,2,3\}$.}
$$
where $\mu,\nu$ and $\xi$ are, as in the example (i), the measures generated by $x,y$ and the pair $(x,y)$, respectively.
By $R$-independence of $x$ and $y$, we have $\xi\left(\begin{bmatrix}1\\2\end{bmatrix}\right)=\mu([1])\nu([2])$, which implies that $\xi'\left(\begin{bmatrix}1\\2\end{bmatrix}\right)=\tfrac12\mu([1])\nu([2])$, which is strictly larger than $\mu'([1])\nu'([2])$.
\end{exam}

\begin{rem}
In Example~\ref{sm} we were utilizing base $4$. With some extra work, one can create similar examples in base $2$. For instance, to get an example as in Example~\ref{sm}(i), consider elements $x,y\in\mathbb T$ such that $\omega_2(x)$ is built of sufficiently separated (say, by at least 20 zeros) repetitions of the block $11$ while $\omega_2(y)$ is built analogously with the blocks $101$. Then, by inspecting the digits of $\omega_2(x+y)$, one can locate all occurrences of $11$ in $\omega_2(x)$ and all occurrences of $101$ in $\omega_2(y)$, and so $x+y$ determines the pair $(x,y)$. From here, we can argue as in Example~\ref{sm}, including the modification leading to example (ii).
\end{rem}

We are going now to present an example which illustrates the behavior of lower and upper entropy under addition when the entropy of either $x\in\mathbb T$ or $y\in\mathbb T$ does not exist, i.e., either $\underline h(x)<\overline h(x)$ or $\underline h(y)<\overline h(y)$.
We start with introducing a concise notation for the expressions that appear in Proposition~\ref{x+y}:
\begin{align}\label{for}
\begin{split}
    \text{lower bound for $\underline h(x_1+x_2)$:\ \ }&\underline{LB}=\max\{0,\ \underline h(x_1)-\overline h(x_2),\ \underline h(x_2)-\overline h(x_1)\},\\
    \text{upper bound for $\underline h(x_1+x_2)$:\ \ }&\underline{U\!B}=\min\{\log r,\ \underline h(x_1)+\overline h(x_2),\ \overline h(x_1)+\underline h(x_2)\},\\
    \text{lower bound for $\overline h(x_1+x_2)$:\ \ }&\overline{LB}=\max\{|\underline h(x_1)-\underline h(x_2)|,\ |\overline h(x_1)-\overline h(x_2)|\},\\
    \text{upper bound for $\overline h(x_1+x_2)$:\ \ }&\overline{U\!B}=\min\{\log r,\ \overline h(x_1)+\overline h(x_2)\}.
\end{split}
\end{align}
Note that if $\underline{LB}<\underline{U\!B}$ then in the double inequality 
$\underline{LB}\le \underline h(x_1+x_2)\le \underline{UB}$ only one equality can be achieved.
A similar observation applies to $\overline{LB}\le \overline h(x_1+x_2)\le \overline{UB}$ when $\overline{LB}<\overline{U\!B}$. This leads to four extreme cases, and each of them can be demonstrated by an example. We will provide just one, for the most delicate situation when the (smaller) lower entropy achieves its upper bound while the (larger) upper entropy achieves its lower bound. The remaining three examples are similar (and in fact easier). 

\begin{exam} There exist $x_1,x_2\in\mathbb T$ such that $\underline h(x_1+x_2)<\overline h(x_1+x_2)$ and
$$
\underline{LB}<\underline h(x_1+x_2)=\underline{U\!B}\text{ \ while \ }\overline{LB}=\overline h(x_1+x_2)<\overline{U\!B}.
$$

\smallskip\noindent
Let $r=2$. Let $\omega\in\{0,1\}^\N$ be generic for the Bernoulli measure $\mu_{\bar p}$ with $\bar p=(\frac15,\frac45)$ and let $z=\phi_2(\omega)$ (that is, $z\in\mathbb T$ is such that its binary alias, $\omega_2(z)$, matches $\omega$). 
Then 
$$
h(z)=h(-z) =-\tfrac15\log(\tfrac15)-\tfrac45\log(\tfrac45)=: H(\bar p),
$$
which is a positive number smaller than $\frac34\log 2$ (this will be used later). Let 
\begin{equation}\label{sos}
\mathbb S=\bigcup_{n\ge1}\{(2n)!+1,(2n)!+2\dots,(2n+1)!\}.
\end{equation}
Then 
$$
\mathbb S^{\mathsf c}=\{1\}\cup\bigcup_{n\ge1}\{(2n-1)!+1,(2n-1)!+2,\dots,(2n)!\}.
$$
We will also use the periodic set $\mathbb A=3\N$ and its complement $\mathbb A^{\mathsf c}=(3\N-1)\cup(3\N-2)$. 
By \cite[Theorem 4]{K1}, the \sq\ $\omega_2(z)|_\mathbb A$ (the restriction of $\omega_2(z)$ to the periodic \sq\ $\mathbbm 1_{\mathbb A}$) is also generic for $\mu_{\bar p}$ and hence has entropy $H(\bar p)$. Note that under $\sigma^3$ (the shift by three positions) the \sq\ $\omega_2(z)\cdot \mathbbm 1_{\mathbb A}$, where the multiplication of binary \sq s is understood coordinatewise, generates a measure $\mu$ such that the system $(\{0,1\}^\N,\mu,\sigma^3)$ is isomorphic to $(\{0,1\}^\N,\nu,\sigma)$, where
$\nu$ is generated (under $\sigma$) by $\omega_2(z)|_\mathbb A$. Now, the classical formula for entropy, $h_\mu(T^k)=kh_\mu(T)$, implies that the entropy of $\omega_2(z)\cdot \mathbbm 1_{\mathbb A}$ equals one third of the entropy of $\omega_2(z)|_{\mathbb A}$, i.e.,
$$
h(\omega_2(z)\cdot\mathbbm1_{\mathbb A})=\frac{H(\bar p)}3.
$$
By a similar argument, we have $h(\omega_2(-z)\cdot\mathbbm1_{\mathbb A^{\mathsf c}})=\frac{2H(\bar p)}3$. 
We let $x_1\in\mathbb T$ be the element whose binary alias is $\omega_2(z)\cdot\mathbbm1_{\mathbb A}$. Then 
$$
\underline h(x_1)=\overline h(x_1)=\frac{H(\bar p)}3.
$$
We define $x_2\in\mathbb T$ as the element whose alias is $\omega_2(-z)\cdot \mathbbm1_{\mathbb S}$. The alias of $x_2$ is comprised of alternating blocks, of rapidly increasing lengths, coming from the \sq s $\omega_2(-z)$ and $\bar 0$ (the \sq\ of zeros). The values $\underline h(x_2)$ and $\overline h(x_2)$ will be established with the help of the following lemma, whose proof will be given after we complete the example.

\begin{lem}\label{ratunku}
Let $\mathbb S\subset\N$ be the set defined in \eqref{sos}. Suppose $s,t\in\{0,1\}^\N$ are generic (under the shift transformation $\sigma$) for some measures $\mu$ and $\nu$, respectively. Let 
$$
u= s\cdot \mathbbm1_{\mathbb S}+t\cdot\mathbbm1_{\mathbb S^{\mathsf c}}.
$$
Then
$$
\underline h(u)=\min\{h(s), h(t)\} \text{ \ and \ }\overline h(u)=\max\{h(s), h(t)\}.
$$
\end{lem}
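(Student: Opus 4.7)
The plan is to identify the set $\M_u$ of measures quasi-generated by $u$ and then read off $\underline h(u)$ and $\overline h(u)$ using the affinity of the Kolmogorov--Sinai entropy in the invariant measure. I expect $\M_u$ to equal exactly the segment $\{\beta\mu + (1-\beta)\nu : \beta \in [0,1]\}$, so that the two entropies come out as the extreme values of $\beta h(\mu) + (1-\beta)h(\nu)$. The mechanism is the epoch structure built into $\mathbb S$: setting $a_j = j!$ and $E_j = \{(j-1)!+1,\ldots,j!\}$, the sets $E_j$ ($j\ge2$) partition $\N\setminus\{1\}$, and $E_j \subset \mathbb S$ precisely when $j$ is odd. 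Hence $u|_{E_j}$ is a window of $s$ for odd $j$ and of $t$ for even $j$. The decisive combinatorial feature is $|E_j| = (1 - 1/j)\,a_j$, i.e., the terminal epoch dominates $[1, a_j]$ while all preceding epochs together occupy only a fraction $1/j$.

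First I would show $\mu,\nu \in \M_u$. For $j$ odd, the empirical measure of $u|_{[1, a_j]}$ coincides, up to an $O(1/j)$ mass shift coming from earlier epochs, with the empirical measure of the window $s|_{(a_{j-1}, a_j]}$. For a fixed cylinder $[C]$ the frequency of $C$ in this window can be written as a suitably weighted difference of its frequencies in the two initial segments $s|_{[1, a_j]}$ and $s|_{[1, a_{j-1}]}$; both of the latter converge to $\mu([C])$ by the genericity of $s$, hence so does the difference. Thus $u$ quasi-generates $\mu$ along the sub\sq\ of odd factorials, and symmetrically $\nu$ along the even ones.

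Next I would show $\M_u$ contains no other measures. Take $\theta \in \M_u$ generated along $(n_k)$, let $j_k$ be the epoch index of $n_k$, and pass to a sub\sq\ so that $j_k$ has fixed parity (WLOG odd, so the terminal epoch uses $s$) and $\alpha_k := a_{j_k-1}/n_k \to \alpha \in [0,1]$. Split
\[
\frac{1}{n_k}\sum_{i=0}^{n_k-1}\delta_{\sigma^i u} \;=\; \alpha_k\, \pi_k^{(1)} \;+\; (1-\alpha_k)\, \pi_k^{(2)},
\]
where $\pi_k^{(1)}$ is the empirical measure of $u|_{[1, a_{j_k-1}]}$ and $\pi_k^{(2)}$ is the empirical measure over positions inside the terminal $s$-piece $u|_{(a_{j_k-1}, n_k]}$. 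The previous step gives $\pi_k^{(1)} \to \nu$ (since $a_{j_k-1}$ is an even factorial), while the same interior-segment computation gives $\pi_k^{(2)} \to \mu$ when $\alpha < 1$; when $\alpha = 1$ the $(1-\alpha_k)$-weighted contribution of $\pi_k^{(2)}$ is $o(1)$ and drops out. Either way $\theta = \alpha\nu + (1-\alpha)\mu$, and the analysis with $j_k$ even supplies the remaining convex combinations.

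To conclude, the Kolmogorov--Sinai entropy is affine on the set of invariant measures, so $h(\beta\mu + (1-\beta)\nu) = \beta h(\mu) + (1-\beta)h(\nu)$ for $\beta \in [0,1]$. Combined with the determination of $\M_u$, this yields $\underline h(u) = \min\{h(\mu), h(\nu)\}$ and $\overline h(u) = \max\{h(\mu), h(\nu)\}$. I expect the main obstacle to be the interior-segment argument: the hypothesis on $s$ is genericity of its \emph{initial} segments, but I need convergence of fixed block frequencies in windows $s|_{(a, b]}$ with $b-a\to\infty$ and $a/b\to\alpha<1$. This is handled by writing the interior count as a difference of two initial-segment counts, and it is the one step where the factorial growth of the epoch boundaries enters in a slightly delicate way.
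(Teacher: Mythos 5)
Your proof is correct and follows essentially the same route as the paper's: decompose $[1,n_k]$ into the initial segment ending at the last factorial boundary and the terminal partial epoch, show the former generates $\nu$ (resp.\ $\mu$) and the latter generates $\mu$ (resp.\ $\nu$), conclude that every quasi-generated measure is $\alpha\nu+(1-\alpha)\mu$, and finish by affinity of entropy. The only cosmetic difference is that you extract the interior-window limit by the difference-of-counts computation directly, whereas the paper packages the same fact as ``a concatenation generates the convex combination of the measures generated by its pieces'' (Remark~\ref{uo}) and solves $\mu=\alpha\mu+(1-\alpha)\xi$ for $\xi$.
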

By applying Lemma~\ref{ratunku} to $u=\omega_2(x_2)$, we get 
$$
\underline h(x_2)=0 \text{ \ and \ } \overline h(x_2)=H(p).
$$
Substituting the values of $\underline h(x_1), \overline h(x_1), \underline h(x_2), \overline h(x_2)$ into \eqref{for} we obtain
\begin{align*}
&\underline{LB}=\max\{0,\tfrac{H(p)}3-H(p), 0-\tfrac{H(p)}3\}=0,\\
&\underline{U\!B}=\min\{\log2,\ \tfrac{H(p)}3+H(p),\ \tfrac{H(p)}3+0\}=\tfrac{H(p)}3,\\
&\overline{LB}=\max\{|\tfrac{H(p)}3-0|, |\tfrac{H(p)}3-H(p)|\}=\tfrac{2H(p)}3,\\
&\overline{U\!B}=\min\{\log2,\ \tfrac{H(p)}3+H(p)\}=\tfrac{4H(p)}3.
\end{align*} 

Note that coordinatewise addition of binary \sq s with disjoint supports produces binary \sq s. Thus we can write 
$$
\omega_2(x_1) = \omega_2(z)\cdot\mathbbm1_{\mathbb A}\cdot\mathbbm1_{\mathbb S}+\omega_2(z)\cdot\mathbbm1_{\mathbb A}\cdot\mathbbm1_{\mathbb S^{\mathsf c}}
$$ 
and 
$$
\omega_2(x_2) = \omega_2(-z)\cdot\mathbbm1_{\mathbb A}\cdot \mathbbm1_{\mathbb S}+\omega_2(-z)\cdot\mathbbm1_{\mathbb A^{\mathsf c}}\cdot \mathbbm1_{\mathbb S}.
$$

Also note that, whenever $s,t\in\mathbb T$ are such that $\omega_2(s)$ and $\omega_2(t)$ have disjoint supports, then $\omega_2(s+t)=\omega_2(s)+\omega_2(t)$. Since $\omega_2(z)$ and $\omega_2(-z)$ have disjoint supports, so do $\omega_2(x_1)$ and $\omega_2(x_2)$, therefore 
        \begin{multline*}
        \omega_2(x_1+x_2)=\omega_2(z)\cdot\mathbbm1_{\mathbb A}\cdot\mathbbm1_{\mathbb S}+\omega_2(z)\cdot\mathbbm1_{\mathbb A}\cdot\mathbbm1_{\mathbb S^{\mathsf c}}
        +\omega_2(-z)\cdot\mathbbm1_{\mathbb A}\cdot \mathbbm1_{\mathbb S}+\omega_2(-z)\cdot\mathbbm1_{\mathbb A^{\mathsf c}}\cdot \mathbbm1_{\mathbb S}=\\
        =\omega_2(z)\cdot\mathbbm1_{\mathbb A}\cdot\mathbbm1_{\mathbb S^{\mathsf c}}+(\omega_2(-z)\cdot\mathbbm1_{\mathbb A^{\mathsf c}}+\mathbbm1_{\mathbb A})\cdot\mathbbm1_{\mathbb S}.
        \end{multline*}

The rightmost formula shows that the binary alias of $x_1+x_2$ is built of alternating blocks, of rapidly growing lengths, coming from the \sq s $\omega_2(z)\cdot\mathbbm1_{\mathbb A}$ and $\omega_2(-z)\cdot\mathbbm1_{\mathbb A^{\mathsf c}}+\mathbbm1_{\mathbb A}$.
As we have already shown, the measure generated by the \sq\ $\omega_2(z)\cdot\mathbbm1_{\mathbb A}$ has entropy $\frac{H(p)}3$.
Since the \sq s $\omega_2(-z)\cdot\mathbbm1_{\mathbb A^{\mathsf c}}$ and $\mathbbm1_{\mathbb A}$ have disjoint supports, and the periodic \sq\ $\mathbbm1_A$ is deterministic, using Corollary~\ref{rauzy}~(2) we obtain
$$
h(\omega_2(-z)\cdot\mathbbm1_{\mathbb A^{\mathsf c}}+\mathbbm1_{\mathbb A})=h(\omega_2(-z)\cdot\mathbbm1_{\mathbb  A^{\mathsf c}})=\frac{2H(p)}3.
$$
Lemma~\ref{ratunku} now implies that 
$$
\underline h(x_1+x_2)=\frac{H(p)}3 \text{ \ and \ }\overline h(x_1+x_2)=\frac{2H(p)}3,
$$
and the desired relations hold.
\end{exam}

\begin{proof}[Proof of Lemma~\ref{ratunku}]
On the interval $[1,(2n+1)!]$, $u$ differs from $s$ only on the subinterval $[1,(2n)!]$. Since $\frac{(2n)!}{(2n+1)!}\to0$, this difference becomes negligible for large $n$, implying that along the \sq\ $\mathcal J_1=((2n+1)!)_{n\ge1}$, $u$ generates the same measure as does $s$, i.e., the measure $\mu$. By a similar argument, along the \sq\ $\mathcal J_2=((2n)!)_{n\ge1}$, $u$ generates $\nu$. Now, let $\mathcal J=(j_k)_{k\ge1}$ be any \sq\ along which $u$ generates some measure. By passing to a sub\sq, we may assume that either all $j_k$ fall in the intervals of the form $[(2n)!+1,(2n+1)!]$ or all $j_k$ fall in the intervals of the form $[(2n-1)!+1,(2n)!]$. Suppose that the first case holds (the argument for the other case is identical) and for each $k\ge1$ denote by $n_k$ the integer such that 
$j_k\in[(2n_k)!+1,(2n_k+1)!]$. By passing to a sub\sq\ again, we can assume that the fractions
$$
\frac{(2n_k)!}{j_k}
$$
tend to a number $\alpha\in[0,1]$. Then, for large $k$, the block $u|_{[1,j_k]}$ occurring in $u$ over the interval $[1,j_k]$ is a concatenation of the blocks $B_k^{(1)}=u|_{[1,(2n_k)!]}$ and $B_k^{(2)}=u|_{[(2n_k)!+1,j_k]}=s|_{[(2n_k)!+1,j_k]}$. The numbers $(2n_k)!$ form a sub\sq\ of $\mathcal J_2$ so the blocks $B_k^{(1)}$ generate the measure $\nu$. If the blocks $B_k^{(2)}$ have bounded lengths, they can be ignored and $u$ is generic for $\nu$. Otherwise, by passing to a sub\sq\ one last time we may assume that the blocks $B_k^{(2)}$ generate  some measure $\xi$. Since $s$ is generic for $\mu$, the blocks $B_k^{(3)}=s|_{[1,j_k]}$ generate $\mu$. The same holds for the blocks $B_k^{(4)}=s|_{[1,(2n_k)!]}$. 
But $B_k^{(3)}$ is a concatenation of the blocks $B_k^{(4)}$ and $B_k^{(2)}$ where the proportion of lengths
$\frac{|B_k^{(4)}|}{|B_k^{(3)}|}$ tends to $\alpha$. Therefore $\mu=\alpha\mu+(1-\alpha)\xi$ (see Remark~\ref{uo}). Clearly, this implies that either $\alpha=1$ or $\xi=\mu$. 
Eventually, since $u|_{[1,j_k]}$ is a concatenation of the blocks $B_k^{(1)}$ (approximating $\nu$) and $B_k^{(2)}$ (approximating $\mu$, unless $\alpha=1$) and the fractions of lengths $\frac{|B_k^{(1)}|}{|B_k|}$ tend to $\alpha$,  
the measure generated by $u$ along $\mathcal J$ equals $\alpha\nu+(1-\alpha)\mu$ (also when $\alpha=1$). 

By the affinity property of entropy (see, e.g.,~\cite[Theorem~2.5.1]{Do}), we obtain 
$$
h(\alpha\nu+(1-\alpha)\mu)=\alpha h(\nu)+(1-\alpha)h(\mu),
$$
which is a number between $h(\nu)$ and $h(\mu)$. This completes the proof of the lemma.
\end{proof}

\subsection{Multiplication by rationals preserves lower and upper entropy}\label{qx}

It has been proved by Wall \cite{Wa} that if $x\in\R$ is normal in base $r$ and $q\neq0$ is rational then $qx$ is normal in base $r$. It is also true that if $q\neq0$  is rational and $y\in\R$ is deterministic in base $r$ then so is $qy$. Indeed, note that, for any real numbers $x,y,q$, $q\neq0$, we have
$$
qy+x = q(y+\tfrac1qx).
$$
Assume now that $x$ is normal in base $r$ and $q$ is rational. By Wall's theorem $\frac1qx$ is normal in base~$r$. Assuming in addition that $y$ is deterministic in base $r$, we have, by (the necessity in) Rauzy theorem (Theorem~\ref{rrrr}) that $y+\frac1qx$ is normal in base~$r$. Applying Wall's theorem again, we get that $q(y+\frac1qx)=qy+x$ is normal in base $r$. By the sufficiency in Rauzy theorem, $qy$ is deterministic. 

We will now demonstrate that the above facts have deeper dynamical underpinnings. In view of Remark~\ref{thesame}, it is natural (and adequate) to work with the system $(\mathbb T,R)$ where $R(t)=rt$. 

We start with the trivial observation that since multiplication by an integer can be defined in terms of addition and negation,
and the passage $x\mapsto\{x\}$ from $\R$ to $\mathbb T$ is a group homomorphism, we have
$n\{x\}=\{nx\}$ for any $n\in\N$.

Now, division of an element $x\in\mathbb T$ by a positive integer $m$ has multiple outcomes, as there are multiple elements $y\in\mathbb T$ such that $my = x$. We will be using the following notation: for $x\in\mathbb T$ and a rational number $q=\frac nm$, by $qx$ we will denote any element $y\in\mathbb T$ such that $my = nx$. It will be clear from the context, that this ambiguity does not affect the correctness of our statements and proofs. 


\begin{prop}\label{xq} Consider the system $(\mathbb T,R)$ where $R$ is the map $t\mapsto rt$, $t\in\mathbb T$.
Let $q$ be any nonzero rational number. Then 
\begin{enumerate}
\item For any $x\in\mathbb T$ we have
$$
\underline h(qx)=\underline h(x), \text{\ \ and\ \ } \overline h(qx)=\overline h(x).
$$
\item In particular, if $x$ is deterministic or normal then $qx$ is, respectively, deterministic or normal.
\end{enumerate} 
\end{prop}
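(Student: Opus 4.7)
The plan is to reduce the proposition, stated for a nonzero rational $q = n/m$ (with $n \in \Z \setminus \{0\}$ and $m \in \N$), to the corresponding entropy-preservation statement for multiplication by a positive integer. For a positive integer $n$, consider the map $M_n : \mathbb T \to \mathbb T$ given by $t \mapsto nt$. One checks $M_n \circ R = R \circ M_n$ (both equal $t \mapsto rnt$), and $M_n$ is a continuous surjection in which every point has exactly $n$ preimages. Hence $M_n$ is a factor map from $(\mathbb T, R)$ to itself that is $n$-to-one.

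Fix $x \in \mathbb T$. By Remark~\ref{mxmy} applied to $\phi = M_n$ we have $\M_{nx} = M_n^*(\M_x)$. Moreover, since $M_n$ is finite-to-one, the same entropy-preservation result invoked at the end of the proof of Proposition~\ref{rr} (see \cite[Theorem~2.1]{LW}) ensures that every $R$-invariant measure $\mu$ satisfies $h(M_n^*(\mu)) = h(\mu)$. Combining these two facts yields
$$
\underline h(nx) \;=\; \inf_{\nu \in \M_{nx}} h(\nu) \;=\; \inf_{\mu \in \M_x} h(M_n^*(\mu)) \;=\; \inf_{\mu \in \M_x} h(\mu) \;=\; \underline h(x),
$$
and replacing $\inf$ by $\sup$ gives $\overline h(nx) = \overline h(x)$.

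To handle $q = n/m$, let $y \in \mathbb T$ be any element with $my = nx$ (i.e., $y$ is any choice of $qx$ in the sense explained before the Proposition). Applying the previous paragraph twice, one obtains $\underline h(nx) = \underline h(x)$ and $\underline h(my) = \underline h(y)$; since $my = nx$, the two lower entropies coincide, so $\underline h(y) = \underline h(x)$, and the same reasoning gives $\overline h(y) = \overline h(x)$. For negative $n$ it suffices to note, via Remark~\ref{minus}, that negation on $\mathbb T$ preserves both lower and upper entropy, which reduces the case $n < 0$ to $|n| > 0$. Part~(2) is then an immediate consequence of Proposition~\ref{nordet1}, which characterizes $R$-normality (resp.\ $R$-determinism) by the existence of $h(x)$ and its value $\log r$ (resp.\ $0$).

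The only delicate point is the careful use of Remark~\ref{mxmy} to identify $\M_{nx}$ with $M_n^*(\M_x)$ (so that \emph{every} measure quasi-generated by $nx$ is actually the pushforward of something quasi-generated by $x$); once this surjectivity is in hand, the finite-to-one entropy identity does all the real work, and the division step for $m$ is just the same statement read backwards.
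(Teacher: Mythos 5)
Your proposal is correct and follows essentially the same route as the paper's own proof: identify $t\mapsto nt$ as a finite-to-one topological factor map of $(\mathbb T,R)$ onto itself, use \cite[Theorem~2.1]{LW} to preserve entropy of the measures in $\M_x$, and read the relation $my=nx$ in both directions to handle the rational $q=n/m$. The only cosmetic difference is that you treat negative $n$ separately via Remark~\ref{minus}, whereas the paper applies the factor-map argument to all $n\in\Z\setminus\{0\}$ at once.
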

\begin{proof}Statement (2) follows, with the help of  Proposition~\ref{nordet1}(2) and (3), from statement (1). It remains to prove (1).

First, we will show that, for $n\in\Z\setminus\{0\}$, the mapping $t\mapsto nt$, $t\in\mathbb T$, preserves lower and upper entropy. Observe that this mapping is a \tl\ factor map from $(\mathbb T,R)$ to itself, and hence it sends $\M_x$ onto $\M_{nx}$. Moreover, this map preserves entropy of \im s, since it is finite-to-one (see e.g.,  \cite[Theorem~2.1]{LW}).  

As a consequence, the sets of entropy values $\{h(\mu):\mu\in\M_x\}$ and $\{h(\mu):\mu\in\M_{nx}\}$ coincide. In view of Definition~\ref{lue}, it follows that
\begin{equation}\label{hxn}
\underline h(nx)=\underline h(x) \text{\ \ and\ \ } \overline h(nx)=\overline h(x).
\end{equation}

Now let $q=\frac nm$ be rational with $m\in\N$ and let $y=qx$, i.e., $y\in\mathbb T$ satisfies $my = nx$. By \eqref{hxn} we obtain
\begin{align*}\label{hxn1}
\underline h(qx)&=\underline h(y)=\underline h(my)=\underline h(nx)=\underline h(x) \text{\ \ and\ \ }\\ 
\overline h(qx)&=\overline h(y)=\overline h(my)=\overline h(nx)=\overline h(x).
\end{align*}
\end{proof}

\begin{cor}\label{affin}
Let $q\neq0$ be rational and let $y\in\R$ be deterministic in base~$r$. Then the mapping $L_{q,y}:\R\to\R$ given by $x\mapsto qx+y$ preserves both lower and upper entropy of real numbers. In particular, it preserves both normality and determinism in base $r$.
\end{cor}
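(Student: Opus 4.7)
The plan is to combine Proposition~\ref{xq}, which says multiplication by a nonzero rational preserves lower and upper entropy, with Corollary~\ref{rauzy}(2), which says that adding a deterministic element preserves lower and upper entropy. Both results are stated in the circle model $(\mathbb T,R)$, so the bulk of the work is simply transferring them to $\R$ via Remark~\ref{thesame}.

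First, I would fix $x\in\R$ and pass to the torus: since $\R\to\mathbb T$, $x\mapsto\{x\}$, is a group homomorphism, we have
$$
\{L_{q,y}(x)\}=\{qx+y\}=\{qx\}+\{y\},
$$
where the sum on the right is taken in $\mathbb T$. By Remark~\ref{thesame}(1), it suffices to prove the corresponding equalities for $\underline h$ and $\overline h$ computed in $(\mathbb T,R)$. Moreover, writing $q=\tfrac nm$, the element $\{qx\}\in\mathbb T$ satisfies $m\{qx\}=\{nx\}=n\{x\}$, so it is a legitimate value of ``$q\cdot\{x\}$'' in the sense of the convention introduced just before Proposition~\ref{xq}. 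Hence Proposition~\ref{xq}(1) applies and gives
$$
\underline h(\{qx\})=\underline h(\{x\}) \text{ \ and \ } \overline h(\{qx\})=\overline h(\{x\}).
$$

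Next, since $y$ is deterministic in base $r$, Remark~\ref{thesame}(4) yields $\{y\}\in\mathcal D(\mathbb T,R)$. Applying Corollary~\ref{rauzy}(2) with $\{y\}$ as the deterministic element and $\{qx\}\in\mathbb T$ as the variable, we obtain
$$
\underline h(\{qx\}+\{y\})=\underline h(\{qx\}) \text{ \ and \ }\overline h(\{qx\}+\{y\})=\overline h(\{qx\}).
$$
Chaining these with the preceding identities and invoking Remark~\ref{thesame}(1) once more to translate back to $\R$ gives $\underline h(L_{q,y}(x))=\underline h(x)$ and $\overline h(L_{q,y}(x))=\overline h(x)$, establishing the main assertion.

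The ``in particular'' clause is then immediate from Proposition~\ref{nordet1}: normality in base $r$ is characterized by $\underline h(x)=\overline h(x)=\log r$ and determinism in base $r$ by $\underline h(x)=\overline h(x)=0$, so preservation of both entropies automatically yields preservation of normality and determinism. I do not anticipate any genuine obstacle, since the statement is essentially a bookkeeping consequence of the two central results of Section~\ref{Sn}; the only minor point requiring care is confirming that the multi-valued nature of division in $\mathbb T$ does not spoil the equality $\{qx\}=q\{x\}$, and this is already built into the convention used to formulate Proposition~\ref{xq}.
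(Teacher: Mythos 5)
Your proof is correct and follows exactly the route the paper intends: the corollary is stated without proof precisely because it is the direct combination of Proposition~\ref{xq}(1) and Corollary~\ref{rauzy}(2), transferred between $\R$ and $(\mathbb T,R)$ via Remark~\ref{thesame}. Your careful check that $\{qx\}$ is a legitimate value of $q\{x\}$ under the paper's convention is the right point to verify, and the ``in particular'' clause via Proposition~\ref{nordet1} is handled correctly.
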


\begin{rem}\label{group}
Recall that by Corollary~\ref{rrr}(2), the set $\mathcal D(r)$ of numbers deterministic in base $r$ is a group. The family $\mathcal L_{\text{rat,det}}=\{L_{q,y}: q\in\Q\setminus\{0\}, \,y\in\mathcal D(r)\}$ is also a group. Indeed, 
$$
L_{q,y}^{-1}=L_{\tfrac1q,-y\tfrac1q}. 
$$
Since  $-y\tfrac1q$ is deterministic by Corollary~\ref{affin}, we have $L_{q,y}^{-1}\in\mathcal L_{\text{rat,det}}$.
Further, if $q'\in\Q\setminus\{0\}, \,y'\in\mathcal D(r)$ then
$$
(L_{q',y'}\circ L_{q,y})(x) = (qx+y)q'+y' = qq'x + q'y +y'.
$$
Now, $qq'\in\Q\setminus\{0\}$, while $q'y+y'\in\mathcal D(r)$ by Corollary~\ref{affin}. Thus,
$$
L_{q',y'}\circ L_{q,y} = L_{qq',q'y+y'}\in\mathcal L_{\text{rat,det}}.
$$
Finally, note that the maps in $\mathcal L_{\text{rat,det}}$ preserve not only normality and determinism in base $r$ but also, by Theorem~\ref{goal}, $r$-independence of numbers normal in base $r$.
\end{rem}

\begin{rem}Proposition~\ref{xq} allows to prove that a normal number $x$ plus a rational number $q$ is normal, without referring to the more complicated Proposition~\ref{x+y}. Indeed, this fact is trivial if $q=0$. Otherwise, $x+q = q(\frac xq+1)$, where $\frac xq$ is normal by Proposition~\ref{xq}, addition of $1$ does not affect normality, and multiplication by $q$ preserves normality by Proposition~\ref{xq} again.
\end{rem}

\subsection{The ``sufficiency'' in Rauzy theorem}\label{rev}
In this subsection, we will provide a proof of the sufficiency direction in the Rauzy theorem (Theorem~\ref{rrrr}). Unlike the proof of necessity (Corollary~\ref{rrr}(1)) which employs notions of joinings, factors, and entropy, the proof of sufficiency relies mostly on techniques of harmonic analysis. 

\begin{thm}\label{rauzy1} \emph{(Rauzy)} For any base $r\ge2$ we have $\mathcal N^\perp(r)\subset\mathcal D(r)$.
\end{thm}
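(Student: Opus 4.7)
The plan is to prove the contrapositive: if $y \notin \mathcal{D}(r)$, I would produce $x \in \mathcal{N}(r)$ with $x+y \notin \mathcal{N}(r)$. After passing to the compact model $(\mathbb{T}, R)$ via Remark~\ref{thesame}, I would invoke Weyl's equidistribution criterion to recast $R$-normality of $t \in \mathbb{T}$ as the vanishing, for every $k \in \mathbb{Z}\setminus\{0\}$, of the Ces\`aro averages $\frac{1}{N}\sum_{n=0}^{N-1} e^{2\pi i k r^n t}$. Since $R$ is an endomorphism of $\mathbb{T}$, one has $r^n(x+y) = r^n x + r^n y$ in $\mathbb{T}$, which factors the exponential as a product:
$$
\frac{1}{N}\sum_{n=0}^{N-1} e^{2\pi i k r^n (x+y)} \;=\; \frac{1}{N}\sum_{n=0}^{N-1} e^{2\pi i k r^n x}\,\cdot\, e^{2\pi i k r^n y}.
$$
Hence it suffices to exhibit a normal $x$ and a nonzero integer $k_0$ for which this weighted exponential sum fails to tend to zero.

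Next, I would use the non-determinism of $y$ to extract the relevant character. By Proposition~\ref{nordet1}(2), $\overline{h}(y) > 0$, so some $\nu \in \mathcal{M}_y$ satisfies $h(\nu) > 0$, and in particular (by Remark~\ref{umome}) $\nu$ differs from Lebesgue measure $\lambda$. Consequently, at least one Fourier coefficient $\hat{\nu}(k_0)$ is nonzero for some $k_0 \in \mathbb{Z}\setminus\{0\}$. Fixing a subsequence $\mathcal{J} = (N_j)$ along which $y$ quasi-generates $\nu$, the partial averages $\frac{1}{N_j}\sum_{n<N_j} e^{2\pi i k_0 r^n y}$ converge to $\hat{\nu}(k_0) \ne 0$, providing the ``target'' for the construction.

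The heart of the proof is then to construct a normal $x$ such that the weighted average $\frac{1}{N_j}\sum_{n<N_j} e^{2\pi i k_0 r^n x}\cdot e^{2\pi i k_0 r^n y}$ stays bounded away from $0$ along $\mathcal{J}$. I would build the base-$r$ expansion of $x$ by choosing most digits independently and uniformly at random from $\{0,1,\ldots,r-1\}$ (to secure normality), while on a carefully chosen sparse sequence of intervals $I_j \subset [0, N_j)$ prescribing the digits of $x$ so that $e^{2\pi i k_0 r^n x}$ approximates $\overline{e^{2\pi i k_0 r^n y}}$ on $I_j$. A Borel--Cantelli argument combined with Proposition~\ref{nortop} then yields normality of almost every such $x$ (the prescribed portion having density zero), while the controlled portion contributes a positive Ces\`aro limit along $\mathcal{J}$, proportional to $|\hat{\nu}(k_0)|$.

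The main obstacle will be precisely this balancing act: the intervals $I_j$ must be thin enough not to disturb the equidistribution of the orbit $(r^n x)$, yet must represent a sufficiently substantial piece of each $[0, N_j)$ to yield a nonvanishing correlation in the limit. Quantifying this requires a Koksma-type discrepancy bound translating digit constraints on $x$ into estimates on the exponential sums $\frac{1}{N}\sum e^{2\pi i k r^n x}$, together with a judicious choice of $\mathcal{J}$ and $(I_j)$. This is essentially Rauzy's original Fourier-analytic argument, and, as the authors themselves remark, the present proof will not substantially depart from his.
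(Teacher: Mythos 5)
Your overall strategy (contrapositive, Weyl's criterion, factoring the exponential sum, extracting a nonzero Fourier coefficient of a quasi-generated measure $\nu$) starts on the right track, but the construction of the witnessing normal $x$ has a genuine gap, and there is a telltale sign of it: nowhere do you actually use $h(\nu)>0$ beyond the inference $\nu\neq\lambda$. But $\nu\neq\lambda$ also holds for plenty of deterministic $y$ (e.g.\ rational $y$, for which $\nu$ is supported on a periodic orbit), so if your construction worked as described it would contradict the already-established inclusion $\mathcal D(r)\subset\mathcal N^\perp(r)$ (Corollary~\ref{rrr}(1)). Concretely, the ``random digits plus prescribed sparse intervals'' scheme cannot produce the required nonvanishing correlation: if the prescribed intervals $I_j$ have density zero in $[0,N_j)$, their contribution to the Ces\`aro average vanishes, while on the complement the digits of $x$ are chosen independently of $y$, so the pair $(x,y)$ generates product joinings and the mixed sum tends to $\hat\lambda(k_0)\hat\nu(k_0)=0$ (indeed $x+y$ is then normal by Corollary~\ref{indep}). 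If instead the $I_j$ have positive density, then forcing $e^{2\pi i k_0 r^n x}\approx\overline{e^{2\pi i k_0 r^n y}}$ on $I_j$ essentially forces the digits of $x$ there to track those of $-y$, whose block statistics are arbitrary (think of $y$ quasi-generating a $(p,1-p)$-Bernoulli measure with $p\neq\tfrac12$); the resulting empirical measure of $x$ is a nontrivial convex combination of the uniform Bernoulli measure with something else, and normality is lost. No Borel--Cantelli or Koksma-type discrepancy estimate repairs this dichotomy.

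The missing idea is that the correlation between $x$ and $y$ must be routed through a common positive-entropy factor, and this is exactly where $h(\nu)>0$ enters. Since $h(\lambda)>0$ and $h(\nu)>0$, Sinai's factor theorem gives the two systems a common nontrivial Bernoulli factor, hence a non-product joining $\xi$ of $\lambda$ and $\nu$; Kamae's lifting theorem (\cite[Theorem~2]{K1}, generalized in Theorem~\ref{dwt}) then realizes $\xi$ by a pair $(x,y)$ with $x$ genuinely normal and $y$ the given point. This replaces your explicit digit construction and is not something plain randomness can emulate. The paper's proof runs the same Fourier computation in the non-contrapositive direction: assuming $y\in\mathcal N^\perp(r)$, it shows every joining of $\lambda$ with $\nu$ has vanishing mixed coefficients $\int\chi_n(z_1)\chi_m(z_2)\,d\xi$ (reducing off-diagonal pairs $(n,m)$ to diagonal ones by applying the hypothesis to $x^{n/m}$, which is normal by Proposition~\ref{xq}), concluding that $\lambda$ and $\nu$ are disjoint and hence, by Sinai, that $h(\nu)=0$. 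I recommend restructuring your argument around these two inputs rather than the probabilistic construction.
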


\begin{proof}The proof is essentially the same as that of \cite[Lemma 4]{Ra}.
First of all, by Remark~\ref{thesame}, it suffices to conduct the proof in the framework of the system $(\mathbb T,R)$ where $R(t)=rt$, $t\in\mathbb T$. It will be convenient to pass to a \tl ly conjugate model $(\mathsf T,\mathsf R)$ of $(\mathbb T,R)$, where $\mathsf T$ is the unit circle in the complex plane, i.e., $\mathsf T=\{z:|z|=1\}$ and $\mathsf R$ is given by $z\mapsto z^r$, $z\in\mathsf T$. An element $z\in\mathsf T$ corresponds to an element of $\mathcal N(\mathbb T,R)$ if and only if it is generic under $\mathsf R$ for the normalized Lebesgue measure on $\mathsf T$ (which we keep denoting by $\lambda$). In this case we will say that $z$ is $\mathsf R$-normal. Likewise, an element $z\in\mathsf T$ corresponds to an element of $\mathcal D(\mathbb T,R)$ if and only if it is deterministic in the system $(\mathsf T,\mathsf R)$ (we will then say that $z$ is $\mathsf R$-deterministic).

We need to show that if $y\in\mathsf T$ has the property that $xy$ is $\mathsf R$-normal for any $\mathsf R$-normal $x\in\mathsf T$ then $y$ is $\mathsf R$-deterministic. In other words, we need to show that any measure $\nu$ generated (under $\mathsf R$) by $y$ along any sub\sq\ $\mathcal J=(n_k)_{k\ge1}$ has entropy zero. This will be done by showing that $\nu$ is disjoint from $\lambda$. Indeed, since $h(\lambda)=\log r>0$ and two measures of positive entropy are never disjoint\footnote{
According to the well-known Sinai's factor theorem \cite{S1}, any system of positive entropy $h$ has a Bernoulli factor of any entropy less than or equal to $h$. So, two systems of positive entropy have a common nontrivial factor, and hence are not disjoint.\label{foo6}}, the disjointness will imply that $h(\nu)=0$.

In order to show that $\lambda$ and $\nu$ are disjoint, we will verify that for any pair of continuous complex functions $f, g$ on $\mathsf T$ and any joining $\xi$ of $\lambda$ and $\nu$, we have 
\begin{equation}\label{pp}
\int f(z_1)g(z_2)\,d\xi(z_1,z_2)=\int f(z)\,d\lambda(z) \cdot \int f(z)\,d\nu(z).
\end{equation}
Clearly, it suffices to show \eqref{pp} for a linearly uniformly dense family of continuous functions, and we will choose the family of characters $\chi_n$ given by $\chi_n(z)= z^n$, $z\in\mathsf T$, $n\in\Z$. This reduces the problem to showing that
\begin{equation}\label{qq}
\forall_{n,m\in\Z}\ \ \int \chi_n(z_1)\chi_m(z_2)\,d\xi(z_1,z_2)=\int \chi_n(z)\,d\lambda(z) \cdot \int \chi_m(z)\,d\nu(z).
\end{equation}
Note that since $\chi_0\equiv 1$, the equation \eqref{qq} holds trivially if either $n=0$ or $m=0$. Now assume that $n\neq0$ and $m\neq0$. Since, for $n\neq0$, we have $\int \chi_n(z)\,d\lambda(z)=0$, the right hand side of \eqref{qq} equals $0$ and the problem reduces to showing that the left hand side of \eqref{qq} vanishes:
\begin{equation}\label{qq1}
\int\chi_n(z_1)\chi_n(z_2)d\xi(z_1,z_2)=\int z_1^nz_2^m\,d\xi(z_1,z_2)=0.
\end{equation}

By a result of Kamae (see \cite[Theorem 2]{K1}), there exists an $\mathsf R$-normal element $x\in\mathsf T$ such that the pair $(x,y)$ generates $\xi$ (under $\mathsf R\times\mathsf R$) along a sub\sq\ of $\mathcal J=(n_k)_{k\ge1}$. For brevity, we will denote this sub\sq\ again by $(n_k)_{k\ge1}$. So, for any continuous function $F$ on $\mathsf T\times\mathsf T$ we have
$$
\lim_k\frac1{n_k}\sum_{j=0}^{n_k-1}F(x^{jr},y^{jr}) = \int F\,d\xi.
$$
In particular, for $F(z_1,z_2)=z_1^nz_2^m$, we obtain
\begin{equation}\label{eq:1}
\int z_1^nz_2^m\,d\xi(z_1,z_2)=\lim_k\frac1{n_k}\sum_{j=0}^{n_k}x^{njr}y^{mjr}.
\end{equation}

Since $x$ is $\mathsf R$-normal, Proposition~\ref{xq} implies that so is $x^{\frac nm}$. Recall that $y$ is assumed to have the property that $xy$ is $\mathsf R$-normal for any $\mathsf R$-normal $x\in\mathsf T$. Thus $x^{\frac nm}y$ is $\mathsf R$-normal, i.e., it generates $\lambda$ (under $\mathsf R$). Hence, for any continuous function $f$ on $\mathsf T$, we have
\begin{equation}\label{ee}
\lim_k\frac1k\sum_{j=0}^{k-1}f((x^{\frac nm}y)^{jr}) = \int f\,d\lambda.
\end{equation}
Taking $f(z)=z^m$ and observing that \eqref{ee} holds also along the sub\sq\ $(n_k)_{k\ge1}$, we get
\begin{equation}\label{eq:2}
\lim_k\frac1{n_k}\sum_{j=0}^{n_k-1}x^{njr}y^{mjr} = \int z^m\,d\lambda(z) = 0.
\end{equation}
Combining \eqref{eq:1} with \eqref{eq:2} we obtain the desired equality \eqref{qq1}.
\end{proof}

\section{Multidimensional Rauzy theorem}\label{S5}
The main result of this section, Theorem~\ref{k-dim}, generalizes Rauzy theorem (Theorem~\ref{rrrr}) to vectors in $\R^m$. Such a generalization can be also derived from our Theorem~\ref{raufg}, but the proof in this section is much more straightforward.

\begin{defn}\label{mve}Let $m\in\N$ and let $\bar r=(r_1,r_2,\dots,r_m)$ with $r_i\in\N,\ r_i\ge2$, $i\in\{1,2,\dots,m\}$. By the alias of a vector $\bar x=(x_1,x_2,\dots,x_m)\in\R^m$ in base $\bar r$ we will understand a ``multirow'' \sq\ $\omega_{\bar r}(\bar x)$ having $m$ rows, where for each $i\in\{1,2,\dots,m\}$, the $i$th row is comprised of the alias of $x_i$ in base $r_i$.
\end{defn}
Occasionally we will find it convenient to identify the multirow \sq s $\omega_{\bar r}(\bar x)$ appearing in the above definition with \sq s over the alphabet 
$$
\Lambda_{\bar r}=\{0,1,\dots,r_1\}\times\{0,1,\dots,r_2\}\times\cdots\times\{0,1,\dots,r_m\},
$$
where each element of the alphabet $\Lambda_{\bar r}$ is viewed as a column of height $m$. 

\begin{defn}\label{k-nor} \phantom{a}
\begin{itemize}
    \item A vector $\bar x=(x_1,x_2,\dots,x_m)\in\R^m$ will be called \emph{normal in base $\bar r$} if every block $\bar B=(\bar b_1,\bar b_2\dots,\bar b_k)$ with $\bar b_j\in\Lambda_{\bar r}$, $j\in\{1,2,\dots,k\}$, appears in $\omega_{\bar r}(\bar x)$ with frequency $(r_1r_2\cdots r_m)^{-k}$. The set of vectors normal in base $\bar r$ will be denoted by $\mathcal N(\bar r)$.
    \item A vector $\bar y\in\R^m$  \emph{preserves normality in base $\bar r$} if $\bar x +\bar y$ is normal in base $\bar r$ for every $\bar x$ normal in base $\bar r$. The set of vectors that preserve normality in base $\bar r$ will be denoted by $\mathcal N^\perp(\bar r)$.
   \item A vector $\bar y=(y_1,y_2,\dots,y_m)\in\R^m$ is \emph{deterministic in base $\bar r$} if,  for each $i\in\{1,2,\dots,m\}$, $y_i$ is deterministic in base $r_i$.
   The set of vectors deterministic in base $\bar r$ will be denoted by $\mathcal D(\bar r)$.
\end{itemize}
\end{defn}

\begin{rem}\label{above} The following useful observations are straightforward:
\begin{enumerate}
    \item A vector $\bar x$ is normal in base $\bar r$ if and only if its alias in base $\bar r$, $\omega_{\bar r}(\bar x)$, is generic for the uniform Bernoulli measure in the symbolic system $(\Lambda_{\bar r}^\N,\sigma)$.
    \item A vector $\bar x=(x_1,x_2,\dots,x_m)\in\R^m$ is normal in base $\bar r$ if and only if the vector of fractional parts, $\{\bar x\}=(\{x_1\},\{x_2\},\dots,\{x_m\})$, is generic for the $m$-dimensional Lebesgue measure on $\mathbb T^m$ in the system $(\mathbb T^m,\bar R)$, where $\bar R$ is given by $\bar R(t_1,t_2,\dots,t_m)=(r_1t_1,r_2t_2,\dots,r_mt_m)$, $(t_1,t_2,\dots,t_m)\in\mathbb T^m$. 
    \item The $m$-dimensional Lebesgue measure on $\mathbb T^m$ is $\bar R$-\inv, has entropy $\sum_{i=1}^m\log r_i$, and is the unique measure of maximal entropy (this follows by the same argument as in Remark~\ref{umome} using the factor map between the the symbolic system $(\Lambda_{\bar r}^\N,\sigma)$ and $(\mathbb T^m,\bar R)$ which sends the uniform Bernoulli measure to the $m$-dimensional Lebesgue measure on $\mathbb T^m$). Thus, a vector $\bar x$ is normal in base $\bar r$ if and only if $h(\{\bar x\})=\sum_{i=1}^m\log r_i$ (in the system $(\mathbb T^m,\bar R)$).
    \item A vector $\bar x=(x_1,x_2,\dots,x_m)\in\R^m$ is normal in base $\bar r$ if and only if, for each $i=1,2\dots,m$, $x_i$ is normal in base $r_i$ and the fractional parts $\{x_i\}$, viewed as elements of the respective systems $(\mathbb T, R_i)$ with $R_i$ defined by $t\mapsto r_it$, $t\in\mathbb T$, are independent (see Definition~\ref{indd}). 
    \item A vector $\bar y\in\R^m$ is deterministic in base $\bar r$ if and only if its alias in base $\bar r$, $\omega_{\bar r}(\bar y)$, is deterministic in the symbolic system $(\Lambda_{\bar r}^\N,\sigma)$.
    \item A vector $\bar y\in\R^m$ is deterministic in base $\bar r$ if and only if the vector of fractional parts, $\{\bar y\}=(\{y_1\},\{y_2\},\dots,\{y_m\})$, is deterministic in $(\mathbb T^m,\bar R)$, if and only if $h(\{\bar y\})=0$.
\end{enumerate}
\end{rem}

\begin{thm}\label{k-dim}
A vector $\bar y$ is deterministic in base $\bar r$ if and only if, for any $\bar x\in\mathcal N(\bar r)$ one has $\bar x+\bar y\in\mathcal N(\bar r)$. That is,
$$
\mathcal D(\bar r)=\mathcal N^\perp(\bar r).
$$
\end{thm}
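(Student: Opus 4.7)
My strategy is to mirror the proof of the one-dimensional Rauzy theorem, transferring the argument to the compact abelian group system $(\mathbb T^m, \bar R)$ and using Remark \ref{above} throughout to translate between vectors in $\R^m$ and points in $\mathbb T^m$. For the inclusion $\mathcal D(\bar r) \subset \mathcal N^\perp(\bar r)$, I plan to observe that the proof of Proposition \ref{x+y} carries over verbatim from $(\mathbb T, R)$ to $(\mathbb T^m, \bar R)$: its only requirements are that the transformation be a continuous endomorphism of a compact abelian group (so that addition and subtraction are equivariant factor maps on the product system) and that topological entropy be finite. The resulting inequality $\underline h(\{\bar x + \bar y\}) \ge \underline h(\{\bar x\}) - \overline h(\{\bar y\})$, combined with $\underline h(\{\bar x\}) = \sum_l \log r_l$ for $\bar x \in \mathcal N(\bar r)$ and $\overline h(\{\bar y\}) = 0$ for $\bar y \in \mathcal D(\bar r)$ (every measure quasi-generated by $\{\bar y\}$ in $(\mathbb T^m,\bar R)$ is a joining of its zero-entropy coordinate marginals and hence has entropy zero by \eqref{eoj1}), forces $\underline h(\{\bar x + \bar y\}) = \htop(\mathbb T^m, \bar R) = \sum_l \log r_l$. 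Since $\lambda^m$ is the unique measure of maximal entropy on $(\mathbb T^m, \bar R)$ by Remark \ref{above}(3), this makes $\{\bar x + \bar y\}$ generic for $\lambda^m$, i.e., $\bar x + \bar y \in \mathcal N(\bar r)$.

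For the reverse inclusion $\mathcal N^\perp(\bar r) \subset \mathcal D(\bar r)$, fix $\bar y \in \mathcal N^\perp(\bar r)$ and an index $i$. By the one-dimensional Rauzy theorem (Theorem \ref{rrrr}), showing $y_i \in \mathcal D(r_i)$ reduces to showing $y_i \in \mathcal N^\perp(r_i)$, i.e., that $z_i + y_i \in \mathcal N(r_i)$ for every $z_i \in \mathcal N(r_i)$. Given such a $z_i$, my plan is to produce a vector $\bar z \in \mathcal N(\bar r)$ whose $i$-th coordinate equals $z_i$: the hypothesis then gives $\bar z + \bar y \in \mathcal N(\bar r)$, and applying the equivariant $i$-th coordinate projection $(\mathbb T^m, \bar R) \to (\mathbb T, R_i)$ (which pushes $\lambda^m$ forward to $\lambda$) to $\{\bar z + \bar y\}$ yields $z_i + y_i \in \mathcal N(r_i)$. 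The extension I would obtain from Kamae's theorem (Theorem 2 of \cite{K1}, already used in the proof of Theorem \ref{rauzy1}) applied to the two systems $(\mathbb T^{m-1}, \bar R')$, where $\bar R'$ is the product of the maps $R_l$ for $l \ne i$ and $\lambda^{m-1}$ is its unique measure of maximal entropy, and $(\mathbb T, R_i)$ with $z_i$ generic for $\lambda$, using the product joining $\lambda^{m-1} \otimes \lambda = \lambda^m$: Kamae produces a companion $\bar z' \in \mathbb T^{m-1}$ such that $(\bar z', z_i)$ is jointly generic for this product, and reassembling with $z_i$ placed in position $i$ gives the required $\bar z \in \mathcal N(\bar r)$.

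The main obstacle is this extension lemma; once it is in hand, both directions reduce to bookkeeping — in the forward direction to the appropriate lifting of Proposition \ref{x+y}, and in the reverse direction to the one-dimensional Rauzy theorem together with the equivariance and measure-pushing properties of the coordinate projection. The delicate aspect of the extension is verifying that the joint genericity supplied by Kamae's theorem is strong enough to place $\bar z$ in $\mathcal N(\bar r)$ (full genericity for $\lambda^m$, and not merely quasi-generation along some subsequence), which can be checked by a careful application of \cite[Theorem 2]{K1} given that $z_i$ is a priori generic along the whole sequence.
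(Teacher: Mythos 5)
Your proposal is correct and follows essentially the same route as the paper: the inclusion $\mathcal D(\bar r)\subset\mathcal N^\perp(\bar r)$ is obtained from the entropy inequalities of Proposition~\ref{x+y} transferred verbatim to $(\mathbb T^m,\bar R)$ together with the uniqueness of the measure of maximal entropy, and the reverse inclusion by projecting onto coordinates and invoking the one-dimensional Rauzy theorem (Theorem~\ref{rrrr}). Your explicit treatment of the extension step --- realizing a given $r_i$-normal number $z_i$ as the $i$-th coordinate of a vector in $\mathcal N(\bar r)$ via Kamae's lifting theorem --- supplies a detail the paper's proof leaves implicit; the same fact is available in the paper as Proposition~\ref{newcor}(i) applied to the $i$-th coordinate projection.
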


\begin{proof}
Let $\bar x=(x_1,x_2,\dots,x_m)$ be normal in base $\bar r$. Then, by Remark~\ref{above}(3), $h(\{\bar x\})=\sum_{i=1}^m\log r_i$. Let $\bar y$ be deterministic in base $\bar r$.
Since 
$$
\{\bar x\}=(\{\bar x\}+\{\bar y\})+(-\{\bar y\}), 
$$
by the same argument as in the proof of Theorem~\ref{x+y}(a) and (b), we have 
\begin{equation}\label{tt}
h(\{\bar x\}+\{\bar y\})-h(-\{y\})\le h(\{\bar x\})\le h(\{\bar x\}+\{\bar y\})+h(-\{y\}).
\end{equation}
By Remark~\ref{minus}, $-\bar y$ is deterministic, and hence by Remark~\ref{above}(6), $h(-\{\bar y\})=0$. Now, by \eqref{tt}, we get $h(\{\bar x\}+\{\bar y\})=h(\{\bar x\})=\sum_{i=1}^m\log r_i$, which, by Remark~\ref{above}~(3) implies normality of $\{\bar x\}+\{\bar y\}$ in base~$\bar r$.

In the opposite direction, if $\bar y$ preserves normality in base $\bar r$ then, for each $i\in\{1,2,\dots,m\}$ and any $x_i$ normal in base $r_i$, $x_i+y_i$ is normal in base $r_i$. By Rauzy theorem (see Theorem~\ref{rrrr}), we get that, for any $i\in\{1,2,\dots,m\}$, $y_i$ is deterministic in base $r_i$ and so $\bar y$ is deterministic in base~$\bar r$. 
\end{proof}

\begin{rem}The goal of this remark is to explain that Theorem~\ref{k-dim} is a nontrivial generalization of the Rauzy theorem (Theorem~\ref{rrrr}).
In view of Remark~\ref{above}(1) and (5), normality and determinism in base $\bar r$ of vectors in $\R^m$ are equivalent to, respectively, normality and determinism of their aliases in the symbolic space $\Lambda_{\bar r}^\N$ (where $\Lambda_{\bar r}$ has $r=r_1r_2\cdots r_m$ symbols). By labeling the elements of $\Lambda_{\bar r}$ as $\{0,1,\dots,r-1\}$ (in any order), the \sq s in the symbolic space $\Lambda_{\bar r}^\N$ can be interpreted as aliases of real numbers in base $r$, and it is tempting to try to interpret Theorem~\ref{k-dim} as a special case of Theorem~\ref{rrrr}. This however does not work since addition of vectors in $\R^m$, $m>1$, does not correspond to the addition of numbers with aliases described above.
\end{rem}

For $m>1$, Theorem~\ref{k-dim} has an interesting corollary which roughly says that addition of deterministic numbers preserves independence of normal numbers.

\begin{thm}\label{goal}
Let $x_i$ be normal in base $r_i$, $i\in\{1,2,\dots,m\}$, and suppose that the fractional parts $\{x_i\}$ are independent (as elements of the respective systems $(\mathbb T,R_i)$, where $R_i$ is given by $t\mapsto r_it$, $t\in\mathbb T$). Let $y_i$ be deterministic in base $r_i$, $i\in\{1,2,\dots,m\}$. Then the numbers $\{x_i+y_i\}$ regarded as elements of the systems $(\mathbb T,R_i)$, are independent.
\end{thm}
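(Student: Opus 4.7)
The plan is to recognize that Theorem~\ref{goal} is essentially a restatement of Theorem~\ref{k-dim} once one unpacks what joint normality of a vector in $\R^m$ means in terms of independence of its coordinates. No new machinery is required; the work is purely a translation between the two languages.

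First, I would rephrase the hypotheses in the multidimensional language of Section~\ref{S5}. By Remark~\ref{above}(4), the assumption that each $x_i$ is normal in base $r_i$ and that the fractional parts $\{x_i\}$ are independent in the respective systems $(\mathbb T, R_i)$ is precisely equivalent to the vector $\bar x = (x_1,\ldots,x_m)$ being normal in base $\bar r = (r_1,\ldots,r_m)$, i.e., $\bar x \in \mathcal N(\bar r)$. Similarly, by the third bullet of Definition~\ref{k-nor}, the assumption that each $y_i$ is deterministic in base $r_i$ says exactly that $\bar y = (y_1,\ldots,y_m) \in \mathcal D(\bar r)$.

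Next, I would apply the multidimensional Rauzy theorem (Theorem~\ref{k-dim}): since $\bar x \in \mathcal N(\bar r)$ and $\bar y \in \mathcal D(\bar r) = \mathcal N^\perp(\bar r)$, we get $\bar x + \bar y \in \mathcal N(\bar r)$, i.e., the vector $(x_1+y_1,\ldots,x_m+y_m)$ is normal in base $\bar r$.

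Finally, I would invoke Remark~\ref{above}(4) in the reverse direction: normality of $\bar x + \bar y$ in base $\bar r$ implies both that each coordinate $x_i+y_i$ is normal in base $r_i$ and, crucially, that the fractional parts $\{x_i+y_i\}$ are independent as elements of the systems $(\mathbb T, R_i)$. The latter is precisely the conclusion of the theorem. There is no real obstacle here once Theorem~\ref{k-dim} is in place; the only subtlety is to note that Remark~\ref{above}(4) genuinely encodes independence (via genericity in the product system for the product Lebesgue measure, cf.\ Remark~\ref{above}(2) and Definition~\ref{indd}), so the translation in both directions is legitimate.
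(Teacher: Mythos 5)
Your proposal is correct and follows exactly the paper's own argument: translate the hypotheses into the statement that $\bar x\in\mathcal N(\bar r)$ (via Remark~\ref{above}(4)) and $\bar y\in\mathcal D(\bar r)$ (via Definition~\ref{k-nor}), apply Theorem~\ref{k-dim} to get $\bar x+\bar y\in\mathcal N(\bar r)$, and read off independence from Remark~\ref{above}(4) again. Nothing is missing.
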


\begin{proof}
By Remark~\ref{above}(4), the vector $\bar x=(x_1,x_2,\dots,x_m)$ is normal in base $\bar r=(r_1,r_2,\dots,r_m)$, while, directly by Definition~\ref{k-nor}, the vector $\bar y=(y_1,y_2,\dots,y_m)$ is deterministic in base $\bar r$. Theorem~\ref{k-dim} implies that the vector $\bar x+\bar y$ is normal in base $\bar r$, which, again via Remark~\ref{above}(4), concludes the proof of the theorem.
\end{proof}

\section{Generalizations to endomorphisms of compact metric groups}\label{S4}

As it was revealed in the previous sections, Rauzy theorem (Theorem~\ref{rrrr}) has natural dynamical underpinnings and it is of independent interest to establish a general ergodic framework for dealing with various aspects of normality and determinism.
In this section we extend some of the results obtained in Section~\ref{Sn}, in particular Proposition~\ref{x+y}, Corollary~\ref{rauzy}, Theorem~\ref{k-dim}, Proposition~\ref{xq} and partly Theorem~\ref{rauzy1}, to a more general setup.
We want to stress that unlike Sections~\ref{S3} and~\ref{Sn}, which were 
geared towards Rauzy-like theorems in $\R$ and $\R^n$, this section focuses 
on  phenomena associated with dynamics on compact groups. 

The generalizations obtained in this section are of two-fold nature. First, we deal with dynamics induced by ergodic endomorphisms of arbitrary infinite compact metrizable groups, and second, we employ general averaging schemes which involve F\o lner \sq s in the (amenable) semigroup $(\N,+)$. 

This section is comprised of four subsections. In Subsection~\ref{5.1} we introduce 
the background material concerning F\o lner \sq s in $\N$ (viewed as an additive semigroup) and define the notion of determinism along a F\o lner \sq. In Subsection~\ref{fnor} we define normality along a F\o lner \sq\ and generalize Proposition~\ref{x+y}, Corollary~\ref{rauzy} (in particular, ``necessity'' in Rauzy theorem) and Proposition~\ref{xq} to finite entropy ergodic endomorphisms of compact metrizable groups. In Subsection~\ref{sred},~\ref{eee} and~\ref{4.1} we prove generalizations of ``sufficiency'' for some classes of endomorphisms of compact groups including toral endomorphisms. 


\subsection{Determinism along a F\o lner \sq}\label{5.1}

\begin{defn}\label{folner}
A \sq\ of finite subets of $\N$, $\F=(F_n)_{n\ge1}$, is called \emph{a F\o lner \sq} if
\begin{equation}\label{fol}
\lim_{n\to\infty}\frac{|F_n\cap(F_n+1)|}{|F_n|}=1.
\end{equation}
\end{defn}

Note that in general the sets $F_n$ are not required to be nested nor to cover $\N$.

Let \xt\ be a \tl\ \ds. Let $\F=(F_n)_{n\ge1}$ be a F\o lner \sq\ in $\N$. Let $\mu$ be any probability measure of $X$. A point $x\in X$ is called $\F$-generic for $\mu$ if  
$$
\lim_n \frac1{|F_n|}\sum_{i\in F_n}\delta_{T^ix} =\mu \text{ \ \ (in the weak* topology).}
$$
Points $x\in X$ for which this convergence holds along a sub\sq\ $(n_k)_{k\ge1}$ are called \emph{$\F$-quasi-generic for $\mu$}. Given a point $x\in X$, the set of measures which are $\F$-quasi-generated by $x$ will be denoted by $\M_\F(x)$. By compactness of the weak* topology on the set of probability measures on $X$, $\M_\F(x)$ is nonempty for any $x\in X$. Due to the F\o lner property \eqref{fol}, all measures in $\M_\F(x)$ are $T$-\inv. 

\begin{rem}\label{sim}
Note that a point $x\in X$ is \emph{$\F$-generic for $\mu$} if and only if $\M_\F(x)=\{\mu\}$.
\end{rem}

\begin{defn}Let $(X,T)$ be a \ds\ and let $\F$ be a F\o lner \sq\ in $\N$.
The \emph{$\F$-lower} and \emph{$\F$-upper entropies of a point} $x\in X$ are defined as follows:
$$
\underline h_\F(x) = \inf\{h(\mu):\mu\in\M_\F(x)\}, \ \ \ \bar h_\F(x) = \sup\{h(\mu):\mu\in\M_\F(x)\}.
$$
Clearly, $\underline h_\F(x)\le\bar h_\F(x)$. In case of equality we denote the common value by $h_\F(x)$ and call it the \emph{$\F$-entropy} of $x$.
\end{defn}

We can now define the notion of $\F$-determinism in any \ds\ $(X,T)$:

\begin{defn}\label{Fdeter}
Let $(X,T)$ be a \ds\ and let $\F$ be a F\o lner \sq\ in $\N$.  
A point $x\in X$ is $\F$-deterministic if $\bar h_\F(x)=0$. 
\end{defn}

\subsection{Normality along a F\o lner \sq}\label{fnor}

Let $X$ be an infinite compact metric group and let $\lambda_X$ denote the normalized Haar measure on $X$.\footnote{On a compact metric group the normalized left and right Haar measures coincide, see e.g., \cite[Theorem~15.13]{HR}.}
A homomorphism $T:X\to X$ is called an \emph{endomorphism} if it is continuous and surjective. The dynamical system $(X,T)$ will be called an \emph{algebraic system}.
By surjectivity of $T$ and uniqueness of the Haar measure, $T$ preserves $\lambda_X$. 
We will say that $T$ is \emph{ergodic} if $\lambda_X$ is ergodic with respect to $T$. 

Throughout the rest of this section we will assume that $X=(X,+)$ is an infinite compact metric group. We will use the additive notation since in Subsections~\ref{sred} and~\ref{4.1} we will be dealing with Abelian groups. However, the theorems of this subsection are valid without the commutativity assumption.

\begin{prop}\label{ie} Let $(X,T)$ be an ergodic algebraic system. Then the Haar measure $\lambda_X$ has positive (possibly infinite) entropy. If $\lambda_X$ has finite entropy then $\lambda_X$ is the unique measure of maximal entropy.
\end{prop}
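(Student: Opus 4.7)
My plan is to prove the two assertions of Proposition~\ref{ie} separately, in each case reducing the argument to a classical structural theorem for continuous endomorphisms of compact metric groups.

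For the positivity claim, I would examine the kernel $K=T^{-1}(\{e\})$, a closed normal subgroup through which $T$ factors as a composition $X\to X/K\to X$ of the quotient map with an isomorphism. The standard degree estimate for endomorphisms of compact groups yields $h(\lambda_X)\ge\log|K|$, where $|K|$ is interpreted as $+\infty$ when $K$ is infinite. (The estimate is obtained by choosing a finite measurable partition $\P$ that separates finitely many prescribed points of $K$ and checking that $\bigvee_{i=0}^{k-1}T^{-i}\P$ has Shannon entropy growing at rate at least $\log|\P\cap K|$, the group structure forcing the required equidistribution of atoms under Haar measure.) If $|K|\ge 2$, this immediately gives $h(\lambda_X)>0$. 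The residual case $K=\{e\}$ is precisely the setting in which $T$ is a continuous automorphism, and here I would invoke the classical Rokhlin--Juzvinskii theorem: every ergodic continuous automorphism of an infinite compact metric group has strictly positive entropy.

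For the uniqueness claim, assuming $h(\lambda_X)<\infty$, I would appeal to Berg's intrinsic-ergodicity theorem for algebraic systems. Berg's argument hinges on the absorbing identity $\lambda_X*\mu=\lambda_X$, valid for any Borel probability $\mu$ on $X$ by translation invariance of the Haar measure, together with the $T\times T$-equivariance of the sum map $\phi:X\times X\to X$, $(x,y)\mapsto x+y$, and of its companion homeomorphism $S:(x,y)\mapsto(x+y,y)$ (whose inverse is $(u,v)\mapsto(u-v,v)$). If $\mu$ is a $T$-invariant probability with $h(\mu)=h(\lambda_X)$, then $S_*(\mu\times\lambda_X)$ is a $T\times T$-invariant joining of $\lambda_X$ with $\lambda_X$ whose entropy equals $h(\mu)+h(\lambda_X)=2h(\lambda_X)$, the maximum possible value for a self-joining of $\lambda_X$. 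Using the joinings framework and entropy inequalities developed in Section~\ref{S3}, one shows that this maximality forces the joining to be the product $\lambda_X\times\lambda_X$; transporting back via $S^{-1}$ yields $\mu\times\lambda_X=\lambda_X\times\lambda_X$, and projecting onto the first coordinate gives $\mu=\lambda_X$.

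The principal obstacle, as I see it, lies in the automorphism subcase of the first claim: the Rokhlin--Juzvinskii positive-entropy theorem is not elementary---its proof rests on Pontryagin duality in the abelian setting and on the inverse-limit structure of compact groups in the general case---so I would cite it rather than reproduce the argument. Berg's uniqueness argument, by contrast, is comparatively short and fits naturally within the joinings-and-entropy apparatus already assembled earlier in the paper.
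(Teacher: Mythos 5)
Your handling of the first claim is essentially what the paper does: the paper simply cites Juzvinski\u\i's theorem, which already covers ergodic endomorphisms, so your case split on the kernel (degree estimate when $K$ is nontrivial, Rokhlin--Juzvinski\u\i\ when $T$ is an automorphism) is a harmless detour that arrives at the same citation. Your convolution observation $\mu*\lambda_X=\lambda_X$ together with \eqref{eoj} does correctly show that $\lambda_X$ is \emph{a} measure of maximal entropy.

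The gap is in the uniqueness step. The assertion that ``maximality forces the joining to be the product $\lambda_X\times\lambda_X$'' does not follow from the joinings framework of Section~\ref{S3}: the only relevant inequality there is \eqref{eoj}, and its case of equality does not characterize the product joining. For a general finite-entropy system with a nontrivial zero-entropy factor $(Z,\rho,R)$, the relatively independent self-joining over that factor has entropy $2h(\mu)-h(\rho)=2h(\mu)$ yet is carried by $\{(x,y):\pi(x)=\pi(y)\}$ and is not the product. Ruling this out for $\lambda_X$ requires knowing that $(X,\lambda_X,T)$ has no nontrivial zero-entropy factors (the K-property of ergodic group automorphisms --- itself a deep theorem), and even then ``every maximal-entropy self-joining of $\lambda_X$ is the product'' amounts to intrinsic ergodicity of $(X\times X,T\times T)$, which is essentially the statement being proved; so the sketch is either incomplete or circular. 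A second, independent problem: Berg's theorem is proved for \emph{automorphisms}, while the proposition concerns endomorphisms, and you do not bridge that. The paper's proof consists precisely of this bridge: pass to the natural extension $\bar X=\{(x_n)_{n\in\Z}:x_{n+1}=T(x_n)\}$, which is again a compact metric group on which the shift $\bar T$ is an ergodic automorphism whose Haar measure projects to $\lambda_X$, verify that the projection induces an entropy- and ergodicity-preserving surjection $\M_{\bar T}(\bar X)\to\M_T(X)$, and apply Berg's theorem upstairs. Your proposal needs either this reduction or a genuinely complete substitute for Berg's argument; the joining sketch as written supplies neither.
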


\begin{proof}
The first claim of the theorem was proved by S.\ A.\ Juzvinski\u\i\ in \cite{Ju}. 
The second claim in the case of automorphisms was proved by K.\ Berg in \cite{B}, so we only need to make a reduction to the invertible case. This is done using the standard technique of natural extensions. Let 
$$
\bar X = \{(x_n)_{n\in\Z}:\forall_{n\in\Z}\  x_{n+1}=T(x_n)\}\subset X^\Z.
$$
The space $\bar X$, equipped with the coordinatewise addition and the product topology, is a compact metrizable group and if $\bar T$ denotes the left shift transformation,
given by $\bar T((x_n)_{n\in\Z})=(x_{n+1})_{n\in\Z}$, then the projection $\pi_0$ on the zero coordinate is a factor map from the system $(\bar X, \bar T)$ onto $(X,T)$ (this is where surjectivity of $T$ is necessary). The system $(\bar X,\bar T)$ is called the \emph{natural extension} of $(X,T)$. The mapping $\bar T$ is an automorphism of $\bar X$, therefore it preserves the Haar measure $\lambda_{\bar X}$ on $\bar X$. The map $\pi^*_0:\M_{\bar T}(\bar X)\to\M_T(X)$ given by 
$$
\pi_0^*(\bar\mu)(A)=\bar\mu(\pi_0^{-1}(A)), \ \text{where $A$ is a Borel subset of $X$, $\bar\mu\in\M_{\bar T}(\bar X)$}
$$
is surjective (see Section~\ref{S2}). The natural extension preserves ergodicity and entropy, i.e., $\bar\mu$ is ergodic if and only if $\pi_0^*(\bar\mu)$ is ergodic (see, e.g.,~\cite[Theorem 1, page 241]{KFS}) and $h(\pi_0(\bar\mu))=h(\bar\mu)$, for any $\bar T$-\im\ $\bar\mu$ (see, e.g.,~\cite[Fact 6.8.12]{Do}).
Clearly, $\pi_0(\lambda_{\bar X})=\lambda_X$ and since $\lambda_X$ is ergodic, so is $\lambda_{\bar X}$. If we assume that $\lambda_X$ has finite entropy, then $\lambda_{\bar X}$ has finite entropy as well. So, we have made a reduction to the invertible case. The result in question now follows from \cite[Corollary~1.1]{B} and \cite[Theorem~2.1]{B}, where it is proved for automorphisms. 
\end{proof}

\begin{defn}\label{Fnor} Let $(X,T)$ be an algebraic system. Let $\F$ be a F\o lner \sq\ in~$\N$.
A point $x\in X$ is \emph{$\F$-normal} if it is $\F$-generic for the Haar measure~$\lambda_X$. 
We denote
\begin{itemize}
\item $\mathcal N_\F(X,T)$ -- the set of $\F$-normal elements in the system $(X,T)$,
\item $\mathcal D_\F(X,T)$ -- the set of $\F$-deterministic elements in the system $(X,T)$,
\item $\mathcal N^\perp_\F(X,T)=\{y\in X: \forall_{x\in \mathcal N_\F(X,T)}\  x+y\in\mathcal N_\F(X,T)\}$  (the set of \emph{$\F$-normality preserving} elements in $(X,T)$).
\item $\mathcal D^\perp_\F(X,T)=\{y\in X: \forall_{x\in \mathcal D_\F(X,T)}\  x+y\in\mathcal D_\F(X,T)\}$  (the set of \emph{$\F$-determinism preserving} elements in $(X,T)$).
\end{itemize}
\end{defn}

\begin{ques}\label{q0}
Suppose we define $\mathcal N^\perp_\F(X,T)$ and $\mathcal D^\perp_\F(X,T)$ using $y+x$ (instead of $x+y$). Would these be, correspondingly, the same notions?
\end{ques}

Note that if the measure-preserving system $(X,\lambda_X,T)$ has finite entropy then $x$ is $\F$-normal if and only if $\underline h_\F$ attains at $x$ its maximal value on $X$. Observe also that $\mathcal N^\perp_\F(X,T)$ is an \inv\ subgroup of $X$. 

We can now formulate a general version of the main two results of Subsection~\ref{3.1}. The proofs are straightforward adaptations of the corresponding proofs in that subsection and will be omitted. 

\begin{prop}\label{x+y1}{\rm(cf. Proposition~\ref{x+y})} Let $(X,T)$ be an ergodic algebraic system. Let $\F$ be a F\o lner \sq\ in $\N$. If $(X,T)$ has finite \tl\ entropy then 
\begin{multline}\label{inn}
\max\{0, \underline h_\F(x)-\overline h_\F(y), \underline h_\F(y)-\overline h_\F(x)\}\le \underline h_\F(x+y)\le\\
\min\{\htop(X,T),\underline h_\F(x)+\overline h_\F(y), \overline h_\F(x)+\underline h_\F(y)\},
\end{multline}
\begin{multline}\label{innn}
\max\{|\underline h_\F(x)-\underline h_\F(y)|,\ |\overline h_\F(x)-\overline h_\F(y)|\}\le\overline h_\F(x+y)\le\\
\min\{\htop(X,T),\overline h_\F(x) + \overline h_\F(y)\}.
\end{multline}
\end{prop}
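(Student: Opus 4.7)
The plan is to mimic the proof of Proposition~\ref{x+y} line by line, replacing the shift averages $\frac{1}{n}\sum_{i=0}^{n-1}\delta_{T^i x}$ by the F\o lner averages $\frac{1}{|F_n|}\sum_{i\in F_n}\delta_{T^i x}$ and the bound $\log r=\htop(\mathbb T,R)$ by $\htop(X,T)$. Two features transfer without effort: first, because $T$ is a group endomorphism, the addition map $a:X\times X\to X$, $a(u,v)=u+v$, and the (left or right) subtraction maps $s(u,v)=u-v$ and $s'(u,v)=-v+u$ are continuous and equivariant with respect to $T\times T$ and $T$, hence are \tl\ factor maps; no commutativity is needed because we have two subtraction maps at our disposal. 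Second, weak*-compactness of $\M(X)$ and $\M(X\times X)$, combined with the F\o lner property \eqref{fol}, guarantees that from any sub\sq\ of $\F$ one can extract a further sub\sq\ along which any prescribed finite list of points in $X$ (or pairs in $X\times X$) jointly $\F$-quasi-generate invariant measures.

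For inequality (a), fix $\eps>0$ and extract, in several stages, a sub\sq\ of $\F$ along which $x+y$ generates a measure $\mu$ with $h(\mu)\le\underline h_\F(x+y)+\eps$, $x$ generates some $\nu_x\in\M_\F(x)$, $y$ generates some $\nu_y\in\M_\F(y)$, and the pair $(x+y,y)$ generates an invariant measure $\zeta$ on $X\times X$. The projections of $\zeta$ onto the two coordinates are $\mu$ and $\nu_y$, so $\zeta$ is a joining of these. Applying $s$ to $(x+y,y)$ yields $x$, so $s^*\zeta$ equals $\nu_x$, and \eqref{eof} together with \eqref{eoj} give $\underline h_\F(x)\le h(\nu_x)\le h(\zeta)\le h(\mu)+h(\nu_y)\le\underline h_\F(x+y)+\eps+\overline h_\F(y)$; letting $\eps\to0$ yields one half of (a). The other half follows by the analogous extraction of a joining generated by $(x+y,x)$ and use of $s'$, which sends $(x+y,x)$ to $y$.

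For inequality (b), extract a common sub\sq\ of $\F$ along which $x$ generates some $\nu_x$ with $h(\nu_x)\le\underline h_\F(x)+\eps$, $y$ generates some $\nu_y$, and the pair $(x,y)$ generates a joining $\xi$ of $\nu_x$ and $\nu_y$. Pushing $\xi$ forward by $a$ yields the measure on $X$ generated by $x+y$ along the same sub\sq, which therefore lies in $\M_\F(x+y)$; invoking \eqref{eof} and \eqref{eoj} and then letting $\eps\to0$ produces $\underline h_\F(x+y)\le\underline h_\F(x)+\overline h_\F(y)$, and swapping the roles of $x,y$ completes the minimum. The $\htop(X,T)$ bound in (b) is immediate from the variational principle (Definition~\ref{topen}). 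Inequalities (c) and (d) are proved by entirely parallel extractions: (c) uses that $s$ sends $(x+y,x)$ to $y$ (and $s'$ sends $(x+y,y)$ to $x$) combined with \eqref{eof}--\eqref{eoj}, while (d) uses that $a$ sends $(x,y)$ to $x+y$ together with the variational principle.

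The main technical obstacle is purely bookkeeping, namely arranging a single sub-F\o lner-sub\sq\ along which several distinct points and pairs simultaneously quasi-generate measures with the prescribed entropy controls. This is handled by iterated extraction of sub\sq s, relying repeatedly on the weak*-compactness of $\M(X)$ and $\M(X\times X)$ together with the F\o lner property to ensure each limit measure is $T$- (respectively $T\times T$-) invariant. The hypothesis that $\htop(X,T)<\infty$ is used only to keep the entropy arithmetic meaningful and to give a finite right-hand side in the upper bounds; Proposition~\ref{ie} is not needed here, since the statement concerns $\htop(X,T)$ rather than $h(\lambda_X)$.
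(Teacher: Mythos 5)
Your proposal is correct and is exactly what the paper intends: the authors state that the proofs of Proposition~\ref{x+y1} are ``straightforward adaptations of the corresponding proofs'' of Proposition~\ref{x+y} and omit them, and you have carried out precisely that adaptation, correctly identifying the only two points needing care (the F\o lner-average extraction of quasi-generated measures and the use of the two one-sided subtraction maps to avoid any commutativity assumption). Nothing is missing.
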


\begin{cor}\label{rauzy2}{\rm(cf. Corollary~\ref{rauzy})} Under the assumptions of Proposition~\ref{x+y1} we have:
\begin{enumerate}
  \item If $h_\F(x)$ and $h_\F(y)$ exist then 
  $$
  |h_\F(x)-h_\F(y)|\le \underline h_\F(x+y)\le\overline h_\F(x+y)\le h_\F(x)+h_\F(y).
  $$
  \item $y\in\mathcal D_\F(X,T)$ if and only if for any $x\in X$ we have 
  $\underline h_\F(x+y) =\underline h_\F(y+x) = \underline h_\F(x) \text{ \ and \ }
  \overline h_\F(x+y) =\overline h_\F(y+x) = \overline h_\F(x).$
  \item $\mathcal D_\F(X,T)=\mathcal D_\F^\perp(X,T)$,
  \item $\mathcal D_\F(X,T)\subset\mathcal N_\F^\perp(X,T)$.
\end{enumerate}
\end{cor}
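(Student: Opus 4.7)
The plan is to derive all four parts from Proposition~\ref{x+y1} by elementary substitutions, with one additional input (Proposition~\ref{ie}) required only for part~(4). Part~(1) is an immediate plug-in: setting $\underline h_\F(x)=\overline h_\F(x)=h_\F(x)$ and analogously for $y$ in \eqref{inn} reduces the outer $\max$ to $|h_\F(x)-h_\F(y)|$, and in \eqref{innn} reduces the outer $\min$ to $h_\F(x)+h_\F(y)$ (the $\htop(X,T)$ ceiling is never binding here). The middle inequality $\underline h_\F(x+y)\le \overline h_\F(x+y)$ holds by definition.

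For the ``only if'' direction of (2), $\F$-determinism of $y$ gives $\underline h_\F(y)=\overline h_\F(y)=0$ (since entropies are nonnegative and $\overline h_\F(y)=0$ by Definition~\ref{Fdeter}), and the inequalities of Proposition~\ref{x+y1} squeeze both bounds for $\underline h_\F(x+y)$ and for $\overline h_\F(x+y)$ onto $\underline h_\F(x)$ and $\overline h_\F(x)$, respectively. The $y+x$ identities follow by rerunning the proof of Proposition~\ref{x+y1} with the equivariant factor map $(t,u)\mapsto u+t$ in place of $(t,u)\mapsto t+u$; both are equivariant because $T$ is an endomorphism, so no commutativity is needed. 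For the converse direction, I would specialise to $x=e$, the identity of $X$. Since $T$ is a homomorphism, $T(e)=T(e+e)=T(e)+T(e)$ forces $T(e)=e$, so $\M_\F(e)=\{\delta_e\}$ and $\overline h_\F(e)=0$; the hypothesised identity $\overline h_\F(e+y)=\overline h_\F(e)$ then gives $\overline h_\F(y)=0$, i.e., $y\in\mathcal D_\F(X,T)$.

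Part~(3) is then a short chase: one inclusion uses (2) directly (if $x,y\in\mathcal D_\F(X,T)$, then $\overline h_\F(x+y)=\overline h_\F(x)=0$, so $x+y\in\mathcal D_\F(X,T)$); the reverse uses $e\in\mathcal D_\F(X,T)$ together with $e+y=y$. For part~(4), I would combine (2) with the uniqueness clause of Proposition~\ref{ie}: if $x\in\mathcal N_\F(X,T)$ and $y\in\mathcal D_\F(X,T)$, then $h_\F(x)=h(\lambda_X)$, and (2) promotes this to $\underline h_\F(x+y)=\overline h_\F(x+y)=h(\lambda_X)$. This is the step I expect to be the main (in fact, the only non-mechanical) obstacle: the numerical equality of entropies does not by itself imply $\F$-genericity of $x+y$ for $\lambda_X$. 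It is precisely the uniqueness of the measure of maximal entropy --- which rests on the finite-entropy hypothesis inherited from Proposition~\ref{x+y1}, via Proposition~\ref{ie} --- that forces every $\mu\in\M_\F(x+y)$ to coincide with $\lambda_X$, yielding $\M_\F(x+y)=\{\lambda_X\}$ and hence $x+y\in\mathcal N_\F(X,T)$.
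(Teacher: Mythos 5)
Your proposal is correct and follows essentially the route the paper intends (the paper omits the proof as a ``straightforward adaptation'' of Corollary~\ref{rauzy}, and its Remark~\ref{remo} confirms that part~(4) hinges exactly on the implication $\underline h_\F(x+y)=\htop(X,T)\implies x+y\in\mathcal N_\F(X,T)$, i.e., on the uniqueness of the measure of maximal entropy from Proposition~\ref{ie} under the finite-entropy hypothesis, which you correctly identify as the one non-mechanical step). The only nitpick is in part~(1): the ceiling $\htop(X,T)$ in \eqref{innn} \emph{can} be binding (e.g.\ when both points are $\F$-normal), but the desired inequality $\overline h_\F(x+y)\le h_\F(x)+h_\F(y)$ still follows since the minimum is bounded by either of its arguments.
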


\begin{rem}\label{remo}
The assumption in the formulation of Proposition~\ref{x+y1}, that $(X,T)$ has finite \tl\ entropy, is needed to ensure that the formulas~\eqref{inn} and ~\eqref{innn} do not lead to the indeterminate form $\infty-\infty$. This assumption is also needed for the inclusion (4) in Corollary~\ref{rauzy2}, since the proof uses the implication $\underline h_\F(x)=\htop(X,T) \implies x\in\mathcal N_\F(X,T)$, which does not need to hold when the \tl\ entropy is infinite. On the other hand, the equality (3) in Corollary~\ref{rauzy2} holds without finite entropy assumption, because the indeterminate form $\infty-\infty$ does not occur in~\eqref{inn} or~\eqref{innn} when at least one of the points $x,y$ is deterministic. Note that this equality answers positively the part of Question~\ref{q0} concerning $\mathcal D_\F^\perp(X,T)$.
\end{rem}

\begin{defn}\label{afm}
Let $(X,T)$ and $(Y,S)$ be algebraic systems.  A surjective group homomorphism $\pi:X\to Y$ such that $\pi\circ T=S\circ\pi$ is called an \emph{algebraic factor map} and the system $(Y,S)$ is called an \emph{algebraic factor} of $(X,T)$.
\end{defn}

\begin{prop}\label{xq0} 
Let $(X,T)$ be an ergodic algebraic system and let $(Y,S)$ be an algebraic factor of $(X,T)$ via an algebraic factor map $\pi:X\to Y$. Let $\F$ be a F\o lner \sq\ in $\N$. Then
\begin{enumerate}[(i)]
\item $\pi(\mathcal D_\F(X,T))\subset \mathcal D_\F(Y,S)$, 
\item $\pi(\mathcal N_\F(X,T))\subset \mathcal N_\F(Y,S)$.
\end{enumerate}
\end{prop}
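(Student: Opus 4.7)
The plan is to reduce both inclusions to two elementary observations: (a) a continuous equivariant map carries $\F$-(quasi-)generic points to $\F$-(quasi-)generic points, with the push-forward measure determined by $\pi^*$; (b) an algebraic factor map pushes the Haar measure on $X$ to the Haar measure on $Y$. The first is an $\F$-version of Remark~\ref{mxmy}, the second is a consequence of uniqueness of Haar measure on compact groups.

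First I would establish the $\F$-analogue of Remark~\ref{mxmy}: namely, for any continuous factor map $\pi\colon(X,T)\to(Y,S)$ and any $x\in X$, one has $\pi^*(\M_\F(x))=\M_\F(\pi(x))$. The forward inclusion is immediate: if $x$ $\F$-quasi-generates $\mu$ along $(n_k)$, then for every $g\in C(Y)$ equivariance gives
$$
\frac1{|F_{n_k}|}\sum_{i\in F_{n_k}}g(S^i\pi(x))=\frac1{|F_{n_k}|}\sum_{i\in F_{n_k}}(g\circ\pi)(T^ix)\longrightarrow\int g\circ\pi\,d\mu=\int g\,d\pi^*(\mu),
$$
so $\pi(x)$ $\F$-quasi-generates $\pi^*(\mu)$. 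For the reverse inclusion, given $\nu\in\M_\F(\pi(x))$ along $(n_k)$, pass by compactness of $\M(X)$ to a sub\sq\ along which $x$ $\F$-quasi-generates some $\mu\in\M_\F(x)$; the forward direction then forces $\pi^*(\mu)=\nu$.

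For (i), let $x\in\mathcal D_\F(X,T)$, so every $\mu\in\M_\F(x)$ has $h(\mu)=0$. Any $\nu\in\M_\F(\pi(x))$ equals $\pi^*(\mu)$ for some $\mu\in\M_\F(x)$, and since $(Y,\nu,S)$ is a continuous factor of $(X,\mu,T)$, the entropy inequality \eqref{eof} yields $h(\nu)\le h(\mu)=0$. Thus $\bar h_\F(\pi(x))=0$ and $\pi(x)\in\mathcal D_\F(Y,S)$.

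For (ii), let $x\in\mathcal N_\F(X,T)$, i.e.\ $\M_\F(x)=\{\lambda_X\}$. By the observation above, $\M_\F(\pi(x))=\{\pi^*(\lambda_X)\}$, so it suffices to show $\pi^*(\lambda_X)=\lambda_Y$. Because $\pi$ is a continuous surjective group homomorphism, for any $y\in Y$ pick $x_0\in X$ with $\pi(x_0)=y$; for every Borel $A\subset Y$ one has $\pi^{-1}(y+A)=x_0+\pi^{-1}(A)$, whence translation invariance of $\lambda_X$ gives $\pi^*(\lambda_X)(y+A)=\pi^*(\lambda_X)(A)$. Thus $\pi^*(\lambda_X)$ is a translation-invariant Borel probability measure on $Y$, and by uniqueness of Haar measure on the compact metric group $Y$ we conclude $\pi^*(\lambda_X)=\lambda_Y$, so $\pi(x)\in\mathcal N_\F(Y,S)$. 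No serious obstacle arises; the only point requiring a little care is verifying the reverse inclusion $\M_\F(\pi(x))\subset\pi^*(\M_\F(x))$, which is handled by the standard compactness argument above.
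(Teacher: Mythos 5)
Your proof is correct and follows essentially the same route as the paper: the paper dismisses (i) as an immediate consequence of $\pi$ being a factor map, and for (ii) invokes the $\F$-version of Remark~\ref{mxmy} together with the fact that a surjective group homomorphism pushes $\lambda_X$ to $\lambda_Y$. You have merely filled in the details the paper leaves implicit (the two-sided identity $\pi^*(\M_\F(x))=\M_\F(\pi(x))$, the entropy inequality~\eqref{eof}, and the translation-invariance argument identifying $\pi^*(\lambda_X)$ with $\lambda_Y$), all of which are correct.
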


\begin{proof}
Since $\pi$ is a factor map from \xt\ onto \ys, (i) is obvious. Next, $\pi$ induces a map $\pi^*$ (see \eqref{star}) from the set of $T$-\im s onto the set of $S$-\im s. Since $\pi$ is a surjective group homomorphism, $\pi^*$ sends the Haar measure $\lambda_X$ on $X$ to the Haar measure $\lambda_Y$ on $Y$. If $x\in\mathcal N_\F(X,T)$, it is $\F$-generic for $\lambda_X$ and hence $\pi(x)$ is $\F$-generic for the measure $\pi^*(\lambda_X)=\lambda_Y$ (Remark~\ref{mxmy} is valid also for $\F$-quasi-generic points), and thus $\pi(x)\in\mathcal N_\F(Y,S)$.
\end{proof}

The following result on ``lifting quasi-generic points'' is needed in the proof of Proposition~\ref{newcor} which provides an amplification of Proposition~\ref{xq0}. 
Proposition~\ref{newcor} will be utilized in Section~\ref{sred} in the proofs of Corollary~\ref{redu1}, Theorem~\ref{zredu} and Theorem~\ref{raufg}.

\begin{thm}\label{dwt}
Let $(X,T)$ be an ergodic algebraic system and let $\nu$ be a $T$-\im\ on $X$. Let $y\in X$ be $\F$-quasi-generic for the measure $\nu$. Let $\xi=\lambda_X\vee\nu$ be a joining of the Haar measure $\lambda_X$ with $\nu$. Then there exists an $\F$-normal point $x\in X$ such that the pair $(x,y)$ is $\F$-quasi-generic for $\xi$.
\end{thm}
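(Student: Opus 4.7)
My plan is to construct $x$ via a sequence of finite-window approximations $x_k$, each of which approximately realizes the joining $\xi$ on $F_{n_k}$ when paired with $y$, and then extract a suitable limit while arguing that the limit is $\F$-normal.

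\textbf{Approximation.} By hypothesis, I extract a subsequence $(F_{n_k})_{k\ge 1}$ of $\F$ along which $y$ is $\F$-generic for $\nu$, so that $\frac{1}{|F_{n_k}|}\sum_{i\in F_{n_k}}\delta_{T^i y}\to\nu$. For each $k$, I aim to produce $x_k\in X$ with joint empirical measure
\[
\eta_k := \frac1{|F_{n_k}|}\sum_{i\in F_{n_k}}\delta_{(T^i x_k,\,T^i y)}
\]
within $1/k$ of $\xi$ in a metric compatible with weak* convergence. Existence of such $x_k$ should be established by a Kamae-style transport argument (cf.\ the structure of \cite[Theorem 2]{K1} invoked in the proof of Theorem~\ref{rauzy1}): on the product system $(X\times X, T\times T)$, a pointwise ergodic theorem applied to the ergodic components of $\xi$ guarantees $\xi$-almost-every pair is $\F$-quasi-generic for some ergodic component. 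Then the Haar-invariance of $\lambda_X$ together with the intertwining identity $T(x'+a)=Tx'+Ta$ permits one to ``transport'' the second coordinate of such a generic pair to our specific $y$, producing $x_k$ with the required joint statistics on the window $F_{n_k}$.

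\textbf{Extracting the limit.} By compactness of $X$, a subsequence $x_{k_j}\to x$. Since the $T^i$ are continuous and the windows $F_{n_k}$ grow, the joint empirical measures along $(F_{n_{k_j}})$ still converge to $\xi$, showing $(x,y)$ is $\F$-quasi-generic for $\xi$. Projecting onto the first coordinate and using that the first marginal of $\xi$ equals $\lambda_X$, I obtain that $x$ is at least $\F$-quasi-generic for $\lambda_X$ along the subsequence $(F_{n_{k_j}})$.

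\textbf{Upgrading to $\F$-normality.} The most delicate step is ensuring that $x$ is $\F$-normal, i.e., $\F$-\emph{generic} (not merely quasi-generic) for $\lambda_X$: the limit of the Birkhoff averages over $F_n$ must be $\lambda_X$ for every $n$, not only along the chosen sub-subsequence. I see two viable routes. The first is a diagonal refinement of the approximation step: select each $x_k$ from the $\lambda_X$-full-measure set of $\F$-normal points (nonempty by the ergodic theorem applied to $\lambda_X$), and arrange the approximations so that the error in recovering $\lambda_X$ on \emph{every} $F_n$ is controlled, so that a limit point inherits $\F$-normality. The second uses that $\lambda_X$ is the unique measure of maximal entropy on $(X,T)$ (Proposition~\ref{ie}): together with Proposition~\ref{x+y1} this should force every measure $\F$-quasi-generated by $x$ to coincide with $\lambda_X$, which is precisely $\F$-normality. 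I expect the main technical obstacle to lie here—adapting Kamae's classical combinatorial lifting to the more general setting of endomorphisms of compact metric groups averaged along an arbitrary F\o lner \sq\ is where the bulk of the novel work resides.
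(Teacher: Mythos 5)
Your overall architecture (finite-window approximation, limit extraction, then an upgrade to full genericity) has the right shape, and you correctly locate the crux in the last step; but both mechanisms you propose for closing that step fail, and the transport step is also problematic. First, the transport: translation by $a$ satisfies $T(u+a)=Tu+Ta$, so the map $u\mapsto u+a$ does \emph{not} commute with $T$ unless $a$ is a fixed point; it is therefore not a conjugacy of $(X,T)$ and cannot be used to carry the orbit statistics of a $\xi$-generic pair $(x',y')$ over to a pair whose second coordinate is the prescribed point $y$. Second, the upgrade: your route via unique maximal entropy does not work, because the construction only guarantees that $x$ quasi-generates $\lambda_X$ along the chosen subsequence, i.e.\ $\overline h_\F(x)=\htop(X,T)$; $\F$-normality requires $\M_\F(x)=\{\lambda_X\}$, equivalently $\underline h_\F(x)=\htop(X,T)$, and nothing forces the measures quasi-generated along the \emph{other} windows $F_n$ to have maximal entropy (Proposition~\ref{x+y1} concerns sums and is not applicable here). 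Your route via limits of $\F$-normal points also fails, since the set of $\F$-generic points for $\lambda_X$ is a dense full-measure Borel set that is not closed: a limit of $\F$-normal points need not be $\F$-normal, and "controlling the error on every $F_n$" is precisely the statement to be proved, not a device for proving it.

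The missing ingredient is an orbit-gluing mechanism: one must build $x$ so that its orbit shadows the approximating segments $x_k$ on the windows $F_{n_k}$ (to capture the joint statistics with $y$) while shadowing a $\lambda_X$-generic orbit on the complementary portions of $\N$, and the existence of a single point doing both is exactly what the (almost weak) specification property of ergodic group automorphisms provides (Dateyama~\cite{Da}). This is how the paper proceeds: it reduces to automorphisms by natural extension and invokes the lifting theorem of \cite{DW} (whose proof rests on \cite{Da}), together with the observation that an arbitrary F\o lner \sq\ in $\N$ is equivalent to one made of unions of long intervals, so that the interval-based gluing of \cite{DW} can be carried out $\F$-window by $\F$-window. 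Without identifying specification (or an equivalent shadowing device), the passage from "quasi-generic for $\lambda_X$ along a subsequence" to "$\F$-normal" remains a genuine gap.
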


\begin{proof}[Sketch of proof] For an automorphism $T$ and the standard F\o lner \sq\ in $\N$ (i.e., $\F=(F_n)_{n\ge1}$ where $F_n=\{1,2,\dots,n\}$, $n\ge1$) the statement follows directly from \cite[Theorem 1.3]{DW} (see also \cite[Proposition~4]{K2}) and \cite[Corollary on page~345]{Da}. To obtain Theorem~\ref{dwt} in full generality one needs to extend \cite[Corollary]{Da} to endomorphisms and extend \cite[Theorem 1.3]{DW} to arbitarary F\o lner \sq s in $\N$. The passage to endomorphisms can be done via the standard natural extensions technique, similar to that utilized in the proof of Proposition~\ref{ie} above. The adaptation of \cite[Theorem 1.3]{DW} relies on the fact that a general F\o lner \sq\ in $\N$ is equivalent\footnote{Two F\o lner \sq s $(F_n)_{n\ge1}$ and $(F'_n)_{n\ge1}$ are \emph{equivalent} if $\lim_{n\to\infty}\frac{|F_n\triangle F'_n|}{|F_n|}=0$.} to a F\o lner \sq\ $\F=(F_n)_{n\ge1}$ where the sets $F_n$ are unions of long intervals 
(see \cite[Lemma~8.2]{BDM}) and on a careful modification of the constructions in Section~3 of \cite{DW} in which ``density one'' is replaced by ``$\F$-density one'' and ``generic for $\mu$'' is replaced by ``$\F$-generic for $\mu$''. 
\end{proof}

\begin{prop}\label{newcor}
Let $(X,T)$ be an ergodic algebraic system and let $(Y,S)$ be an algebraic factor of $(X,T)$ via an algebraic factor map $\pi:X\to Y$. Let $\F$ be a F\o lner \sq\ in $\N$. Then
\begin{enumerate}[(i)]
\item $\pi(\mathcal N_\F(X,T))= \mathcal N_\F(Y,S)$, 
\item $\pi(\mathcal N^\perp_\F(X,T))\subset \mathcal N^\perp_\F(Y,S)$.
\end{enumerate}
\end{prop}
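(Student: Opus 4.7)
\emph{Plan for (ii).} Part (ii) follows immediately from part (i). Given $\tilde y \in \mathcal N^\perp_\F(X,T)$ and any $y' \in \mathcal N_\F(Y,S)$, use part (i) to produce $x' \in \mathcal N_\F(X,T)$ with $\pi(x') = y'$. Then $\tilde y + x' \in \mathcal N_\F(X,T)$ by the defining property of $\mathcal N^\perp_\F(X,T)$, and Proposition~\ref{xq0}(ii) gives $\pi(\tilde y) + y' = \pi(\tilde y + x') \in \mathcal N_\F(Y,S)$, proving $\pi(\tilde y) \in \mathcal N^\perp_\F(Y,S)$.

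\emph{Plan for (i).} The inclusion $\pi(\mathcal N_\F(X,T)) \subseteq \mathcal N_\F(Y,S)$ is Proposition~\ref{xq0}(ii). For the reverse inclusion, fix $y_0 \in \mathcal N_\F(Y,S)$ and pick any preimage $x_0 \in \pi^{-1}(y_0)$. Passing to a subsequence $\mathcal J \subseteq \F$, the point $x_0$ is $\F$-quasi-generic (along $\mathcal J$) for some $T$-invariant measure $\mu$ on $X$; since $\pi(x_0) = y_0$ is $\F$-generic for $\lambda_Y$, necessarily $\pi^{*}\mu = \lambda_Y$. Using the disintegrations $\lambda_X = \int_Y \lambda_{X,y}\,d\lambda_Y(y)$ and $\mu = \int_Y \mu_y\,d\lambda_Y(y)$ over $\pi$, form the relatively independent joining $\xi = \int_Y \lambda_{X,y} \times \mu_y\,d\lambda_Y(y)$ of $\lambda_X$ and $\mu$ over $\lambda_Y$. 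This $(T\times T)$-invariant probability measure on $X \times X$ is concentrated on the closed subgroup $D = \{(u,v) \in X\times X : \pi(u) = \pi(v)\}$. Applying Theorem~\ref{dwt} with this $\xi$ produces an $\F$-normal point $x \in X$ such that $(x, x_0)$ is $\F$-quasi-generic for $\xi$ along a subsequence $\mathcal J' \subseteq \mathcal J$.

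\emph{Main obstacle.} What remains is to arrange that $\pi(x) = y_0$. Because $\xi$ is concentrated on $D$, testing the joint quasi-generic convergence against continuous functions of the form $(u,v) \mapsto g(\pi(u) - \pi(v))$, $g \in C(Y)$, yields $\frac{1}{|F_{n_k}|} \sum_{i \in F_{n_k}} g(S^i w) \to g(0)$, where $w := \pi(x) - y_0 \in Y$; thus $w$ is $\F$-quasi-generic, along $\mathcal J'$, for the Dirac mass $\delta_0$ in $(Y,S)$. The algebraic structure then forces $w$ into the ascending union $\bigcup_{n \ge 0} \ker S^n$, so $w$ is preperiodic under $S$ and hence $\F$-deterministic. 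Choosing an $\F$-deterministic preimage $v \in \pi^{-1}(w)$ (which exists because $S^n w = 0$ for some $n$ forces $T^n v \in \ker \pi$, allowing one to build a finite-orbit lift inside the coset $\pi^{-1}(w)$) and setting $x_* := x - v$, one gets $\pi(x_*) = y_0$, while Corollary~\ref{rauzy2}(4) (the inclusion $\mathcal D_\F \subseteq \mathcal N^\perp_\F$) guarantees that $x_* = x + (-v)$ remains $\F$-normal. The technically hardest step is rigorously upgrading ``$w$ is $\F$-quasi-generic for $\delta_0$'' to ``$w \in \bigcup_n \ker S^n$'' in the general ergodic-algebraic setting, and producing the $\F$-deterministic fiber lift $v$; both rely essentially on the hypothesis that $\pi$ is an algebraic (not merely topological) factor map and that $T|_{\ker\pi}$ is an endomorphism of $\ker\pi$.
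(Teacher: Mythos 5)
Your part (ii) and the inclusion $\pi(\mathcal N_\F(X,T))\subset\mathcal N_\F(Y,S)$ in part (i) are correct and coincide with the paper's argument. The problem lies in the reverse inclusion of (i). Your construction (arbitrary preimage $x_0$ of $y_0$, relatively independent joining $\xi$ over $\lambda_Y$, Theorem~\ref{dwt}) does yield an $\F$-normal $x$ with $(x,x_0)$ quasi-generic for $\xi$ along $\mathcal J'$, and your computation that $w=\pi(x)-y_0$ is $\F$-quasi-generic for $\delta_0$ along $\mathcal J'$ is also correct. But the next step --- ``the algebraic structure forces $w\in\bigcup_{n\ge0}\ker S^n$'' --- is false. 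Quasi-genericity for $\delta_0$ is a statement about orbit averages, not about the orbit eventually reaching $0$: already for $(\mathbb T,\,t\mapsto 2t)$ the points generic for $\delta_0$ are exactly those whose binary expansion contains the block $01$ with frequency zero (see Remark~\ref{66}), a class vastly larger than the set $\bigcup_n\ker S^n$ of dyadic rationals. Worse, you only know quasi-genericity for $\delta_0$ along the single subsequence $\mathcal J'$, which says nothing about the measures $w$ quasi-generates along other subsequences of $\F$; hence $w$ need not be $\F$-deterministic, the deterministic fiber lift $v$ need not exist, and the correction $x_*=x-v$ cannot be carried out. A secondary issue: Corollary~\ref{rauzy2}(4) is established under the finite-topological-entropy hypothesis (see Remark~\ref{remo}), which Proposition~\ref{newcor} does not assume.

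The underlying difficulty is that fixing an arbitrary preimage $x_0$ and joining $\lambda_X$ to the measure it quasi-generates discards exactly the information you need, namely that the normal lift should lie in the fiber $\pi^{-1}(y_0)$. The paper avoids this by applying Theorem~\ref{dwt} to the factor (graph) joining $\xi=\bar\pi^*(\lambda_X)$ of $\lambda_X$ and $\lambda_Y$, where $\bar\pi(x)=(x,\pi(x))$: the theorem is invoked as a genuine lifting statement which produces an $\F$-normal point $x$ satisfying $\pi(x)=y_0$ by construction, so there is nothing left to correct afterwards. If you wish to keep your framework, you must use the lifting theorem in this fiberwise form rather than attempt to repair $\pi(x)\neq y_0$ after the fact.
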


\begin{proof}
Consider the mapping $\bar\pi:X\to X\times Y$ defined by 
$$
\bar\pi(x) = (x,\pi(x)).
$$
The measure $\xi=\bar\pi^*(\lambda_X)$ is a joining (often called a \emph{factor joining}) of the ergodic measures $\lambda_X$ and $\pi^*(\lambda_X)=\lambda_Y$. Theorem~\ref{dwt} implies that any $\F$-normal point $y\in(Y,S)$ lifts with respect to $\bar\pi$ to an $\F$-normal pair $(x,y)\in(X\times Y,T\times S)$. Then $x$ is normal in the system $(X,T)$ and $y=\pi(x)$. We have shown that 
$$
\mathcal N_\F(Y,S)\subset\pi(\mathcal N_\F(X,T)),
$$
which, combined with Proposition~\ref{xq0}(ii), proves (i). 

Now suppose $x'\in\mathcal N^\perp_\F(X,T)$ and take any $y\in\mathcal N_\F(Y,S)$. By (i), there exists an $x\in\mathcal N(X,T)$ such that $y=\pi(x)$. Then, by (i) again, we have
$$
\pi(x') + y = \pi(x')+\pi(x)=\pi(x'+x)\in\pi(\mathcal N_\F(X,T))=\mathcal N_\F(Y,S),
$$
and hence $\pi(x')\in\mathcal N^\perp_\F(Y,S)$.
\end{proof}

The following question naturally presents itself:

\begin{ques}\label{ques}
Let $(X,T)$ be an ergodic algebraic system and let $\F$ be a F\o lner \sq\ in $\N$. Is it true that $\mathcal N_\F^\perp(X,T)\subset\mathcal D_\F(X,T)$?
\end{ques}

In the next section, after introducing some preparatory notation and facts, we provide the positive answer to this question for some classes of Abelian algebraic systems including toral endomorphisms.\footnote{The statement of \cite[Theorem on page 264]{K1}, can be interpreted as a positive answer to Question~\ref{ques} for certain Abelian groups. In particular, on page~268 in \cite{K1}, the author mentions (without proof) two specific instances of applicability of his theorem including hyperbolic endomorphisms of multidimensional tori $\mathbb T^n$ (\cite[Example~7]{K1}). However, it seems that the proof of \cite[Theorem on page 264]{K1} contains some gaps. First, we do not understand the interpretation of Furstenberg's theorem on the lack of disjointenss for positive entropy systems, and second, we could not fill a missing argument concerning averaging of a non-invariant measure along its orbit.}

\subsection{Preliminary results on endomorphisms of compact Abelian groups}\label{sred}
In this section we will restrict our attention to \emph{Abelian algebraic systems}, i.e., algebraic systems $(X,T)$ where $X$ is an infinite compact metrizable Abelian group. This will allow us to use the Pontryagin duality theory. 

 Recall that \emph{characters} on $X$ are continuous maps $\chi:X\to\{z\in\mathbb C:|z|=1\}$ satisfying $\chi(x+y)=\chi(x)\chi(y)$, $x,y\in X$. Note that product of characters is a character and so is the inverse (equivalently complex conjugate) of a character. The Pontryagin dual $\widehat X$ is the multiplicative group consisting of all characters. The characters separate points (see~\cite[Theorem 22.17]{HR}) and, since $X$ is compact, no proper subgroup of $\widehat X$ has this property (see, e.g., \cite[Theorem~1.3]{CR} and use the fact that compact topology is the weakest among Hausdorff topologies).
\smallskip

At first we will reduce the problem to algebraic systems for which there exists a character which separates orbits. The idea of Lemma~\ref{redu} is taken from the proof of~\cite[Lemma~4]{K2}. 

\begin{defn}\label{simp}
An Abelian algebraic system $(X,T)$ will be called \emph{simple} if there exists a (nontrivial) character $\chi$ on $X$ which separates orbits, i.e., for any $x,x'\in X, \ x\neq x'$ there exists an $n\ge 0$ such that $\chi(T^nx)\neq\chi(T^nx')$.
\end{defn}

We remark that while any endomorphism of the circle $(\mathbb T,R)$ is obviously simple (because the map $x\mapsto e^{2\pi ix}$ is a character which separates points), the higher-dimensional tori $\mathbb T^n$ admit both simple and not simple ergodic endomorphisms. We justify this claim by the following examples where $X=\mathbb T^2$.

\begin{exam}\label{s-ns}
Let $X=\mathbb T^2$ be the two-dimensional torus. Consider the endomorphisms 
$T(x,y)=(2x,3y)$ and $S(x,y)=(2x,2y)$. Both $T$ and $S$ are surjective and ergodic (because the matrices representing $T$ and $S$ have no eigenvalues which are roots of unity (see~\cite[page 623]{Ha} or~\cite[Corollary 2.20]{EW}). Yet, as the following considerations demonstrate, $(X,T)$ is simple while $(X,S)$ is not. Let $\chi$ be the character given by $\chi(x,y)=e^{2\pi i(x+y)}$. Two points $(x,y)$ and $(x',y')$ are not separated by $\chi$ if and only if 
\begin{equation}\label{siml}
x+y=x'+y'.
\end{equation}
Next, $\chi(T(x,y))=\chi(T(x',y'))$ if and only if 
\begin{equation}\label{sim2}
2x+3y=2x'+3y'. 
\end{equation}
If~\eqref{siml} and~\eqref{sim2} hold simultaneously, then the two points are identical. Any pair of distinct points is separated by either $\chi$ or $\chi\circ T$, and thus the system $(X,T)$ is indeed simple.
To see that $(X,S)$ is not simple, fix a character $\chi$ on $X$ and note that it has the form $\chi(x,y)=e^{2\pi i(kx+ly)}$, for some $k,l\in\Z$. If $k=0$ then $\chi$ does not separate the orbits of points of the form $(x,0)$. Similarly, if $l=0$ then $\chi$ does not separate orbits of points of the form $(0,y)$. If $k=\pm l$ then $\chi$ does not separate the orbits of points of the form $(x,y)$ with $x=\mp y$. Thus we can assume that $k\neq0$, $l\neq0$, and either $|k|\neq 1$ or $|l|\neq 1$ (or both). However, in this case the points $(\frac1k,\frac1l)$ and $(0,0)$ are different while $\chi$ does not separate their orbits.    
\end{exam}

\begin{lem}\label{redu}
Let $(X,T)$ be an Abelian algebraic system and let $\F$ be a F\o lner \sq\ in $\N$. Choose an element $y_0\in X\setminus\mathcal D_\F(X,T)$. Then there exists an algebraic factor map $\pi:(X,T)\to(X',T')$, where $(X',T')$ a simple algebraic system, such that $\pi(y_0)\in X'\setminus\mathcal D_\F(X',T')$. 
\end{lem}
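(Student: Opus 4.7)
The plan is to exhibit, starting from a witness measure $\nu \in \M_\F(y_0)$ with $h(\nu)>0$, a single character $\chi \in \widehat X$ for which the $T$-invariant sub-$\sigma$-algebra $\Sigma_\chi := \sigma(\chi\circ T^n : n\ge 0)$ already carries positive entropy, and then to take $(X',T')$ to be the algebraic factor of $(X,T)$ determined by $\Sigma_\chi$, namely $X' := X/H_\chi^\perp$ with $H_\chi := \langle \widehat T^k\chi : k\ge 0\rangle\subset\widehat X$.

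Because $y_0\notin\mathcal D_\F(X,T)$, Definition~\ref{Fdeter} furnishes some $\nu\in\M_\F(y_0)$ with $h_\nu(T)>0$. Since $X$ is compact metrizable and Abelian, distinct characters are orthonormal in $L^2(\lambda_X)$ and thus at uniform distance $\sqrt 2$ in $C(X)$; separability of $C(X)$ then forces $\widehat X$ to be countable. Enumerate $\widehat X = \{\chi_1,\chi_2,\dots\}$ and set $\Sigma_n := \sigma(\chi_i\circ T^m : i\le n,\ m\ge 0)$. These form an increasing sequence of (forward) $T$-invariant sub-$\sigma$-algebras whose join generates $\mathcal B(X)$, since the countable family of characters separates points of the compact metric space $X$. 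By the standard continuity of entropy on increasing $T$-invariant sub-$\sigma$-algebras, $h_\nu^{\Sigma_n}(T)\uparrow h_\nu(T)>0$, so some $\Sigma_n$ has positive entropy for $\nu$. Writing $\Sigma_n = \Sigma_{\chi_1}\vee\cdots\vee\Sigma_{\chi_n}$ and applying subadditivity of dynamical entropy for joins, at least one of the summands $\Sigma_{\chi_i}$ must carry positive entropy; pick such a character and call it $\chi$.

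Now I would construct the factor. The subgroup $H_\chi$ is by construction forward-invariant under the dual endomorphism $\widehat T$, so its annihilator $H_\chi^\perp\subset X$ is a closed $T$-forward-invariant subgroup, and $T$ descends to a continuous surjective endomorphism $T'$ on $X' := X/H_\chi^\perp$. The quotient $\pi: X\to X'$ is then an algebraic factor map, and by Pontryagin duality $\widehat{X'}=H_\chi$, which is generated as a group by the $\widehat{T'}$-orbit of the character induced by $\chi$; since a subgroup of $\widehat{X'}$ separating points of $X'$ must be all of $\widehat{X'}$ (by the remark opening Subsection~\ref{sred}), this character separates $T'$-orbits, so $(X',T')$ is simple. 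Moreover $\pi^{-1}\mathcal B(X') = \Sigma_\chi$, so by Remark~\ref{mxmy} the point $\pi(y_0)$ is $\F$-quasi-generic for $\pi^*(\nu)$, whose entropy equals $h_\nu^{\Sigma_\chi}(T)>0$; in particular $X'$ is infinite and $\pi(y_0)\notin\mathcal D_\F(X',T')$.

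The main obstacle is the entropy-continuity step $h_\nu^{\Sigma_n}(T)\uparrow h_\nu(T)$. While standard, it requires approximating arbitrary finite partitions of $(X,\nu)$ by $\Sigma_n$-measurable partitions with small conditional entropy, which in turn relies on martingale convergence for conditional expectations with respect to $\Sigma_n\uparrow\mathcal B(X)$. Once this is granted, together with the fact that characters generate the Borel $\sigma$-algebra of a compact metric Abelian group, the remainder of the argument is a fairly routine exercise in Pontryagin duality.
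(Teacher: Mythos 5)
Your argument is correct and follows essentially the same route as the paper's: both exploit countability of $\widehat X$ to decompose $(X,\nu,T)$ into the countably many factors generated by the orbits of single characters, use subadditivity of entropy to locate one such factor carrying positive entropy for $\nu$, and observe that any such factor is simple. The paper realizes that factor concretely as the image of the orbit map $x\mapsto(\chi_m(T^nx))_{n\ge0}$ in $\mathbb T^{\N\cup\{0\}}$ and gets positivity in one step from the countable-joining inequality~\eqref{eoj1}, whereas you present it as the quotient $X/H_\chi^{\perp}$ and reach positivity via increasing invariant sub-$\sigma$-algebras followed by finite subadditivity --- a difference of packaging only.
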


\begin{proof}
By Definition~\ref{Fdeter}, $y_0$ is $\F$-quasi-generic for a $T$-\inv\ measure $\nu$ on $X$ such that 
$$
h_\nu(T)>0.
$$

It is well known that, under our assumptions, $\widehat X$, the Pontryagin dual of $X$, is infinite countable. So, we can write $\widehat X=\{\chi_0,\chi_1,\chi_2,\dots\}$, where  $\chi_0\equiv1$ is the trivial character. For a fixed $m\ge1$, let $\pi_m:X\to\mathbb T^{\N\cup\{0\}}$ be given by 
\begin{equation}\label{iscon}
\pi_m(x)= \mathbf x_m=(\mathbf x_{m,n})_{n\ge0}\in\mathbb T^{\N\cup\{0\}},\text{ where }\mathbf x_{m,n}=\chi_m(T^nx).
\end{equation}
The image $\mathbf X_m=\pi_m(\mathbb T^d)$ is clearly a compact Abelian group.
The map $\pi_m$ is an algebraic factor map from the system $(X,T)$ onto $(\mathbf X_m,\sigma_m)$, where $\sigma_m$ is the shift transformation~given~by
$$
(\sigma_m(\mathbf x_m))_n=\mathbf x_{m,n+1}, \ n\ge0.
$$
It is clear by construction that each of the systems $(\mathbf X_m,\sigma_m)$ is simple (with $\chi_m$ playing the role of $\chi$ in Definition~\ref{simp}).

Remark~\ref{mxmy} (which is valid also for $\F$-quasi-generic points) implies that
the element $\pi_m(y_0)\in\mathbf X_m$ is $\F$-quasi-generic for the $\sigma_m$-\im\ $\boldsymbol\nu_m=\pi^*_m(\nu)$. We will show now that $h_{\boldsymbol\nu_m}(\sigma_m)>0$ for at least one index~$m$. This will conclude the proof, because then $\pi_m(y_0)$, being $\F$-quasi-generic for $\boldsymbol\nu_m$, is not $\F$-deterministic, so the algebraic factor map $\pi=\pi_m$ (with $X'=\mathbf X_m$) satisfies the claim of the theorem. 

Consider the mapping $\bar\pi:X\to\prod_{m\ge1}\mathbf X_m$ given by
$$
\bar\pi(x)=(\pi_m(x))_{m\ge1}.
$$
This map is obviously continuous and satisfies $\bar\pi\circ T=\sigma\circ\bar\pi$, where $\sigma$ is the natural product transformation on $\prod_{m\ge1}\mathbf X_m$, $\sigma=\sigma_1\times\sigma_2\times\cdots$. Since characters separate points of $X$, $\bar\pi$ is also injective, and thus it is a \tl\ conjugacy between the algebraic systems $(X,T)$ and $(\mathbf X,\sigma)$, where $\mathbf X=\bar\pi(X)$.
This implies that $\bar\pi$ is also a measure-theoretic isomorphism between the measure-preserving systems $(X,\nu,T)$ and $(\mathbf X,\boldsymbol\nu,\sigma)$, where $\boldsymbol\nu=\bar\pi^*(\nu)$. In particular, we have $h_{\boldsymbol\nu}(\sigma)=h_\nu(T)>0$. Since for each $m\ge1$, the marginal of $\bar\pi^*(\nu)$ on $\mathbf X_m$ equals $\boldsymbol\nu_m$, we can view $\boldsymbol\nu$ as a countable joining $\bigvee_{m\ge1}\boldsymbol\nu_m$. The inequality~\eqref{eoj1} for countable joinings implies that
$$
0<h_{\boldsymbol\nu}(\sigma)\le \sum_{m\ge1}h_{\boldsymbol\nu_m}(\sigma_m),
$$
and thus there exists an $m\ge1$ such that $h_{\boldsymbol\nu_{m}}(\sigma_{m})>0$, as claimed.
\end{proof}

\begin{cor}\label{redu1}
Let $\mathfrak X$ be a class of ergodic Abelian algebraic systems such that, whenever $(X,T)\in\mathfrak X$, all algebraic factors of $(X,T)$ also belong to $\mathfrak X$. 
Let $\F$ be a F\o lner \sq\ in $\N$. If the inclusion
\begin{equation}\label{ooo}
\mathcal N_\F^\perp(X,T)\subset\mathcal D_\F(X,T)
\end{equation}
holds for all simple systems in $\mathfrak X$ then it holds for all systems in $\mathfrak X$.
\end{cor}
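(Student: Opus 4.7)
The plan is to argue by contradiction, using Lemma~\ref{redu} to descend to a simple algebraic factor and then Proposition~\ref{newcor}(ii) to push the normality-preserving property down to that factor. Concretely, I would assume that some system $(X,T)\in\mathfrak X$ violates the inclusion, so that there exists $y\in\mathcal N_\F^\perp(X,T)\setminus\mathcal D_\F(X,T)$, and aim to derive a contradiction inside a simple system belonging to $\mathfrak X$.

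First, since $y\notin\mathcal D_\F(X,T)$, I would apply Lemma~\ref{redu} to produce an algebraic factor map $\pi\colon(X,T)\to(X',T')$ with $(X',T')$ simple and $\pi(y)\notin\mathcal D_\F(X',T')$. By the hypothesis that $\mathfrak X$ is closed under algebraic factors, $(X',T')\in\mathfrak X$. Next, since $y\in\mathcal N_\F^\perp(X,T)$, Proposition~\ref{newcor}(ii) yields $\pi(y)\in\mathcal N_\F^\perp(X',T')$. The hypothesis of the corollary, applied to the simple system $(X',T')\in\mathfrak X$, then gives $\pi(y)\in\mathcal D_\F(X',T')$, contradicting the conclusion of Lemma~\ref{redu}.

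For the argument to go through, the system $(X',T')$ supplied by Lemma~\ref{redu} should be not just algebraic but ergodic, so that it truly lies in the class $\mathfrak X$ of ergodic Abelian algebraic systems under consideration; this is automatic because ergodicity is preserved by algebraic factor maps (the Haar measure of the factor is the pushforward of the Haar measure of the original system, and factors of ergodic measure-preserving systems are ergodic). There is no real obstacle beyond assembling these ingredients in the right order; the work has already been done in Lemma~\ref{redu} (which handles the reduction to a simple target) and in Proposition~\ref{newcor}(ii) (which guarantees that the normality-preserving property is transported to the factor). The one subtle point worth flagging in the writeup is the appeal to Proposition~\ref{newcor}(ii), which itself relies on Theorem~\ref{dwt} on lifting $\F$-normal (and hence $\F$-quasi-generic) points, so the argument does use the full strength of the preceding subsection rather than being purely formal.
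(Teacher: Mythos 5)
Your proposal is correct and follows essentially the same route as the paper's own proof: contradiction via Lemma~\ref{redu} to descend to a simple algebraic factor, Proposition~\ref{newcor}(ii) to transport membership in $\mathcal N_\F^\perp$ to that factor, and closure of $\mathfrak X$ under algebraic factors to conclude. Your additional remark that ergodicity passes to algebraic factors is a valid (if implicit in the paper) point.
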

\begin{proof}
Let $(X,T)\in\mathfrak X$ and suppose there exists an element $y_0\in\mathcal N^\perp_\F(X,T)$ which is not $\F$-deterministic in $(X,T)$. By Lemma~\ref{redu}, there exists an algebraic factor map $\pi:(X,T)\to(X',T')$ onto a simple algebraic system such that $\pi(y_0)$ is not $\F$-deterministic in $(X',T')$. By Corollary~\ref{newcor}(ii), $\pi(y_0)\in\mathcal N^\perp_\F(X',T')$. Since $\mathfrak X$ is closed under algebraic factors, we have $(X',T')\in\mathfrak X$. We have arrived at a contradiction with the assumption that~\eqref{ooo} holds for all simple systems in the class $\mathfrak X$.
\end{proof}

\begin{defn}\label{subfac}
Let $(X,T)$ be an Abelian algebraic system. A \emph{polynomial in variable $T$} is a map $P:X\to X$ of the form 
\begin{equation}\label{poly}
P = a_0 T^0+a_1 T+ a_2T^2+\dots+a_kT^k,
\end{equation}
where $T^0$ is the identity map and $a_l\in\Z$ for $l=0,1,\dots,k$, $k\ge0$. The zero polynomial will be denoted by $P_0$.
\end{defn}

\begin{rem}\label{fsg}\phantom{.}
\begin{enumerate}[(a)]
\item Note that different polynomials may represent the same map. For example, if $T(x)=2x$ on $\mathbb T$ then $2kT^0-kT^1=P_0$ for any $k\in\Z$.
\item A polynomial $P$ in $T$ need not be surjective, even when $P\neq P_0$. For instance, if $X=\mathbb T^2$ and $T(x,y)=2x+3y$ then $P=-2T^0+T^1$ maps any point $(x,y)\in\mathbb T^2$ to $(0,y)$, so $P(X)$ is a one-dimensional subtorus of $X$. 
\item The image $P(X)$ is a \emph{$P$-subgroup} of $X$, that is, it is a closed $T$-\inv\ subgroup of $X$ and $P$ is an algebraic factor map from $(X,T)$ to the algebraic system $(P(X),T|_{P(X)})$.
\end{enumerate}
\end{rem}

\begin{defn}\label{ujj}
Given an ergodic Abelian algebraic system $(X,T)$, we let $\boldsymbol\U$ denote the (at most countable) collection of all \underline{non-surjective} polynomials in $T$.\footnote{If the system is ergodic then the family $\boldsymbol\U$ is either infinite countable or consists of just the trivial map $P_0$. Indeed,  Suppose $\boldsymbol\U$ is finite and contains a (not surjective) polynomial $P\neq P_0$. Then, for any nontrivial character $\gamma$ on $P(X)$, the map $\chi=\gamma\circ P$ is a nontrivial character on $X$ and for any $n\ge 0$ we have $\chi\circ T^n =\gamma\circ P\circ T^n$. Clearly, $P\circ T^n$ is a not surjective polynomial, and hence it belongs to $\boldsymbol\U$. So, $\chi$ has a finite orbit under the composition with~$T$, which implies that $T$ is not ergodic (see, e.g.,~\cite[Theorem~1]{Ha}).}  
Let 
$$
\mathbf Y =\{(P(x))_{P\in\boldsymbol\U}, x\in X\} \subset \prod_{P\in \boldsymbol\U}P(X) \ \ \text{(Cartesian product)},
$$
and define the map $\mathbf P:X\to \mathbf Y$
by 
$$
\mathbf P(x)=(P(x))_{P\in\boldsymbol\U}, \ \ x\in X.
$$
\end{defn}
Clearly, $\mathbf P$ is an algebraic factor map from $(X,T)$ to the algebraic system $(\mathbf Y,\mathbf T)$, where $\mathbf T$ denotes the product transformation $T\times T\times\dots$ restricted to $\mathbf Y$  (in the trivial case when $\boldsymbol\U=\{P_0\}$ we have $\mathbf Y=\{0\}$ and we let $\mathbf T$ be the identity map). 
\smallskip

The next theorem together with Corollary~\ref{trivy} answers Question~\ref{ques} for some classes of Abelian algebraic systems. 

\begin{thm}\label{zredu}
Let $(X,T)$ be a simple ergodic Abelian algebraic system and let $(\mathbf Y,\mathbf T)$ be as in Definition~\ref{ujj}. Let $\lambda_{\mathbf Y}$ denote the Haar measure on $\mathbf Y$. 
If 
\begin{equation}\label{incl}
h_{\lambda_X}(T)>h_{\lambda_{\mathbf Y}}(\mathbf T)
\end{equation}
then, for any F\o lner \sq\ $\F$ in $\N$,~\eqref{ooo} holds, i.e., 
$$
\mathcal N_\F^\perp(X,T)\subset\mathcal D_\F(X,T).
$$
\end{thm}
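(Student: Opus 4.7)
I would proceed by contradiction, adapting the character-theoretic strategy from the proof of Theorem~\ref{rauzy1} (sufficiency in Rauzy's theorem for the circle) to the general algebraic setting via Pontryagin duality and the lifting result Theorem~\ref{dwt}. Suppose $y\in\mathcal N_\F^\perp(X,T)\setminus\mathcal D_\F(X,T)$, pick $\nu\in\M_\F(y)$ with $h_\nu(T)>0$, and let $\xi$ be any joining of $\lambda_X$ and $\nu$. By Theorem~\ref{dwt}, there exists an $\F$-normal $x\in X$ such that $(x,y)$ is $\F$-quasi-generic for $\xi$ along some subsequence. A preliminary observation is that $\mathcal N_\F^\perp(X,T)$ is closed under surjective polynomials in $T$: for any surjective $Q\in\mathbb Z[T]$ and any $\F$-normal $u$, Proposition~\ref{newcor}(i) provides an $\F$-normal $u'$ with $Q(u')=u$, and then $u+Q(y)=Q(u'+y)$ is $\F$-normal because $u'+y$ is $\F$-normal and $Q$ preserves $\F$-normality.

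The main technical step is to derive character-theoretic vanishing identities. For any surjective $P,Q\in\mathbb Z[T]$ and any nontrivial character $\psi$ on $X$, the point $P(x)+Q(y)$ is $\F$-normal, so the $\F$-average of $\psi$ along its $T$-orbit tends to zero. Since $P$ and $Q$ commute with $T$, passing to the limit along the subsequence where $(x,y)$ is $\F$-quasi-generic for $\xi$ yields
\[
\int(\psi\circ P)(x_1)\,(\psi\circ Q)(x_2)\,d\xi(x_1,x_2)=0.
\]
By simplicity, fix a character $\chi_0$ separating orbits; every character on $X$ has the form $\chi_0\circ R$ for some $R\in\mathbb Z[T]$, and $\chi_0\circ R$ factors through the algebraic factor map $\mathbf P\colon X\to\mathbf Y$ precisely when $R\in\boldsymbol{\mathcal U}$. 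Writing $\psi=\chi_0\circ S$ with $\chi_0\circ S\neq 1$ and varying $P,Q$ over surjective polynomials, a careful analysis in the ring $\widehat X$ (using that non-surjective polynomials correspond to zero divisors, while surjective ones form a multiplicative set) would establish $\int\eta_1\otimes\eta_2\,d\xi=0$ whenever at least one of $\eta_1,\eta_2$ fails to factor through $\mathbf P$. Equivalently, $\xi$ is the relatively independent joining of $\lambda_X$ and $\nu$ over the common factor $(\mathbf Y,\mathbf T)$, and since $\xi$ was arbitrary, $\lambda_X$ and $\nu$ are relatively disjoint over $\mathbf Y$.

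The final step leverages the entropy hypothesis. Since $h_{\lambda_X}(T)>h_{\lambda_{\mathbf Y}}(\mathbf T)$, the extension $(X,\lambda_X,T)\to(\mathbf Y,\lambda_{\mathbf Y},\mathbf T)$ has strictly positive relative entropy and thus admits a nontrivial relative Bernoulli factor; relative disjointness of $\nu$ with $\lambda_X$ over $\mathbf Y$ then forces $h_\nu(T\mid\mathbf P)=0$, i.e., $h_\nu(T)=h_{\mathbf P^*\nu}(\mathbf T)$. To conclude $h_\nu(T)=0$, apply the same reasoning to $\mathbf P(y)\in\mathcal N_\F^\perp(\mathbf Y,\mathbf T)$ (Proposition~\ref{newcor}(ii)), reducing via Lemma~\ref{redu} to a simple algebraic factor of $(\mathbf Y,\mathbf T)$ whose Haar entropy is strictly smaller; iterating and exploiting the strict entropy drop produces the desired contradiction. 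The main obstacle is the second paragraph: translating the ``diagonal'' character identities, which only directly couple characters sharing a common $\psi$-component, into the genuine relative independence of $\xi$ over $\mathbf Y$. A secondary difficulty lies in formalizing the iteration in the final step, which rests on the strict decrease of Haar entropy after each passage to $(\mathbf Y,\mathbf T)$.
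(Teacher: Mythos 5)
Your proposal assembles the right ingredients --- Theorem~\ref{dwt}, simplicity giving characters of the form $\chi_0\circ P$ with $P$ a polynomial in $T$, the observation that $P(x)+Q(y)$ is $\F$-normal when $P$ is surjective and $y\in\mathcal N_\F^\perp(X,T)$, and the Sinai--Thouvenot machinery --- and these are exactly the tools the paper uses. But the paper's architecture is the reverse of yours and avoids both of your difficulties: it fixes \emph{one} joining $\xi$, namely the relatively independent joining of $\lambda_X$ and $\nu$ over a common Bernoulli factor which Thouvenot's relative theorem (this is where~\eqref{incl}, i.e.\ $h_{\lambda_X}(T|\Sigma)>0$ with $\Sigma=\sigma(\mathbf P)$, enters) supplies \emph{independent of} $\Sigma$ on the $\lambda_X$ side. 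For that $\xi$ one reads off a single non-vanishing integral $\int\chi(P(x_1))\chi(Q(x_2))\,d\xi\neq0$ in which $P$ is automatically surjective (the correlating function is orthogonal to everything lifted from $\mathbf Y$), and Theorem~\ref{dwt} converts it into the ergodic average of the nontrivial character $\chi$ along the orbit of the $\F$-normal point $P(x_0)+Q(y_0)$, which must vanish. That is the whole proof; no disjointness statement for all joinings and no iteration is required.

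Two steps of your plan are genuinely gapped. First, the passage from the ``diagonal'' identities to relative independence: the one-sided half is in fact within reach --- if $\eta_1$ does not factor through $\mathbf P$ then \emph{every} representation $\eta_1=\chi_0\circ A$ has $A$ surjective, and since $\chi_0(A(x_1))\chi_0(B(x_2))=\chi_0\bigl(A(x_1)+B(x_2)\bigr)$ the integral reduces to the average of $\chi_0$ over the orbit of the $\F$-normal point $A(x)+B(y)$ (here you must allow arbitrary, not only surjective, $Q=B$, using that $\mathcal N_\F^\perp(X,T)$ is a $T$-invariant subgroup) --- but the symmetric case ($\eta_1$ factoring through $\mathbf P$, $\eta_2$ not) is \emph{not} reachable this way, because then $A(x)$ is normal only in the proper subgroup $A(X)$, and your assertion that $\xi$ is ``the relatively independent joining over the common factor $(\mathbf Y,\mathbf T)$'' is not even well posed, since $\lambda_X$ and $\nu$ need not push forward to the same measure on $\mathbf Y$. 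Second, the closing iteration is both unnecessary and unjustified: the one-sided relative independence already forces $h_\nu(T)=0$ outright (if $h_\nu(T)>0$, a common Bernoulli factor of $(X,\lambda_X,T)$ and $(X,\nu,T)$, taken independent of $\Sigma$ on the $\lambda_X$ side, produces a joining correlating a function orthogonal to $\sigma(\mathbf P)$ with a function of the second coordinate), whereas your descent to $(\mathbf Y,\mathbf T)$ comes with no guarantee that simplicity or the hypothesis~\eqref{incl} persists there, nor that the process terminates --- the paper needs the separate inductive apparatus of Lemma~\ref{red} and Theorem~\ref{raufg} precisely because such a descent is not automatic.
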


\begin{proof}
Suppose there exists an element $y_0\in\mathcal N^\perp_\F(X,T)$ which is not $\F$-deterministic. Then $y_0$ is $\F$-quasi-generic of an \im\ $\nu$ on $X$ satisfying $h_\nu(T)>0$.
Recall that $\mathbf P$ is an algebraic factor map from $(X,T)$ onto $(\mathbf Y,\mathbf T)$ and note that $\lambda_{\mathbf Y}= \mathbf P^*(\lambda_X)$ (recall that, by convention, $\mathbf P^*$ is the map from $\M(X,Y)\to\M(\mathbf Y,\mathbf T)$ induced by $\mathbf P$, see~\eqref{star}). The inequality $h_{\lambda_X}(T)>h_{\lambda_{\mathbf Y}}(\mathbf T)$ can be interpreted in terms of conditional entropy as follows:
$$
h_{\lambda_X}(T|\Sigma)>0,
$$
where $\Sigma = \{\mathbf P^{-1}(B): B\text{ is a Borel set in }\mathbf Y\}$.
By Sinai's theorem (\cite{S1}) and Thouvenot's relative factor theorem (\cite{T}, see also~\cite{Se}), the measure-preserving systems $(X,\lambda_X,T)$ and $(X,\nu,T)$ have a common Bernoulli factor $(Z,\zeta,S)$ which is independent (with respect to $\lambda_X$) of~$\Sigma$. That is, if we let $\phi_1:(X,\lambda_X,T)\to (Z,\zeta,S)$ and $\phi_2:(X,\nu,T)\to(Z,\zeta,S)$ denote the respective (measure-theoretic) factor maps then any complex functions of the form $f\circ\phi_1$ and $g\circ \mathbf P$, where $f\in L^2_0(\zeta)$ and $g\in L^2_0(\lambda_{\mathbf Y})$, 
($L^2_0(\mu)$ stands for the orthocomplement of constant functions in $L^2(\mu)$)
are orthogonal in $L^2(\lambda_X)$. Let $\xi$ be any joining $\lambda_X\vee\nu$ over the common factor $(Z,\zeta,T)$, which means that $\xi$-almost all pairs $(x,y)\in X\times X$ satisfy $\phi_1(x)=\phi_2(y)$.\footnote{At least one joining over the common factor (so-called relatively independent joining) always exists, see, e.g.,~\cite[page 800]{R}.} Let $f\in L^2_0(\zeta)$ be a non-constant function on $Z$. Denote $f_1=f\circ\phi_1$ and $f_2=f\circ\phi_2$. These are non-constant complex functions on $X$ which satisfy $f_1(x)=f_2(y)$ for $\xi$-almost all pairs $(x,y)$, and hence
\begin{equation}\label{depe}
    \int f_1(x)\bar f_2(y)\,d\xi(x,y)>0.
\end{equation}
Since $(X,T)$ is simple, there exists a character $\chi$ on $X$ which separates orbits. Then, for any polynomial $P(x)=a_0x+a_1Tx+\cdots+a_kT^kx$, the function  
\begin{equation}\label{cha}
    \chi(P(x))=\chi(a_0 x)\chi(a_1 Tx)\cdots\chi(a_kT^kx),
\end{equation}
is a character on $X$. Now, the family $\Theta$ of all characters of this form separates points (because the characters $\chi\circ T^n$, $n\ge0$, do), and clearly it is a group (with multiplication). So $\Theta=\widehat X$, the dual group of $X$. The characters are linearly uniformly dense in $C(X)$, and hence linearly dense in both $L^2(\lambda_X)$ and $L^2(\nu)$. Therefore, we can approximate $f_1$ in $L^2(\lambda_X)$ and $\bar f_2$ in $L^2(\nu)$ arbitrarily well by linear combinations of the characters on $X$. Since $\int f_1(x)\,d\lambda_X = \int f\,d\zeta=0$, $f_1$ is orthogonal in $L^2(\lambda_X)$ to the trivial character, so this character can be omitted in the combinations approximating $f_1$. Moreover, since $f_1$ is lifted from $L^2_0(\zeta)$ with respect to $\phi_1$, it is orthogonal to any function lifted from $L^2(\lambda_{\mathbf Y})$ with respect to~$\mathbf P$. So, in the combinations of characters approximating $f_1$ we can also omit all nontrivial characters obtained by lifting characters on $\mathbf Y$ with respect to $\mathbf P$. 
Thus, there exist linear combinations of characters on $X$, say $g_1$ and $g_2$, 
where $g_1$ avoids any (trivial and non-trivial) characters lifted from $\mathbf Y$ with respect to $\mathbf P$, such that
\begin{equation*}
\int g_1(x)g_2(y)\,d\xi(x,y)>0.
\end{equation*} 
This inequality in turn implies that there exist two characters on $X$, say $\chi_1$ and $\chi_2$, with $\chi_1$ non-trivial and not lifted from $\mathbf Y$ with respect to $\mathbf P$, such that
\begin{equation}\label{new}
\int \chi_1(x)\chi_2(y)\,d\xi(x,y)\neq 0.
\end{equation}
(in fact, $\chi_2$ cannot be trivial either, because $\int \chi_1\,d\lambda_X = 0$).
By~\eqref{cha}, there are polynomials $P$ and $Q$ in $T$, such that
\begin{equation*}
\chi_1=\chi\circ P,\ \ \ 
\chi_2=\chi\circ Q.
\end{equation*}

If $P$ was not surjective (i.e., if it belonged to $\boldsymbol\U$), we would have $P=\pi\circ\mathbf P$ where $\pi$ is the natural projection of $\mathbf Y$ onto $P(X)$. Then $\chi_1$ would equal $\chi|_{P(X)}\circ\pi\circ\mathbf P$, so it would be the character $\chi|_{P(X)}\circ\pi$ on $\mathbf Y$ lifted with respect to $\mathbf P$, a contradiction. We conclude that $P$ is surjective. 

By Theorem~\ref{dwt}, there exists an $\F$-normal element $x_0\in X$ such that the pair $(x_0,y_0)$ is $\F$-quasi-generic for $\xi$. We let $\F'=(F_{n_k})_{k\ge1}$ denote the sub\sq\ of $\F$ such that $(x_0,y_0)$ is $\F'$-generic for $\xi$. Then the (non-vanishing) integral in~\eqref{new} becomes 
\begin{multline}\label{depe2}
\lim_{k\to\infty}\frac1{|F_{n_k}|}\sum_{n\in F_{n_k}}\chi(P(T^n( x_0)))\chi(Q(T^n(y_0)))=\\
\lim_{k\to\infty}\frac1{|F_{n_k}|}\sum_{n\in F_{n_k}}\chi(T^n(P(x_0)+Q(y_0))).
\end{multline}

Since $P$ is surjective, in virtue of Corollary~\ref{newcor}(i), we have $P(x_0)\in\mathcal N_\F(X,T)$. On the other hand, $Q(y_0)\in\mathcal N^\perp_\F(X,T)$ (here we cannot use Corollary~\ref{newcor}(ii), instead we use the fact that $Q$ is a polynomial in variable $T$ and that $\mathcal N_\F^\perp(X,T)$ is a $T$-invariant subgroup of $X$). So, $P(x_0)+Q(y_0)\in\mathcal N_\F(X,T)$ and the right hand side of~\eqref{depe2} equals the integral of the nontrivial character $\chi$ with respect to the Haar measure~$\lambda_X$. Since such an integral equals $0$ we have a contradiction with~\eqref{new}, which ends the proof.
\end{proof}

\begin{cor}\label{trivy}
If $(X,T)$ is an ergodic Abelian algebraic system such that any proper $P$-subgroup of $X$ (see Remark~\ref{fsg}(c)) is finite, then~\eqref{ooo} holds, that is, for any F\o lner \sq\ $\F$ in $\N$, we have
$$
\mathcal N_\F^\perp(X,T)\subset\mathcal D_\F(X,T).
$$
\end{cor}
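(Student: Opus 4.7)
The plan is to combine Corollary~\ref{redu1} with Theorem~\ref{zredu}. Let $\mathfrak X$ denote the class of ergodic Abelian algebraic systems in which every proper $P$-subgroup is finite. I will (a) verify that $\mathfrak X$ is closed under algebraic factors and (b) show that the entropy inequality~\eqref{incl} of Theorem~\ref{zredu} holds for every simple system in $\mathfrak X$. Together with Corollary~\ref{redu1}, these transfer the inclusion $\mathcal N^\perp_\F(X,T)\subset\mathcal D_\F(X,T)$ from simple members of $\mathfrak X$ to all of $\mathfrak X$, which is the desired statement.

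For (a), let $\pi:(X,T)\to(X',T')$ be an algebraic factor map with $(X,T)\in\mathfrak X$, and let $P'=a_0(T')^0+a_1T'+\cdots+a_k(T')^k$ be a non-surjective polynomial in $T'$. The ``parallel'' polynomial $P=a_0T^0+a_1T+\cdots+a_kT^k$ on $X$ satisfies $\pi\circ P=P'\circ\pi$, since $\pi$ is an equivariant group homomorphism; hence $P'(X')=\pi(P(X))$. If $P$ were surjective then so would be $P'$, contradicting the choice of $P'$; thus $P$ is non-surjective, $P(X)$ is finite by hypothesis, and $P'(X')=\pi(P(X))$ is finite too. Ergodicity and the compact-metrizable-Abelian structure are preserved by algebraic factor maps, so $(X',T')\in\mathfrak X$.

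For (b), fix a simple $(X,T)\in\mathfrak X$. For every $P\in\boldsymbol\U$ the group $P(X)$ is finite by hypothesis, hence every $T|_{P(X)}$-invariant probability measure on $P(X)$ has entropy zero. The map $\mathbf P:X\to\mathbf Y$ sends $\lambda_X$ to $\lambda_{\mathbf Y}$ (pushforwards of Haar measure under surjective continuous group homomorphisms are Haar), and for each $P\in\boldsymbol\U$ the coordinate projection $\pi_P:\mathbf Y\to P(X)$ satisfies $(\pi_P)_*\lambda_{\mathbf Y}=\lambda_{P(X)}$, a zero-entropy measure. Viewing $\lambda_{\mathbf Y}$ as the countable joining of the $\lambda_{P(X)}$'s, $P\in\boldsymbol\U$, and applying~\eqref{eoj1} yields $h_{\lambda_{\mathbf Y}}(\mathbf T)\le\sum_{P\in\boldsymbol\U}h(\lambda_{P(X)})=0$, while Proposition~\ref{ie} gives $h_{\lambda_X}(T)>0$; thus~\eqref{incl} holds. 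The main obstacle I anticipate is precisely this identification of $\lambda_{\mathbf Y}$ as a countable joining of Haar measures on the finite coordinate factors, so that the countable-joining bound~\eqref{eoj1} can be legitimately applied (when $\boldsymbol\U=\{P_0\}$ the claim is trivial, so the interesting case is when $\boldsymbol\U$ is infinite countable). Once this is in place, Theorem~\ref{zredu} handles the simple case, and Corollary~\ref{redu1} completes the proof.
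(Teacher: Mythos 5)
Your proof is correct and follows essentially the same route as the paper: verify that the class of systems with only finite proper $P$-subgroups is closed under algebraic factors, check that $h_{\lambda_{\mathbf Y}}(\mathbf T)=0<h_{\lambda_X}(T)$ so that~\eqref{incl} holds, and then combine Theorem~\ref{zredu} with Corollary~\ref{redu1}. The only difference is that you spell out the two steps the paper dismisses as ``obvious''/``clear'' (the parallel-polynomial argument for closure under factors, and the countable-joining bound~\eqref{eoj1} giving $h_{\lambda_{\mathbf Y}}(\mathbf T)=0$), both of which are carried out correctly.
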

\begin{proof}
It is obvious that if an Abelian algebraic system $(X,T)$ has the property that all its proper $P$-subgroups are finite then the same property have all algebraic factors of $(X,T)$. So, the class $\mathfrak X$ of ergodic Abelian algebraic systems with this property satisfies the assumption of Corollary~\ref{redu1}. Clearly, for any system in this class we have $h_{\lambda_{\mathbf Y}}(\mathbf T)=0$, which is strictly less than $h_{\lambda_X}(T)$, i.e.,~\eqref{incl} holds. By Theorem~\ref{zredu}, any simple system $(X,T)$ in the class $\mathfrak X$ satisfies~\eqref{ooo} and by Corollary~\ref{redu1}, any system in the class $\mathfrak X$ satisfies~\eqref{ooo}, as claimed.
\end{proof}

\subsection{Applications to direct products of $\Z/p\Z$ ($p$ prime) and solenoids}\label{eee}
In this subsection we apply Corollary~\ref{trivy} to two particular classes of Abelian groups.

Let $\Lambda_p=\{0,1,2,\dots,p-1\}$, where $p$ is prime. 
On $\Lambda_p^\N$ consider the operation $+$ of the coordinatewise addition modulo $p$. Clearly, this operation is continuous ($\Lambda_p^\N$ is isomorphic, as a \tl\ group, to the infinite direct product $(\Z/p\Z)^\N$). The Haar measure on $\Lambda_p^\N$ in the product measure $\mu^\N$, where $\mu$ is the normalized counting measure on $\Lambda$ (note that $\mu^\N$ coincides with the uniform Bernoulli measure on one-sided \sq s over $\Lambda$). The shift $\sigma$ is an endomorphism of $\Lambda_p^\N$, and the Abelian algebraic system $(\Lambda_p^\N,\sigma)$ is ergodic.

For this system we can proof a theorem fully analogous to the Rauzy theorem (Theorem~\ref{newrauzy}).

\begin{thm}\label{rtforzp}
$$
\mathcal D_\F(\Lambda_p^\N,\sigma)=\mathcal N_\F^\perp(\Lambda_p^\N,\sigma),
$$
\end{thm}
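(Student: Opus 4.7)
The plan is to prove the two inclusions separately, each as a direct application of one of the general results developed in Subsections~\ref{fnor} and~\ref{sred}. For the forward inclusion $\mathcal D_\F(\Lambda_p^\N,\sigma) \subset \mathcal N_\F^\perp(\Lambda_p^\N,\sigma)$, I plan to invoke Corollary~\ref{rauzy2}(4). The required hypotheses are straightforward to verify: $\sigma$ is a surjective continuous endomorphism of the compact Abelian group $(\Z/p\Z)^\N$, the Haar measure on $\Lambda_p^\N$ coincides with the uniform Bernoulli measure $\mu^\N$ which is classically mixing (hence ergodic) under $\sigma$, and $\htop(\Lambda_p^\N,\sigma) = \log p$ is finite. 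Therefore Corollary~\ref{rauzy2}(4) applies and yields the forward inclusion.

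For the reverse inclusion I will invoke Corollary~\ref{trivy}, whose key hypothesis is that every proper $P$-subgroup of $\Lambda_p^\N$ is finite. I will verify a stronger statement: the only proper $P$-subgroup is the trivial subgroup $\{0\}$, since every polynomial $P = a_0 \sigma^0 + a_1 \sigma + \cdots + a_k \sigma^k$ in $\sigma$ (with $a_i \in \Z$) that is not the zero map on $\Lambda_p^\N$ is in fact surjective.

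To establish this surjectivity I reduce the coefficients $a_i$ modulo $p$ to obtain $b_0, b_1, \dots, b_k \in \Z/p\Z$; because $P \neq P_0$ as a map, at least one $b_i$ is nonzero. Let $k'$ be the largest index with $b_{k'} \neq 0$. Given an arbitrary target $z = (z_n)_{n \geq 0} \in \Lambda_p^\N$, I prescribe $y_0 = y_1 = \cdots = y_{k'-1} = 0$ and then define
$$
y_{n+k'} \;=\; b_{k'}^{-1}\!\left(z_n - \sum_{i=0}^{k'-1} b_i\, y_{n+i}\right) \qquad (n \geq 0),
$$
where the inverse $b_{k'}^{-1}$ exists in $\Z/p\Z$ precisely because $p$ is prime and so $\Z/p\Z$ is a field. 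By construction $(Py)_n = z_n$ for all $n \geq 0$, hence $P$ is surjective.

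The main, and rather mild, obstacle is this algebraic observation, which is a version of the classical fact that a linear recurrence with nonzero leading coefficient over a field admits a solution with arbitrary initial values. The primality of $p$ enters at exactly this point. Once the surjectivity claim is in hand, Corollary~\ref{trivy} immediately delivers $\mathcal N_\F^\perp(\Lambda_p^\N,\sigma) \subset \mathcal D_\F(\Lambda_p^\N,\sigma)$, concluding the proof.
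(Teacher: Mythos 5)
Your proof is correct, and the forward inclusion is handled exactly as in the paper: Corollary~\ref{rauzy2}(4) applies because $(\Lambda_p^\N,\sigma)$ is an ergodic Abelian algebraic system with $\htop(\Lambda_p^\N,\sigma)=\log p<\infty$. For the reverse inclusion, however, you verify the hypothesis of Corollary~\ref{trivy} by a genuinely different route. The paper proves Lemma~\ref{fajne}, a combinatorial statement about $H$-admissible blocks showing that \emph{every} proper closed shift-invariant subgroup of $\Lambda_p^\N$ is finite, and then observes that $P$-subgroups are closed and shift-invariant. You instead prove the sharper but narrower fact that every nonzero polynomial in $\sigma$ is already surjective, so the only proper $P$-subgroup is $\{0\}$. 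Your recurrence argument is sound: after reducing the coefficients modulo $p$, the leading nonzero coefficient $b_{k'}$ is invertible in the field $\Z/p\Z$ (this is where primality enters, just as it enters the paper's block-admissibility argument), all $b_i$ vanishing forces $P=P_0$ as a map, and otherwise solving $\sum_{i=0}^{k'}b_iy_{n+i}=z_n$ forward in $n$ from the prescribed initial segment produces a preimage of any target $z$. What your approach buys is brevity and a stronger structural conclusion (nontrivial proper $P$-subgroups simply do not exist); what the paper's lemma buys is generality, since the classification of all proper closed shift-invariant subgroups is reused immediately afterwards for Corollary~\ref{oyo} and for the two-sided shift $\Lambda_p^\Z$. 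Either way the hypothesis of Corollary~\ref{trivy} is verified and the theorem follows.
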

\smallskip

The proof is preceded by a lemma.

\begin{lem}\label{fajne}Let $p$ be a prime.
Any proper closed shift-\inv\ subgroup $H$ of the group $\Lambda_p^\N$ is finite.
\end{lem}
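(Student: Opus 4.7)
The plan is to apply Pontryagin duality. Since $G := \Lambda_p^\N$ with coordinatewise addition mod $p$ is the compact group $(\Z/p\Z)^\N$, every continuous character depends on only finitely many coordinates, so $\hat G \cong \bigoplus_{n\ge 1}\Z/p\Z$. I would identify this dual group with the polynomial ring $\mathbb F_p[x]$ via $(a_n)_{n\ge 1} \mapsto \sum_{n\ge 1} a_n x^{n-1}$. A direct computation on the generating characters $\chi_a((x_n)) = \exp\!\bigl(\tfrac{2\pi i}{p}\sum_n a_n x_n\bigr)$ shows that the dual endomorphism $\hat\sigma$, defined by $\hat\sigma(\chi) = \chi\circ\sigma$, corresponds to multiplication by $x$ on $\mathbb F_p[x]$.

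Next I would use the annihilator correspondence: closed subgroups $H\subseteq G$ are in order-reversing bijection with subgroups $H^\perp \subseteq \hat G$, and $H$ is proper precisely when $H^\perp$ is nonzero. Shift-invariance of $H$ (i.e.\ $\sigma(H)\subseteq H$) translates to $\hat\sigma$-invariance of $H^\perp$, i.e.\ closure under multiplication by $x$. Because $p$ is prime, every additive subgroup of $\hat G$ is automatically an $\mathbb F_p$-vector subspace, so closure under multiplication by $x$ upgrades to closure under multiplication by every polynomial. Hence $H^\perp$ is a nonzero ideal of $\mathbb F_p[x]$.

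Since $\mathbb F_p[x]$ is a principal ideal domain, $H^\perp = (f)$ for some nonzero $f\in\mathbb F_p[x]$, and the quotient $\mathbb F_p[x]/(f)$ is finite of order $p^{\deg f}$. Pontryagin duality applied to the exact sequence $0\to H\to G\to G/H\to 0$ yields $\hat H\cong\hat G/H^\perp = \mathbb F_p[x]/(f)$, so $\hat H$ is finite. Dually, $H$ is then finite (of order $p^{\deg f}$), which is precisely the claim.

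The argument is essentially algebraic once the duality setup is in place, and the only delicate point is the bookkeeping verification that $\hat\sigma$ really is multiplication by $x$ (rather than, say, a shift in the other direction or with an off-by-one) and that the primality of $p$ is invoked precisely where needed to obtain the $\mathbb F_p[x]$-module structure on $\hat\sigma$-invariant additive subgroups of $\hat G$.
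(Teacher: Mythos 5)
Your proof is correct, but it takes a genuinely different route from the paper's. The paper argues combinatorially and entirely inside $\Lambda_p^\N$: it looks at the family of ``$H$-admissible'' blocks, uses primality to show that every single symbol is admissible (the multiples $na \bmod p$ of a nonzero admissible symbol exhaust $\Lambda_p$), isolates the maximal length $k_0$ for which \emph{all} blocks are admissible, and then shows by adding/subtracting admissible blocks that every block of length $k_0$ has a unique admissible continuation, whence $|H|\le p^{k_0}$. Your argument instead dualizes: it identifies $\widehat{\Lambda_p^\N}\cong\bigoplus_{n\ge1}\Z/p\Z$ with $\mathbb F_p[x]$, checks that $\hat\sigma$ is multiplication by $x$ (your computation is right: $\hat\sigma(\chi_a)=\chi_b$ with $b_1=0$, $b_m=a_{m-1}$, which is multiplication by $x$ under your indexing), and reduces the lemma to the fact that $\mathbb F_p[x]/(f)$ is finite for $f\ne0$. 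What your approach buys is a complete classification of the closed shift-invariant subgroups of $\Lambda_p^\N$ (they biject with ideals $(f)\subseteq\mathbb F_p[x]$), and it makes transparent why the lemma fails for composite $p$: for $p=4$ the ideal $(2)\subseteq(\Z/4\Z)[x]$ has infinite quotient, corresponding to the infinite proper invariant subgroup $\{0,2\}^\N$. The paper's proof buys self-containedness and elementarity. One small correction to your own accounting of where primality enters: closure of an additive subgroup under multiplication by $x$ already upgrades to closure under multiplication by arbitrary polynomials over $\Z/p\Z$ for \emph{any} $p$ (integer scalars act by repeated addition); primality is genuinely needed only in the next step, namely that $\mathbb F_p[x]$ is a PID over a field so that nonzero ideals have finite quotients.
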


\begin{proof} Let us call a block $B\in\Lambda_p^k$ ($k\in\N$) \emph{$H$-admissible} if $B$ appears in some element of $H$. Since $H$ is a subgroup of $\Lambda_p^\N$, it contains the \sq\ $\mathbf 0$ consisting of only zeros. If $H=\{\mathbf 0\}$ then it is finite and the proof ends. Otherwise some nonzero element $a\in\Lambda_p$, viewed as a block of length $1$, is $H$-admissible. Then, for any $n\in\N$, the number $na$ (mod $p$), viewed as a block of length $1$ over $\Lambda_p$ is also $H$-admissible. Since $p$ is prime, the numbers $na$ (mod $p$) represent all $b\in\Lambda_p$. We have shown that any block of length $1$ is $H$-admissible. Since $H$ is a proper closed and shift-\inv\ subset of $\Lambda_p^\N$, there exists a maximal number $k_0\in\N$ such that all blocks of length $k_0$ are $H$-admissible (and at least one block of length $k_0+1$ is not $H$-admissible). Since each block $B\in\Lambda_p^{k_0}$ is $H$-admissible, it has an $H$-admissible continuation $Ba$, $a\in\Lambda_p$. Suppose that $0^{k_0}a$ is $H$-admissible, where $a\in\Lambda_p$, $a\neq 0$. Then, arguing as before, we get that $0^{k_0}b$ is $H$-admissible for any $b\in\Lambda_p$. Let $B\in\Lambda_p^{k_0}$ be arbitrary and let $Ba$ be an $H$-admissble continuation of $B$. By shift-invariance of $H$, the sum of two $H$-admissible blocks is $H$-admissible. In particular, we can add the $H$-admissible blocks $0^{k_0}b$ and $Ba$ and obtain that the block $Bc$, where $c=b+a$, is $H$-admissible. Since $b$ is an arbitrary element of $\Lambda_p$, so is $c$. In this manner, we obtain that all blocks of length $k_0+1$ are $H$-admissible. This contradicts the definition of $k_0$. We conclude that the only $H$-admissible continuation of $0^{k_0}$ is $0^{k_0+1}$. Now suppose that some block $B$ of length $k_0$ has two different $H$-admissible continuations $Bb$ and $Bc$ with $b\neq c\in\Lambda_p$. Then, by subtraction, we find that the block $0^{k_0}a$ is $H$-admissible, where $a=b-c\neq 0$, a possibility that has just been eliminated. We have shown that any block of length $k_0$ has a unique $H$-admissible continuation. This implies that any $x\in H$ is determined by the block $x|_{[1,k_0]}$, and hence $|H|=|\Lambda_p|^{k_0}$.
\end{proof}

\begin{proof}[Proof of Theorem~\ref{rtforzp}]
By Lemma~\ref{fajne}, any proper closed shift-\inv\ subgroup (in particular, any proper $P$-subgroup) of $\Lambda_p^\N$ is finite. Now,
Corollary~\ref{trivy} implies that for any F\o lner \sq\ $\F$ in $\mathbb N$, we have $\mathcal N_\F^\perp(\Lambda_p^\N,\sigma)\subset\mathcal D_\F(\Lambda_p^\N,\sigma)$. Since $\htop(\Lambda_p^\N,\sigma)=\log p<\infty$, Corollary~\ref{rauzy2}(4) gives the opposite inclusion. 
\end{proof}

We continue to consider the group $(\Lambda_p^\N,+)$. 
Any polynomial $P$ in $\sigma$ is a continuous homomorphism of $\Lambda_p^\N$. Suppose that $P$ is surjective (then $P$ is an endomorphism of $\Lambda_p^\N$), and moreover, suppose that the Abelian algebraic system $(\Lambda_p^\N,P)$ is ergodic. Since $P$ commutes with $\sigma$, it is easy to see that every proper closed $P$-\inv\ subgroup of $X$ is also shift-\inv\ and hence Corollary~\ref{trivy} applies to $(\Lambda_p^\N,P)$. Invoking the fact that $\htop(\Lambda_p^\N,P)<\infty$, we arrive at the following result. 

\begin{cor}\label{oyo}
For any ergodic polynomial $P$ in $\sigma$ on $\Lambda_p^\N$ (where $p$ is a prime), and any F\o lner \sq\ $\F$ in $\N$, one has
$$
\mathcal D_\F(\Lambda_p^\N,P)=\mathcal N_\F^\perp(\Lambda_p^\N,P). 
$$
\end{cor}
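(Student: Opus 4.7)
The plan is to mimic the proof of Theorem~\ref{rtforzp} almost verbatim, showing that the hypothesis of Corollary~\ref{trivy} still holds for the system $(\Lambda_p^\N,P)$ even though $P$ is no longer the shift itself. The key point is that every $P$-subgroup of $\Lambda_p^\N$ (with respect to the dynamics generated by $P$) is a closed shift-invariant subgroup, so Lemma~\ref{fajne} will still deliver finiteness.

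Concretely, first I would fix a polynomial $Q$ in the variable $P$ and observe that, since $P = \sum_{l} a_l \sigma^l$, the map $Q$ is itself a polynomial in $\sigma$. Hence the image $Q(\Lambda_p^\N)$ is a closed subgroup of $\Lambda_p^\N$, and because $\sigma$ commutes with every power of $\sigma$ (hence with $Q$), the equality $\sigma(Q(\Lambda_p^\N)) = Q(\sigma(\Lambda_p^\N)) = Q(\Lambda_p^\N)$ shows that $Q(\Lambda_p^\N)$ is shift-invariant. By Lemma~\ref{fajne}, if $Q(\Lambda_p^\N)$ is a proper subgroup of $\Lambda_p^\N$ then it is finite. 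In particular, every proper $P$-subgroup of the algebraic system $(\Lambda_p^\N,P)$ is finite.

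Next I would invoke Corollary~\ref{trivy} applied to the ergodic Abelian algebraic system $(\Lambda_p^\N,P)$ (ergodicity is assumed, and we have just verified the finiteness hypothesis). This yields, for any F\o lner sequence $\F$ in $\N$,
\[
\mathcal N_\F^\perp(\Lambda_p^\N,P)\subset\mathcal D_\F(\Lambda_p^\N,P).
\]
For the reverse inclusion I would use the hint that $\htop(\Lambda_p^\N,P)<\infty$ (which holds because any continuous endomorphism of the compact zero-dimensional group $\Lambda_p^\N$ has finite topological entropy, being dominated by a multiple of $\htop(\Lambda_p^\N,\sigma)=\log p$). Finite topological entropy is exactly the hypothesis needed to apply Corollary~\ref{rauzy2}(4), giving $\mathcal D_\F(\Lambda_p^\N,P)\subset\mathcal N_\F^\perp(\Lambda_p^\N,P)$.

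There is essentially no obstacle here beyond carefully checking the shift-invariance of $P$-subgroups; once that is in place the statement is a direct specialization of Corollary~\ref{trivy} combined with Corollary~\ref{rauzy2}(4), exactly paralleling the deduction of Theorem~\ref{rtforzp} from the same two ingredients.
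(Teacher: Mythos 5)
Your proposal is correct and follows essentially the same route as the paper: observe that every proper $P$-subgroup of $(\Lambda_p^\N,P)$ is shift-invariant (a polynomial in $P$ is a polynomial in $\sigma$ and commutes with the surjective map $\sigma$), hence finite by Lemma~\ref{fajne}, and then combine Corollary~\ref{trivy} with Corollary~\ref{rauzy2}(4) via finiteness of the topological entropy. The only caveat is your parenthetical justification of $\htop(\Lambda_p^\N,P)<\infty$: it is not true that \emph{every} continuous endomorphism of $\Lambda_p^\N$ has finite topological entropy (the group is topologically isomorphic to $(\Lambda_p^\N)^\N$, which carries a shift endomorphism of infinite entropy), but for a polynomial $P$ of degree $k$ the sliding-block-code estimate $\htop(\Lambda_p^\N,P)\le k\log p$ does hold, which is all you need.
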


\begin{rem}\phantom{.}
\begin{enumerate}[(i)]
\item It can be shown (using the Pontryagin dual $\widehat X$ and~\cite[Theorem~1]{Ha}) that a polynomial $P$ in $\sigma$ is ergodic (in particular, surjective) if and only if it is not of the trivial form $P=a_0\sigma^0$, $a_0\in\Z$ (recall that $\sigma^0$ stands for the identity map). We skip the proof. 
\item Note that polynomials $P$ in $\sigma$ coincide with \emph{algebraic cellular automata}, i.e., cellular automata given  given by 
$$
(P(x))_n = \sum_{k=0}^N a_k x_{n+k}\!\!\mod p, \ \ x=(x_n)_{n\in\N}.
$$
where $N\ge0$ and $a_k\in\Z$, $k=0,1,\dots,N$.
\item If we consider $\Lambda_p^\Z$ rather than $\Lambda_p^\N$ then  Lemma~\ref{fajne} holds as well. Since now the shift transformation $\sigma$ is invertible, we may include as polynomials in $\sigma$ all maps of the form
$$
P=a_{-k}\sigma^{-k}+a_{-k+1}\sigma^{-k+1}+\cdots+a_0\sigma^0+a_1\sigma+\cdots+a_k\sigma^k,
$$
where $k\ge0$, $a_{-k},\dots,a_k\in\Z$. 
With this modification, the analogs of Corollary~\ref{oyo} and item (i) of this remark hold. 
Again, we skip the details.
\end{enumerate}
\end{rem}

In our next example, Corollary~\ref{trivy} is applied to prove an analog of Rauzy theorem (Theorem~\ref{newrauzy}) for the so-called solenoids.

\begin{defn}\label{sole} Let $\mathbf p=(p_k)_{k\ge1}$ be a \sq\ of (not necessarily distinct) prime numbers. The \emph{solenoid with base $\mathbf p$} is the compact Abelian group defined as follows. Let
$$
\mathbb S_{\mathbf p} = \{(t_k)_{k\ge 1}\in\mathbb T^\N: t_k=p_kt_{k+1}\!\!\mod 1\}.
$$
The set $\mathbb S_{\mathbf p}$ is endowed with the operation of addition inherited from the direct product $\mathbb T^\N$. 
\end{defn}
In other words, $\mathbb S_{\mathbf p}$ is the \tl\ group obtained as the inverse limit $\overset{\longleftarrow}{\lim}(\mathbb T_k,p_k)$ of the circle groups $\mathbb T_k=\mathbb T$ with the bonding maps defined as multiplications by $p_k$, as shown in the following diagram
$$
\mathbb T\overset{p_1}{\longleftarrow}\mathbb T\overset{p_2}{\longleftarrow}\mathbb T\overset{p_3}{\longleftarrow}\cdots.
$$

It is well known that solenoids are connected (and in fact, they are indecomposable continua).
For more details concerning solenoids we refer the reader to \cite[Chapter VI]{HR}, \cite{AF} and \cite{H}. 

Denote 
$$
\N_{\mathbf p}=\{1\}\cup\{p_{k_1}p_{k_2}\cdots p_{k_m},\ k_1<k_2<\dots<k_m,\ m\in\N\}
$$
and let $\Q_{\mathbf p}$ be the set
of rational numbers which in some (perhaps reducible) form have denominators in $\N_{\mathbf p}$. 
The Pontryagin dual $\widehat{\mathbb S}_{\mathbf p}$ equals the discrete (additive) group $\Q_{\mathbf p}$. Any endomorphism of $\mathbb S_{\mathbf p}$ is dual to an endomorphism of $\Q_{\mathbf p}$. One can show that the group of endomorphisms of $\Q_{\mathbf p}$ is generated by multiplications by nonzero integers and fractions of the form $\frac1p$, where $p$ is a prime that appears in the \sq\ $\mathbf p$ infinitely many times. Every endomorphism of the solenoid $\mathbb S_{\mathbf p}$ is ergodic except when it is dual to the multiplication by either $1$ or~$-1$. Any ergodic endomorphism of $\mathbb S_{\mathbf p}$ has positive and finite entropy (see, e.g., \cite[Theorem~1]{Ju1}).

\begin{thm}\label{sol}
Let $T$ be an ergodic endomorphism of a solenoid $\mathbb S_{\mathbf p}$. Then the analog of Theorem~\ref{newrauzy} holds:
$$
\mathcal N^\perp_\F(\mathbb S_{\mathbf p},T) = \mathcal D_\F(\mathbb S_{\mathbf p},T).
$$
\end{thm}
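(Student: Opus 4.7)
Plan for proof of Theorem~\ref{sol}:

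The inclusion $\mathcal D_\F(\mathbb S_{\mathbf p},T)\subset\mathcal N^\perp_\F(\mathbb S_{\mathbf p},T)$ is immediate from Corollary~\ref{rauzy2}(4) once we observe that $(\mathbb S_{\mathbf p},T)$ has finite topological entropy: the paper already notes that any ergodic endomorphism of the solenoid has positive, finite measure-theoretic entropy with respect to $\lambda_{\mathbb S_{\mathbf p}}$, and by Proposition~\ref{ie} $\lambda_{\mathbb S_{\mathbf p}}$ is the unique measure of maximal entropy, so by the variational principle $\htop(\mathbb S_{\mathbf p},T)=h_{\lambda_{\mathbb S_{\mathbf p}}}(T)<\infty$.

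For the nontrivial inclusion $\mathcal N^\perp_\F(\mathbb S_{\mathbf p},T)\subset\mathcal D_\F(\mathbb S_{\mathbf p},T)$, the plan is to invoke Corollary~\ref{trivy}, which reduces everything to showing that every proper $P$-subgroup of $\mathbb S_{\mathbf p}$ is finite. I will in fact show the stronger statement: \emph{every polynomial $P$ in $T$ is either the zero map or surjective}, so that the only proper $P$-subgroup is $\{0\}$. This will be done by passing to the Pontryagin dual side $\widehat{\mathbb S}_{\mathbf p}=\Q_{\mathbf p}$. As noted right before Theorem~\ref{sol}, the endomorphism ring of $\Q_{\mathbf p}$ sits inside $\Q$ as the subring $R\subset\Q$ generated by $\Z$ and the reciprocals $1/p$ of those primes appearing infinitely often in $\mathbf p$; thus the dual $T^{\ast}$ of $T$ is multiplication by some $\alpha\in R$, and the dual of $P=a_0I+a_1T+\cdots+a_kT^k$ is multiplication by $\beta=a_0+a_1\alpha+\cdots+a_k\alpha^k\in R\subset\Q$.

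The case distinction then is clean. If $\beta=0$ then $P^{\ast}\equiv 0$ on $\Q_{\mathbf p}$, which by separation of points forces $P\equiv 0$, so $P=P_0$. If $\beta\neq 0$, then multiplication by $\beta$ is injective on the torsion-free group $\Q_{\mathbf p}$, and by the standard duality between injectivity of $P^{\ast}$ and surjectivity of $P$, we conclude that $P(\mathbb S_{\mathbf p})=\mathbb S_{\mathbf p}$. Hence the family $\boldsymbol{\U}$ of non-surjective polynomials in $T$ consists only of $P_0$, every proper $P$-subgroup equals $\{0\}$, and Corollary~\ref{trivy} applies directly, yielding $\mathcal N^\perp_\F(\mathbb S_{\mathbf p},T)\subset\mathcal D_\F(\mathbb S_{\mathbf p},T)$. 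Combined with the first paragraph, this gives the desired equality.

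The main thing to verify carefully is the duality step: namely, that with $\widehat{\mathbb S}_{\mathbf p}$ identified with $\Q_{\mathbf p}$, every polynomial in $T$ pulls back on characters to multiplication by a single rational $\beta\in R$, and that $\beta=0$ really does force $P\equiv 0$ (which uses that characters separate points on $\mathbb S_{\mathbf p}$). Once this duality dictionary is in place, the two case analysis and the application of Corollary~\ref{trivy} are immediate, and there is no further obstacle.
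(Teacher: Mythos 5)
Your proposal is correct, and both halves line up with the paper's strategy at the top level: the easy inclusion via Corollary~\ref{rauzy2}(4) (your justification that $\htop(\mathbb S_{\mathbf p},T)=h_{\lambda_{\mathbb S_{\mathbf p}}}(T)<\infty$ via Proposition~\ref{ie} is exactly what is needed there), and the hard inclusion by feeding the solenoid into Corollary~\ref{trivy}. Where you genuinely diverge is in the verification that every proper $P$-subgroup is trivial. The paper argues topologically: any $P$-subgroup is a compact connected subset of $\mathbb S_{\mathbf p}$, a proper compact connected subset of a solenoid is a point or an arc by a theorem of Hagopian, and no nontrivial topological group can be an arc (fixed-point property versus fixed-point-free translations). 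You instead argue on the Pontryagin dual: since $\widehat{\mathbb S}_{\mathbf p}=\Q_{\mathbf p}$ is a rank-one torsion-free group sitting inside $\Q$, every endomorphism of it is multiplication by a single rational, so the dual of any polynomial $P=\sum a_iT^i$ is multiplication by $\beta=\sum a_i\alpha^i$; then $\beta=0$ forces $P=P_0$ (characters separate points), while $\beta\neq0$ gives injectivity of $P^*$ on a torsion-free group and hence surjectivity of $P$ by the standard annihilator duality. Both routes are sound. Yours trades a nontrivial continuum-theoretic input for an elementary arithmetic computation in $\Q$, and it yields the slightly sharper structural statement that $\boldsymbol\U=\{P_0\}$ (so $\mathbf Y$ is trivial and Theorem~\ref{zredu} applies with $h_{\lambda_{\mathbf Y}}(\mathbf T)=0$); the paper's topological argument, on the other hand, is the one that the authors indicate extends to higher-dimensional solenoids (Remark~\ref{sol2}), where the dual group has rank $d>1$ and your rank-one computation no longer applies verbatim.
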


\begin{proof}By Corollary~\ref{rauzy2}~(4) we have $\mathcal D_\F(\mathbb S_{\mathbf p},T)\subset\mathcal N^\perp_\F(\mathbb S_{\mathbf p},T)
$. In view of Corollary~\ref{trivy}, in order to prove the reverse inclusion, it suffices to notice that the only proper $P$-subgroup of $\mathbb S_{\mathbf p}$ is the trivial subgroup. This follows from \tl\ properties of the solenoid. Since $\mathbb S_{\mathbf p}$ is compact and connected, so is any of its $P$-subgroups. Now, any proper compact connected subset of a solenoid is either a point or an arc (see e.g., \cite[Theorem~2]{H}) and it is well-known that no topological group is homeomorphic to an arc.\footnote{Any arc has the \emph{fixed-point property}, while for any nontrivial \tl\ group $G$ the map $x\mapsto x+x_0$ (where $x_0\neq 0$) is a homeomorphism of $G$ without any fixed points.} So, the only possible $P$-subgroup of $\mathbb S_{\mathbf p}$ is the trivial subgroup.
\end{proof}

\subsection{Rauzy theorem for toral endomorphisms}\label{4.1}

In this subsection, we will prove Theorem~\ref{raufg} which is an analog of the Rauzy theorem (Theorem~\ref{newrauzy}) for ergodic toral endomorphisms. This result is in a way deeper than the results in the preceding subsection, because multidimensional tori do not satisfy the assumption of Corollary~\ref{trivy}, and additional effort is needed to deal with non-surjective polynomials in $T$ which have infinite image. 

\smallskip
Fix an integer $d\ge1$ and consider the $d$-dimensional torus $\mathbb T^d$. Its elements are vectors $x=(x_1,x_2,\dots,x_d)$ with entries in the circle $\mathbb T=\R/\Z$. Any surjective endomorphism of $\mathbb T^d$ is given by the formula $x\mapsto Ax$, where $A$ is a nonsingular integer $d\times d$-matrix and $x=(x_1,x_2,\dots,x_d)\in\mathbb T^d$ is written as a column vector. 
We will denote this endomorphism by the same letter $A$ and call it a \emph{toral endomorphism}.\footnote{An iconic example in this class is the map of $\mathbb T^2$ given by $(a,b)\mapsto(2a+b,a+b)$.} As in any algebraic system, the Haar measure $\lambda$ on $\mathbb T^d$ is preserved under $A$, and hence the Lebesgue measure (which is the completion of the Haar measure and will be denoted by $\lambda$ as well) is also preserved. The measure-preserving system $(\mathbb T^d,\lambda,A)$ is ergodic if and only if $A$ does not have roots of unity among its eigenvalues (see~\cite[page 623]{Ha} or~\cite[Corollary 2.20]{EW}). In the ergodic case, by Proposition~\ref{ie}, the system has positive entropy. It also follows from Proposition~\ref{ie} that since the entropy is finite\footnote{We have $\htop(X,T)=\sum_i\max\{0,\log|\lambda_i|\}$, where the sum ranges over all eigenvalues of $A$; see, e.g.,~see, e.g.,~\cite[Theorem~1]{Ju1}.}, $\lambda$ is the unique measure of maximal entropy.

\begin{lem}\label{red}
Let $A:\mathbb T^d\to\mathbb T^d$ be an ergodic toral endomorphism and let $(Y,S)$ be a nontrivial algebraic system such that there exists an algebraic factor map\break$\pi:(\mathbb T^d,A)\to(Y,S)$. 
Then 
\begin{enumerate}
\item $Y$ is (isomorphic to) a $d'$-dimensional torus with $d'\le d$ and $S$ is an ergodic toral endomorphism.
\item
We have either
    \begin{enumerate}[(a)]
        \item $h_\mu(A)=h_{\pi^*(\mu)}(S)$, for all \im s $\mu\in\M_A(\mathbb T^d)$, or
        \item $\htop(\mathbb T^d,A)>\htop(Y,S)$.
    \end{enumerate}
\end{enumerate}
\end{lem}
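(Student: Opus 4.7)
I would first note that the kernel $K=\ker\pi$ is a closed subgroup of the compact Lie group $\mathbb T^d$, hence has finitely many components; its identity component $K_0$ is a subtorus of some dimension $k\in\{0,1,\dots,d\}$, and $F:=K/K_0$ is a finite abelian group. Realizing $Y=\mathbb T^d/K$ in two stages -- first $\mathbb T^d/K_0\cong\mathbb T^{d-k}$, then the further quotient by the finite $F$ -- shows that $Y\cong\mathbb T^{d'}$ with $d':=d-k\le d$. The induced map $S$ is then a continuous surjective group endomorphism of $Y$ (a toral endomorphism), and it is ergodic because $(Y,\lambda_Y,S)$ is a factor of the ergodic system $(\mathbb T^d,\lambda,A)$.

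\textbf{Part (2), eigenvalue analysis.} Next, by continuity of $A$ and the inclusion $A(K)\subset K$, the connected set $A(K_0)\ni 0$ lies in $K_0$, so $K_0$ corresponds to an $A$-invariant rational subspace $V\subset\R^d$. Writing $\lambda_1,\dots,\lambda_k$ for the eigenvalues of $A|_V$ and $\mu_1,\dots,\mu_{d'}$ for those of the induced map on $\R^d/V$ (which coincide with the eigenvalues of $S$, since passing through the finite quotient $F$ does not change them), the Yuzvinski\u\i\ entropy formula \cite{Ju1} gives
$$
\htop(\mathbb T^d,A)-\htop(Y,S)=\sum_{i:\,|\lambda_i|>1}\log|\lambda_i|=\htop(K_0,A|_{K_0})\ge 0.
$$
If some $|\lambda_i|>1$ this difference is strictly positive, giving alternative (b); otherwise all $|\lambda_i|\le 1$ and $\htop(K,A|_K)=\htop(K_0,A|_{K_0})=0$ (finite extensions do not change $\htop$).

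\textbf{Part (2), equality case and main obstacle.} In the equality case, $h_{\pi^*\mu}(S)\le h_\mu(A)$ is just \eqref{eof}; for the reverse I would appeal to the Ledrappier--Walters relative variational principle
$$
\sup_{\mu':\,\pi^*\mu'=\nu}h_{\mu'}(A)=h_\nu(S)+\int\htop\!\bigl(A,\pi^{-1}(y)\bigr)\,d\nu(y)
$$
with $\nu=\pi^*\mu$. The key technical point, and the hardest step, is identifying the Bowen fibre entropy: $A$ does not preserve individual fibres (it sends $\pi^{-1}(y)$ to $\pi^{-1}(Sy)$), and $A|_K$ need not even be surjective, so one cannot directly equate the fibre dynamics with the endomorphism $A|_K$ of $K$. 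However, linearity of $A$ yields $A^i(y_0+k_1)-A^i(y_0+k_2)=A|_K^i(k_1-k_2)$ for any $y_0\in\pi^{-1}(y)$ and $k_1,k_2\in K$; combined with the translation invariance of the metric on $\mathbb T^d$, this produces a bijection between Bowen $(n,\eps)$-separated sets in $\pi^{-1}(y)$ under $A$ and in $K$ under $A|_K$, whence $\htop(A,\pi^{-1}(y))=\htop(K,A|_K)=0$ for every $y$. The supremum above then collapses to $h_\nu(S)$, forcing $h_\mu(A)=h_{\pi^*\mu}(S)$ and giving alternative~(a).
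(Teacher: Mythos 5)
Your proof is correct, but it takes a genuinely different route from the paper's, most notably in part (2). For part (1) you use the closed-subgroup structure of the Lie group $\mathbb T^d$ (identity component of the kernel is a subtorus, the component group is finite), whereas the paper dualizes: $\widehat\pi$ embeds $\widehat Y$ into $\widehat{\mathbb T^d}\cong\Z^d$, so $\widehat Y\cong\Z^{d'}$ and $Y\cong\mathbb T^{d'}$ in one line; both arguments are fine. For part (2) the paper argues on the kernel $H$ directly: if $H$ is finite, $\pi$ is finite-to-one and therefore preserves the entropy of every invariant measure (quoting Ledrappier--Walters), giving (a); if $H$ is infinite, it contains a nontrivial $A^k$-invariant subtorus on which $A^k$ restricts to an ergodic toral endomorphism, hence has positive entropy by Juzvinski\u\i, and Juzvinski\u\i's addition formula $\htop(\mathbb T^d,A)=\htop(\mathbb T^d/H,A^H)+\htop(H,A|_H)$ yields (b). You replace the addition formula by the explicit eigenvalue formula (equivalent here, once one checks, as you implicitly do, that $A|_{K_0}$ is surjective so the formula applies), and --- more substantially --- you replace the ``finite-to-one factors preserve entropy'' citation by the full Ledrappier--Walters relative variational principle together with a computation of the Bowen fibre entropy using translation-invariance of the metric. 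That fibre-entropy identification $\htop(A,\pi^{-1}(y))=\htop(K,A|_K)$ is correct and is essentially the standard proof of the very lemma the paper cites, so your argument is self-contained where the paper's is not. One structural remark: since $A$ is ergodic, any nontrivial $A$-invariant subtorus $K_0$ carries an ergodic (hence, by Juzvinski\u\i, positive-entropy) restriction, so your ``equality case with $K_0$ nontrivial'' is actually vacuous --- your dichotomy coincides with the paper's ``kernel finite vs.\ kernel infinite.'' This does not affect correctness, but it means the relative variational principle is only ever invoked when $K$ is finite, where $\htop(A,\pi^{-1}(y))=0$ is immediate and the machinery could be dispensed with.
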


\begin{proof}
(1) A compact Abelian group is (isomorphic to) the $d$-dimensional torus~$\mathbb T^d$ if and only if its Pontryagin dual is isomorphic to the additive group $\Z^d$. The algebraic factor $\pi$ induces an injective embedding of the dual of $Y$ in the dual of $\mathbb T^d$, $\widehat\pi:\widehat Y\to\widehat{\mathbb T}^d$, by composition: $\widehat\pi(\chi)=\chi\circ\pi$, $\chi\in\widehat Y$. Thus, $\widehat Y$ is isomorphic to a (nontrivial) subgroup of $\Z^d$. Any such subgroup is isomorphic to $\Z^{d'}$ for some $d'\in\{1,2,\dots,d\}$. So, $Y$ is (isomprphic to) a $d'$-dimensional torus and $S$ is an ergodic toral endomorphism.

\smallskip\noindent(2) Let $H\subset\mathbb T^d$ denote the kernel of $\pi$. Then $H$ is a closed $A$-\inv\ subgroup of $\mathbb T^d$. 
By \cite[Theorem 2]{Ju2}, we have 
\begin{equation}\label{oto}
\htop(\mathbb T^d,A)=\htop(\mathbb T^d/H,A^H)+\htop(H,A|_H),
\end{equation}
where $A^H$ stands for the map induced by $A$ on $X/H$. Since $\pi$ and the natural projection $\mathbb T^d\mapsto \mathbb T^d/H$ have the same kernel, the factors $(Y,S)$ and $(\mathbb T^d/H,A^H)$ are \tl ly conjugate, and hence \begin{equation}\label{ot}
\htop(Y,S)=\htop(\mathbb T^d/H,A^H).
\end{equation}
Any proper closed subgroup of the $d$-dimensional torus is either finite or it is (isomorphic to) a product of a $d'$-dimensional torus, where $1\le d'<d$, with a finite group. If $H$ is finite then $\pi$ is finite-to-one and thus it satisfies (a). Suppose $H= Z\times G$ where $Z$ is (isomorphic to) a $d'$-dimensional torus with $1\le d'< d$ and $G$ is a finite group. Since $H$ is \inv\ under $A$, the torus $Z$ is \inv\ under $A^k$ for some $k\in\N$. The matrix $A^k$ is nonsingular, so it preserves dimension, which implies that $A^k|_Z$ is surjective, i.e., it is a toral endomorphism. Ergodicty of $A$ is equivalent to the lack of eigenvalues that are roots of unity, in particular, and it implies the ergodicity of $A^k$. Since any eigenvalue of $A^k|_Z$ is also an eigenvalue of $A^k$, $A^k|_Z$ is ergodic. By Proposition~\ref{ie}, $(Z,A^k|_Z)$ has a positive entropy, and therefore $(H,A|_H)$ also has a positive entropy, which, considering~\eqref{ot} and~\eqref{oto}, implies~(b).
\end{proof}

We are now in a position to present the main theorem of this subsection.
\begin{thm}\label{raufg}
Let $A:\mathbb T^d\to\mathbb T^d$ $(d\ge1)$ be an ergodic toral endomorphism. Let $\F=(F_n)_{n\ge1}$ be a F\o lner \sq\ in~$\N$. Then 
$$
\mathcal D_\F(\mathbb T^d,A)=\mathcal N_\F^\perp(\mathbb T^d,A).
$$
\end{thm}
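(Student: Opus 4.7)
The inclusion $\mathcal{D}_\F(\mathbb{T}^d, A) \subset \mathcal{N}^\perp_\F(\mathbb{T}^d, A)$ is immediate from Corollary~\ref{rauzy2}(4), since $(\mathbb{T}^d, A)$ has finite topological entropy. For the reverse inclusion I would proceed by induction on $d$, the induction hypothesis being that the theorem holds for every ergodic toral endomorphism of dimension less than~$d$. By Lemma~\ref{red}(1) the class of ergodic toral endomorphisms is closed under algebraic factors, so Corollary~\ref{redu1} reduces the inductive step to establishing the inclusion for simple systems only. When $d=1$ the characteristic polynomial of $R_r$ is linear, so every non-surjective polynomial in $R_r$ vanishes identically; thus $\boldsymbol\U=\{P_0\}$, $\mathbf Y$ is trivial, and Theorem~\ref{zredu} applies (its entropy hypothesis~\eqref{incl} being automatic).

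For the inductive step fix a simple ergodic toral endomorphism $(X,A)$ with $\dim X = d\ge 2$, and form the combined algebraic factor map $\mathbf P\colon X\to\mathbf Y$ of Definition~\ref{ujj}. Let $K=\ker\mathbf P$, a closed $A$-invariant subgroup of $X$; since $\mathbf P$ and the quotient map $X\to X/K$ share the same kernel, $(\mathbf Y,\mathbf T)$ is isomorphic, as an algebraic system, to $(X/K,A^K)$. The argument splits according to whether $\dim K$ is positive or zero. If $\dim K>0$, then $A|_K$ is an ergodic toral endomorphism of a positive-dimensional subtorus (its eigenvalues lie among those of $A$ and thus avoid roots of unity), so Proposition~\ref{ie} together with the Juzvinski\u\i\ entropy addition formula $\htop(X,A)=\htop(X/K,A^K)+\htop(K,A|_K)$ (\cite[Theorem~2]{Ju2}) yields
\begin{equation*}
h_{\lambda_{\mathbf Y}}(\mathbf T)\;=\;\htop(X/K,A^K)\;<\;\htop(X,A)\;=\;h_{\lambda_X}(A).
\end{equation*}
This is exactly the hypothesis~\eqref{incl} of Theorem~\ref{zredu}, which then delivers $\mathcal N^\perp_\F(X,A)\subset\mathcal D_\F(X,A)$.

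If instead $\dim K=0$, so that $\mathbf P$ is finite-to-one, I would invoke the inductive hypothesis. For every nonzero $P\in\boldsymbol\U$ the system $(P(X),A|_{P(X)})$ is an ergodic toral endomorphism of dimension strictly less than $d$ (Lemma~\ref{red}(1)), hence satisfies $\mathcal N^\perp_\F(P(X),A|_{P(X)})=\mathcal D_\F(P(X),A|_{P(X)})$ by induction. Given $y_0\in\mathcal N^\perp_\F(X,A)$, Proposition~\ref{newcor}(ii) applied to the algebraic factor $P\colon X\to P(X)$ gives $P(y_0)\in\mathcal D_\F(P(X),A|_{P(X)})$ for every such~$P$. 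Consequently any measure $\F$-quasi-generated by $\mathbf P(y_0)$ on $\mathbf Y$ is a countable joining of zero-entropy measures, so by~\eqref{eoj1} it has zero entropy; thus $\mathbf P(y_0)$ is $\F$-deterministic in $(\mathbf Y,\mathbf T)$. Because a finite-to-one factor preserves the entropy of invariant measures (\cite[Theorem~2.1]{LW}), $y_0$ is $\F$-deterministic in $(X,A)$, completing the induction.

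The main obstacle is securing the strict entropy gap $h_{\lambda_{\mathbf Y}}(\mathbf T)<h_{\lambda_X}(A)$ needed to invoke Theorem~\ref{zredu} in the case $\dim K>0$; this is precisely the difficulty that makes Corollary~\ref{trivy} inapplicable here, as a non-surjective polynomial in $A$ can have infinite-dimensional image. The Juzvinski\u\i\ addition formula for topological entropy of compact group extensions, combined with the unique maximality of the Haar measure (Proposition~\ref{ie}), is what converts positivity of $\dim K$ into the required strict inequality; the complementary finite-kernel case is then handled cleanly by dimensional induction, using Proposition~\ref{newcor}(ii) to push normality preservation into every lower-dimensional factor $(P(X),A|_{P(X)})$ and the countable-joining bound~\eqref{eoj1} to reassemble the zero-entropy conclusions on $\mathbf Y$.
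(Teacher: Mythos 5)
Your proposal is correct and follows essentially the same route as the paper: reduction to simple systems via Lemma~\ref{red}(1) and Corollary~\ref{redu1}, induction on the dimension, and a dichotomy on the kernel of $\mathbf P$ that feeds Theorem~\ref{zredu} in the positive-dimensional case and the inductive hypothesis plus the countable-joining bound~\eqref{eoj1} in the finite-kernel case. The only (immaterial) differences are that the paper packages your kernel-dimension dichotomy as Lemma~\ref{red}(2), handles the base case $d=1$ by citing Theorem~\ref{rauzy1}, and runs the finite-kernel step contrapositively rather than directly.
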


\begin{proof}
Since $(\mathbb T^d,A)$ has finite \tl\ entropy, in view of Corollary~\ref{rauzy2}(4), we only need to prove $N_\F^\perp(\mathbb T^d,A)\subset\mathcal D_\F(\mathbb T^d,A)$. By Lemma~\ref{red}(1), the class $\mathfrak X$ of ergodic toral endomorphisms is closed under the operation of taking algebraic factors. So, by Corollary~\ref{redu1}, it suffices to prove the theorem for simple ergodic toral endomorphisms.

The proof uses induction on the dimension $d$ of the torus. 
The theorem holds for $d=1$, in which case it reduces to Theorem~\ref{rauzy1} (although the formulation of Theorem~\ref{rauzy1} concerns real numbers, the proof is done for the circle $\mathbb T$). Fix $d\ge2$ and suppose that the theorem holds for any simple ergodic toral endomorphism of dimension $d'\in\{1,2,\dots,d-1\}$. Consider a simple ergodic toral endomorphism $(\mathbb T^d,A)$ and suppose that there exists a nondeterministic element $y_0\in\mathcal N^\perp_\F(\mathbb T^d,A)$. Let $\nu$ be an \im\ on $\mathbb T^d$ which has positive entropy and is $\F$-quasi-generated by $y_0$. Let $\mathbf P:(\mathbb T^d,A)\to(\mathbf Y,\mathbf T)$ be the algebraic factor map introduced in Definition~\ref{ujj}. By Lemma~\ref{red}(2), we have either 
\begin{enumerate}
\item[(a)] $h_\nu(A)=h_{\mathbf P^*(\nu)}(\mathbf T)$ (and hence the latter is strictly positive), 
\end{enumerate}
or
\begin{enumerate}
\item[(b)] $\htop(\mathbb T^d,A)>\htop(\mathbf Y,\mathbf T)$. 
\end{enumerate}
Since in any algebraic system the Haar measure is a measure of maximal entropy, the condition~(b) implies that $h_\lambda(A)>h_{\lambda_{\mathbf Y}}(\mathbf T)$ and~\eqref{ooo} follows directly from Theorem~\ref{zredu}. We will focus on the case (a). 
Since the measure-preserving system $(\mathbf Y,\mathbf P^*(\nu),\mathbf T)$ is a countable joining of the systems $(P(\mathbb T^d),P^*(\nu),A|_{P(\mathbb T^d)})$, where $P$ ranges over all non-surjective polynomials in variable $A$, by the inequality~\eqref{eoj1}, there exists a non-surjective polynomial $P_0$ such that $h_{P_0^*(\nu)}(A|_{P_0(\mathbb T^d)})>0$. This implies that the element $P_0(y_0)$, which is clearly $\F$-quasi-generic for $P_0^*(\nu)$, is not $\F$-deterministic. On the other hand, by Corollary~\ref{newcor}~(ii), we have $P_0(y_0)\in\mathcal N^\perp_\F(P_0(\mathbb T^d),A|_{P_0(\mathbb T^d)})$. Note that $P_0(\mathbb T^d)$, being a proper closed subgroup of $\mathbb T^d$, and also being connected as a continuous image of $\mathbb T^d$, is a proper subtorus of $\mathbb T^d$, and hence its dimension is less than $d$. The system $(P_0(\mathbb T^d),A|_{P_0(\mathbb T^d)})$ is not only a factor but also a subsystem of $(\mathbb T^d,A)$. Since it is a factor, it is ergodic. Since it is a subsystem, it is simple (the property of being simple is obviously inherited by algebraic subsystems of algebraic systems). By the inductive assumption, $P_0(y_0)$ should be $\F$-deterministic, which is a contradiction.
\end{proof}

\begin{rem}\label{sol2} Theorem~\ref{sol} has a natural extension to higher-dimensional solenoids. These are defined, for $d\ge 2$, as $d$-dimensional, connected, compact abelian groups (equivalently, as dual groups of subgroups of $\Q^d$, see~\cite{LiWa}). Any $d$-dimensional solenoid can also be constructed as an inverse limit of the $d$-dimensional tori. For higher dimensional solenoids, Rauzy theorem holds as well. The proof is similar to that for $d$-dimensional tori and relies on the fact that any proper closed connected subgroup of a $d$-dimensional solenoid is a solenoid of a lower dimension. 
\end{rem}

\section{Negative results for $p$-normality when $p\neq\frac12$}\label{S6}

While the definition of normality of a real number $x$ in base 2 deals with equal weights associated to the digits 0 and 1 in the binary alias of $x$
(recall that the binary alias $\omega_2(x)$ of a real number $x$ was introduced in Section~\ref{S2} as the \sq\ of digits in the binary expansion of the fractional part $\{x\}$ of $x$), one can also consider $p$-normality\footnote{In the notation ``$p$-normal'', $p$ is a number strictly between $0$ and $1$, while in the similarly looking notation ``$r$-normal'', $r$ is a natural number larger than $1$, so there should be no confusion.}, i.e., a more general situation where, for some $p\in(0,1)$, the digit $1$ has weight $p$ and the digit $0$ has weight $1-p$. It is natural to ask whether an analog of Rauzy theorem (Theorem~\ref{rrrr}) still holds for $p$-normality. In this section, we will show that, for $p\neq\frac12$, an analog of Theorem~\ref{rrrr}, as well as the analogs of Proposition~\ref{nordet1}~(4), Corollary~\ref{rrr}~(1), Corollary~\ref{indep} and Proposition~\ref{qx}~(2), fail dramatically, meaning that not only there are counterexamples to these statements, but there are actually no non-trivial examples for which the ``$p$-analogs'' hold.

In order to obtain results for real numbers $x\in\R$, we will first conduct the proofs for either elements $t$ of the system $(\mathbb T,R)$ where $R(t)=2t$ and for \sq s $\omega$ viewed as elements of the symbolic system $(\{0,1\}^\N,\sigma)$, where addition of \sq s involves the carry. This addition will be denoted by the symbol $\pl$. Formally, if $\omega,\tau\in\{0,1\}^\N$, 
$\omega=(a_n)_{n\ge1}, \tau=(b_n)_{n\ge1}$ then $\omega\pl\tau=(c_n)_{n\ge1}$, where
$$
c_n = 
\begin{cases}
a_n+b_n\!\!\mod2,& \text{ if }\sum_{i>n}^\infty \frac{a_i+b_i}{2^i}\le\frac1{2^n},\\
a_n+b_n+1\!\!\mod2,& \text{ otherwise}.
\end{cases}
$$
If $n$ is such that $\sum_{i>n}^\infty \frac{a_i+b_i}{2^i}>\frac1{2^n}$, we will say that \emph{the carry} occurs at the coordinate~$n$.

The factor maps $x\mapsto\{x\}$ (the fractional part) from $\R$ to $\mathbb T$ and $\omega\mapsto \phi_2(\omega)$ from $\{0,1\}^\N$ to $\mathbb T$ (see Proposition~\ref{rr}) will allow us to transfer the results from $(\mathbb T,R)$ and $(\{0,1\}^\N,\sigma)$ to the reals. We start with the formal definitions of $p$-normality in the three setups: for \sq s, for elements of the circle, and for real numbers.

\begin{defn}\label{pns}Let $p\in(0,1)$.
\begin{enumerate} 
\item A \sq\ $\omega\in\{0,1\}^\N$ is $p$-normal if every finite block $B=(b_1,b_2,\dots,b_k)\in\{0,1\}^k$ appears in $\omega$ with frequency $p^s(1-p)^{k-s}$, where $s\in\{0,1,\dots,k\}$ is the number of $1$'s appearing in $B$. Equivalently, $\omega$ is $p$-normal if it is generic (under the shift $\sigma$) for the $(p,1-p)$-Bernoulli measure $\mu_p$ on $\{0,1\}^\N$. 
\item An element $t\in\mathbb T$ is $p$-normal if $t=\phi_2(\omega)$ for some $p$-normal \sq\ $\omega\in\{0,1\}^\N$.
\item A real number $x$ is called $p$-normal if its fractional part $\{x\}$ is a $p$-normal element of $\mathbb T$, equivalently, if its binary alias $\omega_2(x)$ is a $p$-normal \sq.
\end{enumerate}
\end{defn}

\begin{rem}\label{ren}
If $\omega\in\{0,1\}^\N$ is $p$-normal then, since $\omega$ is generic for the $(p,1-p)$-Bernoulli measure, the entropy of $\omega$ with respect to the shift (see Definition~\ref{lue}), $h(\omega)$, exists and equals 
        $$
        H(p)=-p\log p-(1-p)\log(1-p).
        $$
By Proposition~\ref{rr}, an element $t\in\mathbb T$ is $p$-normal if and only if it is generic (under the transformation $R$) for a measure $\lambda_p$ such that the system $(\mathbb T,R,\lambda_p)$ is isomorphic to $(\{0,1\}^\N,\sigma,\mu_p)$, where $\mu_p$ is the
$(p,1-p)$-Bernoulli measure. So $h(t)$ also exists and equals $H(p)$. By Remark~\ref{thesame}~(1), the entropy $h(x)$ of a $p$-normal real number $x$ exists and equals $H(p)$ as well.
\end{rem}

Recall that, by Corollary~\ref{nordet1}(4), a number $x$ is normal in base 2 (i.e., $\frac12$-normal) if and only if $h(x)=\log2=H(\frac12)$. The following proposition shows that for $p\neq\frac12$ the situation is quite different. 

\begin{prop}
For any $p\in(0,1),\ p\neq\frac12$, there exists a real number $x$ such that $h(x)=H(p)$ but $x$ is not $p$-normal.   
\end{prop}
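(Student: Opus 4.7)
The key observation I will exploit is that the binary entropy function $H(p) = -p\log p - (1-p)\log(1-p)$ is symmetric about $\frac12$, so $H(p) = H(1-p)$ for all $p \in (0,1)$. Since we assume $p \neq \frac12$, we have $1-p \neq p$, so the Bernoulli measures $\mu_p$ and $\mu_{1-p}$ on $\{0,1\}^\N$ are distinct yet share the common entropy value $H(p)$. This numerical coincidence is the source of the counterexample.

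My plan is to take $x$ to be any real number whose binary alias $\omega_2(x)$ is $(1-p)$-normal rather than $p$-normal. Such numbers exist in abundance: by the Birkhoff ergodic theorem applied to the ergodic system $(\{0,1\}^\N, \sigma, \mu_{1-p})$, $\mu_{1-p}$-almost every sequence in $\{0,1\}^\N$ is generic for $\mu_{1-p}$ and hence $(1-p)$-normal in the sense of Definition~\ref{pns}(1). Picking any such sequence and pushing it forward through $\phi_2$ produces a $(1-p)$-normal element of $\mathbb T$, which can be identified with a real number in $[0,1)$.

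Next I will verify the two required properties for this $x$. By Remark~\ref{ren} applied with $1-p$ in place of $p$, the entropy $h(x)$ exists and equals $H(1-p) = H(p)$. On the other hand, $x$ cannot be $p$-normal: the sequence $\omega_2(x)$ is generic for $\mu_{1-p}$, which uniquely determines the measure it generates, and since $\mu_{1-p} \neq \mu_p$ when $p \neq \frac12$, the sequence $\omega_2(x)$ is not generic for $\mu_p$.

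There will be no substantial obstacle; the content of the proposition is really the observation that entropy fails to separate Bernoulli measures with swapped probabilities, in stark contrast to the base-$r$ normality case where Remark~\ref{umome} identifies the uniform Bernoulli measure as the unique measure of maximal entropy. This asymmetry is precisely what will make the subsequent Rauzy-type statements fail for $p \neq \frac12$.
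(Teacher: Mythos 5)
Your proof is correct, but it takes a genuinely different and more elementary route than the paper. You exploit the symmetry $H(p)=H(1-p)$ and simply take $x$ to be a $(1-p)$-normal number: its alias is generic for $\mu_{1-p}$, so $h(x)$ exists and equals $H(1-p)=H(p)$, while genericity for $\mu_{1-p}\neq\mu_p$ rules out $p$-normality. Everything you need is already in the paper (Birkhoff for existence of generic points, Remark~\ref{ren} for the entropy computation), and no heavy machinery is invoked. The paper instead builds an ergodic \emph{non-Bernoulli} measure of entropy $H(p)$ --- a product of the $(p,1-p)$-Bernoulli system with a nontrivial zero-entropy system --- and uses Krieger's generator theorem (available because $H(p)<\log 2$) to realize it on $\{0,1\}^\N$, then takes a generic point. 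The payoff of the paper's heavier construction is that its example is not $p'$-normal for \emph{any} $p'\in(0,1)$, since the generating measure is not Bernoulli at all; your example, by contrast, is $(1-p)$-normal, so it only separates $h(x)=H(p)$ from $p$-normality in the literal sense of the statement. For the proposition as stated, your argument is a complete and cleaner proof; the paper's argument implicitly proves a slightly stronger fact. Both approaches correctly break down at $p=\frac12$ (yours because $\mu_{1-p}=\mu_p$, the paper's because Krieger requires entropy strictly below $\log 2$), consistent with Proposition~\ref{nordet1}(4).
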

\begin{proof}
We first note that for any $p\in(0,1)$ there exists an ergodic system $(Y,\nu,S)$ not isomorphic to the $(p,1-p)$-Bernoulli system but having entropy $H(p)$. For example, one can take any system that is a product of the $(p,1-p)$-Bernoulli system with a nontrivial ergodic zero-entropy system. This product system is ergodic by disjointness of Bernoulli systems and zero entropy systems, and clearly has a nontrivial zero-entropy factor while all nontrivial factors of Bernoulli systems are isomorphic to Bernoulli systems and hence have positive entropy (see \cite{O}).
If $p\neq\frac12$ we have $h(\nu)=H(p)<\log 2$. Now, we can invoke Krieger's generator theorem \cite{Kr}, which states that for any integer $r\ge 2$ and any ergodic measure-preserving system $(X,\mu,T)$ with entropy $h(\mu)<\log r$ is isomorphic to $(\{0,1,\dots,r\}^\N,\mu',\sigma)$ for some ergodic $\sigma$-\im\ $\mu'$ on $\{0,1,\dots,r\}^\N$. 
In our case, this theorem implies that there exists an ergodic measure $\nu'$ on $\{0,1\}^\N$ such that the system $(\{0,1\}^\N,\nu',\sigma)$ is isomorphic to $(Y,\nu,S)$. So, $\nu'$ has entropy $H(p)$ and $(\{0,1\}^\N,\nu',\sigma)$ is not isomorphic to any Bernoulli system. Let $\omega\in\{0,1\}^\N$ be generic under $\sigma$ for $\nu'$. Any real number $x$ whose binary alias $\omega_2(x)$ equals $\omega$ satisfies the claim of the proposition.
\end{proof}

\begin{prop}\label{pnor+q} \emph{(cf.\ Corollary~\ref{rauzy}(3)).}
Let $y\in\R$ be a deterministic number such that its fractional part $\{y\}\in\mathbb T$ is not generic under the transformation $R(t)=2t$, $t\in\mathbb T$, for the Dirac measure $\delta_0$ concentrated at $0$. If $x\in\R$ is $p$-normal for $p\neq\frac12$ then $x+y$ is {\bf not} $p$-normal and, moreover, it is not $p'$-normal for any $p'\in(0,1)$. 
\end{prop}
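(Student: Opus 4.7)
The plan is to argue by contradiction: assume $x+y$ is $p'$-normal for some $p'\in(0,1)$. The first reduction is to show $p'\in\{p,1-p\}$. By Remark~\ref{ren} we have $h(x)=H(p)$ and $h(x+y)=H(p')$ (both entropies existing because the respective fractional parts are generic for $\lambda_p$ and $\lambda_{p'}$), while Corollary~\ref{rauzy}(2) combined with $y$ being deterministic forces $h(x+y)=h(x)$. Hence $H(p')=H(p)$, and since $H$ is strictly concave and symmetric about $\tfrac12$, we have $p'\in\{p,1-p\}$.

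The next step is to obtain joint control via disjointness. By Proposition~\ref{rr}, $(\mathbb T,\lambda_p,R)$ is measure-theoretically isomorphic to the $(p,1-p)$-Bernoulli shift, so it is a $K$-system and therefore (Furstenberg \cite{F}) is disjoint from every zero-entropy measure-preserving system. Fix $\nu\in\M_{\{y\}}$ and a subsequence $\mathcal J$ along which $\{y\}$ quasi-generates $\nu$. Along a further subsequence of $\mathcal J$ the pair $(\{x\},\{y\})$ quasi-generates some joining of $\lambda_p$ and $\nu$, which by disjointness must equal $\lambda_p\times\nu$. Applying the continuous equivariant map $(s,t)\mapsto s+t$ on $\mathbb T\times\mathbb T$, the point $\{x+y\}=\{x\}+\{y\}$ quasi-generates the convolution $\lambda_p*\nu$ along that subsequence. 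Since $\{x+y\}$ is assumed to be generic for $\lambda_{p'}$, its only quasi-generated measure is $\lambda_{p'}$, so we conclude
$$
\lambda_p*\nu=\lambda_{p'}\qquad\text{for every }\nu\in\M_{\{y\}}.
$$

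The final step is a Fourier computation that eliminates both cases. Using independence of the coordinates under $\mu_p$,
$$
\hat\lambda_p(n)=\prod_{k\ge1}\bigl((1-p)+pe^{-2\pi in/2^k}\bigr),\qquad |\hat\lambda_p(n)|^2=\prod_{k\ge1}\bigl(1-4p(1-p)\sin^2(\pi n/2^k)\bigr).
$$
For $p\ne\tfrac12$ each factor in the modulus product is bounded below by $(1-2p)^2>0$ and equals $1-O(4^{-k})$, so the product converges to a nonzero limit and $\hat\lambda_p(n)\ne0$ for every $n\in\Z$; moreover the modulus formula is symmetric in $p\leftrightarrow1-p$, giving $|\hat\lambda_p(n)|=|\hat\lambda_{1-p}(n)|$ for every $n$. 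In the case $p'=p$, the identity $\hat\lambda_p(n)\hat\nu(n)=\hat\lambda_p(n)$ together with the non-vanishing forces $\hat\nu\equiv1$, so $\nu=\delta_0$ for every $\nu\in\M_{\{y\}}$; equivalently $\{y\}$ is generic for $\delta_0$, contradicting the hypothesis. In the case $p'=1-p$, the equality $|\hat\nu(n)|=1$ for every $n$ forces $\nu=\delta_a$ for some $a\in\mathbb T$, while $R$-invariance of $\lambda_p$ and $\lambda_{1-p}$ gives $\hat\lambda_p(2)=\hat\lambda_p(1)$ and $\hat\lambda_{1-p}(2)=\hat\lambda_{1-p}(1)$, so $\hat\nu(1)=\hat\nu(2)$, i.e.\ $e^{-2\pi ia}=e^{-4\pi ia}$; this forces $a=0$, but then $\lambda_p=\lambda_{1-p}$, contradicting $p\ne\tfrac12$. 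The main obstacle is cleanly verifying the non-vanishing and the modulus identity for $\hat\lambda_p$; both drop out of the explicit infinite product, after which the two cases close mechanically.
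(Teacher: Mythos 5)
Your proof is correct, and after the shared opening moves it takes a genuinely different route from the paper's. Both arguments make the same entropy reduction to $p'\in\{p,1-p\}$ (via Corollary~\ref{rauzy}(2) and $H(p')=H(p)$) and both invoke disjointness of the Bernoulli system from the zero-entropy measures quasi-generated by $y$ to obtain product joinings. From there the paper stays on the symbolic side: it fixes a block $B$ of positive $\nu$-measure in $\omega_2(y)$ with a controlled number of $1$'s, compares the frequency of the pair of blocks $(1^N,B)$ in $(\omega_2(x+y),\omega_2(y))$, computed from the product measure, with the frequency of the ``mirrored'' pair it forces in $(\omega_2(x),\omega_2(y))$, tracking the carry by hand; the resulting inequality $p^{N-l-1}(1-p)^l\ge p^N$ contradicts the choice of $l$, and the case $p'=1-p$ requires a separate, similar run. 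You instead pass entirely to the circle, record the single identity $\lambda_p*\nu=\lambda_{p'}$ for every $\nu\in\M_{\{y\}}$, and settle both cases at once by harmonic analysis, the key input being the Jessen--Wintner-type infinite product for $\hat\lambda_p$ and its non-vanishing for $p\neq\frac12$ (each factor is bounded below by $(1-2p)^2>0$ and the deviations from $1$ are summable, so the product is a nonzero limit); this is precisely where $p\neq\frac12$ enters. Your route avoids the carry bookkeeping entirely and makes transparent that the hypothesis that $\{y\}$ is not generic for $\delta_0$ is needed only in the case $p'=p$, while the case $p'=1-p$ collapses unconditionally to $\lambda_p=\lambda_{1-p}$. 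Two micro-points worth spelling out in a final write-up: the step from $|\hat\nu(1)|=1$ to $\nu=\delta_a$ uses the equality case of the triangle inequality together with injectivity of $t\mapsto e^{-2\pi i t}$ on $\mathbb T$, and the terminal contradiction uses $\lambda_p\neq\lambda_{1-p}$, which one can see from $\lambda_p([0,\tfrac12))=1-p\neq p=\lambda_{1-p}([0,\tfrac12))$.
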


\begin{rem}\label{65} The following argument shows that the assumption that $\{y\}$ is not generic for $\delta_0$ cannot be dropped. Suppose that $\{y\}$ is generic for $\delta_0$. Then the pair $(\{x\},\{y\})$ is generic in the product system $(\mathbb T\times\mathbb T,R\times R)$ for the product measure $\lambda_p\times\delta_0$. Indeed, since $\delta_0$ is concentrated at one point, it is clear that $\lambda_p\times\delta_0$ is the only joining of $\lambda_p$ and $\delta_0$. (Alternatively, one can use disjointness between Bernoulli systems and zero entropy systems.) Then, by Remark~\ref{mxmy}, $\{x\}+\{y\}$ (summation in $\mathbb T$) is generic for the measure $\nu$ on $\mathbb T$ which is the image of $\lambda_p\times\delta_0$ via the factor map $(t,s)\mapsto t+s$, $t,s\in\mathbb T$. But for $(\lambda_p\times\delta_0)$-almost every pair $(t,s)$ we have $s=0$, so $\nu=\lambda_p$, which implies that $\{x\}+\{y\}$ is $p$-normal. By Definition~\ref{pns}, the real number $x+y$ is $p$-normal. 
\end{rem}
\begin{rem}\label{66} Note that if a number $y\in\R$ has the property that $\{y\}$ is generic for $\delta_0$ then the binary alias $\omega_2(y)$ of $y$ consists essentially of very long blocks of $0$'s and very long blocks of $1$'s (long blocks of $0$'s are responsible for elements of the orbit of $\{y\}$ approaching $0$ counterclockwise, while long blocks of $1$'s are responsible for elements of the orbit of $\{y\}$ approaching $0$ clockwise). More precisely, the following holds: $\{y\}$ is generic for $\delta_0$ if and only if the block $01$ (in fact, any finite block containing both $0$ and $1$) occurs in the binary alias $\omega_2(y)$ of $y$ with frequency zero. Indeed, one implication follows immediately from the fact that the cylinder $[01]$ has $\delta_0$-measure zero. For the other implication suppose that $01$ occurs in $\omega_2(y)$ with frequency zero. Then $\omega_2(y)$ consists essentially (i.e., after dropping a sub\sq\ of density zero) of arbitrarily long constant blocks (either just $0$'s or just $1$'s). This implies that any measure which is quasi-generated in the system $(\{0,1\}^\N,\sigma)$ by $\omega_2(y)$ is a convex combination of $\delta_{\bar 0}$ and $\delta_{\bar 1}$ (the measures concentrated at the constant \sq s $\bar 0 = 000\dots$ and $\bar 1 = 111\dots$). But since the map $\phi_2:\{0,1\}^\N\to\mathbb T$ (see Proposition~\ref{rr}) sends both $\bar 0$ and $\bar 1$ to $0$, the adjacent map $\phi_2^*$ on \im s sends both $\delta_{\bar 0}$ and $\delta_{\bar 1}$ to $\delta_0$. Since the adjacent map is affine it sends the convex hull spanned by these two measures to $\delta_0$. Now, since $\{y\}=\phi_2(\omega_2(y))$, we obtain that $\{y\}$ is generic for $\delta_0$.
\end{rem}

\begin{proof}[Proof of Proposition~\ref{pnor+q}]
Assume that $p>\frac12$ (the proof for $p<\frac12$ is similar and is omitted). We begin with the observation that, by Corollary~\ref{rauzy}(2) and Remark~\ref{thesame}~(1), $h(x+y)=h(x)=H(p)$. So, if $x+y$ was $p'$-normal for some $p'$ then, by Remark~\ref{ren}, we would have $H(p')=H(p)$ and hence either $p'=p$ of $p'=1-p$. We will exclude both possibilities.

By Definition~\ref{pns}, it is enough to show that the binary alias $\omega_2(x+y)$ of the sum $x+y$ is neither $p$-normal nor $(1-p)$-normal in $\{0,1\}^\N$. We let $\mu_p$ denote the $(p,1-p)$-Bernoulli measure on $\{0,1\}^\N$. Choose $l\in\N$ so that 
\begin{equation}\label{q1q}
\left(\frac{1-p}p\right)^l<p.
\end{equation}
If follows from Remark~\ref{66} (and from the assumption made on $y$) that there exists a block $B$ ending with $0$, in which $1$ occurs $l$ times, and such that $\nu([B])>0$ for some measure $\nu$ quasi-generated by $\omega_2(y)$ along a sub\sq\ $(n_k)_{k\ge1}$. We denote by $N$ the length of $B$ (note that $N>l$). Since $y$ is deterministic, $h(\nu)=0$. By disjointness of Bernoulli systems from zero entropy systems, the pair $(\omega_2(x),\omega_2(y))$ is quasi-generic (generic along $(n_k)_{k\ge1}$) for the product measure $\mu_p\times\nu$. Suppose $x+y$ is $p$-normal. Then the pair $(\omega_2(x+y),\omega_2(y))$ also generates (along $(n_k)_{k\ge1}$) the product measure $\mu_p\times\nu$. This implies that the pair of blocks $(1^N,B)$ occurs in $(\omega_2(x+y),\omega_2(y))$ with frequency, evaluated along $(n_k)_{k\ge1}$, equal to $p^N\cdot\nu([B])$. More precisely, we have
\begin{multline*}p^N\cdot\nu([B])=\\
\lim_{k\to\infty}\frac 1{n_k}|\{n\in[1,n_k]: \text{ $(1^N,B)$ occurs in $(\omega_2(x+y),\omega_2(y))$ at the position $n$}\}|.
\end{multline*}

On the other hand, the pair of blocks $(1^N,B)$ occurs in $(\omega_2(x+y),\omega_2(y))$ at some position $n$ if and only if $B$ occurs in $\omega_2(y)$ starting at the position $n$ and one of the following two mutually exclusive cases takes place:
\begin{enumerate}
\item in the summation $\omega_2(x)\pl\omega_2(y)$ the carry does not occur at the position $n+N-1$ and $\omega_2(x)|_{[n,n+N-1]}=\tilde B$, where $\tilde B$ is defined by $\tilde B(i)=1-B(i)$, $i=1,2,\dots,N$,
\item in the summation $\omega_2(x)\pl\omega_2(y)$ the carry occurs at the position $n+N-1$ and  $\omega_2(x)|_{[n,n+N-1]}=\tilde B'$, where $\tilde B'$ coincides with $\tilde B$ at all coordinates except that at the last coordinate it has $0$ (while $\tilde B$ has there a $1$). 
\end{enumerate}
Here is the illustration for the case (1):
\begin{align*}
\omega_2(y) = & \dots\underset{B}{\underbrace{0100100110}}\,00\dots\\
\omega_2(x) = & \dots\underset{\tilde B}{\underbrace{1011011001}}\,10\dots\\
\omega_2(x+y) = &\dots 1111111111\dots 
\end{align*}
Here is the illustration for the case (2): 
\begin{align*}
\omega_2(y) = & \dots\underset{B}{\underbrace{0100100110}}\,1\dots\\
\omega_2(x) = & \dots\underset{\tilde B'}{\underbrace{1011011000}}\,1\dots\\
\omega_2(x+y) = &\dots 1111111111\dots 
\end{align*}

In either case, whenever the pair of blocks $(1^N,B)$ occurs in $(\omega_2(x+y),\omega_2(y))$ at some position $n$, then, in $(\omega_2(x),\omega_2(y))$, at the position $n$, there occurs the pair of blocks $(\tilde B'',B)$, where $\tilde B''$ is the block of length $N-1$ obtained from $\tilde B$ by dropping the last digit~$1$. Since $(\omega_2(x),\omega_2(y))$ generates (along $(n_k)_{k\ge1}$) the product measure $\mu_p\times\nu$, the pair of blocks $(\tilde B'',B)$ occurs in $(\omega_2(x),\omega_2(y))$ with frequency, evaluated along $(n_k)_{k\ge1}$, equal to $p^{N-l-1}(1-p)^{l}\cdot\nu([B])$. We have obtained the inequality
\begin{equation}\label{spr}
p^{N-l-1}(1-p)^{l}\cdot\nu([B])\ge p^N\cdot\nu([B]),
\end{equation}
and thus $(1-p)^l\ge p^{l+1}$, i.e., $(\frac{1-p}p)^l\ge p$, which is a contradiction with \eqref{q1q}. This contradiction implies that $x+y$ is not $p$-normal. 

The proof that $x+y$ is not $(1-p)$-normal is similar, with one modification:
we choose $B$ so that it ends with a $1$ (rather than $0$) and contains $l+1$ digits $1$ (including the last digit of $B$). Then, arguing as in the preceding case, we obtain that the occurrence of the pair of blocks $(0^N,B)$ in $(\omega_2(x+y),\omega_2(y))$ implies the occurrence of the pair of blocks block $(\tilde B'',B)$ in $(\omega_2(x), \omega_2(y))$ ($\tilde B''$ is defined as before, as the ``mirror'' of $B$ with the last symbol dropped). If $x+y$ was $(1-p)$-normal, the measure generated by $x+y$ would assign to the cylinder $[0^N]$ the value $p^N$ and we would obtain again the inequality \eqref{spr}, which leads to a contradiction.
\end{proof}

\begin{prop}\label{pq}\emph{(cf.\ Corollary~\ref{indep})}. Fix $p\in(0,1)$, $p\neq\frac12$.
If $x,y\in\R$ are independent in base $2$ (see Definition~\ref{indrn}) $p$-normal numbers then $x+y$ is not $p'$-normal for any $p'\in(0,1)$. 
\end{prop}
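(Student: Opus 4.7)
My plan is to assume for contradiction that $x+y$ is $p'$-normal for some $p'\in(0,1)$ and to violate the Bernoulli identity $\mu_{p'}([00])=\mu_{p'}([0])^2$ by computing both cylinder probabilities for the distribution of $\omega_2(x+y)$ directly from its representation as a carry-addition of two independent $\mu_p$-distributed sequences. The argument will exclude every $p'$ (including $p'=\tfrac12$) at once.

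First I would transfer the problem to $(\mathbb T,R)$. By Remark~\ref{lkj} and Definition~\ref{indd}, the $2$-independence of the $p$-normal numbers $x,y$ means that $(\{x\},\{y\})$ is generic in $(\mathbb T^2,R\times R)$ for $\lambda_p\times\lambda_p$, where $\lambda_p=\phi_2^*(\mu_p)$ is the measure from Remark~\ref{ren}. Since $+:\mathbb T^2\to\mathbb T$ is continuous and equivariant, Remark~\ref{mxmy} gives that $\{x+y\}$ is generic for the convolution $\lambda_p*\lambda_p$. On the other hand, $p'$-normality of $x+y$ forces $\{x+y\}$ to be generic for $\lambda_{p'}$, whence $\lambda_p*\lambda_p=\lambda_{p'}$ by uniqueness of the generated measure. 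Pulling back along $\phi_2$, the distribution $\nu$ of $\omega\pl\tau$ (for independent $\omega,\tau\sim\mu_p$) then coincides with $\mu_{p'}$, and in particular $\nu([00])=\nu([0])^2$.

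Next I would compute $\nu([0])$ and $\nu([00])$ using the Markov structure of the carry. Writing $(\omega\pl\tau)_n=a_n+b_n+c_n\bmod 2$, I would observe that the carry $c_n$ entering position $n$ is a measurable function of $(a_k,b_k)_{k>n}$ and hence is independent of $(a_n,b_n)$; its stationary marginal $q:=P(c_n=1)$ solves $q=p^2+2p(1-p)q$, so $q=p^2/D$ with $D=p^2+(1-p)^2$. Conditioning on $c_n$ yields $\nu([0])$. For $\nu([00])$ the key observation is that $(\omega\pl\tau)_{n+1}=0$ is equivalent to $a_{n+1}+b_{n+1}+c_{n+1}\in\{0,2\}$, and this simultaneously forces the value of $c_n$; one then averages over the independent $(a_n,b_n)$ with conditional weight $D$ or $2p(1-p)$ according as $c_n=0$ or $1$.

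An elementary simplification (using $D-p^2=(1-p)^2$ and $(1-p)^2+p^2-2p(1-p)=(1-2p)^2$) then yields
$$
\nu([00])-\nu([0])^2 \;=\; -\frac{2p^3(1-p)^3(2p-1)^2}{D^2},
$$
which is strictly negative for every $p\in(0,1)\setminus\{\tfrac12\}$, contradicting Bernoullicity. The main obstacle is the bookkeeping for the carry: because $c_n$ depends on \emph{future} positions, the conditioning that computes $[00]$ at coordinates $(n,n+1)$ must be carried out by first examining position $n+1$ (which determines $c_n$) and only then averaging over $(a_n,b_n)$. Once this ordering is set correctly the remaining algebra is routine; any pair of block probabilities whose Bernoulli identity fails for $\nu$ would suffice, but the $[0]$ versus $[00]$ comparison appears the cleanest.
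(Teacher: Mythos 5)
Your proposal is correct and follows essentially the same route as the paper: both push $\mu_p\times\mu_p$ forward under the carry-addition $(\omega,\tau)\mapsto\omega\pl\tau$, compute the stationary carry probability $p^2/(p^2+(1-p)^2)$ from the same one-step recursion, and conclude by showing that the first two coordinates of the image measure fail to be independent, which rules out every Bernoulli $\mu_{p'}$ at once. The only cosmetic difference is that you test independence via the single quantity $\nu([00])-\nu([0])^2$ (and your closed form $-2p^3(1-p)^3(2p-1)^2/D^2$ checks out), whereas the paper first computes the marginal $\xi_p([1])$ to exclude $p'=p$ and then compares the conditional probability $\xi_p(c_1=1\mid c_2=0)$ with $\xi_p(c_1=1)$ — an equivalent bookkeeping of the same non-independence.
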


\begin{proof}
Let $\xi_p$ be the \im\ on $\{0,1\}^\N$ which is the factor of $\mu_p\times\mu_p$ via the map $(\omega,\tau)\mapsto \omega\pl\tau$ from $\{0,1\}^\N\times\{0,1\}^\N$ onto $\{0,1\}^\N$ (this map is continuous except on a countable set, hence it is a measurable factor map). Since $x$ and $y$ are independent, so are their fractional parts $\{x\},\{y\}$, and so are the binary aliases $\omega_2(x), \omega_2(y)$ (see Remark~\ref{lkj} and Definition~\ref{indrn}), which implies that the pair $(\omega_2(x),\omega_2(y))$ is generic in $\{0,1\}^\N\times\{0,1\}^\N$ for $\mu_p\times\mu_p$, and hence the image of this pair via the factor map $(\omega,\tau)\mapsto \omega\pl\tau$, i.e., $\omega_2(x)\pl\omega_2(y)$, is generic for $\xi_p$. To prove the statement in question we will first show that $\xi_p\neq\mu_p$ and then that $\xi_p\neq\mu_{p'}$ for any other $p'\in(0,1)$. 

Recall that $\phi_2:\{0,1\}^\N\to\R$ is defined in Proposition~\ref{rr} (except at one point which we can disregard) by 
\begin{equation}\label{repeated}
\phi_2((a_n)_{n\ge1})=\sum_{n=1}^\infty \frac{a_n}{2^n}\in[0,1).
\end{equation}

Let us view $\{0,1\}^\N\times\{0,1\}^\N$ as a probability space equipped with the measure $\mu_p\times\mu_p$. The elements of this space are pairs $(\omega,\tau)$, where 
$\omega=(a_n)_{n\ge1}$ and $\tau=(b_n)_{n\ge1}$ are elements of $\{0,1\}^\N$. 
Consider the following two events (i.e., subsets of this probability space):
\begin{itemize}
\item $\mathfrak A=\{(\omega,\tau)\in \{0,1\}^\N\times\{0,1\}^\N: \phi_2(\omega)+\phi_2(\tau)\ge1\}$,
\item $\mathfrak B=\{(\omega,\tau)\in \{0,1\}^\N\times\{0,1\}^\N: \phi_2(\sigma(\omega))+\phi_2(\sigma(\tau))\ge1\}$, 
\end{itemize}
where $+$ stands for the usual addition of real numbers. 

Let $P$ denote the probability of the event $\mathfrak B$, i.e.,
$$
P=(\mu_p\times\mu_p)(\mathfrak B).
$$
Further, let us also consider the partition the space $\{0,1\}^\N\times\{0,1\}^\N$ by the following eight events ($\mathfrak B^{\mathsf c}$ denotes the complement of $\mathfrak B$): 
\begin{enumerate}
    \item[] $\mathfrak C_1=\{(\omega,\tau)\in\mathfrak B^{\mathsf c},\ a_1=0,\ b_1=0\}$,
    \item[] $\mathfrak C_2=\{(\omega,\tau)\in\mathfrak B^{\mathsf c},\ a_1=0,\ b_1=1\}$,
    \item[] $\mathfrak C_3=\{(\omega,\tau)\in\mathfrak B^{\mathsf c},\ a_1=1,\ b_1=0\}$,
    \item[] $\mathfrak C_4=\{(\omega,\tau)\in\mathfrak B^{\mathsf c},\ a_1=1,\ b_1=1\}$,
    \item[] $\mathfrak C_5=\{(\omega,\tau)\in\mathfrak B,\ a_1=0,\ b_1=0\}$,
    \item[] $\mathfrak C_6=\{(\omega,\tau)\in\mathfrak B,\ a_1=0,\ b_1=1\}$,
    \item[] $\mathfrak C_7=\{(\omega,\tau)\in\mathfrak B,\ a_1=1,\ b_1=0\}$,
    \item[] $\mathfrak C_8=\{(\omega,\tau)\in\mathfrak B,\ a_1=1,\ b_1=1\}$.
\end{enumerate}

Let $q=1-p$ and $Q=1-P$. Because the event $\mathfrak B$ is independent of the events $\{(\omega,\tau):a_1=1\}$ and $\{(\omega,\tau):b_1=1\}$ (which clearly are also independent of each other), the probabilities of the events $\mathfrak C_1,\mathfrak C_2,\dots,\mathfrak C_8$ are $Qq^2$, $Qpq$, $Qpq$, $Qp^2$, $Pq^2$, $Ppq$, $Ppq$, $Pp^2$, respectively. Observe that $\mathfrak C_4\cup\mathfrak C_6\cup\mathfrak C_7\cup\mathfrak C_8\subset\mathfrak A$. Indeed, if $(\omega,\tau)\in\mathfrak C_4\cup\mathfrak C_8$ then $\phi_2(\omega)\ge\frac12$ and $\phi_2(\tau)\ge\frac12$, so $\phi_2(\omega)+\phi_2(\tau)\ge1$. If $(\omega,\tau)\in\mathfrak C_6$ then although $\phi_2(\omega)<\frac12$, the fact that $(\omega,\tau)\in\mathfrak B$ implies that $\frac12-\phi_2(\omega)\le\phi_2(\tau)-\frac12$, and hence $\phi_2(\omega)+\phi_2(\tau)\ge1$ as well.
By a similar argument, we have $\mathfrak C_1\cup\mathfrak C_2\cup\mathfrak C_3\cup\mathfrak C_5\subset\mathfrak A^{\mathsf c}$, which implies that 
\begin{equation}\label{sum}
\mathfrak C_4\cup\mathfrak C_6\cup\mathfrak C_7\cup\mathfrak C_8=\mathfrak A.
\end{equation}

By invariance of $\mu_p\times\mu_p$ under $\sigma\times\sigma$ and since $\mathfrak B=(\sigma\times\sigma)^{-1}(\mathfrak A)$, we have $(\mu_p\times\mu_p)(\mathfrak A)=(\mu_p\times\mu_p)(\mathfrak B)=P$. Thus, by summing the probabilities of the events $\mathfrak C_4$, $\mathfrak C_6$, $\mathfrak C_7$ and $\mathfrak C_8$, we obtain the following equation:
$$
P=Qp^2+P(2pq+p^2).
$$
After substituting $Q=1-P$, we get
$$
P=p^2+2Ppq,
$$
which implies 
\begin{equation}\label{n1}
P=\frac{p^2}{p^2+q^2},\ \ Q=\frac{q^2}{p^2+q^2}.
\end{equation}

Given a pair $(\omega,\tau)$, let $\rho=\omega\pl\tau$, $\rho=(c_n)_{n\ge1}\in\{0,1\}^\N$. By a reasoning similar to the one above derivation of \eqref{sum}, one can check that $c_1=1$ if and only $(\omega,\tau)\in \mathfrak C_2\cup\mathfrak C_3\cup\mathfrak C_5\cup\mathfrak C_8$. Recall that $\xi_p$ is the image of $\mu_p\times\mu_p$ via the factor map $(s,t)\mapsto s\pl t$. Thus 
\begin{multline}\label{noo}
p':=\xi_p(\{\rho:c_1=1\})=(\mu_p\times\mu_p)(\{(\omega,\tau):c_1=1\})=2Qpq+P(p^2+q^2)\\
=p^2+\frac{2pq^3}{p^2+q^2}.
\end{multline}

Now, the equation $p'=p$ has in $(0,1)$ only one solution, $p=\frac12$. Indeed, we have
\begin{multline*}
p^2+\frac{2pq^3}{p^2+q^2}=p\iff p+\frac{2q^3}{p^2+q^2}=1\iff\frac{2q^3}{p^2+q^2}=q\iff 2q^2=p^2+q^2\iff\\\iff p=q=\tfrac12.
\end{multline*}
So, unless $p=\frac12$, $p'$ is different from $p$ and then $\xi_p\neq\mu_p$, which implies that $x+y$ is not $p$-normal. 

But \emph{a priori} $\xi_p$ could equal $\mu_{p'}$ and hence $x+y$ could be $p'$-normal (indeed, since $p'=\xi_p(\{c:c_1=1\})$, $\mu_{p'}$ is the only possible Bernoulli measure which $\xi_p$ could match). We will presently see that this is not the case. In fact, we will prove that $\xi_p$ is not a Bernoulli measure, because
the coordinates $c_1$ and $c_2$ (viewed as 0-1-valued random variables on the probability space $(\{0,1\}^\N,\xi_p)$) are not independent. More precisely, we will show that
$$
p_0':=\xi_p(\{\rho:c_1=1\}|c_2=0)\neq \xi_p(\{\rho:c_1=1\})=p'.
$$

We have
$$
\xi_p(\{\rho:c_1=0\})=1-p'=q^2+\frac{2qp^3}{p^2+q^2}.
$$
Observe that $\mathfrak A^{\mathsf c}\cap\{(\omega,\tau):c_1=0\}=\mathfrak C_1$. By independence, the probability of $\mathfrak C_1$ equals $Qq^2=\frac{q^4}{p^2+q^2}$. Dividing this number by $1-p'$ we get the conditional probability of $\mathfrak A^{\mathsf c}$ with respect to the event $\{(\omega,\tau):c_1=0\}$:
$$
(\mu_p\times\mu_p)(\mathfrak A^{\mathsf c}|c_1=0)=\frac{\frac{q^4}{p^2+q^2}}{1-p'}=\frac{q^2}{p^2+q^2+2\frac{p^3}q}.
$$
Using invariance of $\mu_p\times\mu_p$ under $\sigma\times\sigma$ again, we also get
\begin{align}
&Q_0:=(\mu_p\times\mu_p)(\mathfrak B^{\mathsf c}|c_2=0)=\frac{q^2}{p^2+q^2+2\frac{p^3}q}, \label{oj1}\\
&P_0:=(\mu_p\times\mu_p)(\mathfrak B^{\mathsf c}|c_2=1)=1-Q_0=\frac{p^2+\frac{2p^3}q}{p^2+q^2+2\frac{p^3}q}.\label{oj2}
\end{align}

We have
\begin{multline*}
p_0':=\xi_p(\{\rho:c_1=1\}|c_2=0)=(\mu_p\times\mu_p)(\mathfrak C_2\cup\mathfrak C_3\cup\mathfrak C_5\cup\mathfrak C_8|c_2=0).
\end{multline*}
The conditional probabilities of the sets $\mathfrak C_2$, $\mathfrak C_3$, $\mathfrak C_5$ and $\mathfrak C_8$ are equal to $Q_0pq$, $Q_0pq$, $P_0q^2$ and $P_0p^2$, respectively. Summing up these probabilities, we obtain a formula similar to \eqref{noo}:
\begin{equation}\label{nooo}
p_0'=2Q_0pq+P_0(p^2+q^2).
\end{equation}

Thus, we can write
\begin{equation}\label{no11}
p'-p_0'=2pq(Q-Q_0)+(p^2+q^2)(P-P_0).
\end{equation}
Since $2pq+(p^2+q^2)=1$, the right hand side of \eqref{no11} can be viewed as a convex combination of the numbers $(Q-Q_0)$ and $(P-P_0)$. Note that $(P-P_0)=-(Q-Q_0)$, i.e., these numbers lie symmetrically around zero. By comparing \eqref{n1} and \eqref{oj1} we see that $(Q-Q_0)>0$ (and hence $(P-P_0)<0$). This means that the convex combination representing $p'-p_0'$ equals zero exclusively when the coefficients $2pq$ and $(p^2+q^2)$ are both equal to $\frac12$. But this happens only when $p=\frac12$, otherwise $2pq<\frac12$ (and hence $(p^2+q^2)>\frac12$), therefore $p_0'>p'$, which ends the proof.
\end{proof}

\begin{rem}
Using the same type of calculations (albeit much more tedious), one can show that if $x$ is $p_1$-normal, $y$ is $p_2$-normal ($p_1,p_2\in(0,1)$), and $x, y$ are independent, then, unless either $p_1=\frac12$ or $p_2=\frac12$ (in which case $x+y$ is normal by Corollary~\ref{indep}), $x+y$ is not $p'$-normal for any $p'\in(0,1)$.   
\end{rem}

\begin{thm}\label{xq1}\emph{(cf.\ Proposition~\ref{xq}(2)).} 
Let $x\in\R$ be $p$-normal with $p\neq\frac12$ and let $n$ be a positive integer which is not a power of $2$. Then $nx$ and $\frac xn$ are not $p$-normal. 
\end{thm}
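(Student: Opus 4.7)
The plan is to pass to the dynamical system $(\mathbb T,R)$ with $R(t)=2t$ and exploit its commuting endomorphisms $\pi_n(t)=nt$. Since $x$ is $p$-normal, its fractional part $\{x\}$ is $R$-generic for the measure $\lambda_p=\phi_2^*\mu_p$. Because $\pi_n$ is continuous and $R$-equivariant, Remark~\ref{mxmy} yields that $\{nx\}=\pi_n(\{x\})$ is $R$-generic for the pushforward $\pi_n^*\lambda_p$, and uniqueness of the generating measure shows that $nx$ is $p$-normal if and only if $\pi_n^*\lambda_p=\lambda_p$. The case of $x/n$ is parallel: for any $y\in\mathbb T$ with $ny=\{x\}$, if $y$ were $R$-generic for $\lambda_p$, then $\pi_n(y)=\{x\}$ would be $R$-generic for $\pi_n^*\lambda_p$, again forcing $\pi_n^*\lambda_p=\lambda_p$. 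Thus both halves of the theorem reduce to showing $\pi_n^*\lambda_p\neq\lambda_p$ whenever $n$ is not a power of~$2$.

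Writing $n=2^am$ with $m$ odd and $m\ge 3$, the factorization $\pi_n=\pi_m\circ R^a$ together with the $R$-invariance of $\lambda_p$ (so that $(R^a)^*\lambda_p=\lambda_p$) yields $\pi_n^*\lambda_p=\pi_m^*\lambda_p$, so it suffices to derive a contradiction from $\pi_m^*\lambda_p=\lambda_p$ for some odd $m\ge 3$ and $p\in(0,1)\setminus\{\tfrac12\}$.

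The cleanest route is to invoke Rudolph's rigidity theorem on measures invariant under both multiplication by $2$ and by $m$: any Borel probability measure on $\mathbb T$ that is invariant under both $R=R_2$ and $\pi_m=R_m$ (with $2$ and $m$ multiplicatively independent, which holds since $m\ge 3$ is odd), ergodic under the joint action, and of positive entropy under $R$, must coincide with Lebesgue measure. The measure $\lambda_p$ meets these hypotheses: via $\phi_2$ it is isomorphic to the Bernoulli shift $(\{0,1\}^\N,\mu_p,\sigma)$ and is therefore $R$-ergodic, hence ergodic under the larger joint action; and its $R$-entropy is $H(p)>0$. Rudolph's theorem then forces $\lambda_p$ to be Lebesgue measure, contradicting $p\neq\tfrac12$.

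An elementary alternative is to compare first moments directly: $\pi_m^*\lambda_p=\lambda_p$ would require $\int (mt\bmod 1)\,d\lambda_p(t)=p$, equivalently $\sum_{k=1}^{m-1}F_p(k/m)=(m-1)(1-p)$, where $F_p(x)=\lambda_p([0,x))$. Since $m$ is coprime to $2$, each $k/m$ has an eventually periodic binary expansion, and the self-similarity identities $F_p(t/2)=(1-p)F_p(t)$ and $F_p((t+1)/2)=(1-p)+pF_p(t)$ express each $F_p(k/m)$ as a rational function of $p$; for $m=3$ one obtains the equation $(1-p)(1-2p)=0$, whose only root in $(0,1)$ is $p=\tfrac12$. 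The main obstacle for this elementary approach is to handle arbitrary odd $m\ge 3$ uniformly: the resulting rational identities in $p$ grow in complexity with $m$, and showing they admit no solution in $(0,1)\setminus\{\tfrac12\}$ takes some care. This uniform difficulty is exactly what Rudolph's rigidity theorem bypasses in a single stroke.
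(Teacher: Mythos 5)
Your proposal is correct and follows essentially the same route as the paper: reduce to the odd part of $n$ using invariance of $p$-normality under multiplication by powers of $2$, observe that $p$-normality of $nx$ (or of $x/n$) would force $\pi_m^*\lambda_p=\lambda_p$ for an odd $m\ge 3$, and then apply Rudolph's $\times 2,\times m$ rigidity theorem together with $h(\lambda_p)=H(p)<\log 2$ to reach a contradiction. The elementary moment computation you sketch as an alternative is not needed and, as you note, does not obviously close for general odd $m$, but the main argument is complete.
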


\begin{rem}\label{2xa} If $n=2^k$ with $k\in\N$ then $x$ is $p$-normal if and only if so is $nx$, if and only if so is $\frac nx$. To see this note that the binary alias $\omega_2(nx)$ of $nx$ equals $\sigma^n(\omega_2(x))$, where $\omega_2(x)$ is the binary alias of $x$. Since the shift preserves $p$-normality (by both image and preimage), we conclude that $nx$ is $p$-normal if and only if so is $x$. Now let $y=\frac xn$. Then $x=ny$ and, by the preceding argument, $y=\frac xn$ is $p$-normal if and only if $ny=x$ is $p$-normal.
\end{rem}

The proof of Theorem~\ref{xq1} makes use of the following theorem by Dan Rudolph \cite{Ru}:
\begin{thm}\label{Rud}
Let $R, S:\mathbb T\to\mathbb T$ be defined by $R(t)=mt$, $S(t)=nt$, where $m>1$ and $n>1$ are relatively prime natural numbers. Let $\mu$ be a measure on $\mathbb T$ invariant and ergodic under the semigroup generated by $R$ and $S$. Then either $\mu=\lambda$ or $\mu$ has entropy zero with respect to $R$ and with respect to $S$.
\end{thm}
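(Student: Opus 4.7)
The plan is to carry out a measure rigidity argument in the spirit of Rudolph's original proof. First I would replace the one-sided semigroup action by its invertible natural extension, obtaining a $\mathbb Z^2$-action that projects onto $(\mathbb T, R, S)$; this is needed because the disintegrations and conditional $\sigma$-algebras used below are only well behaved in the invertible setting. Along the way it is useful to record that $R$ and $S$ act by group endomorphisms of $\mathbb T$, so averages of translates interact nicely with the group operation.

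Next, suppose for contradiction that $\mu \neq \lambda$ and $h_\mu(R) > 0$ (the statement for $S$ will then follow by symmetry, swapping the roles of $m$ and $n$ at the end). By Rokhlin disintegration of $\mu$ along the Pinsker $\sigma$-algebra of $R$ (or, equivalently, along the $R$-past in the natural extension), one obtains a measurable family of conditional measures $\{\mu_x\}$ on the $R$-fibers; positivity of $h_\mu(R)$ forces these conditionals to be generically nontrivial, in fact nonatomic, by a standard Kolmogorov--Sinai argument. The commuting endomorphism $S$ then permutes this family of conditional measures in a structured way.

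The central step, and the main obstacle, is to exploit coprimality of $m$ and $n$ to extract additional symmetry from this setup. The key geometric observation is that the $m$-adic foliation defining the $R$-fibers is transverse to the $n$-adic foliation defining the $S$-fibers; simultaneous invariance under both endomorphisms, combined with positive $R$-entropy, should force the conditional measures $\mu_x$ to be invariant under a nontrivial closed subgroup of translations on each fiber. This is where Rudolph's original technical machinery does the heavy lifting (or Host's later streamlined version using equidistribution of orbits $m^{a}n^{b} t \!\!\mod 1$); reproducing this step carefully is the genuinely hard part and would occupy the bulk of a self-contained proof. I would attempt Host's approach, since it replaces the most delicate combinatorial estimates by a pointwise ergodic theorem applied to the joint action.

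Finally, once translation invariance of $\mu$ under a nontrivial closed subgroup $H \subset \mathbb T$ is established, I would conclude as follows. Ergodicity of the joint semigroup action rules out a finite $H$ (a finite $H$ would force $\mu$ to be supported on a single $\langle R,S\rangle$-orbit, contradicting positive $R$-entropy), so $H = \mathbb T$; but the only translation-invariant Borel probability on $\mathbb T$ is Lebesgue, giving $\mu = \lambda$ and the desired contradiction. This establishes $h_\mu(R) = 0$ whenever $\mu \neq \lambda$, and swapping the roles of $R$ and $S$ at the very start of the argument gives $h_\mu(S) = 0$ as well, proving the theorem.
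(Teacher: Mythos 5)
The paper offers no proof of Theorem~\ref{Rud} to compare against: it is Rudolph's $\times m$, $\times n$ rigidity theorem, quoted from \cite{Ru} and used as a black box in the proof of Theorem~\ref{xq1}. So the only question is whether your sketch stands on its own, and it does not. You explicitly defer the one step that carries all the content --- deducing translation-invariance of the conditional measures from positive entropy together with coprimality of $m$ and $n$ --- with the remark that reproducing it ``is the genuinely hard part and would occupy the bulk of a self-contained proof,'' to be handled by ``Rudolph's original technical machinery'' or ``Host's approach.'' Everything you do spell out (passing to the natural extension, disintegrating, observing that positive entropy makes conditionals nonatomic, classifying closed subgroups of $\mathbb T$) is the routine scaffolding; the deferred step is the theorem. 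A proof whose only nontrivial ingredient is outsourced to the literature is a citation, and at that point one should simply cite \cite{Ru}, as the paper does.

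There is also a concrete error in your endgame. You propose to rule out a finite invariance group $H$ by arguing that finiteness ``would force $\mu$ to be supported on a single $\langle R,S\rangle$-orbit, contradicting positive $R$-entropy.'' Invariance of $\mu$ under translation by a finite subgroup $H\subset\mathbb T$ imposes no restriction at all on the support of $\mu$ (Lebesgue measure itself is invariant under every finite translation group), so this implication is a non sequitur. In the actual arguments one never faces this dichotomy: what comes out of the entropy analysis is invariance of $\mu$ (or of its fiber measures) under a \emph{dense} set of translations --- e.g.\ by $n$-adic rationals --- whose closure is all of $\mathbb T$, whence $\mu=\lambda$ directly. As written, even granting the hard middle step, your final paragraph would not close the proof.
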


\begin{proof}[Proof of Theorem~\ref{xq1}] Since we are dealing with binary aliases, we will apply Ru\-dolph's theorem to $m=2$. Next, we claim that we can restrict to numbers $n$ that are odd (and larger than~1). Indeed, we can represent any $n>1$ as $2^kn'$, where $k\ge0$ and $n'>1$ is odd. Then $nx=n'x'$ and $\frac xn=\frac {x''}{n'}$, where $x'=2^kx$ and $x''=\frac x{2^k}$ are $p$-normal by Remark~\ref{2xa}. 

In view of Definition~\ref{pns}(3) we can replace $x$ by its fractional part $\{x\}=t_0\in\mathbb T$ and work with the system $(\mathbb T,R,\lambda_p)$ isomorphic to the Bernoulli system $(\{0,1\}^\N,\sigma,\mu_p)$ via the map $\phi_2:\{0,1\}^\N\to\mathbb T$. The $p$-normality of $x$ is equivalent to $p$-normality of $t_0$. This, in turn is equivalent to the fact that $t_0$ is generic for~$\mu_p$. The mapping $t\mapsto nt$ is a \tl\ factor map of the system $(\mathbb T,R)$ onto itself, hence it sends the measure $\mu_p$ to some $R$-\im\ $\mu$. Since $t_0$ is generic for $\mu_p$, $nt_0$ is generic for $\mu$ (see Remark~\ref{mxmy}). If $nt_0$ was $p$-normal, we would have $\mu_p=\mu$ implying that $\mu_p$ is invariant under the maps $R:t\mapsto 2t$ and $S:t\mapsto nt$. Clearly, $2$ and $n$ are relatively prime, $\mu_p$ is ergodic with respect to $R$ (and thus also with respect to the action of the semigroup generated by $R$ and $S$) and $\mu_p$ has positive entropy with respect to $R$. By Theorem~\ref{Rud} $\mu_p$ has to be the Lebesgue measure. This, however, is not true for $p\neq\frac12$, because in this case $h(\mu_p)=H(p)<\log 2=h(\lambda)$ (see Remark~\ref{ren}).
Thus $nt_0$ (equivalently $nx$) is not $p$-normal. 

Now, if $\frac xn$ was $p$-normal, then, by the above argument, $x=n\frac xn$ would not be $p$-normal, contradicting the assumption of the theorem.
\end{proof}

We believe that the answer to the following question is positive:

\begin{ques}Is it true that if $x\in\R$ is $p$-normal with $p\neq\frac12$ then $qx$ is not $p$-normal for any positive rational $q$ which is not a power (positive or negative)~of~$2$?
\end{ques}

\section{Behavior of normal and deterministic numbers under multiplication}\label{S7}

It was proved in Section~\ref{Sn} (see Corollary~\ref{affin}) that the lower and upper entropies of a real number $x$ are preserved under the transformation $L_{q,y}(x)= qx+y$, where $q$ is a nonzero rational number and $y$ is a deterministic number. In particular, $L_{q,y}$ preserves normality and determinism.
It is natural to ask whether a transformation of a more general kind, $L_{y_1,y_2}$, where $y_1\neq 0$ and $y_2$ are deterministic numbers, has the same properties.

As we will see in this section, the answer to this question is a sound ``no''. We will prove the following theorem which demonstrates that multiplication by a nonzero deterministic number can reduce the entropy of a real number from $\log 2$ to $0$:
\begin{thm}\label{xyd} 
There exist real numbers $x,y$ with $x\in\mathcal N(2)$, $y\in\mathcal D(2),\ y\neq 0$, such that $xy\in\mathcal D(2)$.
\end{thm}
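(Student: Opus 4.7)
The plan is to construct $x$ and $y$ explicitly as limits of inductively defined binary words, using a hierarchical scheme organized into blocks at rapidly increasing scales. Fix a very fast growing sequence $1 \ll N_1 \ll N_2 \ll \cdots$ (for concreteness $N_{k+1} = 2^{N_k}$ suffices) and write $B_k = (N_{k-1}, N_k]$ for the $k$-th dyadic block. I would take $y \in (0,1)$ to be the number whose binary expansion has $1$'s at precisely the positions $\{N_k\}_{k \geq 1}$: since this set has asymptotic density zero, Proposition~\ref{triv} yields $y \in \mathcal D(2)$, and visibly $y \neq 0$ with zero frequency of the digit~$1$ in its expansion.

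The construction of $x$ is the delicate part. The idea is to build the bits of $x$ block by block so that, simultaneously, (i) the empirical statistics of the alias of $x$ converge to those of the uniform Bernoulli measure (hence $x \in \mathcal N(2)$ by Proposition~\ref{nortop}), and (ii) the binary expansion of the product $xy$ follows a prescribed low-complexity pattern whose quasi-generated measures all have entropy zero. The identity
\[
xy \;=\; \sum_{j \geq 1} \frac{x}{2^{N_j}}
\]
reduces each individual bit of $xy$ to a sum modulo~$2$, plus carries, of bits of $x$ at various shifted positions. Since $y$ has extremely sparse $1$'s, each bit of $xy$ depends on only a small number of bits of $x$; within every block $B_k$ I would first populate the bits of $x$ from a statistically typical Bernoulli word of length $|B_k|$, and then perform a \emph{local correction} that modifies only $o(|B_k|)$ bits of $x$ in order to steer the new bits of $xy$ into the prescribed pattern (for instance, to have $1$'s only on a predetermined density-zero set of positions).

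Once the construction is carried out, $x \in \mathcal N(2)$ follows from a standard counting/large deviations argument: the correction step (ii) alters only a vanishing fraction of the bits of $x$ in each block, so the finite-block frequencies in the alias of $x$ still converge to $2^{-\ell}$. On the other hand, $xy \in \mathcal D(2)$ follows because by construction the $1$'s in the alias of $xy$ lie in a set of asymptotic density zero, so Proposition~\ref{triv} (or Definition~\ref{deta} directly, via $\M_{xy} = \{\delta_{\bar 0}\}$) gives zero entropy.

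The principal obstacle is the careful handling of carry propagation during the long multiplication $xy$: a single carry can travel arbitrarily far up the binary expansion and potentially undo the prescribed pattern for $xy$ while at the same time eating into the freedom needed to randomize $x$. To beat this, the construction must be designed so that the raw integer partial sums $\sum_{j} x_{m - N_j}$ appearing at each position $m$ are uniformly bounded (so that carries remain local and predictable), and the sequence of scales $(N_k)$, the window sizes and the correction targets must be interleaved across all stages of the induction in such a way that the inevitable carry corrections still leave enough untouched bits of $x$ to satisfy the Bernoulli statistics. Arranging all of this simultaneously for a single $x$ that works is what makes the example \emph{rather elaborate}.
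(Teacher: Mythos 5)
There is a genuine gap, and it sits at the heart of your plan: the step where you populate $x|_{B_k}$ with a typical Bernoulli word and then ``steer'' the bits of $xy$ on $B_k$ into a prescribed pattern by altering only $o(|B_k|)$ bits of $x$. The correspondence between $x$ and $xy$ is invertible: at position $m$ the dominant contribution to $(xy)_m$ is $x_{m-N_1}$, so prescribing $(xy)_m$ for all $m$ in a set of density $1-o(1)$ in $B_k$ pins down, by back-substitution (given the earlier blocks and the carries), the bits of $x$ on a set of density $1-o(1)$ in $B_k$. Equivalently, if $xy$ is forced to have $1$'s only on a density-zero set $E$, then $x$ is forced to equal $z\cdot y^{-1}$ where $z$ is the specific number supported on $E$; there is no residual freedom for $x$ to have started out ``typical'', and the counting/large-deviations argument for $x\in\mathcal N(2)$ collapses. (Such an $x$ \emph{could} still be normal --- that is essentially Theorem~\ref{yyn} --- but that requires a proof, not a perturbation argument.) A second, independent obstruction is your carry-taming requirement that the column sums $\sum_j x_{m-N_j}$ be uniformly bounded: for $m\in B_k$ there are roughly $k$ summands, and summing the column sums over $m\in B_k$ gives $\sum_{j<k}\bigl|\{n\in B_k-N_j:\ x_n=1\}\bigr|\sim (k-1)|B_k|/2$ whenever $x$ is normal, so some column in $B_k$ has sum at least about $(k-1)/2\to\infty$. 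Bounded column sums are therefore incompatible with normality of $x$ for this $y$.

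The paper resolves the tension in the opposite way: instead of a generic normal $x$ with corrections, it builds one rigid normal sequence $\kappa$ (Theorem~\ref{kappanor}) out of Gray-code orderings of $\{0,1\}^{n_k}$ with alternating mirrors, so that consecutive length-$n_k$ blocks differ in exactly one position; and it takes the $1$'s of $y$ at the \emph{finite sums} $\mathsf{FS}((n_k)_{k\ge1})$ rather than at the points $n_k$ alone, so that the $2^k$ rows of the long multiplication can be grouped recursively into pairs differing by a shift of exactly $n_k$. Each such pair consists of almost-mirrored blocks, whose sum with carry is a nearly constant block (Observation~1 in the proof of Theorem~\ref{cex}); iterating the pairing shows that the alias of $xy$ has density zero of \emph{switches} $01$ and $10$ --- not density zero of $1$'s --- and determinism then follows by pushing forward under the two-to-one, entropy-preserving map $(a_n)_{n\ge1}\mapsto(a_n+a_{n+1}\bmod 2)$. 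You would need some analogue of this structural mechanism (or a genuinely different one) before the outline can be turned into a proof.
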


In addition, we will show that conversely, multiplication by a deterministic number can bring up the entropy of a real number from $0$ to $\log 2$: 

\begin{thm}\label{yyn}
There exist numbers $y_1, y_2\in\mathcal D(2)$ such that $y_1y_2\in\mathcal N(2)$.
\end{thm}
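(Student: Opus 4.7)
The plan is to derive Theorem~\ref{yyn} from the construction that yields Theorem~\ref{xyd} by a simple algebraic trick. Suppose Theorem~\ref{xyd} supplies $x\in\mathcal N(2)$ and $y\in\mathcal D(2)\setminus\{0\}$ with $xy\in\mathcal D(2)$. Then set
$$
y_1:=xy, \qquad y_2:=\frac{1}{y}.
$$
Clearly $y_1\in\mathcal D(2)$ by the very conclusion of Theorem~\ref{xyd}, and the identity
$$
y_1 y_2 = xy\cdot\frac1y = x
$$
shows that $y_1y_2\in\mathcal N(2)$. Hence the theorem will be established once we check that $y_2=1/y$ also lies in $\mathcal D(2)$.

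As noted in the paper, the $y$ furnished by Theorem~\ref{xyd} has a very structured binary alias: the digit $1$ appears in $\omega_2(y)$ with frequency zero, so (after possibly multiplying by a power of $2$ to normalize the order of magnitude) we may take $y$ of the form $y=\sum_{k\ge1}2^{-a_k}$ with a rapidly growing gap sequence $(a_k)$. The plan is to show, by explicit analysis of such $y$, that $1/y\in\mathcal D(2)$. Writing $y=2^{-a_1}(1+u)$ with $u=\sum_{k\ge2}2^{-(a_k-a_1)}$, and choosing $(a_k)$ to grow fast enough that the geometric series
$$
\frac1{1+u} = \sum_{n\ge0}(-1)^n u^n
$$
consists of essentially disjoint (non-interacting, carry-free in controlled regions) contributions, one can read off the base-$2$ expansion of $\{1/y\}$ and verify that its $\varepsilon$-complexity (Definition~\ref{complexity}) is subexponential for every $\varepsilon>0$. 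Theorem~\ref{detdet} then gives $1/y\in\mathcal D(2)$. The construction of Theorem~\ref{xyd} leaves the growth rate of $(a_k)$ entirely at our disposal (it is only bounded from below by the requirements of that proof), so imposing the additional growth condition needed to run the carry-control argument for $1/y$ causes no conflict.

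I expect the main difficulty to be precisely this carry control. Long division in base $2$ can in principle generate arbitrarily long carry chains, and a priori these could destroy determinism of $1/y$. The hard part will be to show that, for the specific sparse $y$ produced by (a suitable strengthening of) Theorem~\ref{xyd}, the carries in the expansion $\sum_{n\ge0}(-1)^n u^n$ localize into short disjoint windows clustered around the positions $\sum_{j=2}^{k} n_j(a_j-a_1)$ with $n_j\in\mathbb N_0$, so that outside these windows $\omega_2(1/y)$ is a zero sequence and inside them it is determined by finitely many of the gaps $a_k-a_1$. A partition argument along the lines of Lemma~\ref{ratunku}, combined with the subexponential-complexity characterization from Theorem~\ref{detdet}, would then conclude that $1/y\in\mathcal D(2)$ and complete the proof.
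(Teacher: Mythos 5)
Your reduction is exactly the paper's: take $y_1=xy$ and $y_2=1/y$ with $x,y$ from Theorem~\ref{xyd}, so that $y_1y_2=x$ is normal, and the whole burden falls on showing $1/y\in\mathcal D(2)$. But that last step is precisely where your proposal stops being a proof: you state the key claim (that the carries in the base-$2$ expansion of $1/y$ localize into sparse windows) as something you ``expect'' and identify as ``the hard part,'' without carrying it out. Moreover, the sketch you give rests on two inaccurate premises. First, the $y$ from Theorem~\ref{xyd} is not a lacunary sum $\sum_k 2^{-a_k}$ with a free gap sequence; it is $\sum_{s\in S}2^{-s}$ where $S=\{0\}\cup\mathsf{FS}((n_k)_{k\ge1})$ is a set of \emph{finite sums}, i.e.\ $y=\prod_{k\ge1}(1+2^{-n_k})$. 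The multiplicative finite-sums structure, not mere sparsity, is what makes the reciprocal tractable. Second, the growth rate of $(n_k)$ is \emph{not} at your disposal: the recursion $n_{k+1}=n_k2^{n_k}$ is forced by the construction of the Champernowne-like sequence $\kappa$ (each $B^{k+1}_1$ is a concatenation of all $2^{n_k}$ blocks of an ordering of $\{0,1\}^{n_k}$), and the proof that $xy\in\mathcal D(2)$ depends on the shifts by elements of $S$ aligning with that block structure. You cannot impose an extra growth condition without redoing Theorem~\ref{xyd}. Finally, the alternating series $\sum_n(-1)^nu^n$ involves base-$2$ subtraction, whose borrow chains are not obviously confined to short windows; ``carry-free in controlled regions'' is exactly the assertion that needs proof.

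The paper sidesteps all of this by never dividing: it writes down an explicit candidate $v$ for $1/y$, namely the number whose binary alias is the limit of the self-similar blocks $B_1=11$, $B_{k+1}=(B_k0^{n_k})^{n_{k+1}/(2n_k)}$, observes that the digit $1$ has frequency zero in $v$ (so $v$ is trivially deterministic), and then verifies $vy=1$ directly: in the array of shifted copies of $v$ indexed by $S$, every column contains exactly one digit $1$, so the sum is $0.111\ldots=1$ with no carries at all. If you want to salvage your route, the realistic fix is to adopt this ``guess the reciprocal and multiply'' strategy rather than controlling carries in a series expansion of $1/(1+u)$.
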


The structure of this section is as follows: in Subsection~\ref{7.1} we introduce some preliminary notions and results including a special ordering of the family $\{0,1\}^n$ of all blocks of length $n$, called \emph{Gray code}. The numbers $x$ and $y$ appearing in Theorem~\ref{xyd} are constructed in Subsections~\ref{7.2} and~\ref{2det}, correspondingly. In fact, in Subsection~\ref{2det} we construct two deterministic numbers that can play the role of $y$ in Theorem~\ref{xyd}. The first construction provides a \emph{trivially deterministic} number $y$, in the sense that the digit 1 in the binary expansion of $y$ occurs with frequency zero.
Because  trivially deterministic \sq s are in some sense exceptional\footnote{
In the papers of B.\ Weiss \cite{W2} and T.\ Kamae \cite{K1} it is proved that an increasing \sq\ $S=\{n_1,n_2,\dots\}$ of natural numbers of positive lower density \emph{preserves normality} in the sense that whenever $x=(x_n)_{n\ge1}\in\Lambda^\N$ is normal then $x|_S=(x_{n_k})_{k\ge1}$ is also normal, if and only if the indicator function $\mathbbm 1_S\in\{0,1\}^\N$ is deterministic.
Note that this theorem does not apply if $\mathbbm1_S$ is trivially deterministic. In fact, it is easy to see that whenever $\mathbbm1_S$ is trivially deterministic then $S$ does not preserve normality in the sense of Kamae--Weiss.}, we also provide a second construction (which is achieved by modifying the first one), in which $y$ is replaced by a deterministic number $z$, which has positive frequency of occurrences of the block $01$ in its binary expansion. Then the fractional part $\{z\}$ is not generic (under $R$) for $\delta_0$, so it does not fall in the exceptional class of deterministic numbers which we needed to eliminate in Proposition~\ref{pnor+q} (see Remark~\ref{66}). 
Subsection~\ref{sxyz} contains the proof of Theorem~\ref{xyd}. 
Finally, Subsection~\ref{xyn} contains the proof of Theorem~\ref{yyn}.

\subsection{Gray code}\label{7.1}
\begin{itemize}
\item Given an $n\in\N$, consider the family ${\B}_n=\{0,1\}^n$ of all binary blocks of length~$n$. We will say that $B_1,B_2,B_3,\dots,B_{2^n}$ is an \emph{ordering} of ${\B}_n$ if for each $B\in{\B}_n$ we have $B=B_l$ for exactly one $l\in\{1,2,3,\dots,2^n\}$.
\item Given $n\ge2$, a block $B=(b_1b_2\dots b_n)\in{\B}_n$ and an integer $N\in[1,n-1]$, the $N$th prefix of $B$ is the block $B|_{[1,N]}=(b_1b_2\dots b_N)$ and its $N$th suffix is the block $B|_{[N+1,n]}=(b_{N+1}b_{N+2}\dots b_n)$. The notion of the $N$th prefix applies naturally also to infinite unilateral \sq s.
\item For $B\in{\B}_n$ by $\tilde B$ we will denote the ``mirror'' of $B$, that is, $\tilde B$ has $1$'s and $0$'s exactly where $B$ has $0$'s and $1$'s, respectively. 
\end{itemize}

\begin{lem}\label{zig}For any $n\ge1$ and $B\in{\B}_n$ there exists an ordering of ${\B}_n$,\break $B_1,\ B_2,\ B_3,\ \dots,\ B_{2^n-1},\ B_{2^n}$, such that 
\begin{enumerate}
        \item $B_1=B$,
	\item for each $l=1,2,\dots,2^n-1$ the blocks $B_l$ and $B_{l+1}$ differ at only one place,
	\item for each $i=1,2,\dots,n-1$ and $j=0,1,2,\dots 2^{n-i}-1$, the $(n_k-i)$th suffixes (i.e., suffixes of length $i$) of the blocks 
	$$
	B_{j2^i+1},\ B_{j2^i+2},\ B_{j2^i+3},\ B_{j2^i+4},\ \dots,\ B_{j2^i+2^i-1},\ B_{j2^i+2^i}
	$$
	form an ordering of ${\B}_i$, while their $(n_k-i)$th prefixes are all the same. 
	\end{enumerate}
\end{lem}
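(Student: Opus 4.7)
The proof will proceed by induction on $n$, with the recursive construction being the classical reflected binary Gray code. For the base case $n=1$, I would simply take $B_1 = B$ and $B_2 = \tilde B$; conditions (1) and (2) are immediate and condition (3) is vacuous. For the inductive step with $n \ge 2$, given $B \in \B_n$, I would write $B = (b_1, C)$ with $b_1 \in \{0,1\}$ and $C \in \B_{n-1}$, apply the inductive hypothesis to $n-1$ and $C$ to obtain an ordering $C_1, C_2, \dots, C_{2^{n-1}}$ of $\B_{n-1}$ with $C_1 = C$ satisfying all three conditions, and then assemble the ordering of $\B_n$ by reflection.

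Concretely, I would set $B_l = (b_1, C_l)$ for $1 \le l \le 2^{n-1}$ and $B_l = (1-b_1, C_{2^n - l + 1})$ for $2^{n-1} < l \le 2^n$. Condition (1) is immediate. For condition (2), inside each half the single-bit-change property is inherited from the inductive hypothesis applied to consecutive $C$-blocks (traversed forward in the first half and backward in the second), while at the midpoint the blocks $(b_1, C_{2^{n-1}})$ and $(1-b_1, C_{2^{n-1}})$ differ precisely in the first coordinate — this is exactly where the reflection (rather than a fresh copy) is essential.

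The heart of the argument is condition (3). I would exploit the fact that $2^i \mid 2^{n-1}$ for every $i \le n-1$, so any group of $2^i$ consecutive $B_l$'s lies entirely in one half of the ordering and thus corresponds to a contiguous family $C_{j'2^i+1}, \dots, C_{j'2^i+2^i}$, read forward in the first half and read backward in the second half via the reindexing $j' = 2^{n-i} - j - 1$. Applying the inductive hypothesis to that $C$-group yields a common prefix of length $n-1-i$ and an enumeration of $\B_i$ as suffixes of length $i$; prepending the common leading bit ($b_1$ or $1-b_1$) produces the required common prefix of length $n-i$ for the $B_l$'s, while the length-$i$ suffixes are untouched. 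Since the reverse of an ordering is still an ordering, the second half presents no additional difficulty.

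The main (mild) obstacle is purely bookkeeping: one must carefully verify in the reflected half that a block of $2^i$ consecutive indices $\{j2^i+1, \dots, j2^i+2^i\}$ maps, under $l \mapsto 2^n - l + 1$, precisely onto a reversed group of $C$-indices of the same dyadic form. The substitution $j' = 2^{n-i}-j-1$ makes this transparent, and no genuinely delicate step remains.
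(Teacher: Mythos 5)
Your proof is correct and takes essentially the same approach as the paper: an induction on $n$ via the reflected binary Gray code, prepending a bit and reversing the second half. The only cosmetic difference is that the paper first reduces to the case $B=00\dots0$ and then translates the resulting ordering by adding $B$ coordinatewise mod $2$, whereas you carry a general starting block through the induction directly.
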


\begin{rem}
When $B=000\dots0$ is the block of $n$ zeros, the ordering described in Lemma~\ref{zig} is known under the name of \emph{Gray code}. 
\end{rem}

\begin{rem}\label{gran}
In (3), since the $(n_k-i)$th prefixes are the same, the ordering of $\B_i$ formed by the $(n_k-i)$th suffixes has the property that two neighboring blocks differ at only one place.
\end{rem}

\begin{proof}[Proof of Lemma~\ref{zig}]
It suffices to prove this for the block $B=000\dots0$ of $n$ zeros. If $B$ is different, the appropriate ordering is obtained by adding (coordinatewise and modulo $2$) $B$ to each $B_l$, $1\le l\le2^n$, constructed for the block of zeros.

We will proceed inductively. For $n=1$ we have only two blocks and we order them as follows: $B_1=0,\ B_2=1$. Suppose that for some $n\ge1$ we have the ordering $B_1,B_2,\dots,B_{2^n}$ of ${\B}_n$ starting with $B_1=000\dots0$ ($n$ zeros) and satisfying~(2) and~(3). Then, define an ordering of ${\B}_{n+1}$ by:
$$
0B_1,\,0B_2,\,\dots,\,0B_{2^n-1},\,0B_{2^n},\,1B_{2^n},\,1B_{2^n-1},\,\dots,\,1B_2,\,1B_1.
$$
This ordering clearly satisfies (1),~(2) and~(3) for $n+1$ in place of $n$.
\end{proof}

\begin{lem}\label{zag}Let $n\in\N$ be even. Fix $B_1\in{\B}_n$ and let $B_1,B_2,\dots,B_{2^n}$ be an ordering of $\B_n$ such that any two neighboring blocks differ at only one place. Then the \sq\ of blocks
\begin{equation}\label{norder}
B_1,\tilde B_2,B_3,\tilde B_4,\dots B_{2^n-1},\tilde B_{2^n}
\end{equation}
is an ordering of ${\B}_n$.
\end{lem}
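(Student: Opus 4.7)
The plan is to track the parity (mod $2$) of the number of $1$'s appearing in each block, and exploit the hypothesis that $n$ is even.

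First I would observe that for any $B \in \B_n$, the number of $1$'s in $\tilde B$ equals $n$ minus the number of $1$'s in $B$. Since $n$ is even, this shows that the map $B \mapsto \tilde B$ \emph{preserves} the parity of the number of $1$'s. Next, in the given ordering $B_1, B_2, \ldots, B_{2^n}$, consecutive blocks differ in exactly one coordinate, so consecutive blocks have \emph{opposite} parities. Consequently, all odd-indexed blocks $B_1, B_3, \ldots, B_{2^n-1}$ share a common parity, call it $p \in \{0,1\}$, while all even-indexed blocks $B_2, B_4, \ldots, B_{2^n}$ share the opposite parity $1-p$.

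Putting these two observations together, in the new sequence $B_1, \tilde B_2, B_3, \tilde B_4, \dots, B_{2^n-1}, \tilde B_{2^n}$, the odd-indexed entries have parity $p$ (unchanged) and the even-indexed entries $\tilde B_{2k}$ also have parity $1-p$ (by the parity-preservation above, the parity of $B_{2k}$ is retained). In particular, no odd-indexed entry can coincide with any even-indexed entry, so the two ``halves'' of the new sequence live in disjoint subsets of $\B_n$.

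It remains to check that within each half the blocks are pairwise distinct. For the odd-indexed half this is immediate, as the $B_{2k-1}$'s are distinct members of the original ordering. For the even-indexed half, the $\tilde B_{2k}$'s are distinct because the $B_{2k}$'s are distinct and $B \mapsto \tilde B$ is a bijection on $\B_n$. Each half therefore consists of $2^{n-1}$ distinct blocks, and since exactly $2^{n-1}$ blocks of $\B_n$ have parity $p$ (respectively, $1-p$), each half exhausts its parity class. Taking the union yields all of $\B_n$, so the new sequence is indeed an ordering of $\B_n$. There is no serious obstacle here; the only thing one must be careful about is the evenness of $n$, without which the complementation would swap parities rather than preserve them, and the argument would collapse.
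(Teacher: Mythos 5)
Your proof is correct, but it takes a genuinely different route from the paper's. You track the parity of the Hamming weight: since consecutive blocks in the ordering differ at one place, the weight parity alternates along the ordering, and since $n$ is even, the mirror map $B\mapsto\tilde B$ preserves weight parity; hence the odd-indexed and even-indexed entries of the new sequence land in the two disjoint parity classes, each half is injective, and a count ($2^{n-1}+2^{n-1}=2^n$) finishes the job. The paper instead argues about \emph{positions}: $B_l$ and $\tilde B_l$ differ at all $n$ places, and since each step of the ordering changes exactly one coordinate, the positions of $B_l$ and $\tilde B_l$ in the ordering have the same parity; therefore applying the tilde at the even positions either swaps a mirror pair (when both positions are even) or leaves both blocks untouched (when both are odd), so the new sequence is visibly a permutation of the old one. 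Both arguments hinge on the same two facts (evenness of $n$ and the one-coordinate-change property), but yours is an invariant-plus-counting argument, while the paper's exhibits the new sequence explicitly as the old ordering composed with a product of transpositions — slightly more information, though nothing in the rest of the paper seems to need it. Either proof is complete and of comparable length.
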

\begin{proof}
Notice that for each $l=1,2,\dots,2^n$ the blocks $B_l$ and $\tilde B_l$ differ at all $n$ places, which is an even number, hence the distance between $B_l$ and $\tilde B_l$ in the ordering $B_1,B_2,\dots,B_{2^n}$ is even. This implies that in the new \sq\ \eqref{norder} either both of them have a tilde or none. In the first case, they just switch places in the ordering (note that double tilde is no tilde). In the second case they do not change their positions. In conclusion, all blocks from ${\B}_n$ appear in the \sq\ \eqref{norder} exactly once, and hence this \sq\ is an ordering of~${\B}_n$.
\end{proof}

\subsection{Construction of a ``Champernowne-like'' binary \sq.}\label{7.2}
In this subsection we will construct a normal binary \sq\ $\kappa$ which has a special intricate structure and which will be instrumental in proving Theorems~\ref{xyd} and~\ref{yyn} in Subsections~\ref{sxyz} and~\ref{xyn}, respectively. 

We start by defining the block $B^1_1=01$ and denoting its length by $n_1$ (i.e., $n_1=2$). Inductively, once $B^k_1$ is defined and has length $n_k$ which is a power of $2$, we define $B^{k+1}_1$ as the concatenation
$$
B^{k+1}_1 = B^k_1\tilde B^k_2B^k_3\tilde B^k_4\dots B^k_{2^{n_k}-1}\tilde B^k_{2^{n_k}},
$$
where the blocks are ordered according to \eqref{norder} applied to ${\B}_{n_k}$, starting from $B^k_1$. The length of $B^{k+1}_1$ equals $n_k2^{n_k}$ (which is a power of $2$) and we denote it by $n_{k+1}$. Since, for each $k$,  $B^k_1$ is a prefix of $B^{k+1}_1$, the \sq\ of blocks $(B^k_1)_{k\ge1}$ converges (coordinatewise) to an infinite \sq\ in $\{0,1\}^\N$. 
\begin{defn}\label{kapa}
The binary \sq\ $\kappa$ is defined  as the coordinatewise limit of the blocks $B^k_1$. 
\end{defn}

Figure~\ref{f1} shows the initial part of $\kappa$ with complete blocks $B^1_1,B^2_1$ and a small part of $B^3_1$. 
\begin{figure}[h]
$$
\underset{B^3_1}{\underbrace{\underset{B^2_1}{\underbrace{\overset{B^1_1}{\overbrace{01}}\overset{\tilde B^1_2}{\overbrace{11}}\,\overset{B^1_3}{\overbrace{10}}\,\overset{\tilde B^1_4}{\overbrace{00}}}} \underset{\tilde B^2_2}{\underbrace{10000110}}\, \underset{B^2_3}{\underbrace{01111011}}\, \underset{\tilde B^2_4}{\underbrace{10000101}}\, \underset{B^2_5}{\underbrace{01111110}}\, \underset{\tilde B^2_6}{\underbrace{10000000}}\, \underset{B^2_7}{\underbrace{01111101}}\,\dots}}
$$
\caption{The \sq\ $\kappa$.}
\label{f1}
\end{figure}

\begin{thm}\label{kappanor}The \sq\ $\kappa\in\{0,1\}^\N$ is normal.
\end{thm}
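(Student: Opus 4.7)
The plan is to verify Definition~\ref{ns} directly: for every $m\in\N$ and every $C\in\{0,1\}^m$ I will show that the frequency of $C$ in $\kappa$ exists and equals $2^{-m}$. Let $f_N$ denote the number of occurrences of $C$ in $\kappa|_{[1,N]}$.

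First I would establish convergence along the subsequence $(n_{k+1})_{k\ge 1}$. Fix $m$, $C$, and $k$ with $n_k\ge m$. By Lemma~\ref{zag} the sequence $B^k_1,\tilde B^k_2,B^k_3,\tilde B^k_4,\dots,\tilde B^k_{2^{n_k}}$ is an ordering of $\{0,1\}^{n_k}$, so each length-$n_k$ block appears exactly once in the concatenation $B^{k+1}_1$. For each internal starting position $p\in[1,n_k-m+1]$, exactly $2^{n_k-m}$ of the $2^{n_k}$ blocks carry $C$ starting at $p$, giving $(n_k-m+1)\cdot 2^{n_k-m}$ internal occurrences; occurrences straddling two consecutive length-$n_k$ blocks add at most $(m-1)\cdot 2^{n_k}$. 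Dividing by $n_{k+1}=n_k\cdot 2^{n_k}$ gives $f_{n_{k+1}}/n_{k+1}=2^{-m}+O(m/n_k)\to 2^{-m}$.

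The main work will be extending this to all $N$. For $n_k\le N<n_{k+1}$ I would write $N=j\cdot n_k+r$ with $1\le j\le 2^{n_k}$ and $0\le r<n_k$, so that $\kappa|_{[1,N]}$ consists of the first $j$ terms of the tilde-alternated ordering $\beta_1,\beta_2,\dots$ (where $\beta_i$ is $B^k_i$ or $\tilde B^k_i$ according to parity) followed by an $r$-prefix of $\beta_{j+1}$. Decomposing $j=2^{i_1}+\dots+2^{i_s}$ in binary (so that $n_k>i_1>\dots>i_s\ge 0$ and $s\le n_k$), Lemma~\ref{zig}(3) splits the first $j$ blocks into $s$ consecutive \emph{packets}: the $t$-th packet is a set of $2^{i_t}$ Gray-code neighbors whose $(n_k-i_t)$-prefixes equal a fixed block $P_t$ or $\tilde P_t$ (exactly $2^{i_t-1}$ each when $i_t\ge 1$, thanks to the tilde alternation) and whose $i_t$-suffixes form an ordering of $\{0,1\}^{i_t}$ (by Lemma~\ref{zag}). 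Inside a single packet the occurrences of $C$ split into prefix-internal contributions $2^{i_t-1}\bigl(\mathrm{occ}(C,P_t)+\mathrm{occ}(\tilde C,P_t)\bigr)$, suffix-internal contributions $(i_t-m+1)\cdot 2^{i_t-m}$ (when $i_t\ge m$, else $0$), and at most $(m-1)\cdot 2^{i_t}$ straddling positions.

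To close the argument I would control the packet prefixes by an induction on $k$. For $t=1$ we have $P_1=\kappa|_{[1,n_k-i_1]}$, so the inductive hypothesis supplies $\mathrm{occ}(C,P_1)+\mathrm{occ}(\tilde C,P_1)=2(n_k-i_1)\cdot 2^{-m}+o(n_k)$; for $t\ge 2$ the block $P_t$ is no longer a prefix of $\kappa$, but the reflected-Gray-code structure forces consecutive level-$i_t$ groups to differ in a single prefix bit, so a careful descent through the nested levels $i_1>i_2>\dots>i_t$ relates $P_t$ to $P_1$ (and $\tilde P_1$) up to an additive error of order $O(m\cdot s)$ in the occurrence count. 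Summing the packet contributions together with $O(m\cdot 2^{n_k})$ inter-block straddles and the $O(n_k)$ contribution of the $r$-prefix yields $f_N=N\cdot 2^{-m}+O(N\cdot m/n_k)$, whence $f_N/N\to 2^{-m}$. The main obstacle will be precisely this propagation step: the inductive statement must be strong enough to track the joint frequencies of $C$ and $\tilde C$ on \emph{every} prefix of $\kappa$, and one must check that the cumulative error across up to $n_k$ packets (especially those with small $n_k-i_t$, where $P_t$ is short and individual frequencies are not yet close to $2^{-m}$) is absorbed into the $O(N\cdot m/n_k)$ error bound.
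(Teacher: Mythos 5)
Your opening step (the frequency of $C$ along the subsequence $N=n_{k+1}$, using the fact that $B^{k+1}_1$ is a concatenation of an ordering of $\B_{n_k}$) is correct, and the multi-scale plan for general $N$ is reasonable in spirit. But the step you yourself flag as ``the main obstacle'' is a genuine gap rather than a routine verification, and the error bounds you announce are not achievable. Concretely: (i) the remainder $r$-prefix of $\beta_{j+1}$ has length up to $n_k-1$, so when $j$ is small (say $j=1$ or $j=2$) it occupies a constant fraction of $N$ and cannot be ``absorbed'' as an $O(n_k)$ term; it needs its own recursive treatment. (ii) The claimed bound $f_N=N2^{-m}+O(Nm/n_k)$ already fails at $N=2n_k$, where $f_N=\mathrm{occ}(C,B^k_1)+\mathrm{occ}(\tilde C,B^k_1)+O(m)$ and the internal structure of $B^k_1$ is governed by $n_{k-1}\approx\log_2 n_k$, giving an error of order $Nm2^{-m}/n_{k-1}$, much larger than $Nm/n_k$. (iii) Most seriously, the induction does not close as sketched: a packet prefix $P_t$ has length $n_k-i_t$, which can sit at a much lower level $k'$ (each descent step can shrink the length from $N$ down to roughly $\log_2 N$), so the recursion has unbounded depth and can bottom out at short prefixes where the frequency of $C$ is nowhere near $2^{-m}$. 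A per-level additive loss (from discarding negligible packets, straddles, etc.) multiplied by an unbounded number of levels does not tend to zero; to close the argument you must show that the total \emph{weight} $\sum_t 2^{i_t}(n_k-i_t)/N$ carried into the recursion decays along every branch, or that whenever it is close to $1$ the prefix is essentially the full block $B^k_1$ (a base case). None of this is in the sketch. A smaller but real defect: Lemma~\ref{zag} requires the block length to be even, so for odd $i_t$ the tilde-alternated $i_t$-suffixes do \emph{not} form an ordering of $\{0,1\}^{i_t}$ (already for $i_t=1$ the two suffixes coincide); such packets must be split into two aligned sub-packets of suffix length $i_t-1$.

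For comparison, the paper avoids the entire multi-scale recursion with a single-scale rounding device: given $\delta$, it chooses one exponent $i$ with $n_k2^i\approx \delta N$, rounds $N$ down to a multiple of $n_k2^i$ (losing at most a $\delta$-fraction of the prefix), and is left with at most $\lceil 2/\delta\rceil$ \emph{complete} aligned groups at that one scale. Prefixes of blocks then only ever need to be understood when they occupy at least a $\delta$-fraction of the containing block (this is Lemma~\ref{claim}), so the descent stops after essentially one level and no error accumulates. I would recommend adopting that device (or proving a uniform quantitative version of your inductive hypothesis with explicitly decaying weights) rather than trying to repair the full binary decomposition of $j$.
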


\begin{proof} Given $m\ge1$ and $\eps>0$, a binary block $B$ will be called \emph{$(\eps,m)$-normal} if the densities of all blocks of length $m$ in $B$ (see \eqref{empirical}) are $\eps$-close to the ``correct'' value~$2^{-m}$. A~binary \sq\ is normal if and only if, for any $m\ge1$ and $\eps>0$, all its sufficiently long prefixes are $(\eps,m)$-normal. From now on we fix an integer $m$ and we abbreviate the term $(\eps,m)$-normal as just \emph{$\eps$-good}. For $\eps>0$, the following easy facts hold:
\begin{enumerate}
	\item A concatenation of sufficiently long $\eps$-good blocks is $2\eps$-good.
	\item For $n$ large enough, a concatenation of any ordering of ${\B}_n$ is $\eps$-good.
	\item If $n$ is large enough, $B_1,\,B_2,\,B_3,\,\dots,\,B_{2^n}$ is an ordering of ${\B}_n$ and\break$C_1,\,C_2,\,C_3,\,\dots,\,C_{2^n}$ are $\eps$-good blocks (no matter how long) then the ``alternating concatenation'' 
	$$
	C_1B_1C_2B_2\dots C_{2^n}B_{2^n}
	$$ 
	is $2\eps$-good.
	\item For small enough $\delta>0$, large enough $n$ and $B\in\B_n$ we have:
 \begin{enumerate} 
 \item if a block $B'$ which is obtained by removing from $B$ at most $n\delta$ symbols is $\eps$-good then $B$ is $2\eps$-good (when removing symbols from a block we ``close the gaps'', i.e., we shift the remaining parts of the block together, so that $(1-\delta)n\le |B'|\le n$),
 \item if a block $B'$ which is obtained by inserting between the symbols of $B$ at most $n\delta$ additional symbols (so that $n\le |B'|\le (1+\delta)n$) is $\eps$-good then $B$ is $2\eps$-good, 
 \item if a block $B'$ of length $n$ obtained by changing at most $n\delta$ symbols in $B$ is $\eps$-good then $B$ is $2\eps$-good.
\end{enumerate}
\end{enumerate}

We will need the following lemma concerning the blocks $B^{k+1}_1$ described in the construction of $\kappa$.  

\begin{lem}\label{claim} Given $\eps>0$, for small enough $\delta$ and large enough $k$, for each $1\le N\le n_{k+1}-1$ the $N$th prefix of $B^{k+1}_1$, $A=B^{k+1}_1|_{[1,N]}$, is either $\eps$-good or $N< n_{k+1}\delta$ (in the latter case we will say that the prefix is \emph{ignorable}).
\end{lem}

\begin{proof} Assume that $k$ is so large that $2^{-n_k}<\frac{\delta^2}2$ and that $B_1^{k+1}$, which is a concatenation of an ordering of $\B_{n_k}$, is $\frac\eps4$-good, by virtue of~(2). Assume that $N\ge n_{k+1}\delta$ (i.e., that the prefix is non-ignorable). The last two inequalities, together with the formula $n_{k+1}=n_k2^{n_k}$, imply that $2n_k<N\delta$. Thus, we can extend the prefix $A$ to the right by at most $N\delta$ terms, and create a slightly larger prefix $A'=B^{k+1}_1|_{[1,l_0n_k]}$ which is a complete concatenation of an even number of the blocks $B^k_l$ and their mirrors, that is 
$$
A'=B^k_1\tilde B^k_2B^k_3\tilde B^k_4\dots B^k_{l_0-1}\tilde B^k_{l_0}.
$$
Since $N\ge n_{k+1}\delta= n_k2^{n_k}\delta$, we have $l_0\ge2^{n_k}\delta$. Now, by (4b), it suffices to show that $A'$, is $\frac\eps2$-good. 

If $k$ is large enough then there exists $i>n_k(1-\delta)$ such that  $2^i\le 2^{n_k}\delta^2<l_0\delta$. Now we let $A''=B_1^{k+1}|_{[1,j_0n_k2^i]}$, where $j_02^i$ largest multiple of $2^i$ smaller than $l_0$. Note that $|A'|-|A''|<n_k2^i\le n_kl_0\delta=|A'|\delta$.
Thus, by (4a), it will be enough to show that $A''$, is $\frac\eps4$-good. The prefix $A''$ can be naturally divided into $j_0$ subblocks, each having length $n_k2^i$. We denote these subblocks by $C_j$ with $0\le j \le j_0-1$. Each $C_j$ is a concatenation of the form
$$
C_j=B^k_{j2^i+1}\tilde B^k_{j2^i+2}B^k_{j2^i+3}\tilde B^k_{j2^i+4}\dots B^k_{j2^i+2^i-1}\tilde B^k_{j2^i+2^i}=P_1S_1P_2S_2\dots P_{2^i}S_{2^i},
$$
where $P_l$ and $S_l$ are the $(n_k-i)$th prefix and $(n_k-i)$th suffix of $B^k_{j2^i+l}$ (for $l$ odd) or of $\tilde B^k_{j2^i+l}$ (for $l$ even), respectively. 
By Lemma~\ref{zig}(3), Remark~\ref{gran} and Lemma~\ref{zag}, the blocks $S_l$ form an ordering of ${\B}_i$. Note that since $i>n_k(1-\delta)$, by choosing $k$ even larger we can assure that, by (2), the concatenation of the blocks $S_l$ is $\frac\eps{16}$-good. Now, $C_j$ is obtained from this concatenation by inserting the missing $(n_k-i)$th prefixes $S_l$. Since $(n_k-i)<n_k\delta$, these prefixes have jointly less than $|C_j|\delta$ symbols. Thus, by (4b), every ``piece'' $C_j$ is $\frac\eps8$-good, and hence, by (1), $A''$ is $\frac\eps4$-good as desired.
\end{proof}

We continue with the proof of Theorem~\ref{kappanor}. We fix $\delta>0$ so small that (4) holds for large enough $n$ even when $\delta$ is replaced by $2\delta$. We also require that Lemma~\ref{claim} holds for $\delta$, with large enough $k$. 

For large $k$ the block $B^k_1$ is $\eps$-good, because it is a concatenation of an ordering of ${\B}_{n_{k-1}}$ (we can assume that $n_{k-1}$ is large enough as required in (2)). The block $B^k_2$ (and hence also $\tilde B^k_2$) is $2\eps$-good because it differs from $B^k_1$ only at the last place. We can argue in this manner, using the property (4c), up to $B^k_l$ (and $\tilde B^k_l$) as long as $B^k_l$ differs from $B^k_1$ at less than $n_k\delta$ terminal places. It follows from Lemma~\ref{zig} that, for each $i\in[1,n_k]$, the symbol at the position $n_k+1-i$ changes (i.e., differs from the $(n_k+1-i)$th symbol in $B^k_1$) for the first time in $B^k_{2^{i-1}+1}$. This means that $B^k_l$ differs from $B^k_1$ at at most $\log_2l$ terminal positions. So, the largest $l$ such that $B^k_l$ is guaranteed to be $2\eps$-good satisfies $\log_2l<n_k\delta$. In particular, we have shown that  
\begin{enumerate}
	\item[(5)] for $l<2^{n_k\delta}$ the block $B^k_l$ (and hence also $\tilde B^k_l$) is $2\eps$-good.
\end{enumerate}
\smallskip

In order to prove the theorem it suffices to show that the $N$th prefix of $\kappa$, $A=\kappa|_{[1,N]}$, is $8\eps$-good, for all $N$ large enough. So, we fix a large $N$ and we let $k$ be such that $n_k<N\le n_{k+1}$ ($k$ is the largest number such that the coordinate $N$ falls outside $B^k_1$). Since $N$ is large, so is $k$. We can thus assume that $k$ is so large that (in addition to validity of Lemma~\ref{claim}) the following two conditions hold:  
\begin{enumerate} 
\item[($\alpha$)] $n_k>\frac{2-\log\delta}\delta$,
\item[($\beta$)] the number $n=\lceil n_k\delta+\log_2\delta\rceil-1$ is large enough for the validity of (2) and (3).
\end{enumerate}
We need to consider three cases.

\smallskip\noindent{\bf Case 1.} $N\ge n_{k+1}\delta$. In this case $A$ is a non-ignorable prefix of $B^{k+1}_1$, which is $2\eps$-good by Lemma~\ref{claim} (see Figure~\ref{fig10}). 

\smallskip\noindent{\bf Case 2.} $N\le n_k2^{n_k\delta}$.
The coordinate $N$ falls within a block $B^k_l$ or $\tilde B^k_l$ (depending on the parity of $l$), with an $l$ satisfying $1<l< 2^{n_k\delta}$. We assume that $l$ is odd (the even case is similar). Then $A$ is the concatenation $B^k_1\tilde B^k_2\dots B^k_{l-2}\tilde B^k_{l-1}$ with a suffix $P$, which is a prefix of $B^k_l$ (or the entire block $B^k_l$), appended at the right end. By (5), the concatenation comprises just $2\eps$-good blocks, and hence, by (1), it is $4\eps$-good. It remains to consider the suffix $P$.
\begin{enumerate}
\item[(a)] If $P$ is an ignorable prefix of $B^k_l$ (i.e., shorter than $n_k\delta$) then $P$ is an ignorable suffix of $A$ as well, hence $A$ is $8\eps$-good by (4a) (see Figure~\ref{fig10}). 
\end{enumerate}
If $P$ is a non-ignorable prefix of $B^k_l$ then there are two further cases: 
\begin{enumerate}
\item[(b)] $P$ does not reach the coordinates where $B^k_l$ differs from $B^k_1$, or 
\item[(c)]it reaches there. 
\end{enumerate}
\noindent In the case (b), $P$ is identical as a non-ignorable prefix of $B^k_1$, and hence it is $2\eps$-good by Lemma~\ref{claim} (see Figure~\ref{fig10}). In the case (c), recall that $B^k_l$ differs from $B^k_1$ only at at most $n_k\delta$ terminal positions. Since $P$ reaches there, its lenght is at least $n_k(1-\delta)$. Because, by (5), $B^k_l$ is $2\eps$-good, $P$ is $4\eps$-good by (4b) (see Figure~\ref{fig10}). In either case, $P$ is $4\eps$-good and, by~(1), $A$ is $8\eps$-good.

\begin{figure}[H]
    \centering
    \includegraphics[width=\textwidth]{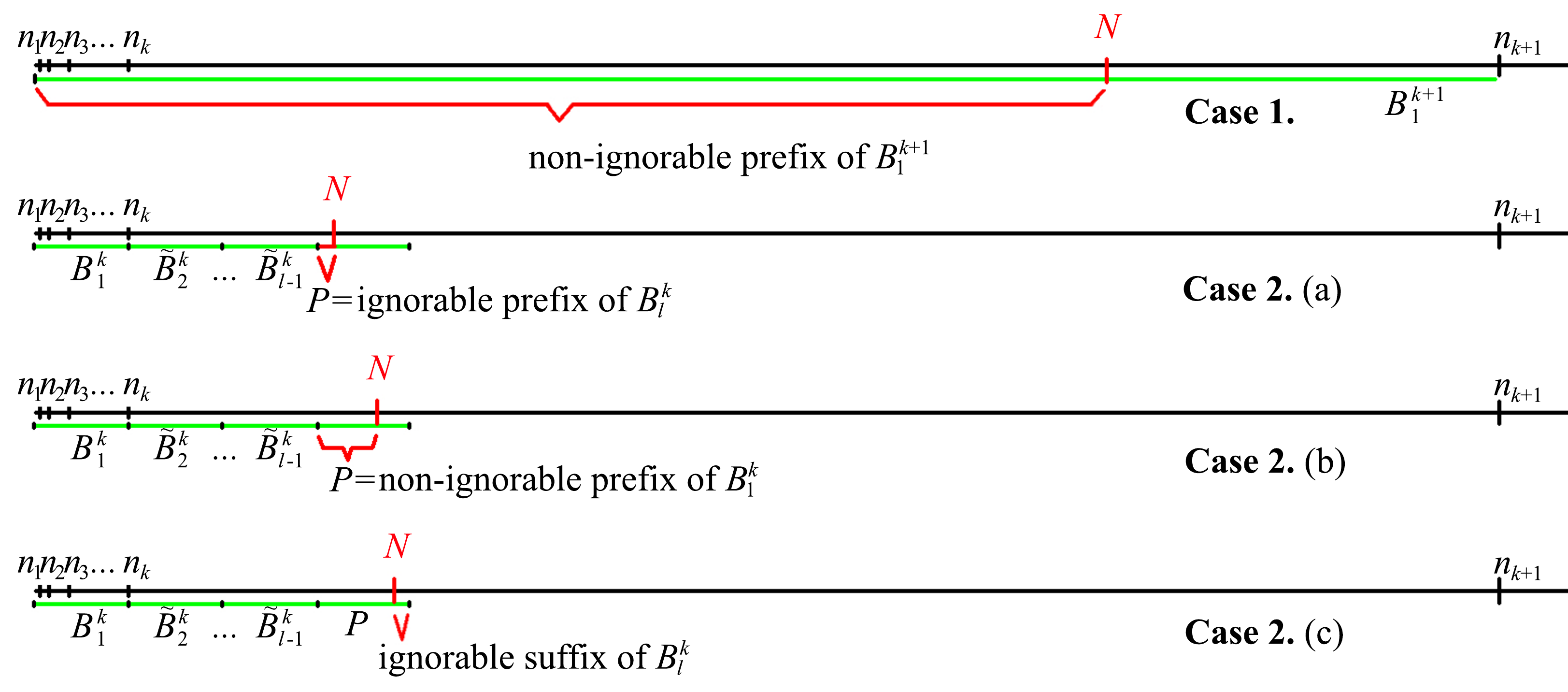}
    \caption{}
    \label{fig10}
\end{figure}

\smallskip\noindent{\bf Case 3.} $n_k2^{n_k\delta}<N<n_{k+1}\delta=n_k2^{n_k}\delta$. By ($\alpha$), we have $2^{-n_k\delta}<\delta$, and hence $n_k<N\delta$. Choose the largest $i\ge0$ such that $n_k2^i<N\delta\le2n_k2^i$. Notice that the above assumptions imply that $n_k2^{n_k\delta}\delta<N\delta<n_k2^{i+1}$, and hence 
$$
i>n_k\delta+\log_2\delta-1.
$$ 
Note that by ($\beta$), (2) and (3) hold for any ordering of $\B_i$ (a fact that will be useful later). 

Let now $N'=j_0n_k2^i$ be the largest multiple of $n_k2^i$ smaller than $N$ (note that $\lfloor\frac1\delta\rfloor<j_0\le\lceil\frac2\delta\rceil$). Then $N-N'< n_k2^i< N\delta$, so, by (4a), in order to show that $A$ is $8\eps$-good, it will be enough to show that the new prefix $A'=\kappa|_{[1,N']}$ is $4\eps$-good. 

The prefix $A'$ equals the concatenation $B^k_1\tilde B^k_2\dots\tilde B^k_{j_02^i}$. Let $s=\lceil\log_2(j_0)\rceil$. The prefix $\kappa|_{[1,N']}$ is contained in the (possibly longer) concatenation
$$
B^k_1\tilde B^k_2\dots \tilde B^k_{2^{s+i}}.
$$
By Lemma~\ref{zig}(3), the $(n_k-s-i)$th prefixes of all the blocks $B^k_l$, $l=1,2,\dots,2^{s+i}$ are the same, hence they are the same as the $(n_k-s-i)$th prefix of $B^k_1$.

Since $j_0\le\lceil\frac2\delta\rceil$, we have, by ($\alpha$),
$$
s=\lceil\log(j_0)\rceil<-\log\delta+2<n_k\delta.
$$
We remove from each block $B^k_l$ ($1\le l\le j_02^i$) the inner subblock of length $s$, $B^k_l|_{[n_k-s-i+1, n_k-i]}$, and denote the block obtained in this manner by $B'_l$. If we show that $B'_l$ is $2\eps$-good, this will imply, by (4a), that $B^k_l$ is $4\eps$-good (and so is $\tilde B^k_l$). Now, $B'_l$ consists of the $(n_k-s-i)$th prefix and $(n_k-i)$th suffix of $B^k_l$. By Lemma~\ref{zig}(3), for each $j=0,1,\dots j_0-1$, within the cluster of blocks $B_{j2^i+1},\tilde B_{j2^i+2},\dots,\tilde B_{{(j+1)}2^i}$, the $(n_k-i)$th suffixes form an ordering of $\B_i$.
In $A'$, these suffixes are mixed with the $(n_k-s-i)$th prefixes of the $B^k_l$'s and $\tilde B^k_l$'s. There are now two possibilities (see Figure~\ref{fig11}): 

(a) $n_k-s-i< n_k\delta$, or 

(b) $n_k-s-i\ge n_k\delta$.

\noindent In case (a), the $(n_k-s-i)$th prefixes of the blocks $B^k_l$ and $\tilde B^k_l$ are ignorable, so we can remove them from the blocks $B^k_l$ together with the inner subblocks $B^k_l|_{[n_k-s-i+1, n_k-i]}$. In this manner, by removing at most $2\delta|A'|$ symbols, 
$A'$ is reduced to a block $A''$ which is a concatenation of orderings of $\B_i$. Since (2) holds for $\B_i$, every such concatenation is $\eps$-good, and we conclude (by (1)) that $A''$ is $2\eps$-good. By (4a) (which is also valid with $2\delta$), $A'$ is $4\eps$-good, as required.

In case~(b), the $(n_k-s-i)$th prefix of each $B^k_l$ is $2\eps$-good, because it is equal to a non-ignorable prefix of $B^k_1$ (which is $2\eps$-good by Lemma~\ref{claim}). The mirrors of such prefixes are also $2\eps$-good, and thus we can use (3) to deduce that $A'$ is $4\eps$-good.
This ends the proof.
\end{proof}

\begin{figure}[H]
    \centering
    \includegraphics[width=\textwidth]{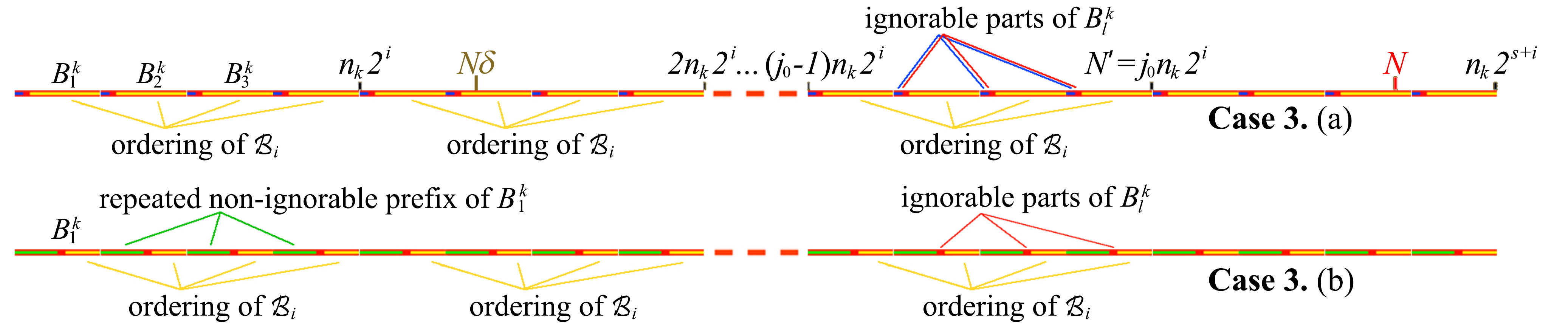}
    \caption{}
    \label{fig11}
\end{figure}

\subsection{Two special deterministic numbers.}\label{2det}
Given an increasing \sq\ of natural numbers $(n_k)_{k\ge1}$, let $\mathsf{FS}((n_k)_{k\ge1})$ denote the set of finite sums of $(n_k)_{k\ge1}$, that is,
$$
\mathsf{FS}((n_k)_{k\ge1})=\{n_{k_1}+n_{k_2}+\cdots+n_{k_i}, k_1<k_2<\cdots<k_i,i\in\N\}.
$$
Assume now that $(n_k)_{k\ge1}$ is the \sq\ defined in the preceding subsection (i.e., $n_1=2$, $n_{k+1}=n_k2^{n_k}$). Let $S=\{0\}\cup\mathsf{FS}((n_k)_{k\ge1})$ and let us write the elements of $S$ in the increasing order. Explicitly, we have
\begin{multline*}
S=\{s_0,s_1,s_2,\dots\}=\\
\{0,2,8,10,2048, 2050,2056,2058,2048\cdot2^{2048},2048\cdot2^{2048}+2,2048\cdot2^{2048}+8,\dots\}.
\end{multline*}
Observe that the density of $S$ is zero. 
Indeed, it is not hard to see that
$$
\bar d(S)=\limsup_{N\to\infty}\frac{|S\cap[0,N]|}{N+1}=\lim_{k\to\infty}\frac{|S\cap[0,N_k]|}{N_k+1},
$$
where $N_k=n_1+n_2+\cdots+n_k$. Note that $\frac{|S\cap[0,N_k]|}{N_k+1}=2^k/(1+n_1+n_2+\cdots+n_k)$, which obviously tends to zero. Thus, $\bar d(S)=0$.

Let $y$ be the number whose binary expansion matches the indicator function of $S$ (with the coordinate zero representing the integer part of $y$), i.e., 
$$
y=s_0.s_1s_2s_3\dots=1.010000010100000\dots.
$$
Since $S$ has density zero, $y$ is trivially deterministic. 

Let us remark here that generally, for real numbers $x$ and $y$, $\{xy\}$ need not equal $\{x\}\{y\}$. Since $y>1$, we cannot replace $y$ by its fractional part~$\{y\}$. For this reason, in what follows we must keep track of the binary dot and the integer part represented by the digit at the coordinate $0$ in the expansion of $y$ and numbers of the form $xy$. 

We also define $z=\frac43y$. By Corollary~\ref{affin}, $z$ is deterministic as well. 
 
\begin{lem}\label{1/2}
The block $01$ appears in the binary expansion of $z$ with frequency $\frac12$. 
\end{lem}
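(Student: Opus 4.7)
The plan is to analyze the binary expansion of $z = \tfrac{4}{3}y$ through the dyadic approximants $z^{[k]} := \tfrac{4}{3} y^{[k]}$, where $y^{[k]} := \sum_{s \in S,\ s \le N_k} 2^{-s}$ and $N_k = n_1 + n_2 + \cdots + n_k$. A brief computation shows that in lowest terms $z^{[k]} = a_k / (3 \cdot 2^{N_k - 2})$ for $k \ge 1$, with $a_k$ odd and coprime to $3$. Oddness follows because the largest element of $S_k$ is $N_k$, contributing $2^0 = 1$ to $2^{N_k} y^{[k]}$, while all other contributions are even. Coprimality with $3$ follows from the fact that every element of $S$ is even and $N_k$ is even, whence $a_k \equiv \sum_{s \in S_k} 1 \equiv 2^k \not\equiv 0 \pmod{3}$. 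Standard theory of binary expansions of rationals then yields that the fractional digits of $z^{[k]}$ past position $N_k - 2$ form a pure period-$2$ tail, i.e., either $\overline{01}$ or $\overline{10}$.

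Next, the bound $0 < z - z^{[k]} = \tfrac{4}{3}(y - y^{[k]}) < 2^{-n_{k+1} + 3}$ shows that $z$ is extremely close to $z^{[k]}$. Because the alternating tail of $z^{[k]}$ contains no two consecutive $1$s, adding the small positive correction $z - z^{[k]}$ to $z^{[k]}$ triggers only short carry cascades that die within at most two steps. Hence the digits of $z$ and $z^{[k]}$ agree at all fractional positions $n \le n_{k+1} - C$ for an absolute constant $C$. Combining with the previous step, the digits of $z$ on each interval $I_k := [N_k - 1,\ n_{k+1} - C]$ form a purely alternating pattern inherited from the tail of $z^{[k]}$.

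Call a position $n$ \emph{good} if $n \in \bigcup_{k \ge 1} I_k$, and \emph{bad} otherwise. Every bad position lies in one of the transition windows $[n_{k+1} - C,\ N_{k+1} - 1]$ of length $N_k + O(1)$. For $N$ with $n_K \le N < n_{K+1}$, the total number of bad positions in $[1, N]$ is at most $\sum_{k \le K}(N_{k-1} + O(1)) = O(N_{K-1}) = O(n_{K-1})$, while $N \ge n_K = n_{K-1} \cdot 2^{n_{K-1}}$. Thus the bad density is $O(1/2^{n_{K-1}})$ and tends to $0$ uniformly in $N$ as $K \to \infty$. Within each good interval $I_k$, the alternating pattern forces the block $01$ to appear at exactly every second position, contributing density $\tfrac{1}{2}$ on $I_k$; summing the contributions yields
\[
\lim_{N\to\infty}\frac{1}{N}\bigl|\{n \le N : (z_n, z_{n+1}) = (0, 1)\}\bigr| = \tfrac{1}{2},
\]
which is the assertion of the lemma.

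The main obstacle will be the careful carry analysis in the second step: verifying that a perturbation smaller than $2^{-n_{k+1}+3}$, when added to the alternating tail of $z^{[k]}$, cannot produce a carry cascade propagating far enough leftward to disturb the digit pattern on $I_k$. This depends crucially on the absence of two consecutive $1$s in the alternating tail of $z^{[k]}$, which was itself a consequence of the exact denominator computation in the first step.
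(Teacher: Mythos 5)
Your proof is correct, and it takes a genuinely different route from the paper's. The paper computes the expansion of $z$ by directly performing the infinite carry-addition $\sum_{i\ge 0}\tfrac43 2^{-s_i}$ of shifted copies of the regular pattern $1010\dots$, pairing rows and inductively invoking an ``Observation'' about the sum of a pattern-with-prefix and a shifted copy of itself; this bookkeeping locates non-regular blocks of length $n_k+\dots+n_1+1$ separated by regular patterns of length $n_{k+1}-n_k-\dots-n_1-1$. You instead exploit the algebraic structure: the truncation $y^{[k]}=\prod_{j\le k}(1+2^{-n_j})$ is a dyadic rational with odd numerator $\prod_{j\le k}(2^{n_j}+1)$, so $z^{[k]}=\tfrac43 y^{[k]}$ has reduced denominator $3\cdot 2^{N_k-2}$ and therefore a purely alternating tail from position $N_k-1$ on (your congruence checks for oddness and coprimality to $3$ are right, using that all $n_j$ are even); then the tail correction $z-z^{[k]}<2^{-n_{k+1}+O(1)}$ can only perturb digits near position $n_{k+1}$, since a single carry entering an alternating pattern dies within two steps. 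The density-zero estimate for the transition windows matches the paper's. Your version buys a cleaner and more conceptual argument — the entire carry analysis collapses to one two-step cascade, and the method visibly generalizes to $\tfrac pq y$ with period $\mathrm{ord}_q(2)$ — while the paper's hands-on approach doubles as a warm-up for the more elaborate carry bookkeeping it needs later in the proof of Theorem~\ref{cex}. The only nitpick is that the constant in your bound $z-z^{[k]}<2^{-n_{k+1}+3}$ should be checked (the crude estimate $\sum_{s\in S,\,s>N_k}2^{-s}\le 2^{-n_{k+1}+1}$ does give it), but any bound of the form $2^{-n_{k+1}+O(1)}$ suffices.
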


\begin{proof}\phantom{.}

\noindent{\bf Observation.} Let us call a finite (of length at least 2) or infinite \sq\ of alternating $0$'s and $1$'s (starting from either $0$ or $1$) a \emph{regular pattern}. Finite regular patterns are allowed to have even or odd length. By convention, any unknown, potentially non-regular finite pattern (block) will be appearing in our figures within a frame. Let $A$ be a block of length $l\ge1$ and consider the \sq\ $\eta\in\{0,1\}^{\N\cup\{0\}}$ starting at the coordinate $0$ with $A$ followed by an infinite regular pattern, e.g., $\eta=\boxed{\ \ A\ \ }10101010101\dots$. Let $n\ge l+4$ be even and let $\zeta$ be the \sq\ $\eta$ shifted to the right so that it starts at the coordinate $n$. The binary summation $\eta\pl\zeta$ (with the carry) is shown on Figure~\ref{f2}.
\begin{figure}[h]
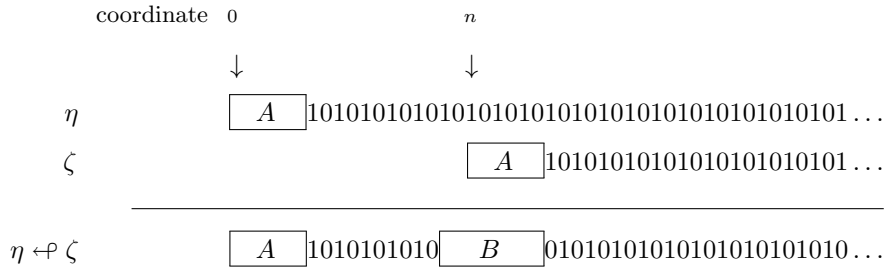

\begin{align*}
^\text{\small{coordinate}}\ \ ^0\phantom{00000000000000000}^n\phantom{^{+1}.0000000000000000000000000000}&\\
\downarrow\!\phantom{0000000000000000}\downarrow\phantom{..00000000000000000000000000000}&\\
\eta\hspace{2cm}\boxed{\ \ A\ \ \,}10101010101010101010101010101010101010101\dots&\\
\zeta\hspace{5.19cm}\boxed{\ \ A\ \ \,}10101010101010101010101\dots&\\
\rule{10cm}{0.4pt}&\\
\eta\pl\zeta\hspace{2cm}\boxed{\ \ A\ \ \,}1010101010\boxed{\, \ \ \ B\ \ \ \ }01010101010101010101010\dots&
\end{align*}
\caption{Summation with potentially non-regular blocks.}
\label{f2}
\end{figure}

\noindent In the sum $\eta\pl\zeta$ we have two potentially non-regular blocks: $A$ of length $l$ starting at the coordinate $0$ and ending at $l-1$, and $B$ of length $l+2$ starting at the coordinate $n-2$ and ending at $n+l-1$. The regular pattern between $A$ and $B$ has length $n-l-2\ge2$. To the right of $B$ there occurs an infinite regular pattern (mirrored with respect to those in $\eta$ and $\zeta$). 
\smallskip

We continue with the proof. The binary expansion of $\frac43$ is $1.010101\dots$, hence the \sq\ obtained by ignoring the binary dot is the infinite regular pattern $1010101\dots$ starting at the coordinate $0$, with $1$'s at the even positions. Since $y=\sum_{i\ge0}2^{-s_i}$, we have $z=\sum_{i\ge0}\tfrac432^{-s_i}$, that is, the \sq\ representing the binary expansion of $z$ (with the binary dot ignored) can be obtained by summing (with the ``carry'') countably many copies of $10101010\dots$ shifted by $s_0, s_1, s_2,$ etc.\ positions to the right. 
\begin{figure}[H]
\begin{align*}
^{0}\ \ \, ^{n_1}\phantom{010..}^{n_2}\phantom{.....0101010101010101010101010101010101000}^{n_3}\phantom{.10110101010000000\dots}&\\
\downarrow\ \ \,\downarrow\phantom{0000}\downarrow\ \phantom{\downarrow1010101010101010101010101010101010100.}\downarrow\ \phantom{\downarrow.0101\downarrow\ \downarrow}\phantom{0101010.\dots}&\\
1.01010101010101010101010101010101010101010101010101010101010101010101\dots&\\
1010101010101010101010101010101010101010101010101010101010101010101\dots&\\
1010101010101010101010101010101010101010101010101010101010101\dots&\\
10101010101010101010101010101010101010101010101010101010101\dots&\\
1010101010101010101\dots&\\
10101010101010101\dots&\\
10101010101\dots&\\
101010101\dots&
\end{align*}
\caption{The summation representing $\frac43y$.}
\label{f3}
\end{figure}
\noindent Figure~\ref{f3} shows the \sq s to be summed up in order to obtain the binary expansion of $z$. The coordinate $n_3$ is intentionally shown much smaller than it is in reality just to make it fit on the page. The sum of the first two rows is $1.101010101\dots$ with the first symbol $1$ being an irregular block of lenght $1$, so we will write $\boxed{\!\!1\!\!}.101010101\dots$. By adding the rows pairwise, the summation on Figure~\ref{f3} reduces to:
\begin{figure}[H]
\begin{align*}
^{0}\ \,\ \phantom{^{n_1}}\phantom{010..}^{n_2}\phantom{.....0101010101010101010101010101010101000}^{n_3}\phantom{.101101010100000a\dots}&\\
\downarrow\ \,\ \phantom{\downarrow}\phantom{0101}\downarrow\ \phantom{\downarrow1010101010101010101010101010101010100.}\downarrow\ \phantom{\downarrow.0101\downarrow\ \downarrow}\phantom{01010a.\dots}&\\
\boxed{\!\!1\!\!}.1010101010101010101010101010101010101010101010101010101010101010101\dots&\\
\boxed{\!\!1\!\!}10101010101010101010101010101010101010101010101010101010101\dots&\\
\boxed{\!\!1\!\!}10101010101010101\dots&\\
\boxed{\!\!1\!\!}101010101\dots&\\
\end{align*}
\caption{}
\label{f01}
\end{figure}
\noindent Now, in the summation of the first two rows we can refer to our Observation with the parameters $l=1$ and $n=n_2$. According to our Observation, we can predict that the sum of these rows should have two potentially non-regular blocks of lengths $l=1$ and $l+2=3$ (which we can write as $n_1+1$). The regular pattern between these blocks should have length $n-l-2=n_2-1-2=5$ (which we can write as $n_2-n_1-1$). The last potentially non-regular block should end at the position \mbox{$n+l-1=n_2=8$}. Indeed, the sum of these rows equals $\boxed{\!\!1\!\!}.10101\boxed{\!\!100\!\!}010101010101010\dots$, which complies with the predictions based on the Observation. 
Note that the regular pattern between the non-regular blocks does not change when the remaining rows are added. The summation on Figure~\ref{f3} now reduces to:
\begin{figure}[H]
\begin{align*}
^{0}\ \ \phantom{^{n_1}}\phantom{010..0.i}\phantom{.....0101010101010101010101010101010101000}^{n_3}\phantom{.101101010100000\dots}&\\
\downarrow\ \ \phantom{\downarrow}\phantom{0101\ i}\phantom{\downarrow}\ \phantom{\downarrow1010101010101010101010101010101010100.}\downarrow\ \phantom{\downarrow.0101\downarrow\ \downarrow}\phantom{01010.\dots}&\\
\boxed{\!\!1\!\!}.10101\boxed{\!\!100\!\!}0101010101010101010101010101010101010101010101010101010101\dots&\\
\boxed{\!\!1\!\!}10101\boxed{\!\!100\!\!}01010101\dots&
\end{align*}
\caption{}
\label{f02}
\end{figure}
\noindent We will treat the non-regular blocks in the second row as one block of length $n_2+1=9$. According to the Observation with the new parameters $l=n_2+1$ and $n=n_3$, we can predict that the sum of these rows should have a third non-regular block of length $n_2+1+2=n_2+n_1+1=11$ preceded by a regular pattern of length $n_3-n_2-1-2=n_3-n_2-n_1-1$.  The last non-regular block should end at the coordinate $n_3+n_2+1-1=n_3+n_2$. Indeed, the sum equals
\begin{figure}[H]
$$
\boxed{\!\!1\!\!}.10101\boxed{\!\!100\!\!}010101010101010101010101010101010101010\boxed{\!\!11100000001\!\!}10101010\dots,
$$
\caption{}
\label{f03}
\end{figure}
\noindent which complies with the predictions based on the Observation. Again, the regular patterns between the non-regular blocks do not change when the remaining rows are added. 
\smallskip

Using inductively the Observation, we can see that the infinite sum (representing the binary expansion of $z=\frac43y$) has non-regular blocks of lengths $n_k+n_{k-1}+\cdots+n_2+n_1+1$ preceded by regular patterns of lengths $n_{k+1}-n_k-n_{k-1}-\cdots-n_2-n_1-1$. Since the numbers $n_k$ are defined by $n_1=2$, $n_{k+1}=n_k2^{n_k}$, we have 
$$
\lim_{k\to\infty}\frac1{n_{k+1}}{(n_k+n_{k-1}+\dots+n_2+n_1+1)}=0,
$$ 
and hence the non-regular blocks occupy a set of density~$0$. The regular patterns in the binary expansion of $z$ have increasing lengths and occupy a subset of density $1$ and thus the block $01$ appears in the expansion of $z$ with frequency $\frac12$, as claimed.
\end{proof}

\subsection{Normal number times a deterministic number can be deterministic}\label{sxyz}
Let $x\in[0,1)$ be the number whose binary expansion is the \sq\ $\kappa$ (see Definition~\ref{kapa}) enumerated from $1$ to $\infty$ (i.e., with the binary dot falling to the left of the first digit 0).

The next theorem shows that the normal number $x$ and the deterministic numbers $y$ and $z$ constructed in Subsection~\ref{2det} satisfy the assertion of Theorem~\ref{xyd}.  
 
\begin{thm}\label{cex}Then numbers $xy$ and $xz$ are deterministic.  
\end{thm}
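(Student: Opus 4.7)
First, since $xz = x \cdot \tfrac{4}{3} y = \tfrac{4}{3}(xy)$, Proposition~\ref{xq}(2) reduces the task to showing $xy \in \mathcal{D}(2)$; the deterministic property is preserved by the nonzero rational multiplier $\tfrac{4}{3}$. I will therefore concentrate on $xy$.

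The crucial algebraic observation is the telescoping identity
\[
y \;=\; \prod_{k=1}^{\infty}\bigl(1 + 2^{-n_k}\bigr),
\]
which follows by expanding the product: the exponents that appear are exactly the finite sums $\sum_{k \in F} n_k$ over finite $F \subseteq \mathbb{N}$ (with $F = \emptyset$ giving~$1$), and these, together with~$0$, enumerate $S$ without repetition. Set $y_k = \prod_{j=1}^{k}(1 + 2^{-n_j})$. Because $n_{k+1} = n_k 2^{n_k}$ grows at a tower-like rate, one has $|y - y_k| = O(2^{-n_{k+1}})$, and consequently $xy$ and $xy_k$ agree in the first $n_{k+1} - O(1)$ binary digits. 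Hence it is enough to exhibit, uniformly in $k$, a low-complexity description of the initial segment of length $n_{k+1}$ of the binary expansion of $xy_k$.

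Now $xy_k$ is obtained by applying to $x$ the commuting operators $T_j \colon u \mapsto u + 2^{-n_j} u$ for $j = 1, \ldots, k$. The heart of the argument is an inductive claim stating that after applying $T_1, T_2, \ldots, T_k$, the first $n_{k+1}$ digits of the binary expansion decompose into $2^{n_k}$ consecutive sub-blocks of length $n_k$, each of which agrees with either $1^{n_k}$ or $0^{n_k}$ outside a set of $O(k)$ exceptional positions whose locations are explicitly determined by the Gray-code ordering of $\B_{n_j}$, $j \leq k$. The inductive step exploits Lemma~\ref{zig}(2): applying $T_j$ pairs up adjacent length-$n_j$ sub-blocks $B$ and $\tilde{B}'$, where $B$ and $B'$ are consecutive in the Gray-code order and hence differ at a single position. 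The bitwise sum of such a pair is $1^{n_j}$ or $0^{n_j}$, possibly modified at that one coordinate; and the recursive structure of Lemma~\ref{zig}(3) ensures that these exceptional positions align coherently as $j$ ranges over $1, \ldots, k$. Consequently only $\operatorname{poly}(n_k)$ distinct blocks of length $n_k$ occur in this initial segment, so for any $\varepsilon > 0$ and all sufficiently large $m$ (with $n_k \leq m \ll n_{k+1}$ for suitable $k$), a sliding window of length $m$ encounters only subexponentially many (in $m$) distinct blocks outside a set of positions of density $< \varepsilon$. By Remark~\ref{inco} this yields $C_{\omega_2(xy)}(\varepsilon, m) < 2^{\varepsilon m}$, and Theorem~\ref{detdet} gives $xy \in \mathcal{D}(2)$, hence also $xz \in \mathcal{D}(2)$.

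The principal technical obstacle will be the careful control of carry propagation in the iterated binary additions. A carry can in principle travel arbitrarily far to the left, and one must verify that the Gray-code cancellations survive the bookkeeping. The saving facts are that (i) at most positions in each pair the bitwise addition is of type $(0,1)$ or $(1,0)$, which transmit carries but do not originate them, (ii) the only carry sources (positions of type $(1,1)$) lie at the single discrepancy coordinates imposed by the Gray code, so they form a sparse set whose contribution can be absorbed into the exceptional positions described above, and (iii) carries entering the relevant prefix of length $n_{k+1}$ from digits further to the right are controlled by the estimate $|y - y_k| = O(2^{-n_{k+1}})$ and are negligible. These three observations together localize the carry cascades to a set of vanishing density and justify the structural description on which the deterministic conclusion rests.
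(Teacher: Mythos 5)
Your reduction of $xz$ to $xy$ via Corollary~\ref{affin}, the telescoping identity $y=\prod_k(1+2^{-n_k})$, and the idea that the Gray-code mirror structure of $\kappa$ forces the shifted sums to collapse into long constant runs are all on target, and this is indeed the engine of the paper's argument (the paper works with the additive form $xy=\sum_i x2^{-s_i}$ and pairs rows shifted by $n_k$, which is the same computation as applying your operator $T_k$ last). However, your inductive invariant is false as stated. You claim each length-$n_k$ sub-block ``agrees with either $1^{n_k}$ or $0^{n_k}$ outside a set of $O(k)$ exceptional positions.'' A carry entering a block of the form $1\cdots1\,0\,1\cdots1$ from the right flips the entire terminal run of $1$'s to $0$'s, producing a block such as $1\cdots1\,1\,0\cdots0$ whose two constant runs may have comparable lengths; such a block is at Hamming distance about $n_k/2$ from \emph{both} $1^{n_k}$ and $0^{n_k}$. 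The invariant that actually propagates through the carries is a bound on the number of \emph{switches} (occurrences of $01$ or $10$), i.e., on the number of maximal constant runs: this is exactly what the paper's Observations 1--3 track, yielding at most $n_1+\cdots+n_{K-1}+3\cdot4^{K-1}$ switches per length-$n_K$ block, a quantity whose ratio to $n_K$ tends to $0$. Your Hamming-distance formulation would have to be replaced by this run-counting one before the induction can close.

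The second gap is in the passage to $\eps$-complexity. Definition~\ref{complexity} requires controlling \emph{all} windows of a fixed length $m$ in the infinite sequence $\omega_2(xy)$, outside a set of positions of upper density at most $\eps$; controlling only the initial segment of length $n_{k+1}$ of $xy_k$ (with $m\approx n_k$) leaves the uncontrolled tail $[n_{k+1},\infty)$, which has upper density $1$. One must run the structural analysis at two separate scales: a fixed window length $m\approx n_k$ and an outer prefix length $n_{K+1}$ with $K\to\infty$, using the self-similarity of $\kappa$ to show the scale-$n_k$ structure persists in every later stretch. The cleanest repair is to prove the single scale-free statement that the density of switches in $\omega_2(xy)$ is zero (as the paper does); from this, either one applies the paper's $2$-to-$1$ entropy-preserving differencing map $(a_n)\mapsto(a_n+a_{n+1}\bmod 2)$, or one notes directly that for any fixed $m$ all but a density-zero set of length-$m$ windows are then constant, so $C_{\omega}(\eps,m)\le 2$ and Theorem~\ref{detdet} applies. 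As written, your plan does not establish the global, all-scales control that either route requires.
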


\begin{proof}
It suffices to show that $xy$ is deterministic (then $xz = \frac43xy$ is also deterministic by Corollary~\ref{affin}).
\smallskip

Before embarking on the proof, we will make a few additional observations concerning the binary addition $\pl$ with the carry. By a \emph{switch} we will mean a block of the form $01$ or $10$.
\smallskip

\noindent{\bf Observation 1.} Let $\eta,\zeta\in\{0,1\}^\N$ be binary \sq s. Suppose that for some interval $[a,b]\subset\N$ the blocks
$B=\eta|_{[a,b]}$ and $C=\zeta|_{[a,b]}$ are ``almost mirrors'' of each other, i.e., $C$ differs from the mirror $\tilde B$ of $B$ at a single coordinate $a\le l\le b$. Then the block $D=(\eta\pl\zeta)|_{[a,b]}$ has at most two switches. We skip an elementary verification. This is illustrated by Figure~\ref{f4}: 
\begin{figure}[h]
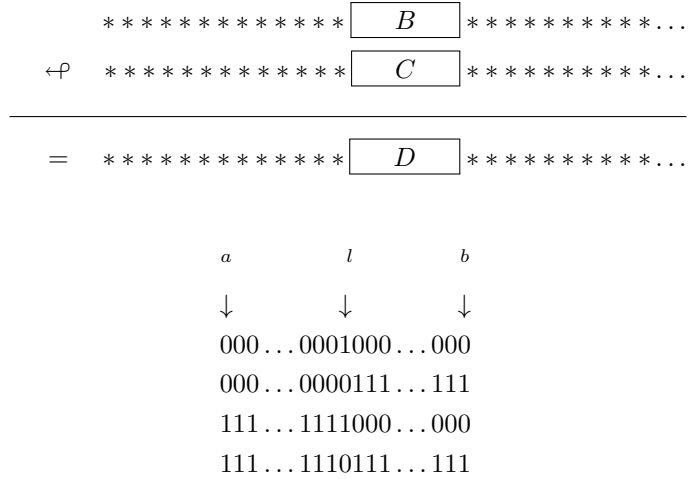

\begin{align*}
\phantom{+\ \ \ }*************\boxed{\ \ \ \ B \ \ \ \ }**********\dots&\\
\pl\ \ \ *************\,\boxed{\ \ \ \ C \ \ \ \ }**********\dots&\\
\rule{9cm}{0.4pt}&\\
=\ \ \ *************\,\boxed{\ \ \ \ D \ \ \ \ }**********\dots&
\end{align*}

\bigskip
\begin{align*}
\phantom{^\text{coordinate relative to $D$}} \ \ \ \ ^a\phantom{.00\dots000}^l\phantom{000\dots00}^b&\phantom{00000000000000000000}\\
\downarrow\phantom{\,.0\dots00}\downarrow\phantom{\,.00\dots0}\downarrow&\phantom{00000000000000000000}
\\
000\dots0001000\dots000&\phantom{00000000000000000000}\\
000\dots0000111\dots111&\phantom{00000000000000000000}\\
111\dots1111000\dots000&\phantom{00000000000000000000}\\
111\dots1110111\dots111&\phantom{00000000000000000000}
\end{align*}
\caption{Top diagram: Addition $\pl$ (with the carry) of ``almost mirrored'' blocks. The stars represent unspecified symbols. Bottom diagram: The block $D=(\eta\pl\zeta)|_{[a,b]}$ has one of the four presented forms, each with at most two switches, for example the first block has two switches, one at the coordinates $(l-1,l)$ and another, at the coordinates $(l,l+1)$.}
\label{f4}
\end{figure}

\smallskip\noindent{\bf Observation 2.} Let $\eta,\zeta\in\{0,1\}^\N$ be binary \sq s and let $[a,b]\subset\N$.
Suppose that each of the blocks $\eta|_{[a,b]}$ and $\zeta|_{[a,b]}$ admits at most $m\ge1$ switches. Then in $(\eta\pl\zeta)|_{[a,b]}$ there may occur at most $4m+1$ switches: every switch in $\eta|_{[a,b]}$ or $\zeta|_{[a,b]}$ may produce at most two switches in $(\eta\pl\zeta)|_{[a,b]}$, and an additional switch may occur in $(\eta\pl\zeta)|_{[a,b]}$ at the terminal coordinates $(b-1,b)$ due to the (unknown) symbols appearing in $\eta$ and $\zeta$ to the right of $b$. An example of this phenomenon is demonstrated by Figure~\ref{f5}.

\begin{figure}[h]
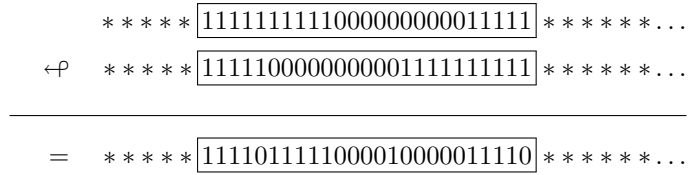

\begin{align*}
\phantom{+\ \ \ }*****\boxed{\!1111111111000000000011111\!}******\dots&\\
\pl\ \ \ *****\,\boxed{\!1111100000000001111111111\!}******\dots&\\
\rule{9cm}{0.4pt}&\\
=\ \ \ *****\,\boxed{\!1111011111000010000011110\!}******\dots&
\end{align*}
\caption{Each of the blocks in top two rows has $m=2$ switches, the bottom block has $7\le4m+1$ switches.}
\label{f5}
\end{figure}

\smallskip\noindent{\bf Observation 3.} Suppose that we perform the addition $\pl$ of $2^k$ binary \sq s and we know that within some interval $[a,b]$ each of these \sq s has at most two switches. Then the number of switches within $[a,b]$ in the sum is at most $3\cdot 4^k$. 
This is best seen by applying Observation 2 inductively on $k$. The iterations of the function $n\mapsto 4n+1$ starting with $n_0=2$ grow slower than $3\cdot4^k$ (where $k$ is the number of iterates).

\begin{rem}The bound $3\cdot4^k$ is largely overestimated. It does not take into account that eventually many of the switches will overlap and cancel out. In fact, the number of switches grows linearly with $k$. But proving a tighter estimate requires tedious work while the crude estimate $3\cdot4^k$ is perfectly sufficient for us.
\end{rem}

\smallskip
We continue with the proof. We have $xy = \sum_{i\ge0}x2^{-s_i}$, hence the \sq\  representing the binary expansion of $xy$ is obtained by summing (using $\pl$) countably many copies of the \sq\ $\kappa$ shifted by $s_0, s_1, s_2,$ etc.\ positions to the right. This is illustrated by Figure~\ref{f6}. 

\begin{figure}[h]
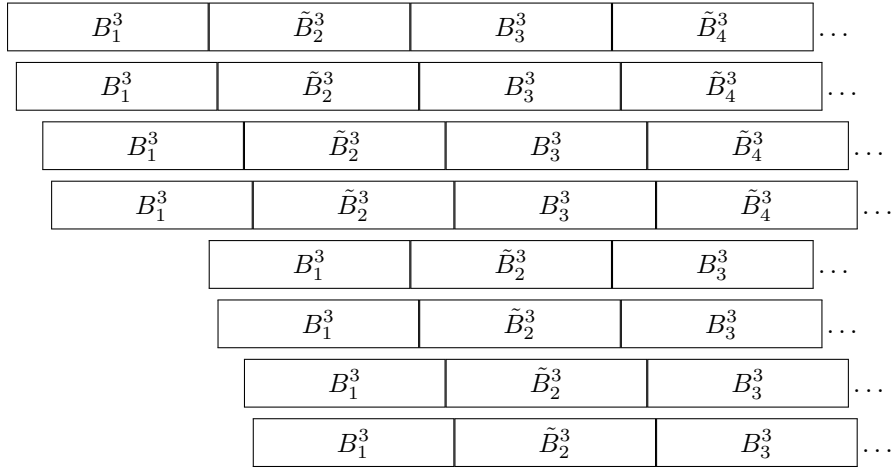

\begin{align*}
\boxed{\hspace{1cm}\vphantom{\tilde B}B^3_1\hspace{1cm}}\boxed{\hspace{1cm}\tilde B_2^3\hspace{1cm}}\boxed{\hspace{1cm}\vphantom{\tilde B}B_3^3\hspace{1cm}}\boxed{\hspace{1cm}\tilde B_4^3\hspace{1cm}}\dots\ \ \ \ \ &\\
\boxed{\hspace{1cm}\vphantom{\tilde B}B^3_1\hspace{1cm}}\boxed{\hspace{1cm}\tilde B^3_2\hspace{1cm}}\boxed{\hspace{1cm}\vphantom{\tilde B}B^3_3\hspace{1cm}}\boxed{\hspace{1cm}\tilde B^3_4\hspace{1cm}}\dots\ \ \ \ &\\
\boxed{\hspace{1cm}\vphantom{\tilde B}B^3_1\hspace{1cm}}\boxed{\hspace{1cm}\tilde B^3_2\hspace{1cm}}\boxed{\hspace{1cm}\vphantom{\tilde B}B^3_3\hspace{1cm}}\boxed{\hspace{1cm}\tilde B^3_4\hspace{1cm}}\dots\ &\\
\boxed{\hspace{1cm}\vphantom{\tilde B}B^3_1\hspace{1cm}}\boxed{\hspace{1cm}\tilde B^3_2\hspace{1cm}}\boxed{\hspace{1cm}\vphantom{\tilde B}B^3_3\hspace{1cm}}\boxed{\hspace{1cm}\tilde B^3_4\hspace{1cm}}\dots&\\
\boxed{\hspace{1cm}\vphantom{\tilde B}B^3_1\hspace{1cm}}\boxed{\hspace{1cm}\tilde B^3_2\hspace{1cm}}\boxed{\hspace{1cm}\vphantom{\tilde B}B^3_3\hspace{1cm}}\dots\ \ \ \ \ &\\
\boxed{\hspace{1cm}\vphantom{\tilde B}B^3_1\hspace{1cm}}\boxed{\hspace{1cm}\tilde B^3_2\hspace{1cm}}\boxed{\hspace{1cm}\vphantom{\tilde B}B^3_3\hspace{1cm}}\dots\ \ \ \ &\\
\boxed{\hspace{1cm}\vphantom{\tilde B}B^3_1\hspace{1cm}}\boxed{\hspace{1cm}\tilde B^3_2\hspace{1cm}}\boxed{\hspace{1cm}\vphantom{\tilde B}B^3_3\hspace{1cm}}\dots\ &\\
\boxed{\hspace{1cm}\vphantom{\tilde B}B^3_1\hspace{1cm}}\boxed{\hspace{1cm}\tilde B^3_2\hspace{1cm}}\boxed{\hspace{1cm}\vphantom{\tilde B}B^3_3\hspace{1cm}}\dots&
\end{align*}
\caption{The summation producing $xy$ (in the binary expansion). The figure is similar to Figure~\ref{f3}, except that instead of shifting the regular pattern representing $\frac43$ we are shifting the \sq\ $\kappa$  shown in Figure~\ref{f1}. Also, we draw the figure in a much smaller horizontal scale. }
\label{f6}
\end{figure}

This time we are not using induction on $k$; the argument works independently for each $k$. We will present it (and draw our figures) for $k=3$. The blocks labeled on Figure~\ref{f6} by $B^3_1,\ B^3_2,\ B^3_3$
have length $n_3=8\cdot 2^8=2048$ (as before, the proportions on the figure are not to scale). The figure is truncated after $\tilde B^3_4$ but the pattern runs till $\tilde B^3_{2^{2048}}$. 
Note that the row~$5$ (counting from the top) is the result of shifting the row~$1$ by exactly~$n_3$ positions to the right. The same applies to the rows~$6$ and~$2$, then~$3$ and~$7$, etc. So, let us rearrange the rows in the following order $1, 5, 2, 6, 3, 7, 4, 8$, as shown in Figure~\ref{fg7}:

\begin{figure}[h]
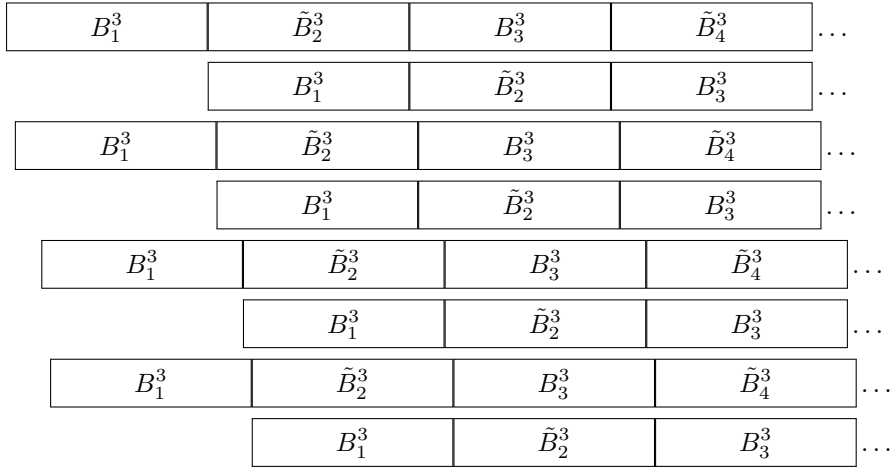

\begin{align*}
\boxed{\hspace{1cm}\vphantom{\tilde B}B^3_1\hspace{1cm}}\boxed{\hspace{1cm}\tilde B^3_2\hspace{1cm}}\boxed{\hspace{1cm}\vphantom{\tilde B}B^3_3\hspace{1cm}}\boxed{\hspace{1cm}\tilde B^3_4\hspace{1cm}}\dots\ \ \ \ \ &\\
\boxed{\hspace{1cm}\vphantom{\tilde B}B^3_1\hspace{1cm}}\boxed{\hspace{1cm}\tilde B^3_2\hspace{1cm}}\boxed{\hspace{1cm}\vphantom{\tilde B}B^3_3\hspace{1cm}}\dots\ \ \ \ \ &\\
\boxed{\hspace{1cm}\vphantom{\tilde B}B^3_1\hspace{1cm}}\boxed{\hspace{1cm}\tilde B^3_2\hspace{1cm}}\boxed{\hspace{1cm}\vphantom{\tilde B}B^3_3\hspace{1cm}}\boxed{\hspace{1cm}\tilde B^3_4\hspace{1cm}}\dots\ \ \ \ &\\
\boxed{\hspace{1cm}\vphantom{\tilde B}B^3_1\hspace{1cm}}\boxed{\hspace{1cm}\tilde B^3_2\hspace{1cm}}\boxed{\hspace{1cm}\vphantom{\tilde B}B^3_3\hspace{1cm}}\dots\ \ \ \ &\\
\boxed{\hspace{1cm}\vphantom{\tilde B}B^3_1\hspace{1cm}}\boxed{\hspace{1cm}\tilde B^3_2\hspace{1cm}}\boxed{\hspace{1cm}\vphantom{\tilde B}B^3_3\hspace{1cm}}\boxed{\hspace{1cm}\tilde B^3_4\hspace{1cm}}\dots\ &\\
\boxed{\hspace{1cm}\vphantom{\tilde B}B^3_1\hspace{1cm}}\boxed{\hspace{1cm}\tilde B^3_2\hspace{1cm}}\boxed{\hspace{1cm}\vphantom{\tilde B}B^3_3\hspace{1cm}}\dots\ &\\
\boxed{\hspace{1cm}\vphantom{\tilde B}B^3_1\hspace{1cm}}\boxed{\hspace{1cm}\tilde B^3_2\hspace{1cm}}\boxed{\hspace{1cm}\vphantom{\tilde B}B^3_3\hspace{1cm}}\boxed{\hspace{1cm}\tilde B^3_4\hspace{1cm}}\dots&\\
\boxed{\hspace{1cm}\vphantom{\tilde B}B^3_1\hspace{1cm}}\boxed{\hspace{1cm}\tilde B^3_2\hspace{1cm}}\boxed{\hspace{1cm}\vphantom{\tilde B}B^3_3\hspace{1cm}}\dots&
\end{align*}
\caption{The summation producing $xy$ (in the binary expansion) after rearranging the order.}
\label{fg7}
\end{figure}

Now, let us add the rows pairwise. The result is shown in Figure~\ref{fg8}.
\begin{figure}[h]
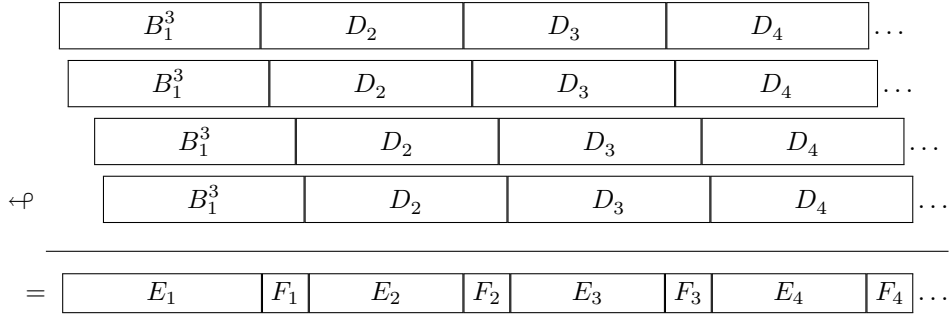

\begin{align*}
\boxed{\hspace{1cm}B^3_1\hspace{1cm}}\boxed{\hspace{1cm}D^{\phantom 3}_2\hspace{1cm}}\boxed{\hspace{1cm}D^{\phantom 3}_3\hspace{1cm}}\boxed{\hspace{1cm}D^{\phantom 3}_4\hspace{1cm}}\dots\ \ \ \ \ &\\
\boxed{\hspace{1cm}B^3_1\hspace{1cm}}\boxed{\hspace{1cm}D^{\phantom 3}_2\hspace{1cm}}\boxed{\hspace{1cm}D^{\phantom 3}_3\hspace{1cm}}\boxed{\hspace{1cm}D^{\phantom 3}_4\hspace{1cm}}\dots\ \ \ \ &\\
\boxed{\hspace{1cm}B^3_1\hspace{1cm}}\boxed{\hspace{1cm}D^{\phantom 3}_2\hspace{1cm}}\boxed{\hspace{1cm}D^{\phantom 3}_3\hspace{1cm}}\boxed{\hspace{1cm}D^{\phantom 3}_4\hspace{1cm}}\dots\ &\\
\pl\ \ \ \ \ \ \ \boxed{\hspace{1cm}B^3_1\hspace{1cm}}\boxed{\hspace{1cm}D^{\phantom 3}_2\hspace{1cm}}\boxed{\hspace{1cm}D^{\phantom 3}_3\hspace{1cm}}\boxed{\hspace{1cm}D^{\phantom 3}_4\hspace{1cm}}\dots&\\
\rule{12cm}{0.4pt}&\\
=\ \boxed{\hspace{1cm}E_1\hspace{1cm}}\boxed{F_1}\boxed{\hspace{0.7cm}E_2\hspace{0.7cm}}\boxed{F_2}\boxed{\hspace{0.7cm}E_3\hspace{0.7cm}}\boxed{F_3}\boxed{\hspace{0.7cm}E_4\hspace{0.7cm}}\boxed{F_4}\dots&
\end{align*}
\caption{The summation producing $xy$ (in the binary expansion) after rearranging the order and summing pairwise.}
\label{fg8}
\end{figure}

Since for each $i=1,2,\dots,2^{2048}$ the blocks $B^3_i$ and $B^3_{i+1}$ differ at one place, by Observation~2, each of the blocks $D_i$ has at most two switches. The blocks $F_1,F_2,\dots,F_{2^{2048}}$ have lenghs $n_1+n_2=10$ while the blocks $E_2,E_3,\dots,E_{2^{2048}}$ have length $n_3-n_2-n_1=2038$ (we recommend consulting also Figure~\ref{f3}). By Observation~3, each of the latter blocks admits at most $3\cdot4^2=48$ switches. Jointly, in each concatenation of the form $F_iE_{i+1}$ we have at most $n_1+n_2+3\cdot4^2$ switches. In general, if we divide the initial block of length $n_{k+1}=n_k2^{n_k}$ of the \sq\ associated with $xy$ into
blocks of length $n_k$ then in all but the first one of them there will be at most $n_1+n_2+\cdots+n_{k-1}+3\cdot4^{k-1}$ switches. Since the ratio $\frac1{n_k}(n_1+n_2+\cdots+n_{k-1}+3\cdot4^{k-1})$ tends to zero, we conclude that the frequency of switches (i.e., of the blocks $01$ and $10$) in the binary expansion of $xy$ is zero.

Now observe that the frequency zero of switches in the binary expansion $\omega$ of $xy$ implies that $\omega$ is a deterministic \sq. Indeed, consider the endomorphism $\pi:\{0,1\}^\N\to \{0,1\}^\N$ given by $(a_n)_{n\in\N}\mapsto (a_n+a_{n+1}\!\!\mod2)_{n\in\N}$ (here we apply the coordinatewise addition mod 2, without the carry). Note that the image $\pi(\omega)$ has the symbol $1$ at a  coordinate $n$ if and only if $\omega$ has a switch at the coordinates $(n,n+1)$. This implies that $\pi(\omega)$ has density zero of symbols $1$, and thus it is trivially deterministic. The map $\phi$ is $2$-$1$, so it preserves entropy (see, e.g.,~\cite[Theorem~4.1.15]{Do}), and hence it sends non-deterministic \sq s to non-deterministic \sq s. Thus, the \sq\ $\omega$ (and hence the number $xy$) is deterministic.
\end{proof}

\subsection{Products of deterministic numbers need not be deterministic} \label{xyn}
In this subsection we show that the product of two deterministic numbers need not be deterministic (it can even be normal). We also show that the square of a deterministic number need not be deterministic. These facts are consequences of the following claim, whose proof will be given after the derivation of the immediate corollaries:

\begin{prop}\label{1/y}
Let $y$ be the deterministic number constructed in Subsection~\ref{2det}. Then $\frac1y$ is deterministic.
\end{prop}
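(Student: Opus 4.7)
The plan is to show that the binary alias $\omega_2(1/y)$ has frequency zero of the digit $1$; the conclusion $1/y\in\mathcal{D}(2)$ then follows from Proposition~\ref{triv}, applied to the density-one set of coordinates carrying a $0$. First I would rewrite $y$ as the infinite product $y=\prod_{k\ge 1}(1+2^{-n_k})$: expanding gives $\sum_{I\subset\mathbb{N},\,|I|<\infty} 2^{-\sum_{j\in I}n_j}$, and the super-exponential growth $n_{k+1}=n_k\,2^{n_k}$ (which forces $\sum_{l<k}n_l<n_k$) makes $I\mapsto \sum_{j\in I}n_j$ injective onto $S$, recovering the definition of $y$. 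Set $y_k=\prod_{j\le k}(1+2^{-n_j})$, $N_k=n_1+\cdots+n_k$, and $D_k=\prod_{j\le k}(2^{n_j}+1)$. The identity $D_k=2^{N_k}y_k$ yields $1/y_k=2^{N_k}/D_k$. Because $n_l/n_j$ is an even power of $2$ for $l>j$, we have $2^{n_l}+1\equiv 2\pmod{2^{n_j}+1}$, so the factors of $D_k$ are pairwise coprime; each factor $2^{n_j}+1$ has multiplicative order $2n_j$ for $2$; and $n_j\mid n_k$ for $j\le k$. Consequently $\omega^{(k)}:=\omega_2(1/y_k)$ is purely periodic with minimal period $2n_k$.

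\medskip

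The crux of the proof is the structural claim that in each period of $\omega^{(k)}$ the second half is identically zero:
\[
\omega^{(k)}|_{[n_k+1,\,2n_k]} \;=\; 0^{n_k}, \qquad k\ge 1,
\]
equivalently $\{2^{n_k}/y_k\}<2^{-n_k}$. I would establish this via the Chinese Remainder Theorem applied to $a_k:=2^{n_k+N_k}\bmod D_k$. The two residue computations required are $n_k+N_k\equiv N_j\pmod{2n_j}$ for $j<k$ (which uses $2n_j\mid n_l$ for $l>j$ together with $N_j<2n_j$), and $n_k+N_k\equiv N_{k-1}\pmod{2n_k}$. The candidate $a_k=2^{N_{k-1}}$ then satisfies all these congruences (its residue modulo $2^{n_j}+1$ equals $2^{N_j}$ for $j<k$ and $2^{N_{k-1}}$ for $j=k$), and because $2^{N_{k-1}}<2^{n_k}<D_k$ it is the unique representative in $[0,D_k)$. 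Dividing by $D_k=2^{N_k}y_k$ collapses everything to the clean identity $\{2^{n_k}/y_k\}=2^{-n_k}/y_k<2^{-n_k}$. This CRT identification of $a_k$ is the main technical obstacle; once it is in hand, the rest becomes mechanical.

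\medskip

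The crude approximation $|1/y-1/y_k|=O(2^{-n_{k+1}})$, which is immediate from $y-y_k=y_k\bigl(\prod_{j>k}(1+2^{-n_j})-1\bigr)$, forces $\omega_2(1/y)$ to agree with $\omega^{(k)}$ on the initial segment $[1,n_{k+1}-O(1)]$. Combined with the structural claim, induction on $k$ gives the density estimate: one period of $\omega^{(k)}$ decomposes as a length-$n_k$ prefix of the periodic sequence $\omega^{(k-1)}$ followed by $n_k$ zeros, so the frequency of $1$s is halved at each stage, hence is at most $2^{-k}$ (the base case $\omega^{(1)}=\overline{1100}$ has density $1/2$). Consequently any prefix of $\omega_2(1/y)$ of length $N\in[n_k,n_{k+1}]$ contains $O(2^{-k}N)$ digits $1$, so the upper density of the set of coordinates of $\omega_2(1/y)$ carrying a $1$ is zero. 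Restricting $\omega_2(1/y)$ to the density-one complement yields the constant zero sequence, which is periodic, and Proposition~\ref{triv} concludes that $1/y\in\mathcal{D}(2)$.
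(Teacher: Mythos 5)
Your proposal is correct in substance but takes a genuinely different route from the paper. The paper works ``forwards'': it writes down a candidate $v$ for $1/y$ (the coordinatewise limit of the blocks $B_1=11$, $B_{k+1}=(B_k0^{n_k})^{n_{k+1}/2n_k}$), observes that the digit $1$ occurs in $v$ with frequency zero, and then verifies $vy=1$ by adding the copies of the expansion of $v$ shifted by the elements of $S$ and checking that every column of the resulting array contains exactly one digit $1$, so the sum is $0.111\ldots=1$. You instead compute $1/y$ ``backwards'' from the partial products $y_k=\prod_{j\le k}(1+2^{-n_j})$: writing $1/y_k=2^{N_k}/D_k$ with $D_k=\prod_{j\le k}(2^{n_j}+1)$ odd, you read off the expansion from the multiplicative order of $2$ modulo $D_k$ (which is $2n_k$, since the factors are pairwise coprime and $2$ has order $2n_j$ modulo $2^{n_j}+1$ because $n_j$ is a power of $2$), and the congruence $2^{n_k+N_k}\equiv 2^{N_{k-1}}\pmod{D_k}$ gives $\{2^{n_k}/y_k\}=2^{-n_k}/y_k<2^{-n_k}$, i.e., the second half of each period is $0^{n_k}$. (You do not even need the CRT here: $2^{n_k+N_k}=2^{2n_k}\cdot2^{N_{k-1}}$ and $2^{2n_k}\equiv1\pmod{D_k}$ since $2n_j\mid 2n_k$ for every $j\le k$.) This is arguably cleaner and more self-contained than the paper's column-by-column bookkeeping, and it reconstructs the same object: your blocks $A_k$ satisfy exactly the recursion defining the paper's $B_k$, so both proofs end up identifying the same binary expansion of $1/y$.

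The one step you must tighten is the passage from $|1/y-1/y_k|=O(2^{-n_{k+1}})$ to digit agreement on $[1,n_{k+1}-O(1)]$: two reals within $2^{-m}$ of each other can disagree in all of their first $m$ digits (compare $0.0111\ldots1$ with $0.1000\ldots0$), so the crude approximation by itself forces nothing. The repair is immediate from facts you already have: the first $n_{k+1}$ digits of $1/y_{k+1}$ and of $1/y_k$ coincide (both prefixes equal $A_{k+1}$), hence by induction all $1/y_j$ with $j\ge k$ share the same first $n_{k+1}$ digits; since the sequence $(1/y_j)_{j\ge k}$ decreases to $1/y$, all these numbers and their limit lie in a common half-open dyadic interval of length $2^{-n_{k+1}}$, so $\omega_2(1/y)$ agrees with $\omega^{(k)}$ on all of $[1,n_{k+1}]$. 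With that fixed, the $O(2^{-k})$ bound on the frequency of $1$'s in prefixes of length between $n_k$ and $n_{k+1}$, and the appeal to Proposition~\ref{triv}, go through exactly as you describe.
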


\begin{cor}\label{xynn} The product of two deterministic numbers need not be deterministic (it can be normal).
\end{cor}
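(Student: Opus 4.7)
The plan is straightforward once we have Theorem~\ref{cex} and Proposition~\ref{1/y} in hand. Let $x$ be the normal number and $y$ the deterministic number from Subsection~\ref{7.2} and Subsection~\ref{2det}, respectively, and recall from Theorem~\ref{cex} that their product $xy$ is deterministic.

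I would set
\[
u=xy, \qquad v=\frac{1}{y}.
\]
By Theorem~\ref{cex}, $u\in\mathcal D(2)$. By Proposition~\ref{1/y}, $v\in\mathcal D(2)$. Yet
\[
uv=xy\cdot\frac{1}{y}=x,
\]
which is normal in base~$2$, and hence (by Proposition~\ref{nordet1}(4)(5), since $h(x)=\log 2\neq 0$) not deterministic. Thus $u,v\in\mathcal D(2)$ while $uv\in\mathcal N(2)\setminus\mathcal D(2)$, proving the corollary in the stronger form stated parenthetically.

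There is no real obstacle in this derivation; the entire content is packaged into the two auxiliary results. The only minor point to verify is that $y\neq 0$ (so that $\tfrac{1}{y}$ makes sense), which is clear since $y=\sum_{i\geq 0}2^{-s_i}\geq 1$ from the construction in Subsection~\ref{2det}. All the nontrivial work sits in Theorem~\ref{cex} (where the ``Champernowne-like'' structure of $\kappa$ combined with the sparse support of $y$ forces $xy$ to have vanishing switch-frequency and hence to be deterministic) and in Proposition~\ref{1/y} (which is the step I would expect to be the main obstacle in the section as a whole, but is not an obstacle for this particular corollary).
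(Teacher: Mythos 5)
Your argument is exactly the paper's: it takes $a=xy$ (deterministic by Theorem~\ref{cex}) and $b=\tfrac1y$ (deterministic by Proposition~\ref{1/y}), and observes that $ab=x$ is normal. The proposal is correct and coincides with the paper's proof, with the harmless extra remark that $y\ge 1$ so the reciprocal is defined.
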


\begin{proof} 
Let $a=xy$ and $b=\frac1y$, where $x$ and $y$ are as in Theorem~\ref{cex} and Proposition~\ref{1/y}. Both $a$ and $b$ are deterministic while $ab=x$, which is normal.
\end{proof}

\begin{cor}\label{x^2}
The square of a deterministic number need not be deterministic. 
\end{cor}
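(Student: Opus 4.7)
The plan is to reuse the two deterministic numbers $a=xy$ and $b=1/y$ produced in Corollary~\ref{xynn}, whose product $ab=x$ is normal, and to combine them by means of the polarisation identity
$$
2ab=(a+b)^2-a^2-b^2.
$$
The point is that the only arithmetic operation appearing on the right-hand side other than squaring is addition, and $\mathcal{D}(2)$ is already known to be closed under addition by Corollary~\ref{rrr}(2). Hence if squaring preserved determinism, then starting from deterministic inputs $a$, $b$, $a+b$ (the last being deterministic because $\mathcal{D}(2)$ is a group), the identity above would force $2ab$ to be deterministic.

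My next step is to argue that $2ab$ is, on the contrary, very far from deterministic. Indeed, $2ab=2x$ where $x$ is the normal number constructed in Subsection~\ref{7.2}, and by Proposition~\ref{xq}(2) normality in base $2$ is preserved under multiplication by any nonzero rational. Thus $2x$ is normal in base $2$, and by Proposition~\ref{nordet1}(5) it has entropy $\log 2$, which rules out determinism.

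Combining these two steps yields the required contradiction: the assumption that $c^2\in\mathcal{D}(2)$ for every $c\in\mathcal{D}(2)$ would apply simultaneously to $c=a$, $c=b$ and $c=a+b$, and after a second application of the group property of $\mathcal{D}(2)$ to the right-hand side of the polarisation identity would place $2x\in\mathcal{D}(2)$, contradicting the normality of $2x$. Therefore at least one of the three deterministic numbers $a$, $b$, $a+b$ provides the required counterexample; the argument does not distinguish which one, but that is not needed for the statement. The only part requiring any care is the verification that each ingredient (closure of $\mathcal{D}(2)$ under $+$, the normality-preserving behaviour of rational multiplication, and the identification of $ab$ with the normal number $x$) is genuinely available at this point in the paper, and all three are supplied by results already proved.
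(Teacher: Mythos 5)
Your argument is correct and is essentially the same as the paper's: both proofs polarise the normal product $ab=x$ into a combination of squares of deterministic numbers and invoke the group structure of $\mathcal{D}(2)$ together with the stability of normality under multiplication by nonzero rationals. The paper uses the variant $s^2-t^2=ab$ with $s=\tfrac{a+b}{2}$, $t=\tfrac{a-b}{2}$, while you use $(a+b)^2-a^2-b^2=2ab$; the difference is purely cosmetic.
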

\begin{proof}
Let $a$ and $b$ be the deterministic numbers as in Corollary~\ref{xynn}. Let $s = \frac{a+b}2$ and $t=\frac{a-b}2$. By Theorems~\ref{x+y} and~\ref{xq}, both $s$ and $t$ are deterministic.
Then 
$$
s^2-t^2=(s+t)(s-t)=ab=x,
$$
which is normal. Thus, by Theorem~\ref{x+y} again, at least one of the squares $s^2$, $t^2$ is not deterministic.
\end{proof}

\begin{proof}[Proof of Proposition~\ref{1/y}] In what follows $(n_k)_{k\ge1}$ is the \sq\ introduced in the process of constructing the number $y$ (recall that the binary expansion of $y$ matches the indicator function of the set $S=\{0\}\cup\mathsf{FS}((n_k)_{k\ge1})$\,). 
We define inductively binary blocks $B_k$ as follows: 
\begin{align*}
&B_1=11 \text{ (note that the length of $B_1$ is $2=n_1$), and then}\\
&B_{k+1}=(B_k 0^{n_k})^{\frac{n_{k+1}}{2n_k}}, \ k\ge1,
\end{align*}
where each exponent should be interpreted as the number of repetitions. In words, $B_{k+1}$ consists of $\frac{n_{k+1}}{2n_k}$ (recall that $n_{k+1}=n_k2^{n_k}$, hence $2n_k$ divides $n_{k+1}$) repetitions of $B_k000\dots0$, where $B_k$ (of length $n_k$) is followed by $n_k$ zeros. The length of $B_{k+1}$ is $n_{k+1}$. For example,
\begin{align*}
B_2 &= 11001100,\\
B_3 &= 11001100000000001100110000000000110011\dots1100110000000000,
\end{align*}
where in $B_3$ the block $B_2$ is followed by eight zeros and the block $B_200000000$ is repeated 128 times. Then the length of $B_3$ is exactly equal to $2048=n_3$. The coordinates in the blocks $B_k$ are counted from $1$ to $n_k$. We let $\omega$ be the infinite one-sided \sq\ (starting at coordinate 1), obtained as the limit of the blocks $B_k$, and we define $v$ as the number whose binary expansion matches $\omega$ with the binary dot on the left of coordinate 1 (so that $v<1$). Observe that the digit $1$ occurs in $\omega$ with frequency zero. This follows from the fact that the fraction of $1$'s in $B_{k+1}$ is half the fraction in $B_k$. So $v$ is trivially deterministic.

In order to show that $v=\frac1y$, let us compute the product $vy$. This is done in an already familiar manner, by adding (using $\pl$) copies of $\omega$ shifted by the elements of $S=\{0\}\cup\mathsf{FS}((n_k)_{k\ge1})=\{0,n_1,n_2,n_2+n_1,n_3,\dots\}$. We obtain the following diagram (cf.\ Figure~\ref{f3}):
\begin{figure}[H]
\begin{align*}
^{0}\,\ ^{n_1}\phantom{010..}^{n_2}\phantom{.....100}^{2n_2}\phantom{.10001100100101010101010101010000}^{n_3}\phantom{.1011010101000000\dots}\phantom{0}&\\
\downarrow\,\ \downarrow\phantom{0101}\downarrow\ \phantom{\downarrow100.}\downarrow\phantom{00101010101010101010101010101000j}\downarrow\ \phantom{\downarrow.0101\downarrow\ \downarrow}\phantom{010101.\dots}\phantom{0}&\\
.11001100000000001100110000000000\dots1100110000000000000000000000000000\dots&\\
11001100000000001100110000000000\dots11001100000000000000000000000000\dots&\\
11001100000000001100110000000000\dots11001100000000000000000000\dots&\\
11001100000000001100110000000000\dots110011000000000000000000\dots&\\
110011000000000011\dots&\\
1100110000000000\dots&\\
1100110000\dots&\\
\pl\phantom{0000000000000000000000000000000000000000000000000000000000}11001100\dots&\\
\rule{12.5cm}{0.4pt}&\\
.11111111111111111111111111111111\dots1111111111111111111111111111111111\dots&
\end{align*}
\caption{The summation producing $vy$ (in the binary expansion).}
\label{fh4}
\end{figure}
In each column of the diagram there appears exactly one digit~$1$. Indeed,
this fact can be visually checked in the initial $n_2$ columns. In columns $n_2+1,\dots,2n_2$, the top two rows become zeros, while rows 3 and 4 duplicate the pattern of the top two rows in columns $1,\dots,n_2$. Hence the ``one digit 1'' rule applies to the initial $2n_2$ columns. Next, the pattern in the top four rows in columns $1,\dots,2n_2$ is repeated periodically until coordinate $n_3$, hence the ``one digit 1'' rule extends to the initial $n_3$ columns. Inductively, once the ``one digit 1'' rule is verified for the initial $n_k$ columns, in columns $n_k+1,\dots,2n_k$ the top $2^{k-1}$ rows become zeros, while the next $2^{k-1}$ rows duplicate the pattern of the top $2^{k-1}$ rows in columns $1,\dots,n_k$, so the rule applies to the initial $2n_k$ columns. Then, repetitions in the top $2^k$ rows extend the rule to the initial $n_{k+1}$ columns. Eventually, the binary expansion of $vy$ is the \sq\ of just $1$'s (in this particular case the carry never occurs, so $\pl$ is the same as $+$), i.e., $vy=1$, as needed (this is the unique case in this paper when we use the alternative binary expansion of a rational number, ending with $1$'s).
\end{proof}

\subsection{Some natural open problems} The goal of this section is to present some natural open problems motivated by the results of the previous subsections and by the following simple observation, which extends Proposition~\ref{any} (we work with a fixed base $r\ge2$ but for brevity in what follows we skip mentioning the base):
\begin{prop}\label{seven}
Any real number $z\neq 0$ can be represented as the sum, difference, product, ratio, and product of reciprocals of two normal numbers, as well as the sum or difference of a normal number and the reciprocal of a normal number.
\end{prop}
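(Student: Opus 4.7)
The plan is to generalize the proof of Proposition~\ref{any}, which handles the sum case, by combining the classical fact that $\mathcal{N}(r)$ has full Lebesgue measure in $\R$ with the observation that each of the operations in question corresponds to a smooth diffeomorphism of an open cofinite subset of $\R$, and such diffeomorphisms necessarily preserve the class of Lebesgue null sets.

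Fix $z\neq 0$ and a desired representation $z = f(x_1,x_2)$ where $f$ is one of the seven operations listed. Solving for $x_1$ as a function of $x_2$ yields $x_1=\phi_z(x_2)$, where $\phi_z$ is one of the maps
$$
x\mapsto z-x,\quad z+x,\quad z/x,\quad zx,\quad 1/(zx),\quad z-1/x,\quad z+1/x.
$$
For each of these, $\phi_z$ is a $C^\infty$ bijection from its natural domain $U_z$ (an open cofinite subset of $\R$, namely $\R$, $\R\setminus\{0\}$, or $\R\setminus\{0,1/z\}$) onto an open cofinite subset of $\R$, with smooth inverse of the same type. Being a $C^1$ diffeomorphism between open subsets of $\R$, $\phi_z$ is locally bi-Lipschitz, and hence both $\phi_z$ and $\phi_z^{-1}$ map Lebesgue null sets to Lebesgue null sets.

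Consequently, since $\R\setminus\mathcal{N}(r)$ is Lebesgue null, the set $\phi_z^{-1}(\R\setminus\mathcal{N}(r))$ is null in $U_z$ as well. Intersecting the full-measure subset $U_z\cap\mathcal{N}(r)$ with the full-measure subset $\phi_z^{-1}(\mathcal{N}(r))$ produces a set
$$
A_z = \{x_2\in U_z \cap\mathcal{N}(r) : \phi_z(x_2)\in\mathcal{N}(r)\}
$$
of full Lebesgue measure in $U_z$, which is therefore non-empty (indeed, uncountable). Any $x_2\in A_z$ together with $x_1:=\phi_z(x_2)$ gives the requested representation $z=f(x_1,x_2)$ with $x_1,x_2\in\mathcal{N}(r)$.

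The only step requiring explicit checking is that each candidate $\phi_z$ is genuinely a diffeomorphism on its natural domain, which is immediate since these are all affine or Möbius-type maps. No real obstacle is anticipated; the strength of the argument lies in its uniformity across all seven cases, and it would go through equally well for any operation given by a $C^1$-diffeomorphic map.
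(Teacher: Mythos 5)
Your argument is correct and is essentially the paper's own proof: both rest on the fact that $\mathcal N(r)$ has full Lebesgue measure and that each of the relevant maps (translations, dilations, and $x\mapsto x^{-1}$, or compositions thereof) is non-singular, so that the preimage of $\mathcal N(r)$ under each has full measure and a finite intersection of full-measure sets is nonempty. The only cosmetic differences are that you justify non-singularity via local bi-Lipschitz continuity of the diffeomorphisms and treat the seven representations one at a time, whereas the paper intersects all the corresponding full-measure sets at once to produce a single $x$ serving every representation simultaneously.
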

\begin{proof}
The map $x\mapsto x^{-1}$ is invertible on $\R\setminus\{0\}$ and non-singular (preserves the class of sets of Lebesgue measure zero). Since the set ${\mathcal N}$ of normal numbers has full Lebesgue measure, the set of reciprocals of normal numbers (henceforth denoted by ${\mathcal N}^{-1}$) also has full Lebesgue measure. In addition, each of the sets:
$z-{\mathcal N}$, $z+{\mathcal N}$, $z\cdot{\mathcal N}^{-1}$, $z\cdot{\mathcal N}$, $\frac1z\cdot{\mathcal N}^{-1}$, and $\frac1z\cdot{\mathcal N}$ has full Lebesgue measure. The same applies to the sets $z-{\mathcal N}^{-1}$, $z+{\mathcal N}^{-1}$.  Let
\begin{multline*}
{\mathcal N}_z = \\
{\mathcal N}\cap (z-{\mathcal N})\cap (z+{\mathcal N})\cap(z\cdot{\mathcal N}^{-1})\cap(z\cdot{\mathcal N})\cap(\tfrac1z\cdot{\mathcal N}^{-1})\cap(\tfrac1z\cdot{\mathcal N})\cap(z-{\mathcal N}^{-1})\cap(z+{\mathcal N}^{-1}).
\end{multline*}
Clearly, the set $\mathcal N_z$ has full measure. Let $x\in{\mathcal N}_z$. Then $x$ is normal and there are normal numbers $x_1,x_2,\dots,x_8$ such that 
$$
x=z-x_1=z+x_2=\tfrac z{x_3}=zx_4=\tfrac1{zx_5}=\tfrac{x_6}z=z-\tfrac1{x_7}=z+\tfrac1{x_8},
$$
implying that
$$
z=x+x_1=x-x_2=x{x_3}=\tfrac x{x_4}=\tfrac1{xx_5}=\tfrac{x_6}x=x+\tfrac1{x_7}=x-\tfrac1{x_8}.
$$
\end{proof}

\begin{rem}
Similarly, it can be shown that any nonzero real number $z$ can be represented as the sum, difference, product, ratio, and product of the reciprocals of two non-normal numbers, as well as the sum or difference of a non-normal number and the reciprocal of a non-normal number. The proof uses the fact that the set of non-normal numbers is residual (i.e., the set of normal numbers is of first Baire category, see for example \cite[footnote 13]{OU} and \cite[Proposition 4.7]{BDM}) and that the map $x\mapsto x^{-1}$ preserves the class of residual sets. 
\end{rem}

Here is finally a list of some open questions.
\begin{enumerate}
    \item Is the reciprocal of a normal number always normal? 
    \item Is the reciprocal of a nonzero deterministic number always deterministic? 
    \item Does there exist a normal number whose reciprocal is deterministic?
    \item Can any nonzero real number be represented as (i) the 
    product, (ii) the ratio, or (iii) the product of reciprocals, of two deterministic numbers? 
    \item Can any nonzero real number be represented as (i) the product, (ii) the ratio, or (iii) the product of reciprocals, of a normal and a deterministic number?
    \item Are there irrational numbers $a$ with the property that $ax$ is normal for all normal $x$?
    \item Are there any irrational numbers $b$ with the property that $by$ is deterministic for every deterministic $y$?
\end{enumerate}

\section{Appendix}
In this appendix, we will sketch the proof of Theorem~\ref{detdet}. For the reader's convenience, we repeat here the formulation of this theorem. 
\smallskip

\noindent{\bf Theorem 3.9.}
\emph{A \sq\ $\omega\in\{0,1,\dots,r-1\}^\N$ is deterministic if and only if it has subexponential epsilon-complexity.}
\medskip

The proof utilizes the notion of combinatorial entropy of a block. Recall (see~\eqref{empirical}, Section~\ref{S2}) that any block $B$ of length $m$, over a finite alphabet $\Lambda$, and determines a density function $\mu_B$ on blocks $C$ of length $n\le m$ by the formula:
\begin{equation}\label{czep}
\mu_B(C)=\frac1{m-n+1}|\{i\in[1,m-n+1]:B|_{[i,i+n-1]}\approx C\}|.
\end{equation}
Note that $\{\mu_B(C):C\in\Lambda^n\}$ is a probability vector. 

\begin{defn}{\rm(cf. \cite[Section 2.8]{Do})}
Fix some $n\in\N$ and let $B$ be a block of length $m\ge n$, over a finite alphabet $\Lambda$. The \emph{$n$th combinatorial entropy of $B$} is defined as 
\begin{equation}\label{czep1}
H_n(B) = -\frac1n\sum_{C\in\Lambda^n}\mu_B(C)\log(\mu_B(C)).
\end{equation}
\end{defn}

In the proof of Theorem 3.7, we will need the following fact, see \cite[Lemma~1]{BGH} or \cite[Lemma~2.8.2]{Do} (we use the notation from \cite{Do}):
\begin{thm}\label{cl}
For $c>0$ let $\mathbf C[n,m,c]$ denote the number of blocks of length $m$, over $\Lambda$, such that  $H_n(B)\le c$.
Then 
$$
\limsup_{m\to\infty}\frac{\log_2(\mathbf C[n,m,c])}m\le c.
$$
\end{thm}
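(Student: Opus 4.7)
The plan is to leverage the classical method of types over the enlarged alphabet $\Lambda^n$, combined with the concavity of Shannon entropy, in order to reduce the overlapping sliding-window count to a non-overlapping one that is amenable to standard multinomial counting.

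First, I will write $\mu_B$ as a convex combination of the $n$ non-overlapping empirical distributions $\nu_B^{(j)}$ on $\Lambda^n$ ($j\in\{0,1,\dots,n-1\}$), where $\nu_B^{(j)}$ is the empirical distribution of the tiles $B|_{[j+1,j+n]}$, $B|_{[j+n+1,j+2n]}$, etc. Grouping the sliding-window positions $i\in[1,m-n+1]$ according to $i\bmod n$ will show that $\mu_B=\sum_{j=0}^{n-1}w_j^{(m)}\nu_B^{(j)}$ with weights $w_j^{(m)}\to 1/n$ as $m\to\infty$ for fixed $n$. By concavity of the entropy functional $H$, this yields $H(\mu_B)\ge\sum_j w_j^{(m)}H(\nu_B^{(j)})$, so some offset $j^\star=j^\star(B)$ satisfies
$$H(\nu_B^{(j^\star)})\le H(\mu_B)=nH_n(B)\le nc.$$

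Second, I will apply a standard method-of-types count over the alphabet $\Lambda^n$. For each fixed offset $j$, treating the non-overlapping tiles as a sequence of length $k_j=\lfloor(m-j)/n\rfloor$ over $\Lambda^n$, the multinomial bound gives at most $2^{k_j H(\nu)}\le 2^{k_j\cdot nc}\le 2^{mc}$ sequences realizing any prescribed empirical distribution $\nu$ with $H(\nu)\le nc$. The number of distinct such empirical distributions is at most $(k_j+1)^{|\Lambda|^n}$, a polynomial in $m$ for $n$ fixed, and the untiled symbols of $B$ (a prefix of length $j<n$ and a tail of length less than $n$) contribute a further factor of at most $|\Lambda|^{2n}$. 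Summing over the $n$ possible choices of $j^\star$, one obtains
$$\mathbf C[n,m,c]\le n\cdot|\Lambda|^{2n}\cdot(m+1)^{|\Lambda|^n}\cdot 2^{mc},$$
and taking $\log_2$, dividing by $m$, and sending $m\to\infty$ yields the desired bound $\limsup_{m\to\infty}m^{-1}\log_2\mathbf C[n,m,c]\le c$.

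The only genuine technical point is the verification that the weights $w_j^{(m)}$ in the convex-combination identity are indeed within $O(1/m)$ of $1/n$, uniformly in $B$. This is a routine pigeonhole count, but it is where the minor boundary and end-effect corrections must be absorbed cleanly into the $o(m)$ terms so that the concavity step produces the crisp bound $H(\nu_B^{(j^\star)})\le nc$ rather than a distribution-dependent error.
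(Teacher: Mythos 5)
The paper does not prove Theorem~\ref{cl}; it quotes it from \cite[Lemma~1]{BGH} and \cite[Lemma~2.8.2]{Do}, so there is no in-paper argument to compare against. Your proof is correct and is essentially the standard method-of-types argument one would find behind those references: the exact convex decomposition $\mu_B=\sum_{j=0}^{n-1}w_j^{(m)}\nu_B^{(j)}$ with $w_j^{(m)}=k_j/(m-n+1)$, concavity of $H$ to extract an offset $j^\star$ with $H(\nu_B^{(j^\star)})\le nH_n(B)\le nc$, the multinomial bound $2^{k_jH(\nu)}\le 2^{mc}$ per type, the polynomial count $(k_j+1)^{|\Lambda|^n}$ of types, and the bounded-alphabet factor $|\Lambda|^{2n}$ for the untiled prefix and tail all check out, and the resulting bound $\mathbf C[n,m,c]\le n\,|\Lambda|^{2n}(m+1)^{|\Lambda|^n}2^{mc}$ gives the claim after taking $\tfrac1m\log_2$ and $m\to\infty$.

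One remark: the "genuine technical point" you flag at the end is not actually needed. Since the $w_j^{(m)}$ are nonnegative and sum to $1$ exactly (each sliding position $i\in[1,m-n+1]$ lies in exactly one residue class), concavity gives $\min_j H(\nu_B^{(j)})\le\sum_j w_j^{(m)}H(\nu_B^{(j)})\le H(\mu_B)\le nc$ for \emph{any} convex weights; no estimate $w_j^{(m)}=1/n+O(1/m)$ is required, and no distribution-dependent error appears. The only end effects you must absorb are the untiled prefix and tail, which you already handle with the $|\Lambda|^{2n}$ factor, and the trivial proviso that $m\ge 2n$ so every offset class is nonempty (irrelevant for the $\limsup$).
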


\begin{proof}[Proof of Theorem~\ref{detdet}] Let $\Lambda=\{0,1,\dots,r-1\}$ and suppose that $\omega\in\Lambda^\N$ is deterministic. Let $\M_\omega$ denote the set of measures quasi-generated (via the shift $\sigma$) by~$\omega$.
By Definition~\ref{deta}, the fact that $\omega$ is deterministic means that $h(\mu)=0$ for all $\mu\in\M_\omega$. For any \im\ $\mu$ on $\Lambda^\N$, by the Kolmogorov--Sinai Theorem (\cite{S}) we have $h(\mu) = h_\mu(\P,T)=\lim_{n\to\infty}\frac1n H_\mu(\P^n)$, where $\P$ is the (generating) partition into cylinders corresponding to blocks of length 1, $\P=\{[a]:a\in\Lambda\}$, where 
$[a]=\{(a_n)_{n\in\N}\in\Lambda^\N: a_1=a\}$.
Because the cylinders $[a]$ are clopen subsets of $\Lambda^\N$, the functions $\mu\mapsto\frac1n H_\mu(\P^n)$, $n\in\N$, are continuous on Borel probability measures and for any \im\ $\mu$ the \sq\ $\frac1n H_\mu(\P^n)$ is nonincreasing (see, e.g.,~\cite[Fact~2.3.1]{Do}), and converges to $0$ on $\M_\omega$. Since $\M_\omega$ is compact (in the topology of the weak* convergence), the convergence is uniform. This implies that for any $\eps>0$, for a large enough $n$ we have $\frac1n H_\mu(\P^n)<\frac{\eps^2}2$ for all $\mu\in\M_\omega$. Recall that $\M_\omega$ coincides with the set of accumulation points of the \sq\ of measures $A_m(\omega)=\frac1m\sum_{i=1}^m\delta_{\sigma^i\omega}$ (see~\eqref{wsl}). 
Observe that since the atoms of the partition $\P^n$ are clopen, the function $\nu\mapsto H_\nu(\P^n)$ (see Section~\ref{S3}, formula (1)) is continuous on $\M(\Lambda^\N)$. As a consequence, we get that
$$
\tfrac1n H_{A_m(\omega)}(\P^n)<\eps^2,
$$
for all sufficiently large $m$.
On the other hand, it is elementary to see that 
$$
\tfrac1n H_{A_m(\omega)}(\P^n)=H_n(\omega|_{[1,m+n-1]}).
$$
We conclude that  
\begin{equation}\label{hh1}
 H_n(\omega|_{[1,M]})<\eps^2   
\end{equation}
for all large enough $M$. Observe that if $m$ is large and $M>m$ then for any $C\in\Lambda^n$ we have 
$$
\Bigl|\mu_{\omega|_{[1,M]}}(C) \ - \ \frac1{M-m+1}\sum_{i\in[1,M-m+1]}\mu_{\omega|_{[i,i+m-1]}}(C)\Bigr|<\delta_0,
$$
where $\delta_0>0$ does not depend on $C$ and, by choosing $M$ large enough, can be made arbitrarily small. Since the entropy function 
$$
P=\{p_1,p_2,\dots,p_k\}\mapsto H(P)=-\sum_{i=1}^k p_i\log(p_i)
$$ 
on the compact convex set of probability vectors of a fixed dimension $k\ge2$ is continuous and concave (see, e.g.,~\cite[Fact 1.1.3]{Do}), for large enough $M$, we have
$$
H_n(\mu_{\omega|_{[1,M]}})\ge \frac1{M-m+1}\sum_{i\in[1,M-m+1]}H_n(\mu_{\omega|_{[i,i+m-1]}}) -\delta_1,
$$
where $\delta_1>0$ is again arbitrarily small. Choosing $\delta_1<\eps^2-H_n(\omega|_{[1,M]})$, by~\eqref{hh1} we get 
$$
\frac1{M-m+1}\sum_{i\in[1,M-m+1]}H_n(\mu_{\omega|_{[i,i+m-1]}})<\eps^2.
$$
This implies that the number of $i\in[1,M-m+1]$ such that $H_n(\omega|_{[i,i+m-1]})\ge\eps$ does not exceed $\eps(M-m+1)$. Letting $M$ tend to infinity, we obtain that the set 
$$
\mathbb S=\{i\in\N:H_n(\omega|_{[i,i+m-1]})\ge\eps\}
$$
has upper density less than $\eps$. Let $F=\{B\in\Lambda^m:H_n(B)<\eps\}$.
Then for any $i\in\N$ we have either $i\in\mathbb S$ or $\omega|_{[i,i+m-1]}\in F$. By Theorem~\ref{cl}, if $m$ is large enough then $|F|<2^{2m\eps}$ and by Remark~\ref{inco}, we have $C_\omega(\eps,m)\le 2^{2m\eps}$. Thus, according to  Definition~\ref{detb}, $\omega$ has subexponential epsilon-complexity.
\smallskip

Now suppose that $\omega$ has subexponential epsilon-complexity. Choose $\mu\in\M_\omega$. There exists a \sq\ $\mathcal J=(n_k)_{k\ge1}$ along which $\omega$ generates $\mu$. Recall that then the \sq\ of blocks $\omega|_{[1,n_k]}$ generates $\mu$ (see Definition~\ref{wss} and the discussion that follows it). By continuity of the entropy function $P\mapsto H(P)$ on the probability vectors, we find that for each $m\ge1$, we have
$$
\lim _{k\to\infty}H_m(\omega|_{[1,n_k]})=\tfrac1m H_\mu(\P^m).
$$

Fix an $\eps>0$. By Definition~\ref{complexity}, there exists $m$ such that all blocks of length $m$ appearing in $\omega$ can be divided into two classes: class~1 of cardinality less than $2^{\eps m}$ and class~2 such that the blocks from class~2 appear in $\omega$ with joint frequency (see Definition~\ref{frequency}~(b)) less than $\eps$. Then, 
for $k$ large enough, the joint frequency of the blocks from class 2
in the block $C=\omega|_{[1,n_k]}$ equals some $\zeta<\eps$. Thus, we can write
\begin{multline*}
H_m(C) = -\frac1m\Bigl(\sum_{B\in\text{class 1}}\mu_C(B)\log\mu_B(C)+\sum_{B\in\text{class 2}}\mu_C(B)\log\mu_B(C)\Bigr)=\\
-\frac1m\Bigl((1-\zeta)\sum_{B\in\text{class 1}}\frac{\mu_C(B)}{1-\zeta}\bigl(\log\frac{\mu_C(B)}{1-\zeta}+\log(1-\zeta)\bigr)+\\
\zeta\sum_{B\in\text{class 2}}\frac{\mu_C(B)}{\zeta}\bigl(\log\frac{\mu_C(B)}{\zeta} +
\log\zeta\bigr)\Bigr)=\\
-\frac1m\Bigl((1-\zeta)\log(1-\zeta)+(1-\zeta)\sum_{B\in\text{class 1}}\frac{\mu_C(B)}{1-\zeta}\log\frac{\mu_C(B)}{1-\zeta}+\\
\zeta\log\zeta+\zeta\sum_{B\in\text{class 2}}\frac{\mu_C(B)}{\zeta}\log\frac{\mu_C(B)}{\zeta}\Bigr)=\\
\frac1m\Bigl(H(\zeta,1-\zeta) + (1-\zeta)H(\text{class 1})+ \zeta H(\text{class 2})\Bigr),
\end{multline*}
where $H(\zeta,1-\zeta)=-(1-\zeta)\log(1-\zeta)-\zeta\log\zeta$, $H(\text{class 1})$ is the entropy of the probability vector $\{\frac{\mu_C(B)}{1-\zeta}:B\in\text{class 1}\}$, and $H(\text{class 2})$ is defined analogously. Clearly, $H(\text{class 1})\le\log|\text{class 1}|\le m\eps$ and $H(\text{class 2})\le\log|\text{class 2}|\le|m\log|\Lambda|$, which implies that
$$
H_m(C)\le \tfrac1m H(\zeta,1-\zeta)+\eps+\eps\log|\Lambda|.
$$
Further, we have
$$
\tfrac1m h_\mu(\P^m)=\lim_kH_m(\omega|_{[1,n_k]})\le\tfrac1m H(\zeta,1-\zeta)+\eps+\eps\log|\Lambda|,
$$
and so, by letting $m$ grow, we obtain
$$
h(\mu)=h_\mu(\P)=\lim_{m\to\infty}\tfrac1m h_\mu(\P^m)\le\eps+\eps\log|\Lambda|.
$$
Since $\eps$ is arbitrarily small, we have shown that $h(\mu)=0$ and hence $\omega$ is deterministic.
\end{proof}

\end{document}